\documentclass{amsart}
\usepackage{a4wide}
\usepackage{bbm}
\usepackage{lipsum}
\usepackage{enumerate}
\usepackage{showlabels}
\usepackage{amsmath,amsthm,amsfonts,amssymb,amscd,mathrsfs}
\usepackage[pdftex, colorlinks=true]{hyperref}

\newtheorem{theo}{Theorem}[section]
\newtheorem{prop}{Proposition}[section]
\newtheorem{lemma}{Lemma}[section]
\newtheorem{cor}{Corollary}[section]
\newtheorem{defi}{Definition}[section]
\newtheorem{ques}{Question}[section]
\newtheorem{rem}{Remark}[section]

\newtheorem{CClaim}{Claim}

\usepackage{graphicx}
 \usepackage[all]{xy}
 
\usepackage{xcolor}
\usepackage{boxhandler}
\usepackage{caption}
\usepackage{lipsum}
\captionsetup{
   margin=1cm,
   font=small,
   format=plain,
   labelfont={bf,up},
  textfont={it}
}
\captionsetup[figure]{ labelsep=colon}

\usepackage{xcolor}
\usepackage{boxhandler}
\usepackage{caption}
\usepackage{lipsum}
\captionsetup{
   margin=1cm,
   font=small,
   format=plain,
   labelfont={bf,up},
  textfont={it}
}

\def \diam {\mathsf{diam}}

\def \Id {\mathsf{Id}}

\def \ocap{\mathsf{ocap}}
\def \dim{\mathsf{dim}}
\def \codim{\mathsf{codim}}

\def \Int{\mathsf{Int}}

\begin{document}


\title{Symbolic extensions and uniform generators for topological regular flows.}

\author{DAVID BURGUET}
\address{LPSM - CNRS UMR 8001 \\
\newline
Universite Paris 6 \\
 75252 Paris Cedex 05 FRANCE\\
 \newline
david.burguet@upmc.fr}


\begin{abstract}Building on the theory of symbolic extensions and uniform generators for discrete transformations we develop a similar theory for topological regular flows. In this context a symbolic extension is given by a suspension flow over a subshift. 
\end{abstract}
\maketitle
\tableofcontents
\section{Introduction}
Given an invertible dynamical system $(X,f)$ a \textit{generator} is a finite partition $P$, which   ``generates" the system in the sense that 
the map from $(X,f)$ to $(P^\mathbb{Z},\sigma)$, with $\sigma$ being the usual shift, which associates to any $x\in X$ its $P$-name $(P(f^kx))_{k\in \mathbb{Z}}$
defines an embedding (where $P(x)$ denotes the atom of $P$ containing $x\in X$). The nature of the embedding depends on the structure of the system, e.g. if we consider  a measure preserving system (resp. Borel system, resp. topological system) we require the embedding to be measure theoretical  (resp. Borel, resp. topological). A necessary set-theoretical condition for the existence of a generator is given by the cardinality of periodic points which has to be smaller than in a full shift over  a finite alphabet, i.e.  $\sup_{n\in \mathbb{N}\setminus \{0\}}\frac{1}{n}\log \sharp \{x, \ f^nx=x\}<+\infty$. There are other  conditions, of dynamical nature, namely entropic and expansive properties. \\

 Generators in ergodic theory have a long history. As the  entropy is preserved by isomorphism, an ergodic system with a generator  has finite entropy.  W.Krieger  showed the converse in  \cite{kri} : any ergodic system on a Lebesgue space with finite entropy admits a (finite) generator. For topological systems, expansiveness (which implies finite topological entropy) completely characterizes systems with a generator (see \cite{ky} and \cite{kr}). New developments appear recently for Borel
systems. Namely M.Hochman has proved in  \cite{Ho,Hoc} that a Borel system  admits a generator if and only if the  entropy of its ergodic invariant measures is  bounded from above. 

When considering  the time $t$-map $\phi_t$ of a (regular) flow $(X,\Phi)$ we are interrested in generators $P$ whose atoms are \textit{towers} associated to a cross-section, i.e of the form $\{\phi_t(x), \ x\in A \text{ and }0\leq t<t_S(x)\}$ with $A\subset S$ for a cross-section $S$ and its  return time $t_S$.  As a first step we aim to represent the system as a suspension flow or equivalently to build a  (global) cross-section. For aperiodic ergodic flows it was achieved by W.Ambrose \cite{amb}, whereas V.M.Wagh \cite{wa}  obtained the analogous result in the Borel case. As shown by D.Rudolph \cite{rud} the roof function in this representation of ergodic flows may be always assumed to be two-valued ; moreover for finite entropy flows  the two-partition consisting of the towers 
with constant return time is generating not only  for the flow, but also for the ergodic time $t$-maps with  $t$ less than the minimal return time. Recently K.Slutsky \cite{KS} built for  Borel systems a Borel cross-section with a two-valued return map.\\

In the present paper we are interested in  generators for topological systems.  M.Boyle and  T.Downarowicz \cite{BD} have developped a new theory of entropy revealing fine  properties of expansiveness of discrete topological systems. They introduce new entropy invariants which allow in particular to know whether the system may be encoded with a finite alphabet or not. Formally such a code is given by a topological extension by a subshift over a finite alphabet, also called a \textit{symbolic extension}. More recently  T.Downarowicz and the author \cite{bdo} have related the theory of symbolic extensions with  a  Krieger-like generators problem.  For a discrete topological system $(X,T)$ they introduced \textit{uniform generators} as Borel partitions $P$ of $X$  whose iterated partitions $P_T^{[-n,n]}:=\bigvee_{k=-n}^nT^kP$ have a diameter going to zero with $n$, in other terms $\sup_{y\in P_T^{[-n,n]}(x)}d(y,x)$ goes to zero uniformly in $x$ when $n$ goes to infinity (with $d$ being the distance on $X$). By Theorem 1 in \cite{bdo} a uniform generator is given by a symbolic extension with a Borel  embedding and vice versa.  For aperiodic systems the existence of symbolic extensions is equivalent to the existence of uniform generators whereas the presence of periodic points  generates other constraints to build uniform generators (see Theorem 55 in \cite{bdo}). \\

We aim to develop such theories for topological flows. Regarding the discrete systems given by the time $t$-maps of a real flow, M.Boyle and  T.Downarowicz  have proved that for $t\neq  0$ the time $t$-map  admits a symbolic extension if and only so does the time $1$-map  (Theorem 3.4 in \cite{bdd}).  
 Nevertheless we  consider here the flow in its own, not only through its time-$t$ maps. We call \textit{symbolic extension} of a topological flow any topological extension given by  the suspension flow over a subshift with a positive  continuous roof function, whereas a \textit{uniform generator} for the flow   is a symbolic extension of the flow with a Borel embedding.\\

 For Axiom A flows R.Bowen \cite{BOW} built   a finite-to-one symbolic  extension. Moreover the roof function may be chosen to be H\"{o}lder continuous and the subshift is of finite type. R.Bowen also introduced a notion of expansiveness for flows (satisfied in particular by Axiom A flows). The existence of symbolic extensions preserving entropy for expansive discrete dynamical systems is now well-known. For expansive flows, R.Bowen and P.Walters \cite{BW} built a symbolic extension but they ask whether this extension preserves the entropy. Their construction involves closed cross-sections and they wonder if one could choose carefully the closed cross-sections so that the associated symbolic extension has the same topological  entropy. We will give  a  positive answer to this question for $C^2$ expansive flows.\\
 
A first step in the theory of symbolic extensions  or uniform generators consists in reducing the problem to zero-dimensional systems. This is done by considering partitions with \textit{small boundary}, i.e. with boundaries having zero measure for any invariant probability measure. 
 The existence of  partitions with small boundary and arbitrarily small diameter, known as the small boundary property,  is related with the  deep theory of mean dimension \cite{lin}. In the Section 2 we introduce a small boundary property for flows. This property  is always satisfied for $C^2$ smooth  flows when the set of periodic orbits  with period less than $T$ is finite for any $T>0$. 
   For flows with the \textit{small flow boundary} property we may build, by a similar construction as R.Bowen and P.Walters,  a topological extension preserving entropy given by a suspension  flow over a zero-dimensional discrete system.  
 Then a symbolic extension of the flow may be built from a symbolic extension of this discrete system. 
  Our main results may be stated as follows:


\begin{theo}\label{intr}
Let $(X,\Phi)$ be a (resp. aperiodic)  regular topological flow with the small flow boundary property. It admits a symbolic  extension (resp. a uniform generator) if and only if for some (any) $t\neq 0$, the time $t$-map admits a symbolic  extension. Moreover this property is invariant under orbit equivalence. 
\end{theo}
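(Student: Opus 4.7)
The strategy is to use the small flow boundary property to pass to a zero-dimensional intermediate model and then transfer symbolic extensions between the flow and its Poincaré return map. First I would apply the construction provided in Section 2 to obtain an entropy-preserving topological extension $\pi : (\bar Y, \bar \Psi) \to (X, \Phi)$, where $(\bar Y, \bar \Psi)$ is the suspension flow over a zero-dimensional discrete system $(\bar Z, \bar T)$ with continuous positive roof $\bar r$. All subsequent work takes place on this intermediate model and descends to $(X, \Phi)$ through $\pi$.

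For the direction ``time-$t$ map has symbolic extension $\Rightarrow$ flow has symbolic extension'', the key step is extracting a symbolic extension $\tau : (\Sigma, \sigma) \to (\bar Z, \bar T)$ from a symbolic extension of $\phi_t$. Since $\bar T$ is the first-return map of $\bar \Psi$ to the clopen cross-section $\bar Z \subset \bar Y$, continuous-time symbolic data obtained above $\phi_t$ can be read off at the discrete return times, yielding the required symbolic extension of $\bar T$. Once such $\tau$ is available, the lifted roof $\tilde r := \bar r \circ \tau$ is continuous and positive on $\Sigma$, and the suspension flow $(\Sigma^{\tilde r}, \sigma^{\tilde r})$ is by construction a symbolic extension of $(\bar Y, \bar \Psi)$, hence of $(X, \Phi)$.

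For the converse, assume $(X, \Phi)$ is symbolically extended by $(\Sigma^r, \sigma^r)$, the suspension flow over a subshift $\Sigma$ with continuous roof $r$. This flow is expansive, and for $t \neq 0$ its time-$t$ map admits a symbolic extension built from the subshift structure of the base combined with a finite height partition of mesh less than $|t|$; this descends via the factor map to a symbolic extension of $\phi_t$. Independence of the choice of $t \neq 0$ is then ensured by Theorem 3.4 of \cite{bdd}. The uniform generator case in the aperiodic setting follows the same route, requiring Borel embeddings at every step and invoking Theorem 1 of \cite{bdo} to convert between uniform generators and symbolic extensions with Borel embedding. Invariance under orbit equivalence is inherited from the discrete picture: the existence of a symbolic extension of $\Phi$ is equivalent to that of the zero-dimensional return map $\bar T$, and orbit-equivalent flows have topologically conjugate return maps to suitable cross-sections.

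The principal difficulty I anticipate lies in the first step of the ``$\Leftarrow$'' direction, namely constructing a symbolic extension of the discrete-time zero-dimensional map $\bar T$ from a symbolic extension of the continuous-time map $\phi_t$. The obstacle is one of synchronization, since the return times to the cross-section $\bar Z$ are arbitrary real numbers while the discrete symbolic extension theory provides information in integer time; resolving this will likely rely on the zero-dimensionality of $\bar Z$, the continuity of $\bar r$, and possibly an approximation of $\bar r$ by locally constant roofs.
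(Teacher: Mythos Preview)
Your route diverges from the paper's, and the difficulty you yourself flag is a genuine gap that your outline does not close. The paper never attempts a direct transfer of symbolic extensions between $\phi_t$ and the return map $\bar T$; instead it passes entirely through the \emph{superenvelope} characterization. After developing entropy structures for flows (Section~3.2), it proves that the existence of a symbolic extension is equivalent to the existence of a finite affine superenvelope, both for discrete systems (Theorem~\ref{sexy}, Boyle--Downarowicz) and for flows with the small flow boundary property (Theorem~\ref{SEXX}). The link between $\Phi$ and $\phi_t$ is then purely at the superenvelope level: Lemma~\ref{bz} shows that $E\mapsto tE\circ\theta_t$ sends superenvelopes of the flow to superenvelopes of $\phi_t$, with $E\mapsto \frac{1}{t}E\circ i_t$ as a one-sided inverse. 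Hence a finite superenvelope exists for one if and only if for the other, and the theorem follows without ever confronting the synchronization problem you describe. The uniform generator case is handled identically via Theorems~\ref{durud} and~\ref{Gener}, which add the periodic-growth condition.

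Your proposed direct argument would need to produce a symbolic extension of the zero-dimensional base map $\bar T$ from a symbolic extension of $\phi_t$. Even after lifting along the principal extension $\pi$ (which the discrete theory permits), a symbolic extension of the time-$t$ map of a suspension flow does not in any evident way induce a symbolic extension of the base: the return times to $\bar Z$ are real-valued and incommensurate with $t$, so ``reading off at return times'' does not yield a subshift over a finite alphabet. This is exactly the obstruction the superenvelope machinery is designed to bypass. Your converse direction (flow $\Rightarrow$ $\phi_t$) is essentially sound, since the time-$t$ map of a symbolic suspension flow is asymptotically $h$-expansive (Lemma~\ref{susp}) and hence has a principal symbolic extension; and your orbit-equivalence argument matches the paper's (Theorem~\ref{equi}), which indeed exploits that the zero-dimensional base systems for orbit-equivalent flows are topologically conjugate.
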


 For a  $C^\infty$ smooth aperiodic regular flow on a compact manifold, the time-$t$ maps is also $C^\infty$ smooth and thus  admits a symbolic extension. Therefore the flow admits a uniform generator. In fact  the symbolic extension (associated to the uniform generator) is in this case an isomorphic extension (see Section 2.3 for the definitions). 
 
 \begin{cor}
 Any $C^\infty$ smooth aperiodic flow admits an isomorphic symbolic extension.
 \end{cor}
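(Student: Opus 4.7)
The plan is to verify the hypotheses of Theorem \ref{intr} for a $C^\infty$ aperiodic flow and then strengthen the resulting extension to an isomorphic one using the additional entropy-theoretic properties of smooth maps.

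First I would check the small flow boundary property. Since the flow is aperiodic there are no periodic orbits at all, so for every $T>0$ the set of periodic orbits of period at most $T$ is empty (in particular finite). The introduction states that any $C^2$ flow satisfying this finiteness condition has the small flow boundary property; this applies \emph{a fortiori} to a $C^\infty$ flow.

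Second I would verify that the time-$t$ map admits a symbolic extension for every $t\neq 0$. Indeed, if $\Phi$ is $C^\infty$ on a compact manifold then each $\phi_t$ is a $C^\infty$ diffeomorphism, and by Buzzi's theorem such a map is asymptotically $h$-expansive. By the Boyle--Downarowicz theory of symbolic extensions this implies that $\phi_t$ admits a symbolic extension, and in fact a \emph{principal} one (i.e.\ the extension preserves measure-theoretic entropy on every invariant measure). Together with the small flow boundary property, the hypotheses of Theorem \ref{intr} are satisfied, and we obtain a uniform generator for the flow, i.e.\ a symbolic extension whose factor map is a Borel embedding.

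The main obstacle is passing from a uniform generator to an \emph{isomorphic} symbolic extension, since the statement of Theorem \ref{intr} by itself only yields the Borel part. For this we must use not merely the existence but the principality of Buzzi's symbolic extension of the time-$t$ map. The expected strategy is to show that the construction behind Theorem \ref{intr}—which builds the symbolic extension of the flow from a symbolic extension of a discrete factor system, via a suspension over the zero-dimensional model coming from the small flow boundary property—propagates principality: a principal extension of the discrete system yields a principal extension of the flow. Combining principality (entropy preservation on every invariant measure) with the Borel embedding provided by the uniform generator then gives an isomorphic extension, which is precisely the conclusion of the corollary.
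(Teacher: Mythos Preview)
Your first two steps are correct and match the paper: aperiodicity gives the small flow boundary property via Proposition~\ref{SMB}, and Buzzi's theorem makes each $\phi_t$ asymptotically $h$-expansive, hence a principal symbolic extension of $\phi_t$ exists.

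The gap is in your final step. You assert that ``principality combined with the Borel embedding then gives an isomorphic extension,'' but this implication is false in general. A trivial counterexample: if $(X,\Phi)$ is any system and $Y$ consists of two disjoint copies of $X$ with the obvious projection, the extension is principal and admits a Borel section (into the first copy), yet the induced map on invariant measures is two-to-one, so the extension is not isomorphic. Principality constrains entropy but does not force uniqueness of lifts of measures; a Borel embedding only guarantees surjectivity of $\pi_*$, not injectivity.

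The paper avoids this by using a sharper dichotomy than the one in Theorem~\ref{intr}. From Proposition~\ref{zut}, a \emph{uniform generator} corresponds to a symbolic extension with an embedding, but an \emph{essential} uniform generator corresponds to a \emph{strongly isomorphic} symbolic extension (one-to-one over a full set), which in particular is isomorphic. To obtain an essential uniform generator one appeals to Theorem~\ref{Gener}: the flow admits one if and only if it is \emph{asymptotically expansive}, meaning asymptotically $h$-expansive together with $u_1^\Phi=0$. The first condition follows from Buzzi as you noted; the second is automatic because aperiodicity makes every $p_k^\Phi$ identically zero (there are no periodic measures). Thus the route is: asymptotic expansiveness $\Rightarrow$ essential uniform generator $\Rightarrow$ strongly isomorphic symbolic extension $\Rightarrow$ isomorphic symbolic extension. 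Your argument needs to pass through the essential/strongly isomorphic level rather than attempting to combine principality with a bare embedding.
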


For the time $t$-map $\phi_t$, $t\neq 0$, of a topological flow $(X,\Phi)$ we define a weak notion of uniform generators as follows. For $\alpha>0$ a partition $P$ is said to be \emph{an $\alpha$-uniform generator of $\phi_t$} when 
$\sup_{y\in P_{\phi_t}^{[-n,n]}(x)}d\left(y, \phi_{[-\alpha,\alpha]}(x)\right)$ with $\phi_{[-\alpha,\alpha]}(x)=\{\phi_s(x), \ |s|\leq \alpha\}$ goes to zero uniformly in $x\in X$. 

\begin{theo}\label{deux}
Let $(X,\Phi)$ be a regular aperiodic flow  admitting a uniform generator. Then for any $t\neq 0$ small enough (depending only on the topological entropy of $\Phi$) and  for any $\alpha>0$ there  is a $3$-partition of a Borel global cross-section such that the associated partition of $X$ in towers  is an $\alpha$-uniform generator of $\phi_t$. 
\end{theo}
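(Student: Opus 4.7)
The plan is to encode the flow as a Borel-embedded suspension, build a Borel cross-section with thin two-valued towers, and transfer a $3$-generator of the first-return map into a tower partition of $X$.

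First, by the definition of uniform generator recalled in the introduction, the hypothesis provides a Borel embedding of $(X,\Phi)$ into a suspension flow $\Sigma_r$ over a subshift $(\Sigma,\sigma)$ with positive continuous roof $r$. Applying Slutsky's Borel two-valued cross-section theorem \cite{KS} (the Borel counterpart of Rudolph \cite{rud}) to this aperiodic flow, I would produce, for any prescribed $\varepsilon>0$, a Borel global cross-section $S\subset X$ whose return-time function $t_S$ takes exactly two values $a<b\le\varepsilon$. Choosing $\varepsilon<\alpha$ guarantees tower heights below $\alpha$. Choosing also $\varepsilon<\log 3/\htop(\Phi)$ yields, via the suspension estimate $\htop(T_S)\le b\cdot\htop(\Phi)$ obtained from Abramov's formula and the variational principle, that the first-return map $T_S$ has topological entropy strictly less than $\log 3$. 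The admissible range of $t$ will be $0<t<\log 3/\htop(\Phi)$, depending only on topological entropy, as required.

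Second, $T_S$ inherits from $\Sigma$ an aperiodic zero-dimensional Borel structure. Since $\htop(T_S)<\log 3$, the discrete uniform-generator theory of \cite{bdo} (in its aperiodic form, building on Hochman \cite{Ho,Hoc}) produces a Borel $3$-partition $Q=\{Q_1,Q_2,Q_3\}$ of $S$ with $\sup_{s'\in Q_{T_S}^{[-n,n]}(s)}d(s,s')\to 0$ uniformly in $s\in S$, that is, a uniform $3$-generator of $T_S$. The candidate partition of $X$ is the tower partition $P=\{P_1,P_2,P_3\}$ with $P_i=\bigcup_{s\in Q_i}\{\phi_\tau(s):0\le\tau<t_S(s)\}$. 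The $\alpha$-uniform generator property of $P$ for $\phi_t$ is then verified as follows: for $x=\phi_\tau(s)$ the $\phi_t$-name $(P(\phi_{kt}x))_{k\in\mathbb{Z}}$ records, at resolution $t$, the $Q$-labels of the successive return points $T_S^j s$; using that $t_S$ takes only two values, one can reconstruct from the $\phi_t$-name both the sequence of $Q$-labels $Q(T_S^j s)$ and the sequence of return-time values on a window growing with $n$. The uniform-generator property of $Q$ then forces the base points of any two points sharing the $\phi_t$-name to converge in $S$, and since tower heights are below $\alpha$, the two points themselves converge to the $\alpha$-time neighborhood $\phi_{[-\alpha,\alpha]}(x)$ of each other, as required.

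The main obstacle lies in the verification step when $\alpha$ is much smaller than $t$: individual $\phi_t$-steps may then skip several crossings of $S$, so recovering the full $T_S$-$Q$-name of a base point from its sparsely sampled values requires combining the two-valued structure of $t_S$ with a marker-style synchronization coded into $Q$. Making this reconstruction injective modulo an $\alpha$-time ambiguity, uniformly in $x\in X$ and with only three symbols, is where the bulk of the technical work will sit, and it will likely rest on a careful choice of $t$ rationally independent of $a$ and $b$ to exclude resonances.
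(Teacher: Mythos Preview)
Your proposal contains a genuine structural gap and diverges from the paper's approach in a way that makes the final ``reconstruction'' step essentially unworkable.

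The central problem is your choice of tower heights. You force the return times $a<b\le\varepsilon<\alpha$, so that every tower has height below $\alpha$. This buys you the $\alpha$-orbit containment for free, but it destroys the synchronization between $\phi_t$ and $T_S$: since $t$ is fixed (independently of $\alpha$) while $a,b<\alpha$, a single application of $\phi_t$ crosses on the order of $t/\alpha$ levels of the tower stack. The $\phi_t$-name of $x$ therefore samples the $Q$-labels of only one out of roughly $t/\alpha$ return points, and with only three symbols there is no room to encode the skipped labels, their order, or the return-time pattern. Your suggestion that rational independence of $t$ from $a,b$ will save this is not enough: rational independence controls equidistribution, not the information rate, and here you are simply losing information at a rate proportional to $t/\alpha$, which is unbounded as $\alpha\to 0$.

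There is also a type mismatch in your second step. Slutsky's theorem gives a Borel cross-section $S$; the induced system $(S,T_S)$ is then a Borel system, not a compact topological one. The uniform-generator results of \cite{bdo} you invoke are topological and do not apply directly. Hochman's results give a Borel generator, not a uniform one, and uniformity is exactly what is needed to push the estimate through $\pi$ at the end.

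The paper's approach is essentially the opposite of yours: it works entirely inside the symbolic extension $(Y_r,\Phi_r)$ (where everything is topological and the cross-sections are Poincar\'e), and chooses tower heights close to $t$, not close to $0$. Concretely, one sets $p=t$ and picks $q$ rationally independent with $q-p=\alpha$; a Rudolph-type recoding (Lemma~\ref{dep}) puts the roof into $\{p\}\cup[q,q+\delta]$ with a controlled marker structure. The $3$-partition is then $\{\mathfrak P,\mathfrak Q,\phi_\alpha\mathfrak Q\}$ on the enlarged Poincar\'e section $S\cup\phi_\alpha\mathfrak Q$: the towers over $\mathfrak P$ have height exactly $t$, and the tower over $\mathfrak Q$ is split at height $\alpha$. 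Because each $\phi_t$-step advances essentially one symbol, the $\phi_t$-name reads off the subshift name up to the deletion of at most half the letters, and the marker blocks allow one to resynchronize; the only residual ambiguity is the $\alpha$-height offset, which is precisely the content of ``$\alpha$-uniform''. Finally one pulls the clopen $3$-partition back through the Borel embedding $\psi$.
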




\section{Zero-dimensional suspension flows as models}
Following R.Bowen and P.Walters we  build from  cross-sections an extension  given by a suspension flow over a zero-dimensional system. When the  cross-sections have small flow boundaries, this extension is isomorphic.
\subsection{Generalities on topological flows}
A pair $(X,\Phi)$ is called a \textit{topological flow}, when $(X,d)$ is a compact metric space and $\Phi=X\times \mathbb{R} \rightarrow X$ is a continuous flow on $X$, i.e. $\Phi$ is continuous, $x\mapsto \Phi(x,0)$ is the identity map on $X$ and $\Phi( \Phi(x,t),s)=\Phi(x,t+s)$ for all $t,s\in \mathbb{R}, \,x\in X$. For $t\in \mathbb{R}$ we let $\phi_t$ be the homeomorphism of $X$ given by $x\mapsto \Phi(x,t)$ and we will denote the flow by $\Phi=(\phi_t)_{t\in \mathbb{R}}$.  The flow is said to be \textit{singular} when there is (at least) a  point $x\in X$
fixed by the flow, i.e. $\phi_t(x)=x$ for all  $t$. Otherwise the flow is said to be \textit{regular}.  In the present paper the flow is always assumed to be regular.

\subsubsection{Cross-sections}
 Following the pioneering works of Poincar\'e, we consider the return maps to cross-sections in order  to study the flow.

\begin{defi}
 Let $(X,\Phi)$ be a topological flow. A \textit{cross-section} $S$ of time $\eta>0$  is a  subset $S$ of $X$ such that the restriction of  $\Phi:(x,t)\mapsto \phi_t(x)$ to  $S\times [-\eta,\eta]$ is one-to-one. 
\end{defi}
Any  subset of a cross-section is itself a cross-section. The cross-section $S$ is  \textit{global} when there is $\xi>0$ with $\Phi(S\times [-\xi,\xi])=X$. Obviously any cross-section has an empty interior. Moreover any Borel cross-section has zero measure for any probability Borel measure invariant by the flow. \\

For an interval $I$ of $\mathbb{R}$ and a subset $E$ of $X$ we  denote by $\phi_I(E)$ the subset $\Phi(E\times I)=\{\phi_t(x), \ t\in I \text{ and }x\in E\}$.  Let $S$ be a cross-section of time $\eta$. For $0<\zeta\leq \eta$   the set $S_\zeta:=\phi_{[-\zeta, \zeta]}(S)$ is called the \textit{$\zeta$-cylinder} associated to $S$. For a subset $E$ of $X$ we denote the interior of $E$   by $\Int (E)$, its closure by $\overline{E}$  and its boundary by $\partial E$.   A cross-section $S$ of time $\eta$ is said \textit{weakly extendable}  when its closure $\overline{S}$ is  itself a  cross-section of time $\eta$. To shorten the notations we will write \textit{w.e.c.} for  weakly extendable cross-section. Again any subset of a w.e.c. is itself a w.e.c. and any closed cross-section is obviously a w.e.c.. We recall below a notion of interior  adapted to w.e.c.'s (introduced in \cite{BW} for closed cross-sections).  

\begin{defi}
Let $S$ be a w.e.c. of time $\eta$ and let $0<\zeta\leq \eta$. The \textit{flow interior} $\Int^\Phi(S)$  of $S$ is defined as follows 
\[\Int^\Phi (S):=\Int(S_{\zeta})\cap S.\]
We also define  the flow boundary of $S$ as $\partial^\Phi S=\overline{S}\setminus \Int^\Phi (S)$ and the \textit{flow boundary of the cylinder } $S_{\zeta}$ as $\partial^\Phi S_\zeta:=\phi_{[-\zeta, \zeta]}(\partial^\Phi S)$.  
\end{defi}

The flow boundary $\partial^{\Phi}S$ of a w.e.c. is always closed since  it may be written as $\partial^{\Phi}S=\overline{S}\setminus \Int(S_{\zeta})$.  The definition of $\Int^\Phi S$ does not depend on $0<\zeta\leq \eta$.  Indeed if $x\in\Int (S_\eta)\setminus \Int (S_\zeta)$ for some $0<\zeta< \eta$  then there is a sequence $(x_n)_n$ in the complement of $S_\zeta$ converging to $x$. As $x$ belongs to $\Int (S_\eta)$ so does $x_n$ for $n$ large enough, in particular $x_n=\phi_{t_n}(y_n)$ for some  $y_n\in S$ and $t_n\in ]\zeta,  \eta]$. By extracting subsequences there exist $y\in \overline{S}$ and $t\in [\zeta, \eta]$ with $x=\phi_t(y)$. Therefore, $x$ does not lie in $S$, because  the closure  $\overline{S}$ of $S$ is a cross-section of time $\eta$. \\

 Consider a flow associated to a smooth  nonvanishing vector field $\mathsf X$ on a compact $(d+1)$-manifold $M$. Then any embedded $d$-disc  transverse to $\mathsf X$ defines a closed cross-section. We denote respectively  by $\mathsf B_d $ and $\mathsf S_d$ the unit ball and the unit sphere in $\mathbb{R}^d$ for the Euclidean norm. If we let $h:\mathsf B_d\rightarrow M $ denote a smooth embedding  satisfying $h(\mathsf B_d)=S$ then $\partial^\Phi S$ is the set $h(\mathsf S_d)$. \\
 
  For a topological flow $(X,\Phi)$ any point belongs to the flow interior of a closed cross-section with arbitrarily small diameter (see \cite{Wi} p. 270).

\subsubsection{Properties of the flow boundary}

The interior boundary of a w.e.c. may be characterized as follows :

\begin{lemma}\label{comp} Let $S$ be a w.e.c. of time $\eta$. The restriction of $\Phi$ to $\Int^\Phi (S)\times ]-\eta,\eta[$ defines a homeomorphism onto $\Int(S_\eta)$. In particular $\Int^\Phi (\Int^\Phi (S))=\Int^\Phi (S)$. Moreover the boundary $\partial S_\zeta$ of $S_\zeta$ (in $X$) is the union of $\partial^\Phi S_\zeta$, $\phi_{-\zeta}(\overline{S})$ and $\phi_{\zeta}(\overline{S})$ for $0\leq  \zeta\leq \eta$. 
\end{lemma}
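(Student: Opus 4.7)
The plan is to exploit the fact that $\psi\colon\overline{S}\times[-\eta,\eta]\to X$, $(x,t)\mapsto\phi_t(x)$, is a continuous injection (by the w.e.c.\ hypothesis) from a compact domain, hence a homeomorphism onto its image $\overline{S}_\eta:=\phi_{[-\eta,\eta]}(\overline{S})$. An elementary approximation shows $\overline{S_\eta}=\overline{S}_\eta$, so every $y\in\overline{S_\eta}$ admits a \emph{unique} representation $y=\phi_t(x)$ with $x\in\overline{S}$ and $t\in[-\eta,\eta]$. The target statement reduces to the set equality $\Phi(\Int^{\Phi}(S)\times\,]-\eta,\eta[)=\Int(S_\eta)$, the homeomorphism property then following by restricting $\psi$.

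The forward inclusion is easy: for $x\in\Int^{\Phi}(S)$ and $t\in\,]-\eta,\eta[$, take $\zeta=(\eta-|t|)/2>0$ and use $\Int^\Phi(S)\subset\Int(S_\zeta)$ to produce an open neighbourhood $\phi_t(U)\subset\phi_{[t-\zeta,t+\zeta]}(S)\subset S_\eta$ of $\phi_t(x)$. The main content is surjectivity. Given $y\in\Int(S_\eta)$, uniqueness yields $y=\phi_{t_0}(x_0)$ with $x_0\in S$ and $t_0\in[-\eta,\eta]$. The case $t_0=\eta$ (the case $t_0=-\eta$ is symmetric) I rule out by contradiction: for small $\varepsilon>0$ the point $\phi_\varepsilon(y)=\phi_{\eta+\varepsilon}(x_0)$ still lies in $\Int(S_\eta)$, so $\phi_\varepsilon(y)=\phi_{s_\varepsilon}(x_\varepsilon)$ with $x_\varepsilon\in S$, $s_\varepsilon\in[-\eta,\eta]$, giving $x_\varepsilon=\phi_{r_\varepsilon}(x_0)$ where $r_\varepsilon:=\eta+\varepsilon-s_\varepsilon\in[\varepsilon,2\eta+\varepsilon]$. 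If $r_\varepsilon\leq 2\eta$, the distinct pairs $(x_0,r_\varepsilon/2)$ and $(x_\varepsilon,-r_\varepsilon/2)$ both lie in $\overline{S}\times[-\eta,\eta]$ and share image $\phi_{r_\varepsilon/2}(x_0)$, contradicting the injectivity of $\psi$; otherwise $s_\varepsilon<\varepsilon-\eta$, and letting $\varepsilon\to 0$ along a subsequence gives $x_\varepsilon\to\phi_{2\eta}(x_0)\in\overline{S}$, leading to a contradiction with injectivity via the distinct pairs $(x_0,\eta)$ and $(\phi_{2\eta}(x_0),-\eta)$. To then obtain $x_0\in\Int^{\Phi}(S)$, I use continuity of $\psi^{-1}$: with $\zeta:=(\eta-|t_0|)/2$, shrinking an open neighbourhood $W$ of $y$ in $\Int(S_\eta)$ so that every $z=\phi_{t_z}(x_z)\in W$ satisfies $|t_z-t_0|<\zeta$, the set $\phi_{-t_0}(W)\ni x_0$ is open and contained in $\phi_{]-\zeta,\zeta[}(S)\subset S_\zeta$, whence $x_0\in\Int(S_\zeta)\cap S=\Int^\Phi(S)$.

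From this homeomorphism the remaining statements follow quickly. Since $\overline{\Int^\Phi(S)}\subset\overline{S}$, $\Int^\Phi(S)$ is itself a w.e.c.\ of time $\eta$; any open $W\subset\Int(S_\eta)$ containing $x\in\Int^\Phi(S)$ satisfies $W\subset(\Int^\Phi(S))_\eta$ by the first part, giving $\Int^\Phi(S)\subset\Int^\Phi(\Int^\Phi(S))$ (the reverse inclusion is obvious). For the boundary of $S_\zeta$, combining $\overline{S_\zeta}=\phi_{[-\zeta,\zeta]}(\overline{S})$ with (the first part applied with time $\zeta$) $\Int(S_\zeta)=\phi_{]-\zeta,\zeta[}(\Int^\Phi(S))$, uniqueness of the representation shows that $\partial S_\zeta$ consists exactly of those $\phi_t(x)$ with $x\in\overline{S}$, $t\in[-\zeta,\zeta]$ such that $x\in\partial^\Phi S$ or $t\in\{\pm\zeta\}$, giving the claimed decomposition. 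The main obstacle is the surjectivity step, where the cross-section hypothesis must be invoked twice---directly for a flow segment of length at most $2\eta$, and then via a limiting argument producing a would-be period-$2\eta$ point in $\overline{S}$.
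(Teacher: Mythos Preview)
Your proof is correct and follows essentially the same strategy as the paper: both reduce to the set equality $\Phi(\Int^\Phi(S)\times]-\eta,\eta[)=\Int(S_\eta)$, prove the forward inclusion by translating a small cylinder neighbourhood, and for surjectivity first rule out the boundary case $|t_0|=\eta$ via a perturbation/limit argument exploiting injectivity on $\overline{S}\times[-\eta,\eta]$, then show $x_0\in\Int^\Phi(S)$. Your treatment of this last step (via continuity of $\psi^{-1}$) is slightly cleaner than the paper's direct set manipulation, and your case split $r_\varepsilon\le 2\eta$ versus $r_\varepsilon>2\eta$ is a bit more explicit than the paper's version, but these are purely presentational differences.
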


\begin{proof}As the restriction of $\Phi$ to $\overline{S}\times [-\eta,\eta]$ is a homeomorphism onto its image it is enough  to check $\Phi(\Int^\Phi (S)\times ]-\eta,\eta[)=\Int(S_\eta)$.

Take $x\in \Int(S_\eta)$. Let $y\in S$ with $\phi_{\zeta}(y)=x$ for some $\zeta$  with $|\zeta|\leq \eta$. Necessarily $|\zeta|<\eta$. If $\zeta=\eta$ we would have $\phi_t(y)\in \phi_{[-\eta,0]}S\subset  S_{\eta}$  for $t>\eta$ close to $\eta$. Indeed for such $t$ we have $\phi_t(y)\notin \phi_{[0,\eta]}S$ by injectivity of  $\Phi$   on $S\times [0,2\eta]$.
Then any limit of $\phi_t(y)$ when $t$ goes to $\eta$ should belong to $\phi_{[-\eta,0]}\overline{S}$ but by continuity of the flow such a limit is necessarily equal to $\phi_{\eta}(y)$ contradicting the injectivity of $\Phi$ on $\overline{S}\times [-\eta, \eta]$. We argue similarly for $\zeta=-\eta$. It is thus enough to show $y\in \Int^{\phi}(S)$. Without loss of generality we can assume $\zeta<0$. Then we have $y\in S\cap \phi_{-\zeta}(\Int(S_\eta))=S\cap \Int(\phi_{-\zeta}(S_\eta))= S\cap \Int (S_{\eta+\zeta})$ where the last equality follows again from the injectivity of $\Phi$ on $S\times [0,2\eta]$.

 Conversely let $y\in \Int^\Phi (S)$ and $\zeta \in  ]-\eta,\eta[$. We can assume $\zeta\geq 0$.  Take $0<\xi<\eta-\zeta$. 
By definition of the flow interior the point $y$ belongs to $\Int(S_\xi)$ so that $x=\phi_{\zeta}(y)$ is in $\phi_\zeta(\Int(S_\xi))\subset \Int(S_\eta)$. Thus $\Phi(\Int^\Phi (S)\times ]-\eta,\eta[)=\Int(S_\eta)$. In particular 
$\Int((\Int^{\Phi}S)_\eta)= \Int (S_\eta)$, and  by taking the intersection with $\Int^{\Phi}(S)$ on both sides we get $\Int^\Phi (\Int^\Phi (S))=\Int^\Phi (S)$.

Finally we have for $0\leq  \zeta\leq \eta$ \begin{eqnarray*}
\partial S_\zeta& =& \overline{S_\zeta}\setminus \Int(S_\zeta),\\
&=&\Phi\left(\left(\overline{S}\times [-\zeta,\zeta]\right)\setminus \left(\Int^\Phi (S)\times ]-\zeta,\zeta[\right) \right),\\
&=&\partial^\Phi S_\zeta\cup \phi_{-\zeta}(\overline{S})\cup \phi_{\zeta}(\overline{S}).
\end{eqnarray*}

\end{proof}

We give  below some topological properties of the flow boundary and the flow interior of a subset $A$ of a w.e.c. $B$ with respect to   the induced topology on $B$.

\begin{lemma}\label{zer}Let $B$ be a w.e.c. of time $\eta$ and let $A\subset B$.
\begin{enumerate}
 \item  $\Int^\Phi( A)$ is open in $B$,
 \item  $\partial_B A\subset\partial^{\Phi}A\subset \partial_BA\cup \partial^{\Phi}B$ with $\partial_BA$ being the frontier of $A$ in $B$.
\end{enumerate} 
\end{lemma}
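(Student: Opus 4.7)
The plan for \textbf{(1)} is to establish the stronger identity
\[
\Int^{\Phi}(A)\;=\;\Int(A_{\zeta})\cap A\;=\;\Int(A_{\zeta})\cap B,
\]
which immediately presents $\Int^{\Phi}(A)$ as the intersection of $B$ with an open set of $X$. The first equality is the definition; the second uses that $A\subset B$ for one inclusion and the injectivity of $\Phi$ on $B\times[-\eta,\eta]$ for the other. Indeed, if $x\in \Int(A_{\zeta})\cap B$, then $x=\phi_{t}(a)$ for some $a\in A\subset B$ with $|t|\leq \zeta\leq \eta$; applying injectivity to $\phi_{0}(x)=\phi_{t}(a)$ with both $x,a\in B$ forces $t=0$, hence $x=a\in A$.

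For the first inclusion in \textbf{(2)}, $\partial_{B}A\subset \partial^{\Phi}A$, I exploit (1): since $\Int^{\Phi}(A)=\Int(A_{\zeta})\cap B$ is open in $B$ and is contained in $A$, it lies in $\Int_{B}(A)$. Combined with $\overline{A}^{B}=\overline{A}\cap B\subset \overline{A}$, this gives
\[
\partial_{B}A \;=\; \overline{A}^{B}\setminus \Int_{B}(A)\;\subset\; \overline{A}\setminus \Int^{\Phi}(A)\;=\;\partial^{\Phi}A.
\]

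The real content is the second inclusion $\partial^{\Phi}A\subset \partial_{B}A\cup \partial^{\Phi}B$, which I argue by contradiction: take $x\in \partial^{\Phi}A$ with $x\notin \partial^{\Phi}B$ and show $x\in \partial_{B}A$. Since $x\in \overline{A}\subset \overline{B}$, the hypothesis forces $x\in \Int^{\Phi}(B)$, in particular $x\in B$ and $x\in \overline{A}\cap B=\overline{A}^{B}$. Assume now for contradiction that $x\in \Int_{B}(A)$, so there is an open $U\subset X$ with $x\in U\cap B\subset A$. By Lemma~\ref{comp}, the map $\Phi\colon \Int^{\Phi}(B)\times\,]-\eta,\eta[\,\to\, \Int(B_{\eta})$ is a homeomorphism. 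Therefore the image $\phi_{]-\zeta,\zeta[}\bigl(U\cap \Int^{\Phi}(B)\bigr)$ is an open neighbourhood of $x$ in $X$, and it is contained in $\phi_{]-\zeta,\zeta[}(U\cap B)\subset A_{\zeta}$ since $U\cap B\subset A$. Hence $x\in \Int(A_{\zeta})\cap A=\Int^{\Phi}(A)$, contradicting $x\in \partial^{\Phi}A$.

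The main obstacle is this last step: one must upgrade the mere openness of $U\cap B$ inside the subspace $B$ to the openness of a genuine $X$-neighbourhood of $x$ contained in the saturation $A_{\zeta}$. This is precisely where Lemma~\ref{comp} is indispensable, since it provides the product-box structure of the flow around the flow-interior point $x\in \Int^{\Phi}(B)$ that turns a relatively open piece of the cross-section into an open tube in the ambient manifold.
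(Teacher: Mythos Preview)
Your proof is correct and follows essentially the same approach as the paper: both establish $\Int^{\Phi}(A)=\Int(A_{\zeta})\cap B$ via injectivity of $\Phi$ on $\overline{B}\times[-\eta,\eta]$ for part (1), and both argue part (2) by contradiction using Lemma~\ref{comp} to turn a $B$-open neighborhood of $x\in\Int^{\Phi}(B)$ into an $X$-open tube inside $A_{\zeta}$. The only difference is cosmetic: the paper splits the negation of $x\in\partial_{B}A$ into the two cases $O\subset A$ and $O\cap A=\emptyset$, whereas you observe up front that $x\in\overline{A}\cap B=\overline{A}^{B}$ rules out the second case, so only $x\in\Int_{B}(A)$ needs to be refuted.
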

\begin{proof} 
\begin{enumerate}
\item  By definition  we have $\Int^{\Phi}A=A \cap \Int(A_{\eta})$. As $B$ is a cross-section of time $\eta$ containing $A$, then $\Int^{\Phi}(A)$  coincides with $B\cap \Int(A_{\eta})$. The set $\Int(A_{\eta})$ being open in $X$, the set $\Int^{\Phi}(A)$ is open in $B$ for the induced topology. 
\item The first inclusion follows directly from (1). We  show  the second one. Let $x\in \partial^{\Phi}A\setminus \partial^{\Phi}B\subset \Int^{\Phi}(B)$. If $x$ did not belong $\partial_BA$ then there would be an open subset $x\in O\subset \Int^{\Phi}(B)$ of $B$  with either $O\cap A=\emptyset$ or $O\subset A$. By Lemma \ref{comp} the set $\Phi_{]-\eta,\eta[}O$ is an open neighborhood of $x$ in $X$. In the first case this open neighborhood lies in the complement of $A$ contradicting  $x\in \overline{A}$, whereas in the second case it lies in the interior of $A_\eta$ contradicting $x\notin \Int^{\Phi}(A)$.  
\end{enumerate}
\end{proof}

The flow boundary behaves with respect to intersection, union and complement in a similar way to the usual boundary. 
\begin{lemma}\label{ter}Let $A$ and $B$ be w.e.c.'s of time $\eta$. 
\begin{enumerate}
\item  When $A\cup B$ defines a w.e.c. of time $\eta$, we have  $$\partial^{\Phi}(A\cup B)\subset \partial^{\Phi}A\cup  \partial^{\Phi}B.$$ If $A$ and $B$ are disjoint and closed, then the equality holds. 
\item $$\partial^{\Phi}(A\cap B)\subset \partial^{\Phi}A\cup\partial^{\Phi}B.$$
\item  $$\partial^{\Phi}(B\setminus A)\subset \partial^{\Phi}B\cup \partial^{\Phi}A.$$
\end{enumerate}
\end{lemma}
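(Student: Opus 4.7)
The strategy is to rewrite the flow boundary set-theoretically as
$$\partial^{\Phi}S=(\overline{S}\setminus S)\cup\bigl(\overline{S}\setminus\Int(S_{\eta})\bigr),$$
so that a point $x\in\partial^{\Phi}S$ either misses $S$ itself or misses the interior of its cylinder $S_{\eta}$. Each of the three inclusions can then be chased through the elementary relations satisfied by closure, interior, and the cylinder operation $S\mapsto S_{\eta}$ under $\cup$, $\cap$, and $\setminus$.

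For (1), I combine $\overline{A\cup B}=\overline{A}\cup\overline{B}$, $(A\cup B)_{\eta}=A_{\eta}\cup B_{\eta}$, and the trivial inclusion $\Int(A_{\eta})\cup\Int(B_{\eta})\subset\Int(A_{\eta}\cup B_{\eta})$ to route $x\in\partial^{\Phi}(A\cup B)$ into $\partial^{\Phi}A\cup\partial^{\Phi}B$. For the equality when $A,B$ are disjoint and closed with $A\cup B$ a w.e.c.\ of time $\eta$, the key observation is that the injectivity of $\Phi$ on $(A\cup B)\times[-\eta,\eta]$ forces $A_{\eta}\cap B_{\eta}=\emptyset$: any coincidence $\phi_t(a)=\phi_s(b)$ with $a\in A$, $b\in B$, $|t|,|s|\leq\eta$ would give $a=b\in A\cap B=\emptyset$. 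Given $x\in\partial^{\Phi}A$, the compactness of $B_{\eta}$ then lets me trim any hypothetical neighborhood of $x$ in $\Int(A_{\eta}\cup B_{\eta})$ down to a neighborhood in $A_{\eta}$, contradicting $x\notin\Int(A_{\eta})$.

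Parts (2) and (3) run along the same lines, using instead $\overline{A\cap B}\subset\overline{A}\cap\overline{B}$, $\overline{B\setminus A}\subset\overline{B}$, $(A\cap B)_{\eta}\subset A_{\eta}\cap B_{\eta}$, and $(B\setminus A)_{\eta}\subset B_{\eta}$. A point $x$ in the left-hand boundary is first tested against membership in $A$ and $B$; if either fails, $x$ drops into $\overline{A}\setminus A$ or $\overline{B}\setminus B$ and hence into $\partial^{\Phi}A$ or $\partial^{\Phi}B$. Otherwise one is in the ``interior'' case, where $x\in A\cap B$ (resp.\ $x\in B\setminus A$) and $x$ lies outside the interior of the corresponding cylinder; here the local product structure of Lemma~\ref{comp}, which promotes $(A\cap B)_{\eta}=A_{\eta}\cap B_{\eta}$ as soon as $\Phi$ is injective on $(A\cup B)\times[-\eta,\eta]$, lets me upgrade $x\notin\Int((A\cap B)_{\eta})$ to $x\notin\Int(A_{\eta})\cap\Int(B_{\eta})$ and thereby land $x$ in $\partial^{\Phi}A$ or $\partial^{\Phi}B$.

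The main obstacle is exactly this last step: the cylinder containment goes the wrong way for the intersection, so extracting the failure relative to $A$ or $B$ individually from the failure relative to $A\cap B$ requires the finer equality of cylinders, which in turn rests on the flow injectivity packaged by Lemma~\ref{comp}. Once this is in place, the rest of (2) and (3) is routine bookkeeping identical in structure to (1).
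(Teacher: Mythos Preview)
Your treatment of (1) matches the paper's. For (2) and (3), however, you invoke injectivity of $\Phi$ on $(A\cup B)\times[-\eta,\eta]$ --- equivalently, that $A\cup B$ is a cross-section of time $\eta$ --- which is \emph{not} among the stated hypotheses. This is a gap relative to the statement as written, but in fact the statement is false without it: with the vertical flow on $\mathbb{R}^3$, take $A=\{(x,y,0):x^2+y^2\le 1\}$ and $B=\{(x,y,x^2+y^2):x^2+y^2\le 1\}$; then $A\cap B=\{0\}$ gives $\partial^\Phi(A\cap B)=\{0\}$, yet $0\in\Int^\Phi(A)\cap\Int^\Phi(B)$, so $0\notin\partial^\Phi A\cup\partial^\Phi B$. (The same example violates (3) with $B\setminus A$.) The paper's own argument for (2) is defective as written --- the inclusion $\Int^\Phi(A\cap B)\subset\Int^\Phi(A)\cap\Int^\Phi(B)$ points the wrong way for the desired conclusion --- and its proof of (3) tacitly uses that $\Int^\Phi(A)$ is open in $\overline{B}$, which again requires exactly the missing hypothesis. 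In every application of the lemma in the paper, $A$ and $B$ sit inside a common w.e.c., so the hypothesis holds and your equality $(A\cap B)_\eta=A_\eta\cap B_\eta$ is valid there; you have correctly isolated what is actually needed.

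A smaller point on (3): your case split ``if either membership fails, $x$ drops into $\overline{A}\setminus A$ or $\overline{B}\setminus B$'' does not work here, since $x\in\overline{B\setminus A}$ need not lie in $\overline{A}$. Under the added hypothesis the correct dichotomy is the paper's: assuming $x\notin\partial^\Phi B$ one has $x\in\Int^\Phi(B)$, and then either $x\notin\overline{A}$, in which case $\phi_{]-\eta,\eta[}(\Int^\Phi(B)\setminus\overline{A})$ is an open neighbourhood of $x$ contained in $(B\setminus A)_\eta$ (contradicting $x\notin\Int^\Phi(B\setminus A)$), or $x\in\overline{A}\setminus\Int^\Phi(A)=\partial^\Phi A$, using that $\Int^\Phi(A)$ is open in $\overline{B}$ and disjoint from $\overline{B\setminus A}$.
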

\begin{proof}\begin{enumerate}
\item The inclusion $\Int^{\Phi}(A)\cup\Int^{\Phi}(B)\subset \Int^{\Phi}(A\cup B)$ follows clearly from $ \Int(A_\eta)\cup\Int (B_\eta)\subset\Int( (A\cup B)_\eta)$. Then $\partial^{\Phi}(A\cup B)=\overline{A\cup B}\setminus \Int^{\Phi}(A\cup B) \subset \left(\overline{A}\setminus \Int^{\Phi}(A)\right)\cup\left(\overline{B} \setminus \Int^{\Phi}B\right)$.
When $A$ and $B$ are closed and disjoint,  the cylinder $(A\cup B)_\eta$ is the disjoint union of the closed cylinders $A_\eta$ and $B_\eta$. Thus  we have $\Int( (A\cup B)_\eta)=\Int(A_\eta)\cup\Int (B_\eta)$ and this easily implies the required equalities. 
\item Using cylinders as above we get easily   $\Int^{\Phi}(A\cap B)\subset \Int^{\Phi}(A)\cap \Int^{\Phi}(B)$ so that $\partial^{\Phi}(A\cap B)=\overline{A\cap B}\setminus \Int^{\Phi}(A\cap B) \subset \left(\overline{A}\setminus \Int^{\Phi}(A)\right)\cap\left(\overline{B} \setminus \Int^{\Phi}(B)\right)$.
\item Let $x\in \partial^{\Phi}(B\setminus A)\setminus \partial^{\Phi}B\subset \Int^{\Phi}(B)$.
As $\Int^\Phi (A)\subset A$ is open in $\overline{B}$ then $\overline{B\setminus A}\cap \Int^\Phi (A)=\emptyset$ and  $x\notin \Int^\Phi (A)$. 
If $x$ belongs to $\overline{A}$ then $x\in \partial^{\Phi}A$. If not,  $x$ would belong to $\Int^{\Phi}(B)\setminus \overline{A}$ which is open in $\Int^{\Phi}(B)$. In particular $\phi_{]-\eta,\eta[}(\Int^{\Phi}(B)\setminus \overline{A})$ is open in $X$ according to Lemma \ref{comp}. But this last open set contains $x$ and it is a subset of  $(B\setminus A)_\eta$. Therefore $x$ should belong to $\Int^{\Phi}(B\setminus A)$. Contradiction.

\end{enumerate}

\end{proof}

\subsubsection{Closed  cross-sections}
We focus in this subsection on closed cross-sections and especially on global closed cross-sections. 
\begin{lemma}\label{closed}
\begin{enumerate} Let $S$ be a closed cross-section of time $\eta$.
\item The cylinder $S_\eta$ is closed, therefore $\partial S_\eta\supset \partial^{\Phi}S_\eta$ have  an empty interior.
\item $\Int^{\Phi}(\partial^{\Phi}S)=\emptyset$. In particular $\partial^{\Phi}\partial^{\Phi}\partial^{\Phi}=\partial^{\Phi}\partial^{\Phi}$. 
\item When $S$ is global, then $\overline{\Int^{\Phi}(S)}$ is also a global closed cross-section.
\end{enumerate}
\end{lemma}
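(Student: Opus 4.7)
For (1), my plan is to note that $S_\eta=\Phi(S\times[-\eta,\eta])$ is compact, hence closed, as the continuous image of a compact product. I then invoke the general fact that, for any closed subset $A$ of a topological space, $\partial A=A\setminus\Int(A)$ has empty interior (any open subset of $\partial A$ would sit inside $A$, hence inside $\Int A$, contradicting $\partial A\cap\Int A=\emptyset$). The decomposition $\partial S_\eta=\partial^\Phi S_\eta\cup\phi_{-\eta}(S)\cup\phi_\eta(S)$ supplied by Lemma \ref{comp} (using $\overline{S}=S$) exhibits $\partial^\Phi S_\eta$ as a subset of $\partial S_\eta$, so it inherits empty interior.

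For (2), I observe that $\partial^\Phi S$ is closed in $S$ by Lemma \ref{zer}(1), hence closed in $X$, and is itself a cross-section of time $\eta$ as a subset of $S$. By the very definition of the flow boundary of a cylinder, $(\partial^\Phi S)_\eta=\phi_{[-\eta,\eta]}(\partial^\Phi S)=\partial^\Phi S_\eta$, which has empty interior by (1), so $\Int^\Phi(\partial^\Phi S)=\partial^\Phi S\cap\Int(\partial^\Phi S_\eta)=\emptyset$. Since $\partial^\Phi S$ is closed, this forces $\partial^\Phi\partial^\Phi S=\overline{\partial^\Phi S}\setminus\Int^\Phi(\partial^\Phi S)=\partial^\Phi S$, and applying $\partial^\Phi$ once more yields $\partial^\Phi\partial^\Phi\partial^\Phi S=\partial^\Phi\partial^\Phi S$.

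For (3), set $T=\overline{\Int^\Phi S}$. Since $S$ is closed, $T\subset S$, so $T$ is a closed cross-section of time $\eta$; it remains to verify globality. Let $\xi>0$ be such that $S_\xi=X$. Using $S=\Int^\Phi S\cup\partial^\Phi S$, I decompose $X=S_\xi=\phi_{[-\xi,\xi]}(\Int^\Phi S)\cup\phi_{[-\xi,\xi]}(\partial^\Phi S)$. The key claim is that the second piece has empty interior: by covering $[-\xi,\xi]$ with finitely many translates of $[-\eta,\eta]$, I can express $\phi_{[-\xi,\xi]}(\partial^\Phi S)$ as a finite union of homeomorphic images of $\partial^\Phi S_\eta$, each closed with empty interior by (1); Baire's theorem in the compact space $X$ then gives the claim. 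Consequently $\phi_{[-\xi,\xi]}(\Int^\Phi S)$ is dense in $X$, and since $T_\xi=\phi_{[-\xi,\xi]}(T)$ is a closed superset it must equal $X$, so $T$ is global with the same $\xi$.

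The main obstacle I anticipate is precisely this last step: part (1) provides empty interior only for the intrinsic flow time $\eta$, whereas globality of $S$ may be witnessed only by some $\xi>\eta$, and one needs the Baire-type argument above to propagate the empty-interior property along the flow before the density conclusion is reached.
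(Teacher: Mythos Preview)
Your proof is correct and follows essentially the same route as the paper. The paper dismisses (1) and (2) as ``following easily from the definitions,'' and your explicit arguments for them are exactly the intended ones; for (3) the paper likewise covers $X$ by finitely many translated $\eta$-cylinders, uses (1) to see that the interiors of these closed pieces form a dense open set contained in $\Phi(\Int^\Phi(S)\times[-K\eta,K\eta])$, and takes closures---your Baire/finite-union formulation via the decomposition $S=\Int^\Phi S\cup\partial^\Phi S$ is the same density argument packaged slightly differently.
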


\begin{proof}
(1) and (2) follow easily from the  definitions. Let us check (3). By Lemma \ref{comp} the restriction of $\Phi$ to 
$\Int^{\Phi}(S)\times ]k\eta, (k+1)\eta[$ is an homeomorphism onto $\Int(\phi_{]k\eta, (k+1)\eta[}S)$ for any integer $k$. But $S$ being global we have $X=\Phi(S \times [-K\eta, K\eta])$ for some $K\in \mathbb{N}$. Thus the open set 
$$\bigcup_{  k=-K,\cdots, K-1}\Phi(\Int^{\Phi}(S)\times ]k\eta, (k+1)\eta[)= \bigcup_{   k=-K,\cdots, K-1}\Int(\phi_{]k\eta, (k+1)\eta[}S)$$ is contained in $\Phi(\Int^{\Phi}(S) \times [-K\eta, K\eta])$ and is dense in $X$ by (1). Therefore 
$\Phi\left(\overline{\Int^{\Phi}(S)} \times [-K\eta, K\eta]\right)=\overline{\Phi(\Int^{\Phi}(S) \times [-K\eta, K\eta])}=X$.
\end{proof}

 Now we consider a global closed cross-section $S$ of time $\eta$ and we let $\xi>0$ with $\Phi(S\times [-\xi,\xi])=X$. The first return time $t_S$ in  $S$ defines a   lower semicontinuous positive  function  as the  cross-section $S$  is closed. Moreover $t_S$ is bounded from above by $2\xi$ and from below by $2\eta$.  Let $\mathcal{C}_S\subset S$ be the (residual) subset of continuity points of $t_S$. The first return map in $ S$, denoted by $T_{S}: S\rightarrow  S$, $x\mapsto \phi_{t_S(x)}(x)$,  is also continuous at any point of $\mathcal{C}_{S}$.  In fact we may describe more precisely the continuity properties of $t_S$  and $T_S$. 

\begin{lemma}\label{piece}
The first return time  $t_S$ in $S$ is a piecewise continuous map, i.e. there is  a finite partition  $(C_k)_k$ of  $S$  into   w.e.c.'s such that   $t_S$ is uniformly continuous on each $C_k$. Moreover the boundaries in $S$ of the $C_k$'s have an empty interior in $S$. 
\end{lemma}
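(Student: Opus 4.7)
The strategy is to cover $X$ by finitely many small open flow tubes carrying continuous transverse coordinates, partition $S$ according to where the return map $T_S$ sends each point, and express $t_S$ on each piece as a sum of continuous functions.

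I would begin by fixing $\epsilon>0$ and, using the existence of arbitrarily small closed cross-sections recalled after Lemma~\ref{zer} together with the compactness of $X$, producing closed cross-sections $F_1,\ldots,F_N$ of time $\eta$ and diameter less than $\epsilon$ whose open flow tubes $W_i:=\Int((F_i)_\eta)$ cover $X$. By Lemma~\ref{comp}, each $W_i$ carries continuous transverse coordinates $\pi_i:W_i\to F_i$ and $\sigma_i:W_i\to(-\eta,\eta)$ with $x=\phi_{\sigma_i(x)}(\pi_i(x))$. I then set $A_i:=\{y\in S:T_S(y)\in W_i\}$ and $C_i:=A_i\setminus\bigcup_{j<i}A_j$; since $S$ is a closed cross-section of time $\eta$, each $C_i\subset S$ is automatically a w.e.c.\ of time $\eta$ (its closure in $X$ lies in $S$), so the $C_i$ furnish a finite partition of $S$ into w.e.c.'s.

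For uniform continuity of $t_S|_{C_i}$, I would use the identity
\[
t_S(y)=\beta_i(y)+\sigma_i(T_S(y)),\qquad y\in C_i,
\]
where $\beta_i(y)$ is the unique time $t\in[t_S(y)-\eta,t_S(y)+\eta]$ with $\phi_t(y)\in F_i$; uniqueness follows from $F_i$ being a cross-section of time $\eta$. I would then show that for $x\in S$ close to $y\in C_i$, the orbit $\phi_{\beta_i(y)}(x)$ lies in $W_i$, and using the coordinate $\sigma_i$ one gets $\beta_i(x)=\beta_i(y)-\sigma_i\bigl(\phi_{\beta_i(y)}(x)\bigr)$, a continuous function of $x$. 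A compatibility argument based on the lower semicontinuity of $t_S$ and the cross-section property of $F_i$ and $S$ identifies this $\beta_i(x)$ with the time at which $x$'s orbit first meets $F_i$ near $\pi_i(T_S(y))$, and forces the above formula for $t_S(x)$. With both summands extending continuously to $\overline{C_i}\subset S$, compactness of $\overline{C_i}$ yields uniform continuity of $t_S$ on $C_i$.

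For the boundary condition, I would observe that $\partial_SC_i\subset\bigcup_{j\leq i}\partial_SA_j$, so it suffices to show that each $\partial_SA_j$ has empty interior in $S$. At any continuity point $y$ of $t_S$ (equivalently of $T_S$) that belongs to $\partial_SA_j$, the point $T_S(y)$ must lie in $\partial W_j$, being the limit of values in $W_j$ and in $X\setminus W_j$. But $\partial W_j\subset\partial\bigl((F_j)_\eta\bigr)$ is nowhere dense in $X$ by Lemma~\ref{closed}(1), and the set of discontinuity points of the lower semicontinuous function $t_S$ is meager in the Baire space $S$; any relatively open subset of $S$ contained in $\partial_SA_j$ would then have to be contained in the meager discontinuity set, which is impossible. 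The main obstacle is the middle step: one must verify carefully that the first return of a perturbed orbit to $S$ occurs at the passage through $F_i$ indexed by $\beta_i$ and not at some spurious earlier or later hit, which requires exploiting the lower semicontinuity of $t_S$ together with the cross-section property of both $F_i$ and $S$ (orbits cannot meet a cross-section of time $\eta$ twice within a time window of length $2\eta$).
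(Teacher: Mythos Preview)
Your partition by ``where $T_S$ lands'' does not control the return time, and this is where the argument breaks. Fix $y\in C_i$ and a nearby $x\in C_i$. You correctly observe that the orbit of $x$ passes through $F_i$ at some time $\tilde\beta_i(x)$ close to $\beta_i(y)$, and this varies continuously. But $x\in C_i$ only tells you that $T_S(x)\in W_i$; it does \emph{not} tell you that $T_S(x)$ lies in the pass of the orbit through $W_i$ occurring near time $\tilde\beta_i(x)$. The orbit of $x$ may traverse $W_i$ several times in $[0,2\xi]$, and the first return to $S$ can occur at a later pass. Concretely, if $T_S(y)\in\partial^\Phi S$, then for suitable $x_n\to y$ the orbits of $x_n$ approach $T_S(y)$ near time $t_S(y)$ without touching $S$, and their actual first return to $S$ may happen at some much later time $t_1$ with $\phi_{t_1}(y)\in W_i$ as well; then $x_n,y\in C_i$ while $t_S$ jumps. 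Lower semicontinuity of $t_S$ only bounds $t_S(x)$ from below, so it cannot rule this out, and the cross-section property of $F_i$ says nothing about returns to $S$. Relatedly, your displayed formula $t_S(y)=\beta_i(y)+\sigma_i(T_S(y))$ involves $T_S$ in the second summand, so asserting that ``both summands extend continuously to $\overline{C_i}$'' already presupposes the continuity of $t_S|_{C_i}$ you are trying to establish.

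The paper avoids this difficulty by slicing in the time variable rather than by the landing region: it sets $Y=\{(x,t)\in S\times\mathbb{R}^+:\phi_t(x)\in S\}$, picks $\delta<\inf t_S$, and notes that $Y\cap(S\times[k\delta,(k+1)\delta])$ is the graph of a continuous function $f_k$ on a closed set $B_k\subset S$ (because returns to $S$ are at least $\inf t_S>\delta$ apart). Then $t_S=f_k$ on $C_k:=B_k\setminus\bigcup_{l<k}B_l$, and the $B_k$'s being closed makes the empty-interior assertion immediate. Your boundary argument also has a loose end: from $T_S(\partial_SA_j\cap\mathcal C_S)\subset\partial W_j$ you conclude that a hypothetical open $U\subset\partial_SA_j$ would lie in the discontinuity set, but what you actually get is $U\subset(S\setminus\mathcal C_S)\cup T_S^{-1}(\partial W_j)$, and you have not shown that the second set has empty interior in $S$. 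The paper's choice of closed $B_k$'s sidesteps this as well.
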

\begin{proof}

The set $Y$ defined by $Y:= \{(x,t)\in S\times \mathbb{R}^+, \ \phi_{t}(x)\in S \}$ is a closed subset of $S\times \mathbb{R}^+$. Let $\delta\in ]0,\inf_{x\in S}t_S(x)[$. 
For any positive integer $k$ the closed intersection $Y\cap\left(S\times [k\delta, (k+1)\delta]\right)$ is the  
graph of a continuous nonnegative  function defined on a closed subset  $B_k$ of $\mathfrak S$. Let denote this function by $f_k:B_k\rightarrow \mathbb{R}^+$. Then the return time $t_S$ coincides with $f_1$ on $C_1:=B_1$ (which may be the empty set) 
and   with   $f_k$ on $C_k:=B_k\setminus \left(\bigcup_{l<k}B_l\right)$ for  every $k>1$. Moreover observe that $
\bigcup_{1\leq k\leq K }C_k=\bigcup_{1\leq k\leq K }B_k=S$ with $K=\lceil\frac{2\xi}{\delta}\rceil$.  The $B_k$'s  being closed, the  boundaries of the $C_k$'s in $S$ have an empty interior in $S$ (indeed the class of subsets, whose boundary has an empty interior, is closed under complement, finite unions and intersections and it contains the closed subsets).
\end{proof}

The set $\mathcal{C}_S$ of continuity points is not only residual, it contains the  open and dense subset of $S$ given   by the union of the interior sets in $S$ of the w.e.c.'s  $C_k$ by Lemma \ref{piece}. We relate below the set of discontinuity points of $t_S$ with the flow boundary of $S$. 

\begin{lemma}\label{firt}
Let $x\in S\setminus \mathcal{C}_S$. Then there exists $t\in [0,2\xi]$  with $\phi_t(x)\in \partial^\Phi S$.
\end{lemma}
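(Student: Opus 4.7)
The plan is to exploit the lower semicontinuity of $t_S$ (noted in the text right after the definition of the return map) to locate an accumulation value of $t_S$ strictly above $t_S(x)$, and then to show by contradiction, via Lemma \ref{comp}, that $\phi_{t_S(x)}(x)$ must lie in $\partial^\Phi S$. The value $t := t_S(x) \in [2\eta, 2\xi] \subset [0,2\xi]$ will then meet the requirement.

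First, since $x \notin \mathcal{C}_S$ there exists a sequence $(x_n)_n$ in $S$ with $x_n \to x$ and $t_S(x_n) \not\to t_S(x)$. The values $t_S(x_n)$ lie in the bounded interval $[2\eta, 2\xi]$, so after extracting a subsequence we may assume $t_S(x_n) \to t^\ast$ for some $t^\ast \in [2\eta, 2\xi]$. Lower semicontinuity of $t_S$ forces $t^\ast \geq t_S(x)$, and the choice of the discontinuity sequence gives $t^\ast \neq t_S(x)$, hence $t^\ast > t_S(x)$.

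Next, set $y := \phi_{t_S(x)}(x) \in S$ and suppose for contradiction that $y \in \Int^\Phi(S)$. By Lemma \ref{comp}, $\Phi$ restricts to a homeomorphism $\Int^\Phi(S) \times {]-\eta,\eta[} \to \Int(S_\eta)$, and $\Int(S_\eta)$ is an open neighborhood of $y$ in $X$. By continuity of $\Phi$ we have $\phi_{t_S(x)}(x_n) \to y$, so for $n$ large enough $\phi_{t_S(x)}(x_n) \in \Int(S_\eta)$, and the homeomorphism property yields a unique decomposition
\[
\phi_{t_S(x)}(x_n) = \phi_{\epsilon_n}(z_n), \qquad z_n \in \Int^\Phi(S) \subset S, \quad \epsilon_n \in {]-\eta,\eta[},
\]
with $\epsilon_n \to 0$ (and $z_n \to y$). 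Equivalently $\phi_{t_S(x)-\epsilon_n}(x_n) = z_n \in S$, and since $t_S(x) - \epsilon_n > \eta > 0$ for $n$ large, this return time is positive, so by minimality of $t_S(x_n)$ we obtain $t_S(x_n) \leq t_S(x) - \epsilon_n$. Passing to the limit gives $t^\ast \leq t_S(x)$, contradicting $t^\ast > t_S(x)$.

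Therefore $\phi_{t_S(x)}(x) \in \partial^\Phi S$, and the conclusion holds with $t := t_S(x) \in [0, 2\xi]$. The main delicate point is the contradiction step: one must be careful that the equation $\phi_{t_S(x)-\epsilon_n}(x_n) = z_n$ indeed produces a bona fide return of $x_n$ to $S$ at a \emph{positive} time strictly smaller than (or equal to) the limit $t^\ast$; this is where the lower bound $t_S \geq 2\eta$ and the bound $|\epsilon_n| < \eta$ on the time coordinate provided by Lemma \ref{comp} come into play.
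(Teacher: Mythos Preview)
Your proof is correct and follows essentially the same route as the paper: both argue that if $y=\phi_{t_S(x)}(x)$ lay in $\Int^\Phi(S)$, then by openness of the cylinder interior the nearby points $x_n$ would hit $S$ at a time close to $t_S(x)$, contradicting $t^\ast>t_S(x)$. Your version is simply more explicit in invoking Lemma~\ref{comp} for the homeomorphism $\Int^\Phi(S)\times]-\eta,\eta[\to\Int(S_\eta)$ and in checking positivity of $t_S(x)-\epsilon_n$ via the lower bound $t_S\geq 2\eta$.
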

\begin{proof}
Assume by contradiction that there is no $t\in [0,2\xi]$ with $\phi_t(x)\in \partial^\Phi S$. Let us show $x$ belongs to $\mathcal{C}_S$. If not there would be a   sequence $(x_n)_n$  of $S$ converging to $x$ with $2\xi\geq \lim_nt_S(x_n)> t_S(x)$. When $n$ is large enough,  $\phi_{t_S(x)}(x_n)$  belongs to the complement of  $\Int(S_\zeta)$ for some small  $\zeta>0$  and thus so does $\phi_{t_S(x)}(x)$, in particular $\phi_{t_S(x)}(x)\in \partial^\Phi S$ contradicting  our  hypothesis.
\end{proof}

\subsubsection{Complete family of closed  cross-sections}
A finite family $\mathcal{S}$ of disjoint closed  cross-sections  $S$ of time $\eta_\mathcal{S}>0$ is said \textit{complete} when  the  cylinders $S_{\eta_\mathcal{S}/2}$ are covering  $X$. In particular the set $\mathfrak{S}=\bigcup_{S\in \mathcal{S}}S$ defines a global  closed cross-section and the first return time $t_\mathfrak{S}$ is bounded from above by   $\eta_\mathcal{S}$. The \textit{diameter} of such a family $\mathcal{S}$ is the maximum of the diameters of $S\in \mathcal{S}$. Any topological regular flow admits a complete family of closed  cross-sections with arbitrarily small diameter (see Lemma 7 in \cite{BW}). 
  Let us consider such a complete family $\mathcal{S}$ of  cross-sections. For the closed global cross-section $\mathfrak S$, the conclusion of Lemma \ref{piece} holds for the partition $T_{\mathfrak{S}}^{-1}\mathcal{S}$ :

\begin{lemma}\label{osc}
 For every $S\in \mathcal{S}$  the first return time $t_\mathfrak{S}$ is  uniformly continuous on the set $T_{\mathfrak{S}}^{-1}S$.
\end{lemma}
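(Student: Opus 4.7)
The plan is to argue by contradiction, exploiting compactness of $\mathfrak{S}$ and the injectivity of $\Phi$ on $S\times[-\eta_{\mathcal{S}},\eta_{\mathcal{S}}]$. The crucial point is that both comparison points land back in the \emph{same} closed cross-section $S$, which permits direct use of the injectivity on $S$ rather than the (possibly weaker) injectivity of $\Phi$ on $\mathfrak{S}$.

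First I would assume uniform continuity fails and extract, for some $\epsilon>0$, sequences $(x_n),(y_n)\subset T_{\mathfrak{S}}^{-1}S$ with $d(x_n,y_n)\to 0$ and $|t_{\mathfrak{S}}(x_n)-t_{\mathfrak{S}}(y_n)|\geq\epsilon$. Using compactness of $\mathfrak{S}$ together with the upper bound $t_{\mathfrak{S}}\leq\eta_{\mathcal{S}}$ recorded just before the statement, I would pass to a common subsequence so that $x_n\to x$ (hence $y_n\to x$), $t_{\mathfrak{S}}(x_n)\to s$ and $t_{\mathfrak{S}}(y_n)\to t$, with $s,t\in[0,\eta_{\mathcal{S}}]$ and, after relabeling, $t-s\geq\epsilon$.

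By continuity of $\Phi$, $T_{\mathfrak{S}}(x_n)=\phi_{t_{\mathfrak{S}}(x_n)}(x_n)\to\phi_s(x)$ and similarly $T_{\mathfrak{S}}(y_n)\to\phi_t(x)$. Since $x_n,y_n\in T_{\mathfrak{S}}^{-1}S$ by hypothesis, all these iterates lie in $S$, and closedness of $S$ then forces $\phi_s(x),\phi_t(x)\in S$. This is precisely the step where the common landing cross-section enters in an essential way — without it the two limits could lie in different members of $\mathcal{S}$ and the argument would not close.

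To conclude, the identity $\phi_{t-s}(\phi_s(x))=\phi_t(x)$ together with $\phi_s(x),\phi_t(x)\in S$ and $t-s\in(0,\eta_{\mathcal{S}}]\subset[-\eta_{\mathcal{S}},\eta_{\mathcal{S}}]$ contradicts the injectivity of the restriction $\Phi|_{S\times[-\eta_{\mathcal{S}},\eta_{\mathcal{S}}]}$, which is precisely the defining property of $S$ as a cross-section of time $\eta_{\mathcal{S}}$. The main (mild) obstacle is ensuring that the limit values $s,t$ remain in the compact window on which this injectivity is available, which is exactly what the bound $t_{\mathfrak{S}}\leq\eta_{\mathcal{S}}$ guarantees.
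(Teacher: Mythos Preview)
Your proof is correct and follows essentially the same route as the paper's: both argue by contradiction, extract convergent subsequences using compactness of $\mathfrak{S}$ and the bound $t_{\mathfrak{S}}\leq\eta_{\mathcal{S}}$, obtain two distinct limit times $s<t$ with $\phi_s(x),\phi_t(x)\in S$ by closedness of $S$, and then contradict the injectivity of $\Phi$ on $S\times[-\eta_{\mathcal{S}},\eta_{\mathcal{S}}]$. The emphasis you place on both points landing in the \emph{same} $S$ is exactly the observation the paper exploits.
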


\begin{proof}
We argue by contradiction. Let $(x_n)_n$ and $(y_n)_n$ be two sequences in $ T_{\mathfrak{S}}^{-1}S$ with $\lim_nd(x_n,y_n)=0$ and $t_1:=\lim_n t_\mathfrak{S}(x_n)> t_2:=\lim_nt_\mathfrak{S}(y_n)>0$. By extracting subsequences we may assume $(x_n)_n$ (and thus $(y_n)_n$) is converging in $\mathfrak{S}$, say to $x$. Then $\phi_{t_1}(x)$ and $\phi_{t_2}(x)$ both belong to $S$. But  we have also $t_1-t_2< \sup_{y\in \mathfrak{S}}t_\mathfrak{S}(y)\leq \eta_\mathcal{S}$.  This contradicts the fact that $S$ is a cross-section of time $\eta_\mathcal{S}$.
\end{proof}

By Lemma \ref{ter} (1) we have $\partial^{\Phi}\mathfrak{S}=\bigcup_{S\in \mathcal{S}}\partial^{\Phi}S$. 
The flow boundary of $T_{\mathfrak{S}}^{-1}S$, for $S\in \mathcal{S}$, satisfies the following property  :
\begin{lemma}\label{tour}
 For every $S\in \mathcal{S}$ we have 
 $$\partial^{\Phi}(T_{\mathfrak{S}}^{-1}S)\subset \partial^{\Phi}\mathfrak{S}_{\eta_\mathcal{S}}\cup T_{\mathfrak{S}}^{-1}(\partial^{\Phi}S).$$
\end{lemma}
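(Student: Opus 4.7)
The plan is to pick $x \in \partial^{\Phi}(T_{\mathfrak{S}}^{-1}S) \setminus \partial^{\Phi}\mathfrak{S}_{\eta_\mathcal{S}}$ and deduce $T_{\mathfrak{S}}(x) \in \partial^{\Phi}S$. Since $T_{\mathfrak{S}}^{-1}S \subset \mathfrak{S}$ and $\mathfrak{S}$ is closed, $x \in \mathfrak{S}$; the hypothesis $x \notin \phi_{[-\eta_\mathcal{S},\eta_\mathcal{S}]}(\partial^{\Phi}\mathfrak{S})$ already rules out $x \in \partial^{\Phi}\mathfrak{S}$, so $x \in \Int^{\Phi}\mathfrak{S}$. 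Since the members of $\mathcal{S}$ are pairwise disjoint and closed, the equality case in Lemma \ref{ter}(1) singles out a unique $S' \in \mathcal{S}$ with $x \in \Int^{\Phi}(S')$.

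Next I would take a sequence $(x_n)_n \subset T_{\mathfrak{S}}^{-1}S$ converging to $x$; for $n$ large, disjointness and closedness of $\mathcal{S}$ puts $x_n \in S'$. By Lemma \ref{osc}, $t_{\mathfrak{S}}$ is uniformly continuous on $T_{\mathfrak{S}}^{-1}S$, so $t_n := t_{\mathfrak{S}}(x_n)$ is Cauchy and converges to some $t>0$ with $\phi_t(x) \in \overline{S}=S$. I then show $t = t_{\mathfrak{S}}(x)$: lower semicontinuity of $t_{\mathfrak{S}}$ gives $t_{\mathfrak{S}}(x) \leq t$, and setting $s := t_{\mathfrak{S}}(x) \leq \eta_\mathcal{S}$, the hypothesis places $\phi_s(x) \in \Int^{\Phi}(S''')$ for some $S''' \in \mathcal{S}$. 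Applying Lemma \ref{comp} to $S'''$ then writes $\phi_s(x_n) = \phi_{\tau_n}(z_n)$ with $z_n \in S'''$ and $\tau_n \to 0$, so $t_n \leq s - \tau_n$ and hence $t \leq s$. Thus $T_{\mathfrak{S}}(x) = \phi_t(x) \in S$, i.e.\ $x \in T_{\mathfrak{S}}^{-1}S$.

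It remains to rule out $T_{\mathfrak{S}}(x) \in \Int^{\Phi}(S)$. Suppose it held; the same trick, now on the target side, yields for each $y \in S'$ close to $x$ some $\tau(y) \to 0$ with $\phi_{t-\tau(y)}(y) \in S$, so $t_{\mathfrak{S}}(y) \leq t - \tau(y)$; together with lower semicontinuity this forces $t_{\mathfrak{S}}(y) \to t$. Hence $\phi_{t_{\mathfrak{S}}(y)}(y) \in \mathfrak{S}$ converges to $\phi_t(x) \in S$ and, by disjointness of the closed elements of $\mathcal{S}$, lies in $S$, so $y \in T_{\mathfrak{S}}^{-1}S$. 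Flowing this open neighborhood of $x$ in $S'$ through $(-\eta_\mathcal{S},\eta_\mathcal{S})$ via Lemma \ref{comp} (using $x \in \Int^{\Phi}(S')$) produces an open neighborhood of $x$ in $X$ contained in $(T_{\mathfrak{S}}^{-1}S)_{\eta_\mathcal{S}}$, whence $x \in \Int^{\Phi}(T_{\mathfrak{S}}^{-1}S)$, contradicting $x \in \partial^{\Phi}(T_{\mathfrak{S}}^{-1}S)$. The recurring obstacle is upgrading the merely lower semicontinuous $t_{\mathfrak{S}}$ to a two-sided estimate; it is resolved both here and above by the same mechanism: the hypothesis $x \notin \phi_{[-\eta_\mathcal{S},\eta_\mathcal{S}]}(\partial^{\Phi}\mathfrak{S})$ is exactly what places $\phi_s(x)$ (respectively $\phi_t(x)$) in the flow interior of some element of $\mathcal{S}$, so that Lemma \ref{comp} supplies the required upper bound on the return time.
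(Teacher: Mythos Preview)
Your proof is correct and follows essentially the same strategy as the paper's: both arguments boil down to showing that if $x\notin\partial^{\Phi}\mathfrak{S}_{\eta_\mathcal{S}}$ then $t_{\mathfrak{S}}$ is continuous at $x$, and that if in addition $T_{\mathfrak{S}}(x)\in\Int^{\Phi}(S)$ then $x\in\Int^{\Phi}(T_{\mathfrak{S}}^{-1}S)$. The only organizational difference is that the paper packages the continuity step via the set $\mathcal{C}_{\mathfrak{S}}$ and Lemma~\ref{firt} (proving the inclusion $\Int^{\Phi}(T_{\mathfrak{S}}^{-1}S)\supset T_{\mathfrak{S}}^{-1}(\Int^{\Phi}S)\cap\Int^{\Phi}\mathfrak{S}\cap\mathcal{C}_{\mathfrak{S}}$ and then taking complements), whereas you redo that continuity argument in place using Lemma~\ref{comp} and Lemma~\ref{osc}.
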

\begin{proof}
It is enough to show $\Int^{\Phi}(T_{\mathfrak{S}}^{-1}S)\supset T_{\mathfrak{S}}^{-1}(\Int^{\Phi}(S)) \cap \Int^{\Phi}(\mathfrak{S})\cap \mathcal{C}_\mathfrak{S}$. Indeed this implies 
\begin{eqnarray*}
\partial^{\Phi}(T_{\mathfrak{S}}^{-1}S)&=& \overline{T_{\mathfrak{S}}^{-1}S}\setminus \Int^{\Phi}(T_{\mathfrak{S}}^{-1}S),\\
&\subset & \left(\overline{T_{\mathfrak{S}}^{-1}S}\setminus T_{\mathfrak{S}}^{-1}S \right)\cup 
T_{\mathfrak{S}}^{-1}(\partial^{\Phi}S)\cup \partial^{\Phi}\mathfrak S \cup \left(\mathfrak S\setminus \mathcal{C}_\mathfrak{S}\right),
\end{eqnarray*}
but the set $\overline{T_{\mathfrak{S}}^{-1}S}\setminus T_{\mathfrak{S}}^{-1}S $  is contained in $\mathfrak S\setminus \mathcal{C}_\mathfrak{S}$, which by Lemma \ref{firt} is a subset of $\partial^{\Phi}\mathfrak{S}_{\eta_\mathcal{S}}$.

 Let $x\in T_{\mathfrak{S}}^{-1}(\Int^{\Phi}(S)) \cap \Int^{\Phi}(\mathfrak{S})\cap \mathcal{C}_\mathfrak{S}$. Then $T_{\mathfrak{S}}(x)=\phi_{t_\mathfrak{S}(x)}(x)$ lies in $\Int (S_{\zeta})$ for any small $\zeta>0$. By continuity of  the flow we have also $\phi_{t_\mathfrak{S}(x)}(y)\in \Int (S_{\zeta})$ for $y\in \mathfrak S$ close enough to $x$. Therefore such points $y$ return in $S$ in a time close to $t_\mathfrak{S}(x)$. As $x$ belongs to $\mathcal{C}_\mathfrak{S}$ this correspond to their first return time. But  $x$ also belongs to $\Int^{\Phi}(\mathfrak S)$ so that  there is an open subset $x\in O \subset \Int^{\Phi}(\mathfrak S)$  of $\mathfrak S$ contained in $T_{\mathfrak{S}}^{-1}S$. Therefore  $x$ belongs to  $\Int^{\Phi}(T_{\mathfrak{S}}^{-1}S)$ by Lemma \ref{comp}.
\end{proof}

\subsubsection{Suspension flows}\label{sssup}
Let $(X,T)$ be a topological discrete system. Let $r:X\rightarrow \mathbb{R}^+$ be  a positive continuous function. Consider the quotient space
$${\displaystyle X_r=\{(x,t)\, : \, 0 \leq t\leq r(x),\, x\in X \text{ and }  (x,r(x))\sim(Tx,0) \}}.$$
This quotient space $X_r$ is compact and metrizable (see Section 4 in \cite{BW}). The\textit{ suspension flow} over  $(X,T)$ under the  roof function $r$ is the flow $\Phi_r$ on $ X_r$ 
 induced by the time translation $T_t$ on $X\times \mathbb{R}$  defined by $T_t(x,s)= (x,s+t)$.

We call  \textit{Poincar\'e cross-section} of a topological flow $(X,\Phi)$ any closed cross-section $S$  such that the flow map $\Phi:(x,t)\mapsto\phi_t(x)$ is a surjective local homeomorphism from $S\times \mathbb{R}$ to $X$.

\begin{lemma}\label{poinc}
A cross-section  is a Poincar\'e cross-section if and only if it is a global closed cross-section with empty flow boundary.
\end{lemma}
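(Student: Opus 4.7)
The plan is to prove each implication directly from the definitions, with Lemma \ref{comp} carrying the bulk of the topological content.

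For the forward direction, assume $S$ is a Poincar\'e cross-section. First, I would derive globalness by a compactness argument: a local homeomorphism is automatically an open map, so the ascending family $\Phi(S\times ]-n,n[)$, $n\in\mathbb{N}$, consists of open subsets of $X$, and their union is $X$ by surjectivity; the compactness of $X$ then forces $X=\Phi(S\times ]-n,n[)$ for some $n$, which exhibits $S$ as global. Next, I would check $\partial^\Phi S=\emptyset$. Fixing $x\in S$, the local homeomorphism hypothesis provides an open neighborhood of $(x,0)$ in $S\times\mathbb{R}$ on which $\Phi$ restricts to a homeomorphism onto an open subset of $X$; shrinking to a basic product neighborhood $V\times ]-\delta,\delta[$ with $\delta\leq \eta$ (where $\eta$ is the time of the cross-section $S$), its image is open in $X$ and contained in $S_\eta$. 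Hence $x\in\Int(S_\eta)\cap S=\Int^\Phi(S)$, which forces $S=\Int^\Phi(S)$ and $\partial^\Phi S=\emptyset$.

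For the reverse direction, suppose $S$ is a global closed cross-section with $\partial^\Phi S=\emptyset$. Surjectivity of $\Phi:S\times\mathbb{R}\to X$ is immediate from globalness. To establish the local homeomorphism property at an arbitrary $(x,t_0)$, I would use the group relation $\Phi(y,s)=\phi_{t_0}\bigl(\Phi(y,s-t_0)\bigr)$, together with the facts that $\phi_{t_0}$ is a homeomorphism of $X$ and that $(y,s)\mapsto (y,s-t_0)$ is a homeomorphism of $S\times\mathbb{R}$, to reduce to the case $t_0=0$. At $(x,0)$ the assumption $\partial^\Phi S=\emptyset$ reads $S=\Int^\Phi(S)$, so Lemma \ref{comp} directly yields that $\Phi$ restricted to the open neighborhood $S\times ]-\eta,\eta[$ of $(x,0)$ is a homeomorphism onto the open set $\Int(S_\eta)$.

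The argument is essentially a dictionary translation between the local homeomorphism language and the flow-interior formalism of Lemma \ref{comp}, so no serious obstacle is anticipated. The only minor subtlety is in the forward direction, where the abstract local homeomorphism neighborhood must first be shrunk to a basic rectangle $V\times ]-\delta,\delta[$ before it can be compared with the cylinder $S_\eta$; this is legitimate because such rectangular open sets form a basis for the product topology on $S\times\mathbb{R}$.
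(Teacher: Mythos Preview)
Your proof is correct and follows essentially the same approach as the paper's: compactness for globalness, openness of the flow image for $\partial^\Phi S=\emptyset$, and Lemma~\ref{comp} for the reverse direction. The only cosmetic difference is that the paper argues globally that $\phi_{]-\zeta,\zeta[}(S)$ is open (hence $S\subset\Int(S_\zeta)$ immediately), whereas you work locally via a product neighborhood, and the paper leaves the translation-to-$t_0=0$ step implicit; both arguments are equivalent.
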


\begin{proof}
Let $S$ be a Poincar\'e cross-section. By compactness of $X=\Phi(S\times \mathbb{R})=\bigcup_{\zeta>0}\Phi(S\times ]-\zeta, \zeta[)$ there is $\xi>0$ with $\Phi(S\times [-\xi,\xi])=X$ and $S$ is thus global. The sets $\phi_{]-\zeta, \zeta[}(S)$ for $\zeta>0$ being open, we have $S=\Int^{\Phi}S$ and thus $\partial^{\Phi}S=\emptyset$ as $S$ is closed.
Conversely, if $S$ is a  closed cross-section with empty flow boundary, then from Lemma \ref{comp} the flow map  $\Phi:S\times \mathbb{R}\rightarrow X$ defines a local homeomorphism onto its image. When the cross-section $S$ is moreover global, this map is then also surjective.
\end{proof}

As the roof function $r$ of the suspension flow $(X_r,\Phi_r)$ does not vanish, the flow is regular and the subset $X\times  \{0\}\subset X_r$ defines a  Poincar\'e cross-section of the suspension flow $X_r$. In fact any topological flow admits  a  Poincar\'e cross-section  if and only if it is topologically conjugate to a suspension  regular flow. When the flow space is one-dimensional, there always exists a  Poincar\'e cross-section. Indeed any closed cross-section is zero-dimensional, so that  any point belongs to a closed   cross-section with empty flow boundary. Bowen-Walters construction then provides a complete family $\mathcal{S}$ of closed cross-sections with empty flow boundary. The union $\mathfrak{S}=\bigcup_{S\in \mathcal{S}}S$ defines therefore in this case a  Poincar\'e cross-section.  Here we consider topological suspension flows, but we may also define similarly  a Borel (resp. ergodic) suspension flow  over a discrete Borel (resp. ergodic) system with a bounded Borel (resp. integrable) roof function. \\


For the suspension flow $(X_r,\Phi_r)$  the $\Phi_r$-invariant measures are related with the $T$-invariant measures as follows. For a discrete topological system $(X,T)$ (resp. topological flow $(X,\Phi)$) we denote by $\mathcal{M}(X,T)$ (resp. $\mathcal{M}(X,\Phi)$) the set of $T$-invariant (resp. $\Phi$-invariant) Borel probability measures. Let  $\lambda$ denote the Lebesgue measure on $\mathbb{R}$. For any $\mu\in \mathcal{M}(X,T)$ the product measure $\mu\times \lambda$ induces a finite $\Phi_r$-invariant measure on $X_r$. This defines a homeomorphism  between $\mathcal{M}(X_r, \Phi_r)$ and $\mathcal{M}(X, T)$. More precisely  the map
\begin{eqnarray*}
\Theta: & \mathcal{M}(X, T) \rightarrow & \mathcal{M}(X_r, \Phi_r),\\
& \mu  \mapsto & \frac{(\mu\times \lambda)|_{X_r}}{ \int r \, d\mu}
\end{eqnarray*}
is a homeomorphism (not affine in general), which preserves  ergodicity. 

\subsubsection{$\Phi$-invariant  and $\phi_t$-invariant measures}
For any $t>0$ we let $i_t$ be the inclusion of $\mathcal{M}(X,\Phi)$ in $\mathcal{M}(X,\phi_t)$. In general this inclusion does not preserve ergodicity : for an ergodic $\mu\in \mathcal{M}(X,\Phi)$, the measure $i_t(\mu)$ need not be ergodic (however, by the spectral theory, for a fixed measure $\mu$, this may occur for at most countably many $t\in \mathbb{R}$). 
  
  For $t>0$ the map
\begin{eqnarray*}
\theta_t: & \mathcal{M}(X,\phi_t)\rightarrow &\mathcal{M}(X,\Phi),\\
& \mu\mapsto & \frac{1}{t}\int_{0}^t\phi_s\mu\,  ds
\end{eqnarray*}  
 defines  a continuous affine map of $\mathcal{M}(X,\phi_t)$ onto $\mathcal{M}(X,\Phi)$, which is a retraction i.e. $\theta_t \circ i_t=\Id_{\mathcal{M}(X,\Phi)}$. As this last identity is immediate, we only check that $\theta_t(\mu)$ belongs to  $\mathcal{M}(X,\Phi)$ for any $\mu\in \mathcal{M}(X,\phi_t)$. Clearly it is enough to show $\phi_u\left(\theta_t(\mu)\right)=\theta_t(\mu)$ for any $u\in ]0,t[$. This follows from the following equalities :

\begin{align*}
 \phi_u\left( \frac{1}{t}\int_{0}^t\phi_s\mu\,  ds\right)&= \frac{1}{t}\int_{0}^t\phi_{s+u}\mu\,  ds,\\
 &=\frac{1}{t}\int_{u}^{t+u}\phi_s\mu\,  ds,\\
&=\frac{1}{t}\left(\int_{u}^{t}\phi_s\mu\,  ds+\int_{0}^{u}\phi_{s+t}\mu\,  ds\right)=\frac{1}{t}\int_{0}^{t}\phi_s\mu\,  ds.
\end{align*}

\subsubsection{Orbit equivalence} 

Two topological flows $(X,\Phi)$ and $(Y,\Psi)$ are \textit{orbit equivalent} when 
there is a homeomorphism $\Lambda$ from $X$ onto $Y$ mapping $\Phi$-orbits to $\Psi$-orbits, preserving their orientation. Any flow obtained by a change of the time scale of a topological flow $(X,\Phi)$ is orbit equivalent to $(X,\Phi)$. 

In the following we are interested in dynamical properties invariant under orbit equivalence. In general the topological entropy is not preserved by  orbit equivalence. But for regular topological flows zero and infinite entropy are invariant \cite{ohn}.

\subsection{The small  boundary property for flows}
\subsubsection{Definitions}
For a topological discrete system $(X,T)$ (resp. topological flow $(X,\Phi)$), a subset $E$ has a 
\textit{ small boundary}  when its boundary is a \textit{null set}, i.e. it  has zero measure for any $T$-invariant (resp. $\Phi$-invariant) Borel probability measure (similarly a Borel subset is said to be a \textit{full set} when its complement is a null set).  \\

We define now an adapted notion of small boundary for w.e.c.'s. 

\begin{defi}
Let $(X,\Phi)$ be a topological flow. A  w.e.c. $S$  of time $\eta$   has a \textit{small flow boundary } when $\partial^\Phi S_{\eta}$  is a null set. 
\end{defi}

 For a w.e.c. $S$, the closure $\overline{S}$ is also a cross-section and it is thus transverse to the flow, so that the subset $\phi_{-\eta}\overline{S}\cup  \phi_{\eta}\overline{S}$  of $\partial S_\eta$ has zero measure  for any $\Phi$-invariant Borel probability measure. Therefore in the above definition we may replace the  flow boundary $\partial^{\Phi} S_{\eta}$ of $S_\eta$ by its usual boundary $\partial S_{\eta}$ according to Lemma \ref{comp}.  Moreover the small flow  boundary property of  $S$ does not depend of the time $\eta$.

\begin{lemma}\label{sss}Let $(X,\Phi)$ be a \textit{topological flow} and let $S$ be a  w.e.c.  of time $\eta>0$. The following properties are equivalent :
\begin{enumerate}[i)]
\item  $S$ has a small flow boundary,  
\item $\lim_{\zeta\rightarrow 0} \frac{\mu(\partial^{\Phi} S_\zeta)}{\zeta}=0$ for any  $\mu\in \mathcal{M}(X,\Phi)$,
\item $\frac{1}{\mathsf T}\sharp \{0<t<\mathsf T, \ \phi_t(x)\in \partial^{\Phi} S \}\xrightarrow{T\rightarrow +\infty}0$  uniformly in $x\in X$.
\end{enumerate}
\end{lemma}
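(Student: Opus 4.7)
The plan is to derive an exact linearity formula for $\mu(\partial^\Phi S_\zeta)$ in $\zeta$ (from which (i) $\Leftrightarrow$ (ii) is immediate), and then to bridge (i) and (iii) by combining the upper semi-continuity of $\mathbf{1}_{\partial^\Phi S_\zeta}$ with a classical uniform bound on time-averages of upper semi-continuous functions.

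\medskip

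\textit{Transverse measure (for (i) $\Leftrightarrow$ (ii)).} For $\mu \in \mathcal{M}(X,\Phi)$ and Borel $B \subset \overline{S}$, set $c_\mu(B) := \mu(\phi_{[0,\eta]}(B))/\eta$. Since $\Phi$ is injective on $\overline{S}\times[-\eta,\eta]$, the pieces $\phi_{[0,\zeta_1]}(B)$ and $\phi_{[\zeta_1,\zeta_1+\zeta_2]}(B)$ intersect only in the cross-section $\phi_{\zeta_1}(B)$, which is null for $\mu$; combined with $\Phi$-invariance of $\mu$, this yields additivity and hence linearity $\mu(\phi_{[0,\zeta]}(B)) = \zeta\, c_\mu(B)$ for $0 \leq \zeta \leq 2\eta$. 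A shift by $-\zeta$ gives $\mu(\phi_{[-\zeta,\zeta]}(B)) = 2\zeta\, c_\mu(B)$ for $0 < \zeta \leq \eta$. Applied to $B = \partial^\Phi S$, this shows that $\mu(\partial^\Phi S_\zeta)/\zeta \equiv 2c_\mu(\partial^\Phi S)$ is constant in $\zeta$, so both (i) and (ii) reduce at once to $c_\mu(\partial^\Phi S) = 0$ for every $\mu$.

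\medskip

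\textit{(i) $\Rightarrow$ (iii).} Since $\partial^\Phi S$ is closed, $\partial^\Phi S_\zeta = \phi_{[-\zeta,\zeta]}(\partial^\Phi S)$ is compact and $\mathbf{1}_{\partial^\Phi S_\zeta}$ is upper semi-continuous. The classical uniform bound
\[
\limsup_{T \to \infty} \sup_{x \in X} \frac{1}{T}\int_0^T \mathbf{1}_{\partial^\Phi S_\zeta}(\phi_t(x))\, dt \leq \sup_{\mu \in \mathcal{M}(X,\Phi)} \mu(\partial^\Phi S_\zeta),
\]
proved by extracting a weak-$*$ accumulation point of the empirical measures $\frac{1}{T_n}\int_0^{T_n}\delta_{\phi_t(x_n)}\,dt$ and using that $\limsup \int f\,d\mu_n \leq \int f\,d\mu$ for upper semi-continuous $f$, then yields, under (i), uniform convergence of the time-average to $0$. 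Since $\partial^\Phi S \subset \overline{S}$ is a cross-section of time $\eta$, the visit times $s_i$ of $\phi_\cdot(x)$ to $\partial^\Phi S$ are $2\eta$-separated, so the fattenings $(s_i-\zeta,s_i+\zeta)$ are pairwise disjoint for $\zeta \leq \eta$ and
\[
\lambda\bigl(\{t \in [0,T] : \phi_t(x) \in \partial^\Phi S_\zeta\}\bigr) \geq 2\zeta\bigl(N_T(x)-2\bigr),
\]
where $N_T(x) := \sharp\{0 < t < T : \phi_t(x) \in \partial^\Phi S\}$. Fixing a small $\zeta$ and letting $T \to \infty$ then makes $N_T(x)/T$ uniformly small in $x$, which is (iii).

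\medskip

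\textit{(iii) $\Rightarrow$ (i).} Conversely, each visit $s_i \in (-\zeta,T+\zeta)$ contributes an interval of length at most $2\zeta$ to $\{t\in[0,T]:\phi_t(x)\in \partial^\Phi S_\zeta\}$, giving $\int_0^T \mathbf{1}_{\partial^\Phi S_\zeta}(\phi_t(x))\,dt \leq 2\zeta\, N_{T+2\zeta}(\phi_{-\zeta}(x))$. Integrating against $\mu$ and using Fubini together with $\Phi$-invariance rewrites the left-hand side as $T\mu(\partial^\Phi S_\zeta)$; since $N_T(x)/T \leq 1/(2\eta) + O(1/T)$ is uniformly bounded, dominated convergence combined with (iii) forces $\mu(\partial^\Phi S_\zeta) = 0$ for every $\zeta \leq \eta$, recovering (i). The main obstacle of the argument is the uniform upper semi-continuous time-average bound used in the previous step: it is classical in topological ergodic theory but must be adapted cleanly to the continuous-time setting; once it is in hand, the rest is a sequence of flow-box and Fubini manipulations anchored by the transverse-measure formula of Step~1.
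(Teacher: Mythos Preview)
Your proof is correct and follows the paper's overall architecture: the same linearity-in-$\zeta$ computation for (i)$\Leftrightarrow$(ii), and the same Krylov--Bogolyubov compactness argument for (i)$\Rightarrow$(iii). There are two minor but worthwhile differences. First, you package the scaling $\mu(\partial^\Phi S_\zeta)=2\zeta\,c_\mu(\partial^\Phi S)$ as a transverse-measure identity via Cauchy additivity, whereas the paper obtains the same relation by a dyadic partition of $[-\eta,\eta]$; the content is identical but your formulation is a bit cleaner. Second, for (iii)$\Rightarrow$(i) you avoid the Birkhoff ergodic theorem and the ergodic decomposition entirely: integrating the pointwise bound $\int_0^T\mathbf 1_{\partial^\Phi S_\zeta}(\phi_t x)\,dt\le 2\zeta\,N_{T+2\zeta}(\phi_{-\zeta}x)$ against $\mu$ and applying Fubini and invariance gives $\mu(\partial^\Phi S_\zeta)$ directly. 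This is a modest simplification over the paper's route, which invokes Birkhoff to identify $\mu(\partial^\Phi S_\zeta)$ with a time average before bounding it.
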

\begin{proof}
We have trivially $i) \Rightarrow ii)$. Assume $ii)$ and let us prove $i)$. For a $\Phi$-invariant Borel probability measure $\mu$, we have for all positive integers $n$ and for all $\eta>0$:
\begin{align*}
\mu(\partial^{\Phi} S_{\eta})&=\mu\left(\phi_{[-\eta,\eta]}\partial^{\Phi}S\right),\\
&=\mu\left(\bigcup_{k=-n}^{n-1}\phi_{[k\eta /n,(k+1)\eta/n]}(\partial^{\Phi}S )\right),\\
&=2n\times \mu(\partial^{\Phi} S_{\eta/2n})=\eta\times \frac{\mu(\partial^{\Phi} S_{\eta/2n})}{\eta/2n}\xrightarrow{n}0.
\end{align*}
Finally we show $ii)$ and $iii)$ are equivalent. We recall that $\lambda$ denotes the Lebesgue measure on $\mathbb{R}$.  By Birkhof ergodic theorem we have for $0<\zeta<\eta/2$ and for any ergodic $\Phi$-invariant Borel probability measure $\mu$ 
 \begin{align*}
\forall \mu \text{ a.e. }x, \ \ \  \mu(\partial^{\Phi} S_\zeta)&=\lim_{\mathsf T\rightarrow +\infty}\frac{1}{\mathsf T}\lambda \left( \{0<t<\mathsf T, \ \phi_t(x)\in \partial^{\Phi} S_\zeta\}\right),\\
& = \lim_{\mathsf T\rightarrow +\infty}\frac{2\zeta}{\mathsf T}\sharp \{0<t<\mathsf T, \ \phi_t(x)\in \partial^{\Phi} S \},\\
\text{ thus }\mu(\partial^{\Phi} S_\zeta)&\leq  \limsup_{\mathsf T\rightarrow +\infty}\frac{2\zeta}{\mathsf T}\sup_{x\in X}\sharp \{0<t<\mathsf T, \ \phi_t(x)\in \partial^{\Phi} S \}. \end{align*}
By the ergodic decomposition this last inequality holds in fact for any $\Phi$-invariant Borel probability measure and therefore $iii) \Rightarrow ii)$. 

We will use a Krylov-Bogolyubov's like argument to  show  $ii)\Rightarrow iii)$. For any $\mathsf T>0$ take $x_{\mathsf T}\in X$ maximizing the function $x\mapsto \sharp \{0<t<\mathsf T, \ \phi_t(x)\in \partial^{\Phi} S \}$ on $X$. Let $\psi_{\mathsf T}:\mathbb{R}\rightarrow M$, $t\mapsto \phi_t(x_{\mathsf T})$ and let $\mu_{\mathsf T}:=\psi_{\mathsf T}(\lambda_{[0,\mathsf T]})$ with $\lambda_{[0,\mathsf T]}:=\frac{\lambda(\cdot \cap [0,\mathsf T])}{\mathsf T}$. 
For any $0<\zeta<\eta/2$ we have
\begin{align*}\mu_{\mathsf T}(\partial^{\Phi} S_\zeta)&=\frac{\lambda \left( \{0<t<\mathsf T, \ \phi_t(x_{\mathsf T})\in \partial^{\Phi} S_\zeta\}\right)}{\mathsf T},\\
&\geq \frac{2\zeta}{\mathsf T}\left( \sharp \{0<t<\mathsf T, \ \phi_t(x_T)\in \partial^{\Phi} S \}-1\right),\\
&\geq \frac{2\zeta}{\mathsf T}\left( \sup_{x\in X}\sharp \{0<t<\mathsf T, \ \phi_t(x)\in \partial^{\Phi} S \}-1\right).
\end{align*}
As $\partial^{\Phi} S_\zeta$ is closed, any weak-$*$ limit $\mu$ of $(\mu_{\mathsf T})_{\mathsf T}$, when $\mathsf T$ goes to infinity, satisfies 
\begin{align*}
\mu(\partial^{\Phi} S_\zeta)&\geq \limsup_{\mathsf T}\mu_{\mathsf T}(\partial^{\Phi} S_\zeta),\\
&\geq 2\zeta \times \limsup_{\mathsf T}\frac{\sup_{x\in X}\sharp \{0<t<\mathsf T, \ \phi_t(x)\in \partial^{\Phi} S\}}{\mathsf T}.
\end{align*}
Thus if $\frac{\mu(\partial^{\Phi} S_\zeta)}{\zeta}\xrightarrow{\zeta\rightarrow 0}0$ we get 
$iii)$.
\end{proof}

 A discrete topological system  (resp. topological flow) is said to have the \textit{small boundary property} when there is a basis of neighborhoods with small boundary. We will consider the following corresponding notion for the flow boundary. 

\begin{defi}\label{dedef}
A topological flow $(X,\Phi)$ is said to have the \textit{small flow boundary property} when for any $x\in X$ and for any w.e.c. $S'$ with $x\in \Int^\Phi (S')$  there exists   a subset $S$ of $ S'$ with $x\in \Int^\Phi (S)$ such that the w.e.c. $S$ has a small flow boundary. 
\end{defi}

In the above definition we may replace w.e.c. by closed cross-sections. For a w.e.c.  with small flow  boundary the associated cylinders  have a small boundary, so that a topological flow with the small flow boundary property has in particular the small boundary property. Following  the construction of R.Bowen and P.Walters any topological flow with the small flow boundary property admits a complete family of  closed   cross-sections with small flow boundary and with arbitrarily small diameter. Moreover we can assume that  each cross-section in the family is contained in the flow interior of another closed cross-section (see also Lemma 2.4 in \cite{key} for a similar construction).

\subsubsection{Essential and small boundary  partitions }
For a  discrete topological system $(X,T)$ (resp. topological flow $(X,\Phi)$)   a partition $P$ of $X$  is  said to have a small boundary when any atom in $P$ has a small boundary.  Such a partition of $X$ is also called an \textit{essential} partition.

\begin{lemma}\label{fdfee}For a topological system $(X,T)$ (resp. topological  flow $(X,\Phi)$) a Borel  partition $P$ of $X$ has a small boundary if and only if $\overline{A}\setminus A$ is a null set for every $A\in P$. 
\end{lemma}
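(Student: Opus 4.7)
The plan is to treat both implications separately, relying on the decomposition $\partial A = (\overline{A}\setminus A)\cup (A\setminus \mathrm{Int}(A))$ and on the stability of null sets under finite (or countable) unions.

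The forward direction is essentially trivial: if $A$ has a small boundary, then $\overline{A}\setminus A\subset \partial A$ is contained in a null set and therefore is itself a null set (since the class of null sets is closed under taking Borel subsets, as the defining measures are Borel probabilities).

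For the converse, assume that $\overline{B}\setminus B$ is a null set for every atom $B\in P$, and fix $A\in P$. I would first rewrite
\[
\partial A \;=\; (\overline{A}\setminus A)\cup(A\setminus \mathrm{Int}(A)).
\]
The first piece is a null set by hypothesis. For the second, I would use that $A\setminus \mathrm{Int}(A)=A\cap \overline{X\setminus A}$, and since $P$ partitions $X$, we have $X\setminus A=\bigcup_{B\in P,\,B\neq A}B$, whence
\[
A\setminus \mathrm{Int}(A)\;\subset\; A\cap \bigcup_{B\neq A}\overline{B}\;=\;\bigcup_{B\neq A}(A\cap \overline{B}).
\]
Since the atoms of $P$ are pairwise disjoint, $A\cap \overline{B}\subset \overline{B}\setminus B$ for every $B\neq A$, so
\[
A\setminus \mathrm{Int}(A)\;\subset\;\bigcup_{B\in P,\,B\neq A}(\overline{B}\setminus B).
\]
This is a finite (or at worst countable) union of null sets, hence null, and therefore $\partial A$ is null.

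There is no real obstacle here; the only point worth being a little careful about is the quantification over the family of invariant measures: one must observe that the implication ``null set of every invariant measure'' is preserved by Borel subsets and by countable unions, which is immediate since each invariant measure is a genuine Borel probability. The argument is identical in the discrete case $(X,T)$ and the flow case $(X,\Phi)$ because it uses only the topological structure of $X$ and the set-theoretical properties of null sets, not the dynamics.
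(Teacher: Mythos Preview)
Your proof is correct and follows essentially the same route as the paper: both reduce the converse direction to the inclusion $A\cap\partial A=A\setminus\Int(A)\subset\bigcup_{B\neq A}(\overline{B}\setminus B)$, you via the identity $\overline{\bigcup_{B\neq A}B}=\bigcup_{B\neq A}\overline{B}$ and the paper via a sequential/pigeonhole argument. One small caveat: your parenthetical ``(or at worst countable)'' is misleading, since the step $\overline{\bigcup_{B\neq A}B}\subset\bigcup_{B\neq A}\overline{B}$ genuinely requires $P$ to be finite (the paper's subsequence extraction has the same hidden finiteness), so the lemma as proved is for finite Borel partitions --- which is the only case used in the paper.
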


\begin{proof}
The necessary condition is clear because the sets $\overline{A}\setminus A$ is contained in the boundary of $A\in P$. To prove the equivalence it is enough to see that $A\cap \partial A \subset \bigcup_{B\in P\setminus\{A\}}\overline{B}\setminus B$. Take  $x\in A\cap \partial A$, in particular $x\notin \Int (A)$. Therefore there is a sequence $(x_n)_n$ in the complement set of $A$ going to $x$. By extracting a subsequence we may assume all $x_n$  are in $B$ for some $P\ni B\neq A$, so that $x$ belongs to  $\overline{B}\setminus B$.\end{proof}

 The partition,  generated by a finite cover of  sets with small boundary, has  itself a small boundary. Consequently a  system  with the small boundary property  admits  partitions with small boundary and  arbitrarily small diameter.  Similar properties also hold true for the small flow boundary property. A partition $P$ of a w.e.c. $S$ is said to have a \textit{small flow boundary} when every atom  in $ P$  defines a w.e.c. with small flow boundary (in this case $S$ has itself a small flow boundary).

\begin{lemma}\label{mini}
Let $(X, \Phi)$ be a topological flow with the small flow boundary property. Let $S\subset S'$ be  w.e.c.'s with $\overline{S}\subset \Int^{\Phi}(S')$ such that $S$ has a  small flow boundary.  Then there are  partitions  of $S$ into w.e.c.'s with small flow boundary and  arbitrarily small diameter. 
\end{lemma}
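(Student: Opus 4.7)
The plan is to combine the small flow boundary property with a standard cover-to-partition argument, drawing all atoms from the ambient cross-section $S'$. Let $\eta$ be a common time for the w.e.c.'s $S$ and $S'$.

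First, for fixed $\delta>0$ and each $y\in\overline{S}$, I would produce a small-diameter w.e.c.\ inside $S'$, with small flow boundary, whose flow interior contains $y$. Setting $S_y:=S'\cap\overline{B(y,\delta)}$ gives a subset of the w.e.c.\ $S'$, hence itself a w.e.c.\ of time $\eta$, with diameter at most $2\delta$. The preliminary point is to check that $y\in\Int^\Phi(S_y)$: using $y\in\Int^\Phi(S')$ (from $\overline{S}\subset\Int^\Phi(S')$) and the local homeomorphism of Lemma \ref{comp}, any sequence $x_n\to y$ in $X$ is eventually of the form $\phi_{t_n}(y_n)$ with $y_n\in S'$, $y_n\to y$, $t_n\to 0$, whence $y_n\in S_y$ and $x_n\in(S_y)_\eta$ for large $n$. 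Having established this, I would apply the small flow boundary property to $S_y$ to obtain a w.e.c.\ $V_y\subset S_y$ with $y\in\Int^\Phi(V_y)$ and small flow boundary; the diameter bound $\diam V_y\leq 2\delta$ is inherited.

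Next, since $V_y\subset S'$ and $S'$ is a cross-section of time $\eta$, injectivity of $\Phi$ on $S'\times[-\eta,\eta]$ yields $\Int^\Phi(V_y)=S'\cap\Int((V_y)_\eta)$, which is open in $S'$. Therefore $\{\Int^\Phi(V_y)\}_{y\in\overline{S}}$ is an open cover of the compact set $\overline{S}$ in $S'$, from which I extract a finite subcover $V_1,\dots,V_N$; in particular the $V_i$ cover $S$. The candidate partition is defined by successive subtraction: $T_1:=V_1\cap S$ and $T_i:=(V_i\cap S)\setminus\bigcup_{j<i}V_j$ for $i\geq 2$. These atoms are pairwise disjoint, cover $S$, lie in $S$ (so each is a w.e.c.\ of time $\eta$), and satisfy $\diam T_i\leq 2\delta$.

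It remains to verify that each $T_i$ has small flow boundary. Since $\bigcup_{j<i}V_j\subset S'$ is itself a w.e.c.\ of time $\eta$, successive applications of the three items of Lemma \ref{ter} give $\partial^\Phi T_i\subset \partial^\Phi S\cup\bigcup_{j\leq i}\partial^\Phi V_j$; applying $\phi_{[-\eta,\eta]}$ expresses $\partial^\Phi T_{i,\eta}$ as a finite union of null sets, hence null. Discarding empty atoms yields the desired partition of $S$, and since $\delta>0$ was arbitrary, one obtains partitions of arbitrarily small diameter. I expect the main obstacle to be the preliminary claim of the first paragraph, namely that the truncation $S'\cap\overline{B(y,\delta)}$ still contains $y$ in its flow interior; this rests on a careful use of the local homeomorphism in Lemma \ref{comp}. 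Everything else is formal manipulation of flow boundaries via Lemma \ref{ter}.
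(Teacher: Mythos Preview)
Your proof is correct and follows essentially the same approach as the paper's: produce for each point of $\overline S$ a small-diameter w.e.c.\ inside $S'$ with small flow boundary containing the point in its flow interior, extract a finite cover by compactness, and bound the flow boundaries of the resulting partition atoms via Lemma~\ref{ter}. You are actually more careful than the paper on one point: the paper simply asserts the existence of $S_x\subset S'$ with small flow boundary \emph{and arbitrarily small diameter}, whereas Definition~\ref{dedef} does not mention diameter; your intermediate step of first shrinking to $S'\cap\overline{B(y,\delta)}$ and verifying $y\in\Int^\Phi$ of this set (via Lemma~\ref{comp}) is exactly what is needed to justify that assertion. Your successive-subtraction construction of the partition and the paper's ``partition generated by the cover'' are equivalent, and both reduce to the same flow-boundary inclusions from Lemma~\ref{ter}.
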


\begin{proof}
For all $x\in \overline{S}$ there is  a  w.e.c. $S_x\subset S'$ with small flow boundary and arbitrarily small diameter satisfying   $x\in \Int^{\Phi} (S_x) \subset \Int^{\Phi}(S')$. The sets $(\overline{S}\cap \Int^\Phi S_x)_{x\in \overline{S}}$ define an open cover of $\overline{S}$. Let  $E$ be a finite subset of $\overline{S}$ such that  $(\overline{S}\cap \Int^\Phi( S_x))_{x\in E}$ is a finite open subcover.  The partition of $S$ generated by the finite cover $(S\cap S_x)_{x\in E}$ of $S$ has a small flow boundary according to  Lemma \ref{ter} (2).
\end{proof}

We define now the corresponding notion for  global Borel cross-sections of a topological flow.  Let $S$ be  a  global Borel cross-section   of a topological flow $(X,\Phi)$. For $A\subset S$ we let $\mathsf T_A$ be the tower above $A$ defined as \[\mathsf T_A:=\{\phi_t(x), \ x\in A \text{ and }0\leq t< t_S(x) \}.\] For a Borel partition $P$ of $S$, the towers $T_A$ for $A\in P$  define a Borel partition $\mathsf T_P$ of $X$. 

\begin{defi}
With the above notations, 
a Borel partition $P$ of $S$ is said essential when the associated partition $\mathsf T_P$ of $X$ in towers is essential. 
\end{defi}

When $Q$ is a partition of $S$ and $P$ is an  essential partition of $S$ finer than $Q$, then $Q$ is also essential.

\begin{lemma}Let $\mathcal{S}$ be a complete family of  cross-sections  with small flow boundary. Then the partition $\mathcal{S}$ of the global closed cross-section  $\mathfrak{S}=\bigcup_{S\in \mathcal{S}}S$ is essential. 
\end{lemma}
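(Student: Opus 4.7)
The plan is to invoke Lemma~\ref{fdfee}: since $\mathsf{T}_{\mathcal{S}}=\{\mathsf{T}_S:S\in\mathcal{S}\}$ is a Borel partition of $X$, it suffices to show, for every $S\in\mathcal{S}$, that $\overline{\mathsf{T}_S}\setminus\mathsf{T}_S$ is a null set. The strategy is then to obtain a concrete inclusion
\[
\overline{\mathsf{T}_S}\setminus\mathsf{T}_S \;\subset\; \mathfrak{S}\,\cup\,\partial^{\Phi}\mathfrak{S}_{\eta_{\mathcal{S}}},
\]
and conclude by noting that the right-hand side is the union of a Borel cross-section and a null set coming from the small flow boundary hypothesis.

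To establish the inclusion I would analyse an arbitrary $y\in\overline{\mathsf{T}_S}\setminus\mathsf{T}_S$. By definition, $y=\lim_n\phi_{t_n}(x_n)$ for some sequences $x_n\in S$ and $0\le t_n<t_{\mathfrak{S}}(x_n)\le\eta_{\mathcal{S}}$. Using that $S$ is closed (hence compact) and that $t_n$ is bounded, I would pass to a subsequence so that $x_n\to x\in S$ and $t_n\to t\in[0,\eta_{\mathcal{S}}]$, giving $y=\phi_t(x)$. The inequality $t<t_{\mathfrak{S}}(x)$ would place $y$ in $\mathsf{T}_S$, contradicting the choice of $y$; hence $t\ge t_{\mathfrak{S}}(x)$. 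I would then split into two cases. If $t=t_{\mathfrak{S}}(x)$, then $y=T_{\mathfrak{S}}(x)\in\mathfrak{S}$. If $t>t_{\mathfrak{S}}(x)$, then since $\limsup_n t_{\mathfrak{S}}(x_n)\ge t>t_{\mathfrak{S}}(x)$, the lower semicontinuity of $t_{\mathfrak{S}}$ forces $t_{\mathfrak{S}}$ to be genuinely discontinuous at $x$, so $x\in\mathfrak{S}\setminus\mathcal{C}_{\mathfrak{S}}$. Lemma~\ref{firt} applied to the global cross-section $\mathfrak{S}$ then provides $s\in[0,\eta_{\mathcal{S}}]$ with $\phi_s(x)\in\partial^{\Phi}\mathfrak{S}$, and therefore $y=\phi_{t-s}(\phi_s(x))\in\phi_{[-\eta_{\mathcal{S}},\eta_{\mathcal{S}}]}(\partial^{\Phi}\mathfrak{S})=\partial^{\Phi}\mathfrak{S}_{\eta_{\mathcal{S}}}$.

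For the final step, the Borel cross-section $\mathfrak{S}$ has measure zero for every $\Phi$-invariant Borel probability measure, as recalled in the preliminaries. For the second piece, Lemma~\ref{ter}(1) yields $\partial^{\Phi}\mathfrak{S}=\bigcup_{S'\in\mathcal{S}}\partial^{\Phi}S'$, whence $\partial^{\Phi}\mathfrak{S}_{\eta_{\mathcal{S}}}=\bigcup_{S'\in\mathcal{S}}\partial^{\Phi}S'_{\eta_{\mathcal{S}}}$ is a finite union of null sets by the small flow boundary assumption on the family $\mathcal{S}$. The main delicate point in this plan is case $t>t_{\mathfrak{S}}(x)$: one has to extract from the upward jump of $t_{\mathfrak{S}}$ at $x$ the membership $x\in\mathfrak{S}\setminus\mathcal{C}_{\mathfrak{S}}$ in order to invoke Lemma~\ref{firt}, which is precisely the lemma that converts discontinuities of the return time into points whose orbit touches the flow boundary of the cross-section. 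Once this link is made, transferring the small flow boundary from individual $S'\in\mathcal{S}$ to the union $\mathfrak{S}$ via Lemma~\ref{ter}(1) is straightforward.
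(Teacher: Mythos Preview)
Your argument is correct. The inclusion $\overline{\mathsf{T}_S}\setminus\mathsf{T}_S\subset\mathfrak{S}\cup\partial^{\Phi}\mathfrak{S}_{\eta_{\mathcal{S}}}$ is established cleanly: the case $t<t_{\mathfrak{S}}(x)$ is excluded, the case $t=t_{\mathfrak{S}}(x)$ lands in $\mathfrak{S}$, and in the case $t>t_{\mathfrak{S}}(x)$ the inequality $t_n<t_{\mathfrak{S}}(x_n)$ combined with $t_n\to t>t_{\mathfrak{S}}(x)$ forces $x\notin\mathcal{C}_{\mathfrak{S}}$, after which Lemma~\ref{firt} and the bound $t,s\in[0,\eta_{\mathcal{S}}]$ place $y$ in $\partial^{\Phi}\mathfrak{S}_{\eta_{\mathcal{S}}}$. (The mention of lower semicontinuity is not actually needed for the discontinuity step; plain continuity-at-$x$ would force $\limsup_n t_{\mathfrak{S}}(x_n)=t_{\mathfrak{S}}(x)$, which already contradicts $\limsup_n t_{\mathfrak{S}}(x_n)\ge t$.)

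Your route differs genuinely from the paper's. The paper does not work with $\mathsf{T}_S$ directly: it first refines to the finer partition $P=\mathcal{S}\vee T_{\mathfrak{S}}^{-1}\mathcal{S}$, shows via Lemmas~\ref{tour} and~\ref{ter}(2) that each atom $E\in P$ has small flow boundary, and uses Lemma~\ref{osc} to extend $t_{\mathfrak{S}}$ continuously to $\overline{E}$. This continuous extension gives a tidy decomposition of $\overline{\mathsf{T}_E}\setminus\mathsf{T}_E$ into the base $\overline{E}$, the roof $\{\phi_{t^E_{\mathfrak{S}}(x)}(x):x\in\overline{E}\}$ (both inside $\mathfrak{S}$), and a lateral part contained in $\partial^{\Phi}E_{\zeta}$. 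Your approach bypasses Lemmas~\ref{osc} and~\ref{tour} entirely, replacing the continuity-on-atoms machinery by a single application of Lemma~\ref{firt} at the discontinuity point. The paper's detour does buy something extra---it actually shows that each atom of the finer partition $P$ has small flow boundary---but for the lemma as stated your argument is shorter and more self-contained.
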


\begin{proof} It is enough to show  the joined partition $P=\mathcal{S}\vee T_{\mathfrak S}^{-1}\mathcal{S}$ of $\mathfrak{S}$  is essential. For any $S\in \mathcal{S}$ the w.e.c. $T_{\mathfrak S}^{-1}(\partial^{\Phi} S )$ has a small flow boundary  according to Lemma \ref{sss} :
\begin{eqnarray*}
\frac{1}{\mathsf T}\sharp \{0<t<\mathsf T, \ \phi_t(x)\in T_{\mathfrak S}^{-1}(\partial^{\Phi} S) \}& =& \frac{1}{\mathsf T}\sharp \{0<t<\mathsf T, \ \phi_{t+t_\mathfrak S(\phi_t(x))}(x)\in \partial^{\Phi} S \},\\
&\leq & \frac{1}{\mathsf T}\sharp \{0<t<\mathsf T+ \eta_\mathcal{S}, \ \phi_t(x)\in \partial^{\Phi} S \}\xrightarrow{\mathsf T\rightarrow +\infty}0 \text{ uniformly in }x\in X.
\end{eqnarray*}

Each $E\in P$ has  therefore a small flow boundary by Lemma \ref{tour} and Lemma \ref{ter} (2). Moreover  the restriction of $t_\mathfrak{S}$ to  $E$ extends  on $\overline{E}$ to a continuous function $t_\mathfrak{S}^E$ by Lemma \ref{osc}.
 Therefore  $\overline{T_{E}}\setminus T_{E}$ is contained in the union of $\overline{E}$, $\{ \phi_{t^E_\mathfrak{S}(x)}(x), \ x\in \overline{E}\}$ and $\{\phi_{t}(x), \ 0\leq t\leq \sup t^E_\mathfrak{S} \text{ and }x\in \overline{E}\setminus E\}$. The first two sets are subsets of the  global closed cross-section $\mathfrak{S}$ so that their measure is zero for any probability measure invariant by the flow. The last set is contained in $\partial^{\Phi}E_\zeta$ with $\zeta=\sup t_\mathfrak{S}$ and therefore it  is also a null set because $E$ has a small flow boundary.
\end{proof}

\begin{rem}\begin{itemize}
\item There is no statement similar to Lemma \ref{fdfee} for  a complete family $\mathcal{S}$ of closed cross-sections with small flow boundary. Indeed,   for every $S\in \mathcal{S}$ the set $\overline{S}\setminus S$ is empty  but the cross-section $S$ has  not necessarily a small flow boundary.  
\item  For a Borel global cross-section, essential partitions are more general than partitions with small flow boundary, whose atoms are necessarily w.e.c.'s  (the flow boundary makes only sense for a w.e.c.).
\end{itemize}
\end{rem}



\subsubsection{Small boundary for $\Phi$ and $\phi_t$}

The small flow boundary property
for a closed cross-section may be related with the small boundary property
of the associated cylinders for the discrete time-$t$ maps as follows. 

\begin{lemma}Let $(X,\Phi)$ be a topological flow with a  closed cross-section $S$  and let $t\neq 0$. The following properties are equivalent.
\begin{enumerate}[i)]
\item  $S$ has a small flow boundary for the flow $\Phi$;  
\item for some $\eta>0$ the flow boundary  $\partial^{\Phi} S_\eta$   is a null set  for the topological discrete system $(X,\phi_t)$.
\end{enumerate}
\end{lemma}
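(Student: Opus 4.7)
The direction $(ii)\Rightarrow(i)$ I would dispatch immediately: every $\Phi$-invariant Borel probability measure is $\phi_t$-invariant (this is precisely the inclusion $i_t$ from Section 2.1.6), so if $\partial^{\Phi}S_\eta$ is $\phi_t$-null it is a fortiori $\Phi$-null, and $S$ has small flow boundary.

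For $(i)\Rightarrow(ii)$, my plan is to transfer the null property in the reverse direction by using the retraction $\theta_t:\mathcal{M}(X,\phi_t)\rightarrow\mathcal{M}(X,\Phi)$. Assume first $t>0$ (the case $t<0$ reduces to this since $\mathcal{M}(X,\phi_t)=\mathcal{M}(X,\phi_{-t})$). Given any $\mu\in\mathcal{M}(X,\phi_t)$, set $\nu:=\theta_t(\mu)=\frac{1}{t}\int_0^t\phi_s\mu\,ds\in\mathcal{M}(X,\Phi)$. Let $\eta_0$ denote the time of the w.e.c.\ $S$. Hypothesis $(i)$ gives $\nu(\partial^{\Phi}S_{\eta_0})=0$, i.e.
\[
\frac{1}{t}\int_0^t\mu\bigl(\phi_{-s}(\partial^{\Phi}S_{\eta_0})\bigr)\,ds=0.
\]

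The key geometric observation is the inclusion
\[
\phi_{-s}(\partial^{\Phi}S_{\eta_0})=\phi_{[-\eta_0-s,\,\eta_0-s]}\partial^{\Phi}S\;\supset\;\phi_{[-(\eta_0-s),\,\eta_0-s]}\partial^{\Phi}S=\partial^{\Phi}S_{\eta_0-s}\qquad(0\le s\le\eta_0),
\]
obtained simply by noting $-\eta_0-s\le -(\eta_0-s)$. For any fixed $\eta'\in(0,\eta_0)$ and any $s\in[0,\eta_0-\eta']$ one further has $\partial^{\Phi}S_{\eta_0-s}\supset\partial^{\Phi}S_{\eta'}$, hence $\mu\bigl(\phi_{-s}(\partial^{\Phi}S_{\eta_0})\bigr)\ge\mu(\partial^{\Phi}S_{\eta'})$ on that range of $s$. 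Integrating over $s\in[0,\min(t,\eta_0-\eta')]$ and comparing with the full integral above yields
\[
\min(t,\eta_0-\eta')\,\mu(\partial^{\Phi}S_{\eta'})\;\le\;\int_0^t\mu\bigl(\phi_{-s}(\partial^{\Phi}S_{\eta_0})\bigr)\,ds\;=\;t\,\nu(\partial^{\Phi}S_{\eta_0})\;=\;0,
\]
forcing $\mu(\partial^{\Phi}S_{\eta'})=0$ since $\min(t,\eta_0-\eta')>0$. As $\mu\in\mathcal{M}(X,\phi_t)$ was arbitrary, $\partial^{\Phi}S_{\eta'}$ is a null set for $(X,\phi_t)$, establishing $(ii)$.

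There is essentially no obstacle here; the only point requiring care is to verify the inclusion $\partial^{\Phi}S_{\eta_0-s}\subset\phi_{-s}(\partial^{\Phi}S_{\eta_0})$ and to choose $\eta'<\eta_0$ so that the geometric shrinkage stays within the admissible range of the cross-section. Notably, no restriction on $|t|$ relative to $\eta_0$ is needed, because the upper limit of integration is automatically truncated by $\min(t,\eta_0-\eta')$. The result is therefore a clean application of the averaging map $\theta_t$ combined with an elementary containment between flow-cylinders.
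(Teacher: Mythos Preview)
Your proof is correct and follows essentially the same route as the paper: both directions use the inclusion $i_t$ and the retraction $\theta_t$, and the heart of $(i)\Rightarrow(ii)$ is the containment $\partial^{\Phi}S_{\eta_0-s}\subset\phi_{-s}(\partial^{\Phi}S_{\eta_0})$ combined with the vanishing of the averaged measure. The only cosmetic difference is that the paper argues ``integral zero $\Rightarrow$ integrand zero for a.e.\ $s$'' and then picks one such $s$, whereas you bound the integrand below by the constant $\mu(\partial^{\Phi}S_{\eta'})$ on a subinterval of positive length; your variant has the minor bonus of giving $(ii)$ for \emph{every} $\eta'<\eta_0$ rather than just some $\eta'$.
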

However  there may exist $\phi_t$-invariant probability measures with $t\neq 0$ supported on $\phi_\eta S$ and thus on $\partial S_\eta$.
\begin{proof}
As any $\Phi$-invariant measure is $\phi_t$-invariant we have trivially $ii) \Rightarrow i)$. Assume $i)$ and let us prove $ii)$.  Let $\mu$ be a $\phi_t$-invariant measure.  
The  $\Phi$-invariant measure $\theta_t(\mu)$ satisfies
 $0=\theta_t(\mu)(\partial S_\eta)=\frac{1}{t}\int_0^t\phi_s\mu(\partial S_\eta)\, ds$. Therefore we have  $\phi_s\mu(\partial S_\eta)=\mu\left(\partial\phi_{[-\eta-s,\eta-s]}S\right)=0$ for Lebesgue almost $s\in [0,t]$. This concludes the proof as we have $\partial^{\Phi} S_{\eta-s}\subset  \partial\phi_{[-\eta-s,\eta-s]}S$ for $0\leq s\leq \eta$.
\end{proof}

\subsubsection{Suspension flows}
Let $(X,T)$ be a topological discrete system. We consider the suspension flow  $(X_r,\Phi_r)$ over the base $(X,T)$ under a positive continuous roof function $r:X\rightarrow\mathbb{R}^{+}$.
\begin{lemma}\label{suspe}
With the above notations, the flow  $(X_r,\Phi_r)$ satisfies the small flow boundary property if and only  if  $(X,T)$ satisfies the small boundary property. 
\end{lemma}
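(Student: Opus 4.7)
The plan is to translate between the small boundary property on the base $(X,T)$ and the small flow boundary property on the suspension $(X_r,\Phi_r)$ using two tools: the product chart of $X_r$ near any point $p=(y,t_0)$ with $t_0\in(0,r(y))$, and the measure homeomorphism $\Theta:\mathcal{M}(X,T)\to \mathcal{M}(X_r,\Phi_r)$ recalled in \S\ref{sssup}. Through $\Theta$ and Fubini, any Borel set of the form $\{(x,\tau(x)+s):x\in B,\ s\in I\}$ (a flow saturation of a graph over $B\subset X$) has $\Theta(\mu)$-measure $\mu(B)\cdot\lambda(I)/\int r\,d\mu$, which converts ``null for every $\Phi_r$-invariant measure'' into ``null for every $T$-invariant measure'' and vice versa.

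For $(\Rightarrow)$ I will argue as follows. Fix $y\in X$ and an open neighborhood $V$ of $y$, and pick $t_0$ in the nonempty open interval $(0,\inf_{\overline V}r)$. Then $S':=V\times\{t_0\}$ is a w.e.c.\ of the flow of some positive time $\eta$, and its flow interior contains $p:=(y,t_0)$. Applying the small flow boundary property to this $S'$ and $p$ yields a subset $S\subset S'$, necessarily of the form $U\times\{t_0\}$ with $U\subset V$, such that $y\in \Int^\Phi S=\Int(U)\times\{t_0\}$ and $\partial^\Phi S_\eta=\partial U\times[t_0-\eta,t_0+\eta]$ is null for every $\Phi_r$-invariant measure. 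Pushing this through $\Theta$ forces $\mu(\partial U)=0$ for every $\mu\in\mathcal{M}(X,T)$; since $\partial(\Int U)\subset\partial U$, the open set $\Int(U)\subset V$ is the required small-boundary neighborhood of $y$.

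For $(\Leftarrow)$, fix $p\in X_r$ and a w.e.c.\ $S'$ of some time $\eta>0$ with $p\in\Int^\Phi(S')$. After a short flow translate (a point-reduction we return to at the end) I may take a representative $p=(y,t_0)$ with $t_0\in(0,r(y))$. The homeomorphism $\Phi:\Int^\Phi(S')\times(-\eta,\eta)\to\Int(S'_\eta)$ of Lemma \ref{comp}, combined with the product chart of $X_r$ at $p$, implies that for a small enough open $U\ni y$ in $X$ and $\epsilon>0$ the flow box $U\times(t_0-\epsilon,t_0+\epsilon)$ lies in $\Int(S'_\eta)$ and meets $\Int^\Phi(S')$ in a continuous graph $\{(x,\tau(x)):x\in U\}$ with $\tau(y)=t_0$ (the function $\tau$ being defined as $t_0-\pi_2\circ\Phi^{-1}(\cdot,t_0)$). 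Using the small boundary property of $(X,T)$ I will shrink $U$ to an open $U_0\ni y$ with $\overline{U_0}\subset U$ and $\mu(\partial U_0)=0$ for every $\mu\in\mathcal{M}(X,T)$, and set $S:=\{(x,\tau(x)):x\in U_0\}$. A direct calculation gives $\overline S=\{(x,\tau(x)):x\in\overline{U_0}\}$ (a cross-section of some time $\eta_0>0$ as a subset of $\overline{S'}$), $\Int^\Phi S=S$ (so $p\in\Int^\Phi S\subset S'$), and $\partial^\Phi S_{\eta_0}=\{(x,\tau(x)+s):x\in\partial U_0,\ |s|\leq\eta_0\}$; Fubini through $\Theta$ then kills the measure of this set.

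The main obstacle is setting up the graph function $\tau$ in the $(\Leftarrow)$ direction: one must verify that the flow box above $U$ meets the cross-section $\Int^\Phi(S')$ in exactly one continuous sheet, which rests on the injectivity granted by Lemma \ref{comp}. A minor technicality arises when $p$ lies on the image of the base $X\times\{0\}$ in $X_r$, where the product chart has to be obtained by first flowing a short distance; I side-step this by applying the whole construction to $\phi_\delta(p)$ and $\phi_\delta(S')$ for a small $\delta>0$ and then pulling back by $\phi_{-\delta}$, since $\Int^\Phi$, $\partial^\Phi$ and the null-set property are all preserved under the flow.
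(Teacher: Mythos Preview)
Your proposal is correct and follows essentially the same approach as the paper. In the $(\Leftarrow)$ direction the paper phrases the construction slightly differently --- it takes $S$ to be the intersection of $S'$ with the $\xi$-cylinder $(U\times\{t\})_\xi$ of a small-boundary neighborhood $U\ni y$ --- but this intersection is exactly your graph $\{(x,\tau(x)):x\in U_0\}$ once $R=\diam(U)$ is chosen small relative to $\xi$, so the two arguments coincide; your explicit use of Lemma~\ref{comp} to produce $\tau$ simply makes visible what the paper's inclusion $\partial^{\Phi_r}S_\zeta\subset \partial U\times[t-\xi,t+\xi]$ leaves implicit.
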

\begin{proof}
Assume firstly $(X_r,\Phi_r)$ has the small flow boundary property. Let $(x,t)\in X_r$ with $0\leq t<r(x)$. For any $R>0$ the set $S'=\{y\in X,\ d(y,x)\leq R\}\times \{t\}$ is a closed cross-section containing $(x,t)$ in its flow interior (with $d$ being the distance on $X$). Therefore there exists another cross-section $S=U\times \{t\}\subset S'$ with  a small flow boundary for the flow $\Phi_r$ and $(x,t)\in \Int^{\Phi_r}(S)= \Int(U)\times \{t\}$, i.e. $x\in \Int(U)$. For small enough $\zeta>0$ we have $\Theta(\mu)(\partial^{\Phi_r}S_\zeta)=\frac{2\zeta\times \mu(\partial U)}{\int r \, d\mu}=0$ for all $\mu\in \mathcal{M}(X,T)$. Thus $U\subset X$ is a neighborhood of $x$ with  small boundary for $(X,T)$ and with diameter less than $R$. Therefore $(X,T)$ has the small boundary property. 

 Conversely we consider a topological system $(X,T)$ on the base 
 with the small boundary property. Let $S'$ be a closed cross-section containing $(x,t)$ in its flow interior. Let $U$ be a closed neighborhood of $x$ with small boundary for $(X,T)$ and with diameter less than $R$. We let $S$ be the intersection of $S'$ with the $\xi$-cylinder  of the closed cross-section $U\times \{t\}$. For $\xi>0$ fixed  and $\zeta$, $R$ small enough we have $\partial^{\Phi_r}S_\zeta \subset \partial U\times [-\xi+t, \xi+t]$. Therefore  $(x,t)$ belongs to $\Int^{\Phi_r}(S)$ and  $\Theta(\mu)(\partial^{\Phi_r}S_\zeta)\leq \frac{2\xi \times \mu(\partial U)}{\int r\,d\mu}=0$ for all $\mu\in \mathcal{M}(X,T)$, thus $S$ has a small flow boundary.
\end{proof}



\subsubsection{Orbit equivalence}

\begin{lemma}\label{floeq}The small flow boundary property is preserved by orbit equivalence for regular flows.
\end{lemma}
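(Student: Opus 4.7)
The plan is to transport cross-sections through the orbit equivalence and use the uniform characterization \textit{iii)} from Lemma \ref{sss}. Let $\Lambda:X\to Y$ be the orbit equivalence between $(X,\Phi)$ and $(Y,\Psi)$, and let $\alpha:X\times\mathbb{R}\to\mathbb{R}$ be the associated continuous time-change cocycle, defined by $\Lambda(\phi_t x)=\psi_{\alpha(x,t)}(\Lambda x)$; for each $x$ the map $\alpha(x,\cdot)$ is an orientation-preserving homeomorphism of $\mathbb{R}$, and the cocycle identity $\alpha(x,t+s)=\alpha(x,t)+\alpha(\phi_t x,s)$ holds. By symmetry of orbit equivalence it suffices to prove that if $(X,\Phi)$ has the small flow boundary property then so does $(Y,\Psi)$.

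First I would check that $\Lambda$ carries $\Phi$-w.e.c.'s to $\Psi$-w.e.c.'s and preserves flow interior and flow boundary. Given a $\Phi$-w.e.c.\ $T_0$ of time $\eta$, compactness of $X$ and continuity of $\alpha$ yield $\eta':=\inf_{x\in X}\alpha(x,\eta)>0$, so that $\Psi:\overline{\Lambda(T_0)}\times[-\eta',\eta']\to Y$ inherits injectivity from $\Phi:\overline{T_0}\times[-\eta,\eta]\to X$, making $\Lambda(T_0)$ a $\Psi$-w.e.c.\ of time $\eta'$. Analogous uniform estimates on the inverse cocycle show that the sets $\Lambda(\phi_{[-\eta,\eta]}(T_0))$ and $\psi_{[-\eta',\eta']}(\Lambda(T_0))$ are mutually nested between two $\Psi$-cylinders and two $\Phi$-cylinders, respectively, so that $\Lambda(\Int^\Phi T_0)=\Int^\Psi\Lambda(T_0)$ and $\Lambda(\partial^\Phi T_0)=\partial^\Psi\Lambda(T_0)$.

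The heart of the argument is then to check that the small flow boundary property is preserved. Using criterion \textit{iii)} of Lemma \ref{sss}, for $y=\Lambda(x)$ and $\mathsf T>0$ the identity $\psi_s(y)\in\partial^\Psi\Lambda(T_0)\Leftrightarrow\phi_{\alpha(x,\cdot)^{-1}(s)}(x)\in\partial^\Phi T_0$ yields
\[
\sharp\{0<s<\mathsf T,\ \psi_s(y)\in\partial^\Psi\Lambda(T_0)\}=\sharp\{0<t<t^*(x,\mathsf T),\ \phi_t(x)\in\partial^\Phi T_0\},
\]
where $t^*(x,\mathsf T):=\alpha(x,\cdot)^{-1}(\mathsf T)$. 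Iterating the cocycle identity and invoking the uniform bounds $0<\inf_z\alpha(z,1)\leq\sup_z\alpha(z,1)<\infty$ (which hold by compactness and continuity), I obtain constants $c,C>0$ such that $c\mathsf T-C\leq t^*(x,\mathsf T)\leq C\mathsf T+C$ uniformly in $x\in X$. In particular $t^*(x,\mathsf T)\to\infty$ and $t^*(x,\mathsf T)/\mathsf T$ stays bounded, uniformly in $x$, so dividing by $\mathsf T$ transfers the uniform convergence to zero from the $\Phi$-side to the $\Psi$-side.

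Finally, given $y\in Y$ and a $\Psi$-w.e.c.\ $S'$ with $y\in\Int^\Psi(S')$, the first step gives $\Lambda^{-1}(S')$ as a $\Phi$-w.e.c.\ containing $\Lambda^{-1}(y)$ in its flow interior; the small flow boundary property of $(X,\Phi)$ then furnishes $T_0\subset\Lambda^{-1}(S')$ with $\Lambda^{-1}(y)\in\Int^\Phi(T_0)$ and with small $\Phi$-flow boundary, and $S:=\Lambda(T_0)\subset S'$ is the desired $\Psi$-w.e.c. The main technical obstacle is the first step, namely cleanly controlling the image of a cross-section of positive time $\eta$ as a cross-section of some uniformly positive $\Psi$-time $\eta'$ and identifying flow interiors and flow boundaries under $\Lambda$; once this dictionary is in place the asymptotic argument of the second step is essentially automatic from the uniform cocycle bounds.
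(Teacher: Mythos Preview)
Your proposal is correct and follows essentially the same route as the paper: transport cross-sections through $\Lambda$, identify (or bound) $\partial^{\Psi}\Lambda(S)$ by $\Lambda(\partial^{\Phi}S)$, and then use criterion \textit{iii)} of Lemma \ref{sss} together with a uniform bound on the time-change coming from compactness. The paper is merely terser---it invokes uniform continuity of $\Lambda^{-1}$ directly instead of building the full cocycle $\alpha$, and only needs the inclusion $\partial^{\Psi}\Lambda(S)\subset\Lambda(\partial^{\Phi}S)$ rather than the equality you establish---but the ideas coincide.
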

\begin{proof}
Let $(X,\Phi)$ and $(Y,\Psi)$ be two topological orbit equivalent flows, via a homeomorphism  $\Lambda$ from $X$ onto $Y$. Clearly it is enough to show that the image by $\Lambda$ of a closed cross-section  with small flow  boundary is a closed cross-section with small flow boundary.  By continuity of $\Lambda^{-1}$, we have 
\begin{align*}\forall \epsilon> 0 \ \exists \delta>0 \ & \forall y\in Y, \  \psi_{[0,\delta]}(y)\subset \Lambda\left(\phi_{[0,\epsilon]}\left(\Lambda^{-1}y\right)\right).
\end{align*}
If $S$ is a closed cross-section of $(X,\Phi)$ with time $\epsilon$, then $\Lambda(S)$ defines a closed cross-section  of $(Y,\Psi)$ with time $\delta$. The homeomorphism $\Lambda^{-1}$ maps any $\Psi$-orbit of length $\mathsf T$ on a $\Phi$-orbit of length at most $\frac{\epsilon \mathsf T}{\delta}$ so that for any $y\in Y$ 
\begin{align*}
\frac{1}{\mathsf T}\sharp \{0<t<\mathsf T, \ \psi_t(y)\in \partial^{\Psi} \Lambda(S) \}& \leq \frac{1}{\mathsf T}\sharp \{0<t<\mathsf T, \ \psi_t(y)\in\Lambda( \partial^{\Phi} S) \},\\
&\leq  \frac{1}{\mathsf T}\sharp \{0<t<\mathsf T, \ \Lambda^{-1}(\psi_t(y))\in \partial^{\Phi} S \},\\
&\leq \frac{1}{\mathsf T}\sharp \{0<t<\frac{\epsilon \mathsf T}{\delta}, \ \phi_t(\Lambda^{-1}y)\in \partial^{\Phi} S \}. 
\end{align*}
It follows then from  Lemma \ref{sss} iii), that  if $S$ satisfies the small flow boundary property for $(X,\Phi)$ then so does  $\Lambda(S)$ for $(Y,\Psi)$.
\end{proof}
\subsubsection{The case of $C^2$ smooth regular flows.}

Building on works of E.Lindenstrauss and J.Kulesza we prove the small flow boundary property for $C^2$ smooth  (regular) flows on compact manifolds (in fact our proof also applies to $C^1$ discrete  systems).  The closed cross-sections with small flow boundary obtained in our proof are given by smooth discs (in the previous works of  E.Lindenstrauss and J.Kulesza we do not know if one can choose the neighborhoods with small boundary as topological balls).  However we only deal with $C^2$ smooth flows on a compact manifold as we use differential transversality tools, whereas  E.Lindenstrauss and J.Kulesza proved the small boundary property for  homeomorphisms of a finite dimensional compact set. 

A $C^2$ smooth flow is said to have the \textit{smooth small flow boundary property} when  Definition \ref{dedef} holds true with   closed cross-sections $S$ and $S'$  given by $C^1$ smooth discs  transverse to the $C^1$ vector field generating the flow.
\begin{prop}\label{SMB}
Let $(X,\Phi)$ be a $C^2$ smooth  flow on a  compact manifold $X$. We assume that for any $t>0$ the number of periodic orbits  with period less than $t$ is finite. Then $(X,\Phi)$ has the smooth small flow boundary property.
\end{prop}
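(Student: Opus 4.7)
The plan is to realize closed cross-sections as smooth embedded discs transverse to the $C^1$ vector field $\mathsf X$ generating $\Phi$, and within a smooth one-parameter family of such sub-discs to extract a generic member whose boundary sphere has zero flow-cylinder measure for every $\Phi$-invariant probability measure.

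Fix $x \in X$ and a smooth disc cross-section $S'$ with $x \in \Int^\Phi(S')$. Using smooth coordinates on $S'$ centred at $x$, I construct a nested smooth family $(D_r)_{r \in [r_0, 2r_0]}$ of closed sub-discs with $x \in \Int(D_{r_0})$ whose smoothly embedded boundary spheres $\partial D_r$ are pairwise disjoint and foliate the closed annulus $D_{2r_0} \setminus \Int(D_{r_0})$. After shrinking $\eta > 0$ so that $\Phi$ is injective on $S' \times [-\eta, \eta]$, the cylinders $\phi_{[-\eta, \eta]}(\partial D_r)$ are pairwise disjoint as well. By Lemma~\ref{sss} it suffices to exhibit one $r$ for which $\mu(\phi_{[-\eta, \eta]}\partial D_r) = 0$ for every $\mu \in \mathcal M(X, \Phi)$.

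For each fixed $\mu$, a Fubini computation yields
\[
\int_{r_0}^{2r_0} \mu\bigl(\phi_{[-\eta, \eta]}\partial D_r\bigr)\, dr = 0,
\]
since any $y$ in the flow-box of $S'$ lies in at most one $\phi_{[-\eta, \eta]}\partial D_r$ (determined by its transverse radius). Hence the bad set $B_\mu := \{r : \mu(\phi_{[-\eta, \eta]}\partial D_r) > 0\}$ is at most countable. Reducing by the ergodic decomposition, ergodic $\mu$ fall into two types: those supported on a single periodic orbit (countably many by hypothesis, each intersecting the smooth disc $S'$ transversely in finitely many points, and thus contributing only countably many bad radii), and those with $\mu(\mathsf{Per})=0$, for which the transverse measure $\tilde\mu$ on $S'$ is non-atomic --- an atom at $z$ would by flow-invariance force $\mu$ to charge each of the pairwise disjoint segments $\phi_{[-\eta+2\eta k,\,\eta+2\eta k]}\{z\}$ with equal positive mass, contradicting $\mu(X)<\infty$.

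The main obstacle I expect lies in collapsing $\bigcup_\mu B_\mu$ over the uncountable non-atomic class. Here I adapt the Lindenstrauss--Kulesza slicing construction, exploiting the finite topological dimension of $X$ and the $C^2$ smoothness of $\mathsf X$ (which provides uniform transverse-speed control in the flow-box) to refine $(D_r)$ inductively, removing at each stage a meagre null set of radii identified by a weak-$*$-dense countable subset of ergodic measures, and using upper semi-continuity of $\mu \mapsto \mu(\phi_{[-\eta, \eta]}\partial D_r)$ on closed sets together with weak-$*$ compactness of $\mathcal M(X, \Phi)$ to propagate the conclusion from this countable subfamily to all of $\mathcal M(X, \Phi)$. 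Any $r$ outside the resulting exceptional set yields a smooth sub-disc $D_r\subset S'$ with $x \in \Int(D_r)$ and small flow boundary, which is the smooth small flow boundary property.
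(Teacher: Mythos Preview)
Your Fubini/disjointness observation is correct and gives, for each fixed $\mu\in\mathcal M(X,\Phi)$, an at most countable bad set $B_\mu\subset[r_0,2r_0]$. The genuine gap is in your last paragraph: the device you invoke to pass from a countable dense family of measures to all of $\mathcal M(X,\Phi)$ does not work. For a closed set $C$ the map $\mu\mapsto\mu(C)$ is upper semicontinuous, so from $\mu_n\to\mu$ and $\mu_n(C)=0$ you only get $\limsup_n\mu_n(C)\le\mu(C)$, which says nothing about $\mu(C)$. The inequality points the wrong way for your purpose, and no amount of weak-$*$ compactness repairs this. Your fallback --- ``adapt the Lindenstrauss--Kulesza slicing construction'' --- is precisely the hard part; the paper itself (Remark after the proof) treats such an adaptation to flows as an open direction, and in any case it does not proceed via upper semicontinuity on a dense set.

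The paper's proof takes a completely different route that sidesteps the uncountable-union problem. Rather than arguing measure by measure, it perturbs the boundary sphere $\partial S_f$ so that, under the family of local $C^1$ return maps between cross-sections, the iterated preimages $T^{-k}\partial S_f$ are mutually transverse $(d-1)$-submanifolds of $\mathbb R^d$. A Baire-category argument in the space of $C^1$ radial profiles shows this $n$-transversality is generic for every $n$ (this is where the finiteness of periodic orbits with bounded period is used, to separate the iterates locally). Mutual transversality of $(d-1)$-dimensional submanifolds forces any $d$ of them to have empty common intersection, hence every orbit meets $\partial S_f$ at most $d-1$ times. This is a \emph{pointwise, orbit-wise} bound, and Lemma~\ref{sss}\,iii) converts it directly into the small flow boundary property simultaneously for all invariant measures --- no union over $\mu$ is ever taken.
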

\begin{proof}Let $d+1$ be the dimension of $X$.   Fix $x\in  X$ and let $S$ be a $C^1$ smooth embedded disc with $x\in \Int^{\Phi}(S)$. 
One easily builds a finite family of $C^1$ smooth embeddings  $(h_i:\mathsf B_d\rightarrow X)_{i=0,...,N}$ with $h_0(0)=x$ and $h_0(B_d)\subset S$, such that the family $\mathcal{S}=(S_i)_i$ and $\mathcal{S}'=(S'_i)_i$ with $S_i= h_i(\mathsf B_d/2)$ and $S'_i=h_i(\mathsf B_d)$   both define  complete families of closed cross-sections with $\eta_\mathcal{S}=\eta_{\mathcal{S}'}$. For $0\leq i,j\leq N$ we let $t_{i,j}$ be the first  hitting time from $S_i$ to $S_j$ : 
$$\forall x\in S_i, \ t_{i,j}(x)=\min\{t>0, \ \phi_t(x)\in S_j\}.$$
We define similarly the first hitting time $t'_{i,j}$ from $S'_i$ to $S'_j$.  Let $U_{i,j}=\{t_{i,j}\leq \eta_\mathcal{S}\}$ and $U'_{i,j}=\{t'_{i,j}<+\infty\}$. Finally we denote by $T_{i,j}:U_{i,j}\rightarrow S_j$ and $T'_{i,j}:U'_{i,j}\rightarrow S'_j$ the first associated hitting maps (see Figure 1). The following properties hold :
\begin{itemize}
\item the compact subset $h_i^{-1}(U_{i,j})$ of $\mathsf B_d/2$ is contained in    $\Int\left(h_i^{-1}(U'_{i,j})\right)$, 
\item  $h_j^{-1}\circ T'_{i,j}\circ h_i$ is a local $C^1$ diffeomorphism on $\Int\left(h_i^{-1}(U'_{i,j})\right)$, because the vector field $\mathsf X$ generating $\Phi$ is $C^1$ smooth, 
\item $T'_{i,j}=T_{i,j}$ on $U_{i,j}\subset U'_{i,j}$. 
\end{itemize} 

\begin{figure}[!ht]

\hspace{-5cm}
\includegraphics[scale=0.2]{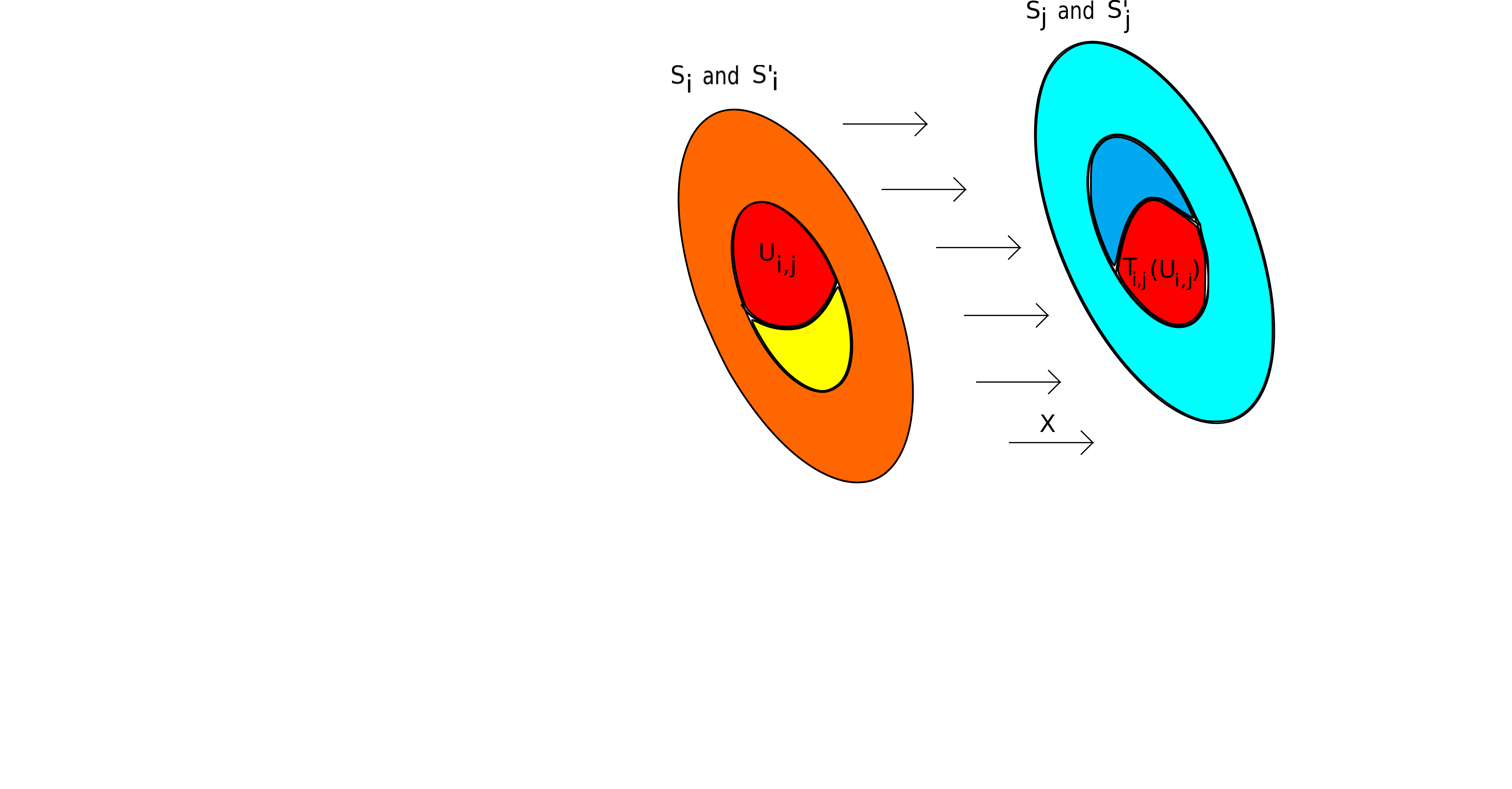}
\centering
\vspace{-2,5cm}
\caption{\label{bl}\textbf{The set $U_{i,j}\subset S_i$ and its image by $T_{i,j}$ in red.} }
\end{figure}

Let $\mathcal{C}_{i,j }$ be a finite collection of closed balls contained  in $\Int\left(h_i^{-1}(U'_{i,j})\right)\subset\mathbb{R}^d$, which covers  $h_i^{-1}U_{i,j}$, such that any $C\in\mathcal{C}_{i,j }$ is contained in an open ball,  where $h_j^{-1}\circ T'_{i,j}\circ h_i$ is a $C^1$ diffeomorphism onto its image. Then for any $C\in \mathcal{C}_{i,j }$ the restriction of $h_j^{-1}\circ T'_{i,j}\circ h_i$ to $C$ extends $C^1$ smoothly to a diffeomorphism $T_C$ of $\mathbb{R}^d$. 
For all $(i_2,...,i_n)\in \{0,...,N\}^{n-1}$  and for all $C^n=(C_1,...,C_{n-1})\in  \prod_{j=1}^{n-1}\mathcal{C}_{i_j,i_{j+1}}$ with $i_1=0$, we let $T_{C^n}^k:=T_{C_k}\circ ...\circ T_{C_1}$    for $1\leq k\leq n-1$ (let also $T_{C^n}^{-k}=(T_{C^n}^k)^{-1}$ for such $k$ and $T^0=\Id_{C_1}$). We denote by $I_{C^n}$ the subset consisting of $0$ and the integers $k$ in $\{1,...,n-1\}$ with $i_{k+1}=0$. Finally we let $F_{C^n}$ be the closed subset of $C_1$ given by $F_{C^n}=\bigcap_{k=0}^{n-1}T_{C^n}^{-k}\left(C_{k+1}\right)$.\\

 
We  build a closed cross-section $\tilde{S}$ (given by a subdisc of $S_0$)  with small flow boundary, arbitrarily small diameter and $x\in \Int^{\Phi}(\tilde S)$.   We let $E:=\{f\in C^1(\mathsf S_d,\mathbb{R}^+), \ \|f\|_{\infty}\leq 1 \}$ endowed with the usual $C^1$ topology (it is a Baire space).   The cross-section $\tilde{S}$ will be of the form $\tilde{S}=h_0(S_f)$ with $S_f:=\{rx, \ x\in \mathsf S_d, \ r\leq f(x)\}\subset \mathbb{R}^d$ for some positive $f\in E$.   Observe that for such an $f$, the boundary $\partial S_f$ is a submanifold of dimension $d-1$ of $\mathbb{R}^d$. Moreover any $C^1$ small enough perturbation of this submanifold, i.e. any submanifold $H(\partial S_f)$ for a $C^1$ diffeomorphism $H$ of $\mathbb{R}^d$ close to the identity,  is of the form $S_g$ for $g \in E$  close to $f$.

   We will say that a map $f\in E$ is \textit{$n$-transverse},  when for all $C^n$ as above,
  the  submanifolds $\left(T_{C^n}^{-i}(\partial  S_f)\right)_{i\in I_{C^n}}$ are mutually transverse  on an open neighborhood of $F_{C^n}$. 
   We   recall that two submanifolds $M$ and $N$ of $\mathbb{R}^d$ are \textit{mutually transverse}    when either $M
  \cap N=\emptyset$ or $T_xM+T_xN=\mathbb{R}^d$ for all $x\in M\cap N$. In this last case the intersection $M
  \cap N$ is itself a submanifold with  $\codim(M\cap N)=\codim M+\codim N
  \geq 0$.  For $n>2$ a family $(M_1,\cdots, M_n)$ of $n$  submanifolds  of $\mathbb{R}^d$ is said 
  \textit{mutually  transverse} when all proper subfamilies are  mutually  transverse  and  the submanifolds    $M_i$ and $
 \bigcap_{j\neq i}M_j$ are transverse  for any   (some\footnote{Indeed this condition does not depend on $i$ because $\dim (T_xM_i+
  T_x(\bigcap_{j\neq i}M_j))=\dim(T_xM_i)+d-\sum_{j\neq i}\codim(T_xM_j)- \dim  (\bigcap_{j}T_xM_j)$ is equal to $d$ if and only if $
 \codim(\bigcap_{j}T_xM_j)
  =\sum_{j} \codim(T_xM_j)$}) $i\in \{1,\cdots,n\}$. Then $ \bigcap_{1=1,\cdots, n}M_i$ is either 
  empty or a submanifold of codimension $\sum_{j=1,\cdots,n}
  \codim(M_j)$. Let $M_1,\cdots,M_{n-1}$ be mutually  transverse submanifolds and let $M_n$ be an other 
  submanifold. Then  $M_1,\cdots,M_n$ are mutually  transverse submanifolds if and only if 
  $M_n$ and $\bigcap_{j\in J}M_j$ are mutually  transverse for any $J\subset \{1,\cdots, n-1\}$. The submanifolds $(M_1,\cdots, M_n)$ are said mutually transverse  on an open set $U$ when $(M_1\cap U,\cdots ,M_n\cap U)$ are mutually transverse.  

\begin{CClaim}\label{dded}
For any $n$ the subset $E_n$ of $E$ consisting of the $n$-transverse maps is open and dense.  
\end{CClaim}
We postpone the proof of  Claim \ref{dded}. 
Let $f\in \bigcap_n E_n$. Any orbit of the flow hits  at most $d-1$ times the set $\partial S_f$. Indeed for any $x\in S_f$ and any 
positive integer $n$ there is  a $n$-uple $C^n$ 
such that (recall $T_{\mathfrak S}$  denotes the return map in the global cross-section $\mathfrak S=\bigcup_{S\in \mathcal{S}}
S$) : \begin{itemize}\item  $x\in F_{C^n}$,
\item    $\forall 0\leq k\leq n, \ T_{\mathfrak S}^k \left(h_0(x)\right)=h_{i_{k+1}}\circ T_{C^n}^k 
(x).$
\end{itemize} The manifolds $\left(T_{C^n}^{-i}(\partial  S_f)\right)_{i\in I_{C^n}}$ being mutually transverse 
submanifolds of dimension $d-1$ on an open neighborhood of $F_{C^n}$, any intersection of $d$'s of them with $F_{C^n}$ is empty. In particular $T_{C^n}^k(x)\in \partial S_f$ and  therefore $T_{\mathfrak S}^k \left(h_0(x)\right)\in h_0(\partial S_f)$ for at most $d$ integers $k$ with $0\leq k\leq n$. As it holds for any $n$, there at most $d$-many  positive times $t
$ with $\phi_t\left(h_0(x)\right)\in \partial S_f$.    By Lemma \ref{sss} $iii)$   the closed cross-section $h_0(S_f)$ has a small flow 
boundary. 
\end{proof}


\begin{proof}[Proof of the Claim \ref{dded}] 
By taking a finite intersection it is enough to consider a single $n$-uple $C^n=(C_1,...,C_n)$ in the definition of $n$-transversality. To simplify the notations we then let $T^k=T_{C^n}^k$ for $0\leq k\leq n-1$, $F_n=F_{C^n}$ and $I_n=I_{C^n}$. 
The tranversality being a $C^1$-stable property (see e.g. Proposition A.3.15 in \cite{KH}), the set  $E_n$ is an open subset of $E$. We prove now by induction on $n$ the density of  $E_n$ in $E$. Let  $f\in E$. By induction hypothesis we may find a positive function $g\in E_{n-1}$ arbitrarily close to $f$. As the set of periodic orbits of $\Phi$ with period less than $t$ is finite for all $t>0$ we may  assume that  the spheres $(T^{-k}\partial S_g)_{k\in I_n}$ avoids the fixed points of $T^k$, $k\in I_n$  on $F_n$.  Therefore there exist two finite families $(B_i)_{1\leq i\leq K}$ and $(B'_i)_{1\leq i\leq K}$ of open balls in $\mathbb{R}^d$ such that 
\begin{itemize}
\item $\overline{B_i}\subset  B'_i$ for all $i$,
\item  $B'_i$, $T^1B'_i,...T^{n-1}B'_i$ are pairwise disjoint for all $i$, 
\item $\bigcup_i B_i\supset F_n\cap \left(\bigcup_{k\in I_n}T^{-k}\partial S_g \right)$. 
\end{itemize}
For $h\in E$ close enough to $g$ the  property of the last item also holds true  for $h$. 

  By induction on  $j=1,\cdots,  K+1$ we produce  a map $g^j$  arbitrarily close to $g$ in $E_{n-1}$ such that $T^{-k}\partial S_{g^j}$, $k\in I_n$, are mutually transverse on an open neighborhood $V_j$ of $F_n \cap \left(\bigcup_{i<j}\overline{B_i}\right)$. 
 Finally we will get  $g^{K+1}\in E_n$ for $g^{K+1}$ close enough to $g$ : if $O$ denotes an open neighborhood of 
 $F_n\cap \left(\mathbb{R}^d\setminus \left(\bigcup_iB_i\right)\right)$ with $T^{-k}\partial S_{g^{K+1}} \cap O=\emptyset$ for all $k\in I_n$, then $(T^{-k}\partial S_{g^{K+1}})_{k\in I_n}$ are mutually transverse on the open neighborhood $O\cup V_{K+1}$ of $F_n$. 

 Take $g^1=g$ and proceed to the inductive step by assuming $g^j\in E_{n-1}$ already built. 
 In particular the submanifolds $(T^{-k}\partial S_{g^{j+1}})_{ k\in I_n\setminus \{0\}}$ (resp. $(T^{-k}\partial S_{g^{j+1}})_{ k\in I_n}$) are mutually transverse on an open  neighborhood  of $F_n$ (resp. of $F_n \cap \left(\bigcup_{i<j}\overline{B_i}\right)$) for $g^{j+1}$ $C^1$-close enough to $g^j$. It is therefore enough to find $g^{j+1}$ arbitrarily close to $g^j$ such that $T^{-k}\partial S_{g^{j+1}}$, $k\in I_n$, are mutually transverse on an open neighborhood of $F_n \cap \overline{B_j}$. 
When  one only  perturbs $\partial S_{g^j}$ on 
$B'_j$, it does not change the submanifolds $T^{-k}\partial 
S_{g^j}$, for $0\neq k\in I_n$, on $B'_j$. By Theorem A.3.19  \cite{KH} there is a $C^1$ small perturbation $\partial S_{g^{j+1}}$ of $\partial S_{g^j}$, supported on $U_n^j\cap B'_j$ (i.e. $\partial S_{g^{j+1}}=H(\partial S_{g^j})$ for a diffeomorphism $H$ of $\mathbb{R}^d$ $C^1$-close to $\Id$ with $H=\Id$ apart from $U_n^j\cap B'_j$),  such that $\partial S_{g^{j+1}}$ and 
$ \bigcap_{k\in I'_n}T^{-k}\partial S_{g^{j}}$ are mutually transverse for any $I'_n\subset I_n\setminus \{0\}$ on a neighborhood of $F_n\cap \overline{B_j}$. But $ \bigcap_{k\in I'_n}T^{-k}\partial S_{g^{j}}$ coincides with $ \bigcap_{k\in I'_n}T^{-k}\partial S_{g^{j+1}}$ on $B'_j$, so that the submanifolds $\partial S_{g^{j+1}}$ and 
$ \bigcap_{k\in I'_n}T^{-k}\partial S_{g^{j+1}}$ satisfy the same transversality property. As it holds for any $I'_n\subset I_n\setminus \{0\}$, the submanifolds $T^{-k}\partial S_{g^{j+1}}$, $k\in I_n$, are mutually transverse on an open neighborhood of $F_n \cap \overline{B_j}$. 
 \end{proof}

\begin{rem}
 We strongly believe the nonautonomous approach through the return maps developed above could be used to prove the small flow boundary property for general topological flows on finite-dimensional compact spaces by adapting the more sophisticated methods of E.Lindenstauss and J.Kulesza.
\end{rem}

\begin{rem}In \cite{DN} Proposition 4.1 it is was proved  that a given finite partition of a smooth compact manifold, whose atoms have piecewise smooth boundaries, has a small boundary with respect to $C^r$ generic diffeomorphisms ($r\geq 1$). Here we use a``dual" approach by fixing the diffeomorphism (in fact the flow in our statement) and  perturbing the boundaries of the partitions.    
\end{rem} 

\subsection{Representation by suspension flow}\label{subs}

Let $(X,\Phi=(\phi_t)_t)$ and $(Y,\Psi=(\psi_t)_t)$ be two continuous flows. We say $(Y,\Psi)$ is a topological extension of $(X,\Phi)$ when there is a continuous surjective map $\pi:Y\rightarrow X$ with $\pi\circ\psi_t=\phi_t\circ \pi$ for all $t$. The topological extension is said \footnote{(Principal, isomorphic, strongly isomorphic) extensions are defined similarly for any group actions.}: \begin{itemize}
 \item \textit{principal} when it preserves the entropy of invariant measures, i.e. 
 $h(\mu)=h(\pi\mu)$ for all $\mu\in \mathcal{M}(Y,\Psi)$,
 \item \textit{isomorphic } when 
the map  induced by $\pi$ on the sets of invariant Borel probability measures 
 is bijective and 
$\pi:(Y,\Psi,\mu)\rightarrow (X,\Phi, \pi\mu)$  is a measure theoretical isomorphism for any $\mu\in \mathcal{M}(Y,\Psi)$,
\item \textit{strongly isomorphic} when there is a full set $E$ of $X$ such that the restriction of $\pi$ to $\pi^{-1}E$ is one-to-one. \end{itemize}
Any strongly isomorphic extension is isomorphic and any isomorphic extension is principal. \\

For a discrete topological system $(X,T)$, to any nonincreasing sequence of  partitions $(P_k)_{k\in \mathbb{N}}$ with $\diam(P_k)\xrightarrow{k}0$  we may associate  a zero dimensional extension $(Y,S)$ given by the closure $Y$ of 
$\{\left(P_k(T^nx)\right)_{k,n}, \ x\in X\}$ in $\prod_k P_k^{\mathbb{Z}}$ (with the shift  acting on each $k$-coordinate) mapping $(A_{k,n})_{k,n}\in Y $ to $\bigcap_{k,n}\overline{T^{-n}A_k^n}\in X$. When the partitions $P_k$ have small boundary then the extension is strongly isomorphic   \cite{dow}.  

We build now in a similar way zero-dimensional extensions for a continuous flow $(X,\Phi)$. Let $\mathcal{S}$ be a complete family of closed cross-sections. 
 As the first return  map $T_\mathfrak{S}$ in $\mathfrak{S}=\bigcup_{S\in \mathcal{S}}S$ is a priori not continuous on $\mathfrak{S}$ we can not directly apply the previous construction to the system $(\mathfrak{S}, T_\mathfrak{S})$. We consider the skew product 
$$\mathcal{S}^\mathbb{Z}\ltimes \mathfrak{S}:=
\left\{ ((A_n)_n,x)\in \mathcal{S}^{\mathbb{Z}}\times \mathfrak{S}, \forall n \ T_{\mathfrak{S}}^nx\in A_n  \right\}.$$
 As a  consequence of Lemma \ref{osc}  the map $\mathcal{S}^\mathbb{Z}\ltimes \mathfrak{S}  \ni \left((A_n)_n,x\right)  \mapsto t_\mathfrak{S}(x)$ extends  continuously on  the closure of  $\mathcal{S}^\mathbb{Z}\ltimes \mathfrak{S}$ in $\mathcal{S}^{\mathbb{Z}}\times \mathfrak{S}$ (this extension will be again denoted by $t_\mathfrak{S}$). Moreover the map 
 $T:\mathcal{S}^\mathbb{Z}\ltimes \mathfrak{S}\circlearrowleft $, $ \left((A_n)_n,x\right)\mapsto \left((A_{n+1})_n,T_\mathfrak{S}x\right)$ extends to a homeomorphism of  $\overline{\mathcal{S}^\mathbb{Z}\ltimes \mathfrak{S}}$. We let $(\overline{\mathcal{S}^\mathbb{Z}\ltimes \mathfrak{S}}, T)$ be  the skew product system given by  this extension.

 \begin{lemma} \label{prec}
 The suspension flow over $(\overline{\mathcal{S}^\mathbb{Z}\ltimes \mathfrak{S}}, T)$ with roof function given by $t_\mathfrak{S}$ is 
 a topological extension of $(X,\Phi)$. Moreover when every  $S\in\mathcal{S}$ has a small flow boundary, this extension is strongly isomorphic.
  \end{lemma}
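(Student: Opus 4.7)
The plan is to define
\[
\pi:\bigl(\overline{\mathcal{S}^\mathbb{Z}\ltimes\mathfrak{S}}\bigr)_{t_\mathfrak{S}}\to X,\qquad \pi\bigl([(A_n)_n,x,t]\bigr):=\phi_t(x),
\]
on representatives with $0\le t\le t_\mathfrak{S}((A_n)_n,x)$, where $t_\mathfrak{S}$ is the continuous extension to the closure discussed just before the statement. The identification $((A_n)_n,x,t_\mathfrak{S}((A_n)_n,x))\sim (T((A_n)_n,x),0)$ is respected since, by continuity, the second coordinate of the extended homeomorphism $T$ equals $\phi_{t_\mathfrak{S}((A_n)_n,x)}(x)$; continuity of $\pi$ and the intertwining with the flows are then automatic. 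For surjectivity, given $y\in X$, the fact that $\mathfrak{S}$ is a global closed cross-section with return time bounded below by $2\eta_\mathcal{S}>0$ allows, after finitely many iterations of $T_\mathfrak{S}^{\pm 1}$, to find $x\in\mathfrak{S}$ and $0\le t<t_\mathfrak{S}(x)$ with $\phi_t(x)=y$; setting $A_n$ to be the unique element of $\mathcal{S}$ containing $T_\mathfrak{S}^n x$ produces a preimage in $\mathcal{S}^\mathbb{Z}\ltimes\mathfrak{S}$.

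For the strong isomorphism statement, the natural candidate for the full set is
\[
E:=X\setminus\bigcup_{t\in\mathbb{R}}\phi_t(\partial^\Phi\mathfrak{S}).
\]
To see $E$ is full, Lemma~\ref{ter}(1) gives $\partial^\Phi\mathfrak{S}=\bigcup_{S\in\mathcal{S}}\partial^\Phi S$, so the small flow boundary hypothesis on each member of $\mathcal{S}$ yields $\mu(\partial^\Phi\mathfrak{S}_{\eta_\mathcal{S}})=0$ for every $\mu\in\mathcal{M}(X,\Phi)$. Writing $\bigcup_{t\in\mathbb{R}}\phi_t(\partial^\Phi\mathfrak{S})=\bigcup_{n\in\mathbb{Z}}\phi_{[n\eta_\mathcal{S},(n+1)\eta_\mathcal{S}]}(\partial^\Phi\mathfrak{S})$ and using $\Phi$-invariance, each piece has measure at most $\mu(\partial^\Phi\mathfrak{S}_{\eta_\mathcal{S}})=0$, so the countable union is a null set.

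The main task, and the delicate step, is the injectivity of $\pi$ on $\pi^{-1}(E)$. Suppose $[(A_n)_n,x,t]$ and $[(A'_n)_n,x',t']$ both project to $y\in E$. First $(x,t)=(x',t')$: the $\Phi$-orbit of $y$ meets $\mathfrak{S}$ at a discrete set of times, and for $y\in E$ the extended $t_\mathfrak{S}((A_n)_n,x)$ reduces to $t_\mathfrak{S}(x)$ (by continuity of $t_\mathfrak{S}$ at $x$, see below), so the representative with $x$ the most recent past visit in $\mathfrak{S}$ and $0\le t<t_\mathfrak{S}(x)$ is unique. Since $y\in E$, the orbit of $x$ avoids $\partial^\Phi\mathfrak{S}$ entirely, so each $T_\mathfrak{S}^k x$ lies in $\Int^\Phi(S_{i_k})$ for a unique $S_{i_k}\in\mathcal{S}$, and Lemma~\ref{firt} guarantees that $t_\mathfrak{S}$ is continuous along the whole orbit of $x$; consequently each $T_\mathfrak{S}^k$ is continuous at $x$. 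Choosing now a sequence $((B^m_n)_n,x^m)\in\mathcal{S}^\mathbb{Z}\ltimes\mathfrak{S}$ converging to $((A'_n)_n,x)$, the discreteness of $\mathcal{S}$ forces $B^m_k=A'_k$ for $m$ large, while $T_\mathfrak{S}^k x^m\in A'_k$ together with $T_\mathfrak{S}^k x^m\to T_\mathfrak{S}^k x$ and the closedness of $A'_k$ imply $T_\mathfrak{S}^k x\in A'_k$. Disjointness of the members of $\mathcal{S}$ then forces $A'_k=S_{i_k}$, and likewise $A_k=S_{i_k}$, hence $(A_n)_n=(A'_n)_n$.

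The obstacle to watch for is precisely this closure step: a priori a point of $\overline{\mathcal{S}^\mathbb{Z}\ltimes\mathfrak{S}}$ may carry a symbolic sequence not matching the actual $T_\mathfrak{S}$-itinerary of its second coordinate, and only the avoidance of $\partial^\Phi\mathfrak{S}$ along the whole orbit rules this out, via Lemma~\ref{firt}. The small flow boundary hypothesis is used only to ensure $E$ is full; injectivity on $\pi^{-1}(E)$ is then a purely topological consequence of the skew-product definition together with the orbit-wise continuity of $t_\mathfrak{S}$.
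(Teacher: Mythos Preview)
Your proof is correct and follows the same approach as the paper: define $\pi$ via $\phi_t(x)$, take $E$ to be the set of points whose $\Phi$-orbit avoids $\partial^\Phi\mathfrak{S}$, and use Lemma~\ref{firt} to obtain injectivity on $\pi^{-1}(E)$. Your argument that $E$ is full (a countable union of translates of $\partial^\Phi\mathfrak{S}_{\eta_{\mathcal S}}$ together with $\Phi$-invariance) is in fact more direct than the paper's, which appeals to the ergodic theorem and item iii) of Lemma~\ref{sss}; the only point to watch is that for negative $k$ the continuity of $T_\mathfrak{S}^k$ at $x$ requires the time-reversed analogue of Lemma~\ref{firt} (a discontinuity of the backward return time forces the orbit to meet $\partial^\Phi\mathfrak{S}$), which holds by the same proof applied to the reversed flow.
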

  \begin{proof}Let $\left(\mathcal{S}^\mathbb{Z}\ltimes \mathfrak{S}\right)_{t_\mathfrak{S}}$ be the invariant dense set above $\mathcal{S}^\mathbb{Z}\ltimes \mathfrak{S}$ in  the suspension flow $\left( \overline{\mathcal{S}^\mathbb{Z}\ltimes \mathfrak{S}}\right)_{t_\mathfrak{S}}$.
  The map $\pi:\left(\mathcal{S}^\mathbb{Z}\ltimes \mathfrak{S}\right)_{t_\mathfrak{S}}\rightarrow (X,
  \Phi)$ with $\pi\left( \left((A_n),x\right), t\right)=\phi_t(x)$ defines an equivariant surjective  function continuously extendable   on  $\left( \overline{\mathcal{S}^\mathbb{Z}\ltimes \mathfrak{S}}\right)_{t_\mathfrak{S}}$.  Therefore its continuous  extension,  again denoted by $\pi$, defines a topological extension of $(X,\Phi)$. 
  
 The $\Phi$-orbit of a point with multiple $\pi$-preimages hits the set $\mathcal{C}_\mathfrak{S}$.  
 Therefore by Lemma \ref{firt} the extension $\pi$ is one-to-one above the (residual) set of points whose $\Phi$-orbits do not visit the flow boundary of the closed cross-sections in $\mathcal{S}$. When these cross-sections have small flow boundaries this set has zero measure for any $\Phi$-invariant measure and thus the extension is strongly isomorphic. Indeed 
  assume by contradiction  there is $S\in \mathcal{S}$ with $\mu(\{x, \exists t\in [0,T] \ \phi_t(x)\in \partial^\Phi S\})>0$  for some $\mu\in \mathcal{M}(X,\Phi)$ and for some $T>0$.  By using the ergodic decomposition we may assume $\mu$ ergodic.  From the ergodic theorem it follows then that the $\Phi$-orbit of $\mu$-almost every $x$ visits $\partial^{\Phi}S$ with a positive frequency contradicting item $iii)$ of Lemma \ref{sss}.
  \end{proof}
  In particular the skew product system $(\overline{\mathcal{S}^\mathbb{Z}\ltimes \mathfrak{S}}, T)$  is a principal extension of $(X,\Phi)$ when the closed cross-sections in $\mathcal{S}$ have a small flow boundary. This answers in this case an  open question of R.Bowen and P.Walters (aforementioned in the introduction).  \\

 A suspension flow over a zero-dimensional topological discrete system will be called a \textit{zero-dimensional suspension flow} and a topological extension by a zero-dimensional suspension flow is said to be a \textit{zero-dimensional extension}.
 
\begin{prop}\label{pro}
A topological flow with the small flow  boundary property admits a zero-dimensional strongly isomorphic extension.
\end{prop}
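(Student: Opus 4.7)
The plan is to refine the construction of Lemma \ref{prec} by encoding the first-return dynamics with respect to a refining sequence of partitions of the global cross-section, so that the base of the resulting suspension extension becomes zero-dimensional. First, invoking the small flow boundary property together with the paragraph following Definition \ref{dedef}, I would choose a complete family $\mathcal{S}$ of closed cross-sections with small flow boundary such that each $S\in\mathcal{S}$ satisfies $\overline{S}\subset\Int^{\Phi}(S')$ for some w.e.c.\ $S'$. Setting $\mathfrak{S}=\bigcup_{S\in \mathcal{S}}S$, I would apply Lemma \ref{mini} iteratively (at each generation, to every current atom, using the fixed container $S'$) to produce a refining sequence of finite partitions $P_1\le P_2\le \cdots$ of $\mathfrak{S}$ whose atoms are all w.e.c.'s with small flow boundary, with $P_1=\mathcal{S}$ and $\diam(P_k)\to 0$.

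Next I would form the zero-dimensional space
\[Y:=\overline{\left\{\bigl(P_k(T_\mathfrak{S}^n x)\bigr)_{k,n}\,:\, x\in \mathfrak{S}\right\}}\subset \prod_{k\in\mathbb{N}} P_k^{\mathbb{Z}},\]
endowed with the shift $T_Y$ acting simultaneously on every $\mathbb{Z}$-coordinate. Because $P_1=\mathcal{S}$, the $(1,0)$ and $(1,1)$ coordinates already record which sections in $\mathcal{S}$ are crossed, so by Lemma \ref{osc} the first-return time $t_\mathfrak{S}$ extends to a continuous positive function $\tilde t$ on $Y$: any sequence $(x_n)\subset \mathfrak{S}$ whose images in $Y$ converge eventually shares all finite-window $P_k$-coordinates, hence by $\diam(P_k)\to 0$ is Cauchy in $\mathfrak{S}$, and the uniform continuity of $t_\mathfrak{S}$ on the relevant $T_\mathfrak{S}^{-1}S$ forces a common limit. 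Let $(Y_{\tilde t},\Phi_{\tilde t})$ denote the associated suspension flow. Defining $\pi:Y_{\tilde t}\to X$ on the dense equivariant set of orbits through $\mathfrak{S}$ by $\pi\bigl((P_k(T_\mathfrak{S}^n x))_{k,n},\,t\bigr)=\phi_t(x)$ and extending by continuity---the same diameter argument recovers a unique base point $x\in X$ for each element of $Y$---yields a continuous, flow-equivariant, surjective projection onto $(X,\Phi)$.

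For strong isomorphism I would argue as in Lemma \ref{prec}: by Lemma \ref{sss}$\,(iii)$ together with the ergodic decomposition, for each atom $A$ of each $P_k$ no $\Phi$-invariant Borel probability measure can give positive weight to the set of $x\in X$ whose $\Phi$-orbit meets $\partial^{\Phi}A$; taking the countable union of these invariant null sets, the complement $E$ is a full invariant set, and for $x\in E$ the $\Phi$-orbit encoding is unambiguous so $\pi^{-1}(x)$ is a single point. The main obstacle I foresee is the continuous extendability of $\tilde t$ to all of $Y$, but the key point is precisely that keeping $P_1=\mathcal{S}$ as the coarsest partition pins down the pair of sections between which the return occurs, reducing that step to the uniform continuity of Lemma \ref{osc} and the diameter condition $\diam(P_k)\to 0$.
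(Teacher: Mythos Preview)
Your proposal is correct and follows essentially the same route as the paper's proof: choose a complete family $\mathcal{S}$ with small flow boundary (each section contained in the flow interior of a larger one), use Lemma \ref{mini} to build a refining sequence $(P_k)_k$ of partitions of $\mathfrak{S}$ with small flow boundary and vanishing diameters, form the zero-dimensional base $Y$ as the closure of the name map in $\prod_k P_k^{\mathbb{Z}}$, extend $t_{\mathfrak{S}}$ continuously via Lemma \ref{osc}, and obtain strong isomorphism from the countable union of null sets coming from the flow boundaries of the atoms. The paper additionally invokes Lemma \ref{zer} to note that the relevant boundaries in $S$ are contained in the flow boundaries, but your direct use of $\partial^{\Phi}A$ for each atom $A$ already covers this.
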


\begin{proof}   Let $\mathcal{S}_0$ be a complete family of closed cross-sections with small flow boundary, such that  each $S\in \mathcal{S}_0$  is contained  in the flow interior of another closed  cross-section.    By Lemma \ref{mini}  there is a nonincreasing sequence of  partitions with small  flow boundary $(\mathcal{S}_k)_{k\geq 1}$ of $\mathfrak{S}=\bigcup_{S\in \mathcal{S}_0}S$ finer than $\mathcal{S}_0$ satisfying $\diam(\mathcal{S}_k)\xrightarrow{k}0$. Then we can follow the proof 
of Lemma \ref{prec}, by replacing the skew product system $(\overline{\mathcal{S}^\mathbb{Z}\ltimes \mathfrak{S}}, T)$ by the closure $Y^{\mathcal{S}}$ in $\prod_{k\in \mathbb{N}} \mathcal{S}_k^{\mathbb{Z}}$ of $\left\{\left(\mathcal{S}_k(T_{\mathfrak{S}}^nx)\right)_{k,n}  \right\}$ with the shift  acting on each $k$-coordinate, to get the desired zero-dimensional  extension of $(X,\Phi)$ (the roof function is again given by $t_{\mathcal{S}_0}$). The extension is one-to-one above the set of points whose $\Phi$-orbits do not visit the flow boundary of the closed cross-sections in $\mathcal{S}_0$ and the boundary of $A$ in $S$ for every $(A,S)\in (\bigcup_{k\geq 1}\mathcal{S}_k, \mathcal{S}_0)$ with $A\subset S$. But this last boundary is a subset of the flow boundary of $A$ by Lemma \ref{zer}. As the partitions  $\mathcal{S}_k$, $k\geq 0$, have  a small flow boundary,  the extension is strongly isomorphic.
\end{proof}

For two orbit equivalent flows, the base of the suspension flows of the zero-dimensional extensions  built in  Proposition \ref{pro} may be chosen to be topologically conjugate. More precisely consider $(X,\Phi)$ and $(X',\Phi')$ two orbit equivalent topological flows via a homeomorphism $\Lambda:X\rightarrow X'$. Let $\mathcal{S}=(\mathcal{S}_k)_k$ be  a sequence of complete families of closed cross-sections for $(X,\Phi)$ as in the proof of Proposition \ref{pro} and let $\Lambda(\mathcal{S})=(\Lambda(\mathcal{S}_k))_k$ be the associated sequence for $(X',\Phi')$. Then the map  $$\left(\mathcal{S}_k(T_{\mathfrak{S}}^nx)\right)_{k,n}\mapsto\left(\Lambda (\mathcal{S}_k)(T_{\Lambda(\mathfrak{S})}^n(\Lambda x))\right)_{k,n}$$ extends to a topological conjugacy of $Y^{\mathcal{S}}$ and $Y^{\Lambda(\mathcal{S})}$ (with the notations of the  above proof).\\

Any topological system $(X,T)$ has a principal zero-dimensional extension \cite{DH}. By   Proposition \ref{pro} any topological flow with the small flow  boundary property admits a strongly isomorphic, therefore  principal, zero-dimensional extension.

\begin{ques}
Does a topological flow always admits a principal zero-dimensional extension?
\end{ques}

\section{Symbolic extensions and uniform generators for flows}
In this section we develop a theory of symbolic extensions and uniform generators for flows. From now on we only consider topological flows and discrete systems with finite topological entropy. We recall that the topological entropy of a flow is given by the topological entropy of its time $1$-map. 

\subsection{Definitions}
For a topological system $(X,T)$ a \textit{symbolic  extension} is a topological extension $\pi:(Y,S)\rightarrow (X,T)$ where $(Y,S)$ is a subshift over a finite alphabet. \textit{A symbolic extension with an embedding}  is a symbolic extension $\pi:(Y,S)\rightarrow (X,T)$ endowed with a Borel  embedding $\psi:(X,T)\rightarrow (Y,S)$ satisfying $\pi\circ \psi=\Id_{X}$. A \textit{uniform generator } is a Borel partition $P$ of $X$ such that the diameter of $\bigvee_{k=-n}^nT^{-k}P$ goes to zero when $n$ goes to infinity.  
The following statement follows from Theorem 1.2 in \cite{bdo}. The characterization for clopen uniform generators  was first proved in \cite{ky}.
\begin{prop}\label{super}A topological  system $(X,T)$ admits a uniform generator (resp. essential, resp. clopen) if and only if it admits a symbolic extension with an embedding (resp. a strongly isomorphic symbolic extension, resp. is topologically conjugate to a subshift). 
\end{prop}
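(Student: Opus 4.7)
The plan is to reduce each of the three equivalences to the explicit correspondence between Borel partitions of $(X,T)$ and Borel codings into shift spaces, which is precisely the content of Theorem 1.2 in \cite{bdo}; it suffices here to describe the correspondence and check that ``uniform generator", ``essential", and ``clopen" match ``symbolic extension with embedding", ``strongly isomorphic symbolic extension", and ``topological conjugacy to a subshift" respectively.

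First, starting from a uniform generator $P$, I would define the coding $\psi:X\to P^{\mathbb{Z}}$, $\psi(x)=(P(T^{k}x))_{k\in\mathbb{Z}}$, which is Borel and $T$-$\sigma$-equivariant, and let $Y$ be the closure of $\psi(X)$ in $P^{\mathbb{Z}}$. The key point is that the uniform generator condition $\diam(P_{T}^{[-n,n]})\to 0$ guarantees that for any sequence $(A_{k})_{k}\in Y$, the nested intersection of $\bigcap_{|k|\le n}\overline{T^{-k}A_{k}}$ has diameter going to zero uniformly, so one obtains a well-defined continuous factor map $\pi:Y\to X$ with $\pi\circ\psi=\Id_{X}$, that is, a symbolic extension with a Borel embedding.

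Conversely, from a symbolic extension $\pi:(Y,\sigma)\to (X,T)$ with embedding $\psi$, I would pull back the zero-coordinate alphabet partition $Q$ of $Y$ by setting $P:=\psi^{-1}Q$. For any $y\in P_{T}^{[-n,n]}(x)$, the images $\psi(y)$ and $\psi(x)$ lie in a common cylinder of $Q_{\sigma}^{[-n,n]}$, whose diameter in $Y$ tends to zero uniformly as $n\to\infty$; applying the uniformly continuous map $\pi$ and using $\pi\circ\psi=\Id_{X}$ gives $\sup_{y\in P_{T}^{[-n,n]}(x)}d(y,x)\to 0$ uniformly, so $P$ is a uniform generator.

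For the refinements, the essential case is handled by observing that if $P$ has small boundary then the full set $\{x:T^{k}x\notin\bigcup_{A\in P}\partial A\ \forall k\}$ is a set above which $\pi$ is one-to-one, yielding strong isomorphism; conversely, if the extension is strongly isomorphic one transports the clopen alphabet partition of $Y$ down to an essential partition of $X$, the boundaries being contained in the $\pi$-image of a shift-invariant null set. The clopen case is immediate in both directions: when $P$ is clopen, $\psi$ is continuous and hence a topological embedding of the compact $X$ onto the closed invariant set $\psi(X)=Y$, so $\pi$ is a conjugacy, while pulling back the clopen alphabet partition of a conjugate subshift obviously gives a clopen uniform generator. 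The main thing to check carefully is the essential direction, where one must verify that the orbits meeting the partition boundaries really account for all multi-preimage points of $\pi$; this uses precisely the fact that two distinct elements of $Y$ mapping to the same $x$ must differ in some coordinate, so the $T$-orbit of $x$ hits the corresponding boundary.
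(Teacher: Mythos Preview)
Your approach matches the paper's exactly: the paper also just spells out the name map $\psi^T_P$ and the decoding map $\pi^T_P$ in one direction, and pulls back the zero-coordinate partition via $\psi^{-1}$ in the other, then checks the three refinements in turn (with the clopen case going back to \cite{ky}). The only place your sketch is slightly looser than the paper is the converse of the essential case (strongly isomorphic $\Rightarrow$ essential): rather than saying the boundaries lie in ``the $\pi$-image of a shift-invariant null set'', the paper's Lemma~\ref{gege} shows directly that any $x\in\partial(\psi^{-1}A)$ has at least two $\pi$-preimages (one in $A$, one in some $B\neq A$, obtained by lifting a sequence in $\psi^{-1}B$ converging to $x$ and passing to a limit in the compact cylinder $B$), hence $x\notin E$; your phrasing obscures that this is where the compactness of the subshift is used.
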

 For a partition $P$ of $X$ we let $\psi^T_P:(X,T)\rightarrow (P^\mathbb{Z},\sigma)$ be the equivariant map which associates to $x$ its $P$-name, i.e.  $\psi^T_P:x\mapsto (P(T^kx))_{k\in \mathbb{Z}}$. Given a uniform generator $P$  we may in fact build explicitly a symbolic extension with an embedding. Indeed in this case the map $\psi^T_P$ is a Borel embedding and $\pi^T_P:\left(\overline{\psi^T_P(X)},\sigma\right)\rightarrow (X,T)$, $(A_k)_k\mapsto \bigcap_{n\in \mathbb{N}}\overline{\bigcap_{|k|\leq n}T^{-k}A_k}$ is a symbolic extension satisfying $\pi^T_P\circ \psi^T_P=\Id_X$. When moreover the uniform generator $P$ 
   has a small boundary (resp. is clopen), then  the symbolic extension $\pi^T_P$ is a strongly isomorphic extension (resp. a topological conjugacy).

Conversely   when $\pi:(Y,S)\rightarrow (X,T)$ is a symbolic extension with an embedding $\psi$ then the partition $\psi^{-1}Q$ with $Q$ being the zero-coordinate partition of $(Y,S)$ defines a uniform generator. When this symbolic extension is a strongly isomorphic extension (resp. a topological conjugacy) then the corresponding uniform generator has a small boundary (resp. is clopen). Let us check this last point which did not appear in \cite{bdo}.

\begin{lemma}\label{gege}
Let $\pi:(Y,S)\rightarrow (X,T)$ be a strongly isomorphic symbolic extension. Then  the partition $\psi^{-1}Q$ with $Q$ being the zero-coordinate partition of $(Y,S)$ defines an essential uniform generator.
\end{lemma}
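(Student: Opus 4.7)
The plan is to show that the partition $P := \psi^{-1}Q$ is essential; the fact that it is already a uniform generator follows from the general statement recalled just above the lemma (namely, that for any symbolic extension with embedding, $\psi^{-1}Q$ is a uniform generator). So the only new work concerns the small boundary property of the atoms of $P$.

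By strong isomorphy there is a full Borel set $E \subset X$ such that $\pi$ is one-to-one on $\pi^{-1}(E)$; since $\pi\circ\psi = \Id_X$, this forces $\pi^{-1}(x) = \{\psi(x)\}$ for every $x \in E$. Writing $Q = \{Q_1,\dots,Q_r\}$ (each $Q_i$ clopen in $Y$), I set $P_i := \psi^{-1}(Q_i)$. In view of Lemma \ref{fdfee}, it suffices to prove that $\overline{P_i} \setminus P_i$ is a null set for every $i$; my aim is the stronger containment
\[
\overline{P_i} \setminus P_i \;\subset\; X \setminus E.
\]

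To establish this containment, pick $x \in \overline{P_i}$ and a sequence $(x_n)_n$ in $P_i$ with $x_n \to x$. By definition $\psi(x_n) \in Q_i$ for every $n$. Since $Y$ is compact and $Q_i$ is closed, after extracting a subsequence I obtain $\psi(x_n) \to y$ for some $y \in Q_i$. Continuity of $\pi$ then gives $\pi(y) = \lim_n \pi(\psi(x_n)) = \lim_n x_n = x$, so $y \in \pi^{-1}(x) \cap Q_i$. If in addition $x \in E$, then $\pi^{-1}(x) = \{\psi(x)\}$, hence $\psi(x) = y \in Q_i$, i.e.\ $x \in P_i$. This proves the desired inclusion, and since $X \setminus E$ is null, Lemma \ref{fdfee} yields that $P$ has small boundary.

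The only conceptual obstacle is verifying that the limit $y$ lies in $Q_i$ and that strong isomorphy forces $y = \psi(x)$; once this dichotomy (either $x \in P_i$ or $x \notin E$) is isolated, the rest is routine. I expect no further difficulty; the argument uses only the clopenness of $Q$, compactness of $Y$, continuity of $\pi$, and the defining property of strong isomorphy.
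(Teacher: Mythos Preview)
Your proof is correct and follows essentially the same approach as the paper's: both arguments pick a point $x$ in (the boundary of, resp.\ $\overline{P_i}\setminus P_i$ of) an atom, push a convergent sequence through $\psi$ into the clopen atom of $Q$, pass to a limit in $Y$, and use continuity of $\pi$ to land back at $x$. The only cosmetic difference is that the paper works directly with $\partial\psi^{-1}A$ and exhibits \emph{two} preimages of $x$ (one in $A$, one in some $B\neq A$) to force $x\notin E$, whereas you invoke Lemma~\ref{fdfee} to reduce to $\overline{P_i}\setminus P_i$ and argue by contrapositive that the \emph{unique} preimage over $E$ must be $\psi(x)$; this is the same mechanism viewed from the other side.
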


\begin{proof}
Let $E$ be a full set of $X$ such that $\pi$ is one-to-one on $\pi^{-1}E$. Let $A\in Q$. A point $x\in \partial \psi^{-1}A$ is a limit of a sequence $(x_n)_n$ in $\psi^{-1}B$ for some $B\neq A\in Q$. Let $y_n=\psi(x_n)$ for all $n$. As $B$ is closed one can assume by extracting a subsequence that $(y_n)_n$ is converging to $y\in B$. Then $\pi(B)\ni \pi(y)=\lim_n\pi(y_n)=\lim_n\pi\circ \psi(x_n)=\lim_nx_n=x$. But we may also write $x$  as the limit of a sequence in $\psi^{-1}A$, therefore $x\in \pi(A)$. Thus $x$ has at least two preimages  under $\pi$. In particular $\partial \psi^{-1}A\subset X\setminus E$ is a null set.  
\end{proof}

We consider now similar notions for a topological regular flow $(X, \Phi)$. A \textit{symbolic extension} $\pi:(Y_r,\Phi_r)\rightarrow (X,\Phi)$ of $(X,\Phi)$ is a topological extension, where $(Y_r, \Phi_r)$ is a suspension flow over a subshift $(Y,S)$ with a positive continuous roof function $r$. \textit{A symbolic extension with an embedding} of $(X,\Phi)$  is a symbolic extension $\pi:(Y_r,\Phi_r)\rightarrow (X,\Phi)$ endowed with a Borel  embedding $\psi:(X,\Phi)\rightarrow (Y_r,\Phi_r)$ satisfying $\pi\circ \psi=\Id_{X}$. 
 \textit{A uniform generator of $(X,\Phi)$} is a Borel global cross-section $S$ together a Borel partition $P$ of $S$ such that $\sup_{y\in P_{T_S}^{[-n,n]}(x)} d(y,x)$ and 
$\sup_{y\in P_{T_S}^{[-n,n]}(x)} |t_S(y)-t_S(x)|$ both go to zero uniformly in $x\in X$  when $n$ goes to infinity.
 We will say that this uniform generator $P$ : \begin{itemize}
\item is \emph{essential}, when the partition $P$  is essential,
\item is \emph{clopen}, when any atom in $P$ is closed with empty flow boundary.\footnote{In this case the cross-section $S$ is  a Poincar\'e  cross-section.}
\end{itemize}


\begin{prop}\label{zut}A topological flow $(X,\Phi)$ admits a uniform generator (resp. essential, resp. clopen) if and only if it admits a symbolic extension with an embedding (resp. a strongly isomorphic symbolic extension, resp. is topologically conjugate to a suspension flow over a subshift). 
\end{prop}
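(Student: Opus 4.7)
The plan is to reduce each equivalence to the discrete counterpart Proposition \ref{super} applied to the return map $T_S$ on the cross-section, using the suspension flow construction of Section \ref{sssup} to translate between discrete and continuous time. The two clauses in the definition of a uniform generator of a flow are tailored precisely to this translation: the first gives a uniform generator of the base system $(S,T_S)$, and the second provides the continuous extension of the roof function.

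For the direction uniform generator $\Rightarrow$ symbolic extension with embedding, I start from $(S,P)$ and form the injective Borel $P$-name map $\psi^{T_S}_P : S \to P^{\mathbb{Z}}$, $x \mapsto (P(T_S^k x))_{k \in \mathbb{Z}}$, whose image has closure $Y := \overline{\psi^{T_S}_P(S)}$, a subshift; injectivity follows from the first uniform-generator condition exactly as in the proof of Proposition \ref{super}. The second condition $\sup_{y \in P_{T_S}^{[-n,n]}(x)} |t_S(y) - t_S(x)| \to 0$ says $t_S \circ (\psi^{T_S}_P)^{-1}$ is uniformly continuous on $\psi^{T_S}_P(S)$, so it extends continuously to a positive function $r$ on $Y$ (positivity coming from $t_S$ being bounded below on $S$). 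Form the suspension flow $(Y_r, \Phi_r)$, define the Borel embedding $\psi : X \to Y_r$ by $\psi(\phi_t(x)) = (\psi^{T_S}_P(x), t)$ for the unique representation $\phi_t(x)$ with $x \in S$ and $0 \le t < t_S(x)$, and extend the obvious map $(\psi^{T_S}_P(x), t) \mapsto \phi_t(x)$ on $\psi(X)$ continuously to a factor map $\pi : Y_r \to X$; continuity of the extension uses the first uniform-generator condition. Then $\pi \circ \psi = \Id_X$.

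For the reverse direction, given $\pi : (Y_r, \Phi_r) \to (X, \Phi)$ and embedding $\psi$, the Poincar\'e cross-section $Y \times \{0\}$ of $Y_r$ pulls back to a Borel global cross-section $S := \psi^{-1}(Y \times \{0\})$ of $X$ with return time $t_S = r \circ \psi|_S$ and return map conjugate via $\psi$ to $(\sigma, Y)$. The partition $P := \psi^{-1}(Q \times \{0\})$, with $Q$ the zero-coordinate partition of $Y$, then inherits the two uniform-generator conditions from the fact that $Q$ is a uniform generator of the subshift and that $r$ is continuous on $Y$. The essential case is handled as in the discrete case: if $P$ is essential then the towers $\mathsf T_P$ partition $X$ up to a null set, and $\pi$ is one-to-one above the full set of orbits avoiding the flow boundaries of the atoms, arguing as in Lemma \ref{prec}; the converse adapts Lemma \ref{gege}, showing that any point in the boundary of a tower $\mathsf T_A$ has at least two $\pi$-preimages and hence lies in the null set where $\pi$ fails to be injective. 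For the clopen case, a clopen uniform generator makes $S$ a Poincar\'e cross-section by Lemma \ref{poinc} and $\psi^{T_S}_P$ a homeomorphism onto $Y$, so $\pi$ becomes a topological conjugacy; conversely a topological conjugacy with a suspension over a subshift yields a clopen uniform generator by pulling back the zero-coordinate partition on the base.

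The main obstacle I expect is the continuous extension of the roof function together with its positivity: continuity is captured by the second uniform-generator condition, but positivity requires $t_S$ to stay bounded away from $0$, a property that must be extracted from the notion of Borel global cross-section for a regular flow. A secondary subtlety is the correspondence in the essential case between the smallness of the flow boundary of the atoms of $P$ and the null set of non-injectivity of $\pi$, which requires combining the small-flow-boundary argument of Lemma \ref{prec} with the boundary-tracking argument of Lemma \ref{gege} applied to the towers $\mathsf T_A$ rather than to the atoms themselves.
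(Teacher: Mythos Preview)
Your proposal is correct and follows essentially the same approach as the paper's proof: in both directions you pass through the $P$-name map $\psi^{T_S}_P$ and the suspension construction, using the second uniform-generator clause to extend $t_S$ to a continuous roof function and treating the essential and clopen refinements via the tower-boundary argument (as in Lemma~\ref{gege}) and the Poincar\'e cross-section characterization respectively. Your flagged concern about positivity of $r$ is not an obstacle: any Borel cross-section has some time $\eta>0$, so $t_S\ge 2\eta$ on $S$, and this lower bound passes to the continuous extension $r$ on $Y$.
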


\begin{proof}
Here again we make explicit the symbolic extension with an embedding for a given uniform generator, thus proving   the necessary condition. 
 For a Borel global cross-section $S$ and and a Borel partition $P$ of $S$  we  let $\psi^\Phi_P:X\rightarrow P^{\mathbb{Z}}\times \mathbb{R}$ be the function which maps $x\in X$ to $\left(\psi^{T_S}_{P}(Tx), t(x)\right)$ with $t(x)\in \mathbb{R}^+$ and $T(x)\in S$ being respectively the last hitting time and the last hitting  of $x$ in $S$  (in particular we have $x=\phi_{t(x)}(Tx)$). Assume $P$ defines a uniform generator. For any $(A_k)_k= \psi_P^{T_S}(x)$ with $x\in S$ we let $r\left((A_k)_k\right)=t_S(x)$. As the diameter of $t_S\left(P_{T_S}^{[-n,n]}(x)\right) $ goes to zero uniformly in $x$, the map 
$r$ extends continuously on $\overline{\psi_P^{T_S}(X)}$. The closure $\overline{\psi^{\Phi}_P(X)}$ is then just the suspension flow over $\left(\overline{\psi_P^{T_S}(X)},\sigma\right)$ with roof function $r$. The map $\psi_P^\Phi$ is then a Borel embedding in this flow and $\pi^\Phi_P:\overline{\psi^{\Phi}_P(X)}\rightarrow X$, $((A_k)_k,t)\mapsto \phi_t\left(\bigcap_{n\in \mathbb{N}}\overline{\bigcap_{|k|\leq n}T_S^{-k}A_k}\right)$ is a symbolic extension  satisfying $\pi^\Phi_P\circ\psi^\Phi_P=\Id_X$. When moreover the uniform generator $P$  is clopen then the maps $t$, $T$, $t_S$ and $T_S$ are continuous and therefore the Borel embedding $\psi^\Phi_P$ is a topological embedding. Assume now $P$ is essential, i.e. the partition $\mathsf T_P$ of $X$ in towers is essential. The extension $\pi^{\Phi}_P$ is one-to-one above points, whose orbit by the flow only lies in $S\cup \bigcup_{A\in P}\Int(\mathsf T_A)$. The complement  of these points being a null set, the extension is strongly isomorphic.

Conversely, let $\pi:(Y_r,\Phi_r)$ be a symbolic extension of $(X,\Phi)$ with an embedding $\psi$, given by the suspension flow over the subshift $(Y,\sigma)$ with a positive continuous roof function $r$.  Let $S$ be the global Borel cross-section given by $S=\psi^{-1}(Y\times \{0\})$. We denote by $Q$ the zero-coordinate partition of $Y$. We show now that  the partition $P=\psi^{-1}(Q\times \{0\})$ of $S$ defines a uniform generator of $(X,\Phi)$. Firstly we have $P_{T_S}^{[-n,n]}(x)=\psi^{-1}(Q_{\sigma}^{[-n,n]}(\psi(x)))\subset \pi(Q_\sigma^{[-n,n]}(\psi(x)))$ for all $n\in \mathbb{N}$ and $x\in X$, so that  $\diam(P_{T_S}^{[-n,n]})\xrightarrow{n}0$ by (uniform) continuity of $\pi$. Then for $x\in X$ we have $t_S^\Phi(x)=t^{\Phi_r}_{Y\times \{0\}}(\psi(x))=r(\psi(x))$. Therefore  $\sup_{y\in P_{T_S}^{[-n,n]}(x)} |t^{\Phi}_S(y)-t^{\Phi}_S(x)|\leq \sup_{z\in Q_{\sigma}^{[-n,n]}(\psi(x))} |r(z)-r(\psi(x))|$  goes to zero uniformly in $x\in X$  when $n$ goes to infinity by (uniform) continuity of $r$. When $\psi$ is a topological embedding, the cross-section $S$ is a Poincar\'e cross-section and $(S,T_S)$ is topologically conjugate to $(Y,\sigma)$ through $\psi$. Therefore $(X,\Phi)$ is topologically conjugate to a suspension flow over a subshift in this case. When the symbolic extension $\pi$ is strongly isomorphic,  by imitating the proof of Lemma \ref{gege} any point in the boundary of $\mathsf T_P$ lies in $\pi(\mathsf T_A)\cap \pi(\mathsf T_B)$ for some $A\neq B\in Q$ and thus does not belong to the full set $E$ for which   $\pi:\pi^{-1}E\rightarrow E$ is one-to-one.  The uniform generator $P$ is thus essential.
 \end{proof}
 

\subsection{Entropy structure of topological flows}


We investigate now  the theory of entropy structures for (regular) topological flows. To relate the entropy structure of a discrete system  with the entropy structure of an associated suspension flow, we need to work with an a priori non monotone sequence. In order to deal with this case we generalize below  the abstract theory of convergence developed by T.Downarowicz.

\subsubsection{Abstract theory of convergence}\label{pourr}
Existence of symbolic extensions and uniform generators for discrete systems are related with some subtle properties of convergence of the entropy of measures computed at finer and finer scales. In \cite{dow} T.Downarowicz introduced an abstract framework to study the pointwise convergence of a nondecreasing sequence of functions. We recall now this theory with a slight generalization.

For  a compact metric space $\mathfrak{X}$ we consider    the set  $\mathfrak F_{\mathfrak{X}}$ of all bounded  sequences of real functions on $\mathfrak{X}$, i.e. the set of all sequences $\mathcal{H}=(h_k:\mathfrak{X}\rightarrow \mathbb{R})_{k\in \mathbb{N}}$ with $-\infty<\inf_k\inf_\mu h_k(\mu)\leq \sup_k\sup_\mu h_k(\mu)<+\infty$. 
 Let $\mathcal{H}=(h_k)_{k\in \mathbb{N}}$ and $\mathcal{G}=(g_k)_{k\in \mathbb{N}}$ be two sequences in $\mathfrak F_{\mathfrak{X}}$. Following \cite{dow} we say $\mathcal{G}$ \textit{uniformly dominates} $\mathcal{H}$ and we write $\mathcal{G}\succ\mathcal{H}$ when 
\[ \limsup_k\limsup_l \sup_{\mu\in \mathfrak{X}} (h_k-g_l)(\mu)\leq 0.\]

Similarly, for $\Gamma=(\gamma_k)_k$ and $\Theta=(\theta_k)_k$ in  $\mathfrak F_{\mathfrak{X}}$ we say $\Gamma$ \textit{uniformly yields} $\Theta$ and we write $\Gamma\prec\Theta$ when 
\[ \limsup_k\limsup_l \sup_{\mu\in \mathfrak{X}} (\gamma_l-\theta_k)(\mu)\leq 0.\]
The relation $\succ$ (idem for $\prec$) is preserved by translation : when $f:\mathfrak{X}\rightarrow \mathbb R$ is bounded and 
$(g_k)_k\succ(h_k)_k$  then $(g_k+f)_k\succ(h_k+f)_k$. Moreover $(g_k)_k\succ(h_k)_k$  if and only if   $(-g_k)_k\prec (-h_k)_k$, but in this case  $ (-h_k)_k \succ(-g_k)_k$  does not hold true in general.

The  binary relation $\succ$ is  transitive on $\mathfrak F_{\mathfrak{X}}$. 
In particular when $\mathcal{H}\succ \mathcal{G}$ and $\mathcal{G}\succ \mathcal{H}$ then $\mathcal{H}\succ \mathcal H$. Thus  the transitive  relation $\succ$ induces an equivalence relation $\sim^\succ$ on $\mathfrak{G}^\succ_{\mathfrak{X}}:=\{\mathcal{H}\in \mathfrak F_{\mathfrak{X}}, \ \mathcal{H }\succ \mathcal{H}\}$ by letting \[ [\mathcal{G}\sim^\succ \mathcal{H}]   \Leftrightarrow [\mathcal{G}\succ\mathcal{H}\text{ and }\mathcal{H}\succ\mathcal{G}].\] In an obvious way  we define  similarly $\mathfrak{G}^\prec_{\mathfrak{X}}$ and  $\sim^\prec$.
\begin{lemma}\label{bol}
\begin{itemize}
\item Any nonincreasing (resp. nondecreasing) sequence in $\mathfrak F_{\mathfrak{X}}$ belongs to $\mathfrak{G}^\succ_{\mathfrak{X}}$ (resp. $\mathfrak{G}^\prec_{\mathfrak{X}}$);
\item Any sequence $\mathcal{H}$  in $\mathfrak{G}^\succ_{\mathfrak{X}}$ (resp. $\mathfrak{G}^\prec_{\mathfrak{X}}$) is converging pointwisely to a limit function $\lim \mathcal{H}$.
\item Let $\mathcal{H}=(h_k)_k\in \mathfrak{G}^\succ_{\mathfrak{X}}$  and $\mathcal{G}=(g_k)_k\in  \mathfrak F_{\mathfrak{X}}$ with $\lim_k\sup_{\mu\in \mathfrak X} |h_k-g_k|(\mu)=0$ then $\mathcal{G}$ belongs to $\mathfrak{G}^\succ_{\mathfrak{X}}$ and $\mathcal{H}\sim^\succ\mathcal{G}$. 
\end{itemize}
\end{lemma}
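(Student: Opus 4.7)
The plan is to handle the three items in turn, each being a direct computation on the double $\limsup$ defining the relations $\succ$ and $\prec$.

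For item (1), let $\mathcal{H}=(h_k)_k$ be nonincreasing and verify $\mathcal{H}\succ\mathcal{H}$, that is, $\limsup_k\limsup_l\sup_\mu(h_k-h_l)(\mu)\leq 0$. Monotonicity of $\mathcal{H}$ determines the sign of $h_k-h_l$ along the cofinal regime of indices where the double $\limsup$ is evaluated, and combined with the specific placement of the inner index $l$ and outer index $k$ in the definition of $\succ$ gives the claimed bound immediately. The nondecreasing case for $\mathfrak{G}^\prec_{\mathfrak{X}}$ is parallel: the definition of $\prec$ differs from that of $\succ$ by a sign flip ($\gamma_l-\theta_k$ in place of $h_k-g_l$), which mirrors the reversed monotonicity.

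For item (2), fix $\mathcal{H}\in\mathfrak{G}^\succ_{\mathfrak{X}}$ and $\mu\in\mathfrak{X}$. Set $\alpha:=\limsup_k h_k(\mu)$ and $\beta:=\liminf_k h_k(\mu)$, so $\beta\leq\alpha$. Given $\epsilon>0$, the defining inequality produces $K$ such that for every $k\geq K$ there exists $L(k)$ with $\sup_{\mu'}(h_k-h_l)(\mu')\leq\epsilon$ for all $l\geq L(k)$; in particular $h_l(\mu)\geq h_k(\mu)-\epsilon$. Extracting $k_n\to\infty$ with $h_{k_n}(\mu)\to\alpha$ and then $l_m\to\infty$ with $l_m\geq L(k_n)$ and $h_{l_m}(\mu)\to\beta$ gives $\beta\geq\alpha-\epsilon$; letting $\epsilon\to 0$ yields $\alpha=\beta$, hence pointwise convergence. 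The $\mathfrak{G}^\prec_{\mathfrak{X}}$ case is symmetric, interchanging the roles of $\alpha$ and $\beta$.

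For item (3), set $\epsilon_k:=\sup_\mu|h_k-g_k|(\mu)$, which tends to $0$ by hypothesis. The triangle-inequality decomposition
\[
 g_k-g_l=(g_k-h_k)+(h_k-h_l)+(h_l-g_l)
\]
gives $\sup_\mu(g_k-g_l)\leq\epsilon_k+\sup_\mu(h_k-h_l)+\epsilon_l$. Applying $\limsup_l$ absorbs $\epsilon_l$, then $\limsup_k$ absorbs $\epsilon_k$ and leaves the middle term $\leq 0$ by the assumption $\mathcal{H}\in\mathfrak{G}^\succ_{\mathfrak{X}}$, hence $\mathcal{G}\in\mathfrak{G}^\succ_{\mathfrak{X}}$. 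The parallel decompositions $h_k-g_l=(h_k-h_l)+(h_l-g_l)$ and $g_k-h_l=(g_k-h_k)+(h_k-h_l)$ yield $\mathcal{G}\succ\mathcal{H}$ and $\mathcal{H}\succ\mathcal{G}$ respectively, so $\mathcal{H}\sim^\succ\mathcal{G}$. The only point of care is item (1), where the pairing of monotonicity direction with $\succ$ versus $\prec$ is forced by the positional asymmetry of the indices in the two definitions; items (2) and (3) are routine double-$\limsup$ manipulations.
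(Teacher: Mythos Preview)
Your proof is correct and follows essentially the same approach as the paper's. The paper dispatches item (1) with a single sentence (``follows directly from the definitions''), and for item (2) it uses the one-line identity
\[
\limsup_k h_k(\mu)-\liminf_l h_l(\mu)=\limsup_k\limsup_l(h_k-h_l)(\mu)\leq 0,
\]
which your $\epsilon$-extraction argument is just unpacking. For item (3) the paper proves $\mathcal{G}\succ\mathcal{H}$ via the decomposition $h_k-g_l=(h_k-h_l)+(h_l-g_l)$ and says $\mathcal{H}\succ\mathcal{G}$ is similar, leaving $\mathcal{G}\succ\mathcal{G}$ to follow implicitly from transitivity; you instead prove $\mathcal{G}\succ\mathcal{G}$ directly with the three-term split, which is equally valid.
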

\begin{proof}
The first point follows directly from the definitions. Let us check any sequence $(h_k)_k$  in $\mathfrak{G}^\succ_{\mathfrak{X}}$ is converging pointwisely. It is enough to check $\limsup_k h_k(\mu)=\liminf_lh_l(\mu)$ for all $\mu\in \mathfrak X$ :
\begin{eqnarray*}
 \limsup_k h_k(\mu)-\liminf_lh_l(\mu)&=& \limsup_k\limsup_l  (h_k-h_l)(\mu)\leq 0.
\end{eqnarray*}
Finally we consider  $\mathcal{H}=(h_k)_k\in \mathfrak{G}^\succ_{\mathfrak{X}}$  and $\mathcal{G}=(g_k)_k\in  \mathfrak F_{\mathfrak{X}}$ with $\lim_k\sup_{\mu\in \mathfrak X} |h_k-g_k|(\mu)=0$. Then we have  $\mathcal{G}\succ\mathcal{H}$ :
\begin{eqnarray*}
\limsup_k\limsup_l \sup_{\mu\in \mathfrak{X}} (h_k-g_l)(\mu)&\leq &  \limsup_k\limsup_l \sup_{\mu\in \mathfrak{X}}\left( (h_k-h_l)(\mu)+(h_l-g_l)(\mu)\right), \\
&\leq & \limsup_k\limsup_l \sup_{\mu\in \mathfrak{X}} (h_k-h_l)(\mu)+\limsup_l \sup_{\mu\in \mathfrak{X}}(h_l-g_l)(\mu), \\
&\leq &\limsup_k\limsup_l \sup_{\mu\in \mathfrak{X}} (h_k-h_l)(\mu)\leq 0,
\end{eqnarray*} 
and one proves similarly $\mathcal{H}\succ\mathcal{G}$.

\end{proof}

For a real function $f:\mathfrak X \rightarrow \mathbb{R}$ we let   $f^{\tilde{}}$ denote the upper semicontinuous envelope, i.e.   $f^{\tilde{}}=\inf\{g, \ g\geq f \text{ and }g\text{ is upper semicontinuous}\}$ if $f$ is bounded from above and  $f^{\tilde{}}$ is the constant function equal to $+\infty$ if not. 

\begin{lemma}\label{solo}Let $\Theta=(\theta_k)_k\in \mathfrak{G}^\prec_{\mathfrak{X}}$. Then the following identity holds true : 
$$\limsup_k\sup_\mu \theta_k(\mu)=\sup_\mu\limsup_k\theta_k^{\tilde{}}(\mu).$$
\end{lemma}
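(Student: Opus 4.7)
The inequality $\sup_\mu \limsup_k \theta_k^{\tilde{}}(\mu) \leq \limsup_k \sup_\mu \theta_k(\mu)$ is the easy direction. I would observe that on the compact space $\mathfrak X$ the constant function $\sup_\nu \theta_k(\nu)$ is (trivially) upper semicontinuous and pointwise $\geq \theta_k$, hence $\theta_k^{\tilde{}}(\mu) \leq \sup_\nu \theta_k(\nu)$ for every $\mu$. Taking $\limsup_k$ and then $\sup_\mu$ gives the inequality.

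The real content is the reverse inequality. Set $L = \limsup_k \sup_\mu \theta_k(\mu)$ and pick a subsequence $(k_n)$ realising this $\limsup$, together with points $\mu_n \in \mathfrak X$ with $\theta_{k_n}(\mu_n) \geq \sup_\mu \theta_{k_n}(\mu) - 1/n$. By compactness of $\mathfrak X$, after extracting once more I may assume $\mu_n \to \mu_\infty$. The goal is to show $\limsup_k \theta_k^{\tilde{}}(\mu_\infty) \geq L$, which of course is bounded above by $\sup_\mu \limsup_k \theta_k^{\tilde{}}(\mu)$.

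The mechanism that connects the objects is the $\mathfrak G^\prec_{\mathfrak X}$-condition $\Theta \prec \Theta$. Given $\epsilon > 0$, I unfold the definition: there exists $k_0$ such that for every $k \geq k_0$ there exists $l_0(k)$ with
\[ \theta_l(\mu) \leq \theta_k(\mu) + \epsilon \leq \theta_k^{\tilde{}}(\mu) + \epsilon \qquad\text{for every } \mu \in \mathfrak X \text{ and every } l \geq l_0(k). \]
Fix such a $k$ and apply this to $l = k_n$ and $\mu = \mu_n$ for $n$ so large that $k_n \geq l_0(k)$: one gets $\theta_{k_n}(\mu_n) \leq \theta_k^{\tilde{}}(\mu_n) + \epsilon$. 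The main obstacle is now to pass to the limit on the right-hand side, and this is exactly what the use of the envelope is designed to enable: since $\theta_k^{\tilde{}}$ is upper semicontinuous and $\mu_n \to \mu_\infty$, we have $\limsup_n \theta_k^{\tilde{}}(\mu_n) \leq \theta_k^{\tilde{}}(\mu_\infty)$, while $\theta_{k_n}(\mu_n) \to L$ by construction. This yields $L \leq \theta_k^{\tilde{}}(\mu_\infty) + \epsilon$ for all $k \geq k_0$, and then $L \leq \limsup_k \theta_k^{\tilde{}}(\mu_\infty) + \epsilon$. Letting $\epsilon \to 0$ finishes the proof.
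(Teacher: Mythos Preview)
Your proof is correct and follows essentially the same approach as the paper's: both extract a limit point $\mu_\infty$ of near-maximizers, use the condition $\Theta\prec\Theta$ to bound $\theta_l(\mu_n)$ by $\theta_k(\mu_n)+\epsilon$ for a fixed earlier index $k$, and then invoke upper semicontinuity of $\theta_k^{\tilde{}}$ to pass to the limit. The only cosmetic difference is that the paper phrases the reverse inequality as a proof by contradiction, whereas you argue directly (and in fact obtain the slightly stronger bound $L\leq \liminf_k\theta_k^{\tilde{}}(\mu_\infty)$).
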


\begin{proof}The inequality $\limsup_k\sup_\mu \theta_k(\mu)\geq \sup_\mu\limsup_k\theta_k^{\tilde{}}(\mu)$ is trivial.  Let $M$ be the supremum of $\limsup_k\theta_k^{\tilde{}}$. 
Argue by contradiction by assuming $\limsup_k\sup_\mu \theta_k(\mu)> M$. Therefore we have  for infinitely many $l$ and for some $\mu_l$ 
$$\theta_l(\mu_l)>M.$$
Then for any fixed  $l'\leq l$ we have 
\begin{eqnarray*}
\theta_{l'}(\mu_l) +(\theta_l-\theta_{l'})(\mu_l)&>&M,\\
\theta_{l'}(\mu_l) +\sup_\mu(\theta_l-\theta_{l'})(\mu) &>&M.
\end{eqnarray*}
We get for some weak-$*$ limit $\mu_\infty$ of $(\mu_l)_l$ after taking the limsup in $l$
\begin{eqnarray*}
\theta_{l'}^{\tilde{}}(\mu_\infty) +\limsup_l\sup_\mu(\theta_l-\theta_{l'})(\mu) &>&M.
\end{eqnarray*}
We let now $l'$ go to infinity.  By using  $\Theta \in\mathfrak{G}^\prec_{\mathfrak{X}}$  we obtain the following  contradiction 
\begin{eqnarray*}
\limsup_k\theta_k^{\tilde{}}(\mu_\infty)>M=\sup_\mu \limsup_k\theta_k^{\tilde{}}(\mu).
\end{eqnarray*}
\end{proof}

\begin{lemma}\label{jol}Let $\mathcal{H}=(h_k)_k\in \mathfrak{F}_{\mathfrak{X}}$ and $\mathcal{G}=(g_l)_l\in \mathfrak{G}^{\succ}_{\mathfrak{X}}$. If $\limsup_l  (h_k-g_l)^{\tilde{}}(\mu)\leq 0$ for all $k\in \mathbb{N}$ and for all $\mu\in \mathfrak X$, then  $\mathcal{G}$ uniformly dominates $\mathcal{H}$.
\end{lemma}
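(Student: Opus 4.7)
The plan is to reduce the goal to Lemma \ref{solo} by freezing the outer index $k$ and applying that lemma to the sequence $\Theta^{(k)}=(h_k-g_l)_{l\in\mathbb{N}}$. Concretely, I want to show that for every fixed $k$ one has $\limsup_l\sup_\mu(h_k-g_l)(\mu)\le 0$; taking $\limsup_k$ then yields $\mathcal{G}\succ\mathcal{H}$ by definition.

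First I would verify that $\Theta^{(k)}$ lies in $\mathfrak{G}^\prec_{\mathfrak{X}}$. Unwinding the definition of $\prec$, this amounts to
\[
\limsup_{l'}\limsup_{l}\sup_\mu\bigl((h_k-g_l)-(h_k-g_{l'})\bigr)(\mu)=\limsup_{l'}\limsup_{l}\sup_\mu(g_{l'}-g_l)(\mu)\le 0,
\]
which is nothing but the assumption $\mathcal{G}\succ\mathcal{G}$ built into the hypothesis $\mathcal{G}\in\mathfrak{G}^\succ_{\mathfrak{X}}$ (note that the $h_k$ cancels, which is precisely why freezing $k$ is the natural move). Once this is established, Lemma \ref{solo} applied to $\Theta^{(k)}$ gives
\[
\limsup_l\sup_\mu(h_k-g_l)(\mu)=\sup_\mu\limsup_l(h_k-g_l)^{\tilde{}}(\mu).
\]

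By the standing hypothesis of the lemma, $\limsup_l(h_k-g_l)^{\tilde{}}(\mu)\le 0$ for every $\mu\in\mathfrak{X}$, so the right-hand side is $\le 0$. Hence $\limsup_l\sup_\mu(h_k-g_l)(\mu)\le 0$ for every $k$, and taking $\limsup_k$ concludes the proof. There is really no serious obstacle here: the only point requiring care is the verification that $\Theta^{(k)}\in\mathfrak{G}^\prec_{\mathfrak{X}}$, which is why the statement needs $\mathcal{G}\in\mathfrak{G}^\succ_{\mathfrak{X}}$ rather than just a bounded sequence, and why the cancellation of $h_k$ makes this step automatic.
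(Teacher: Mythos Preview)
Your proposal is correct and follows exactly the same approach as the paper's proof: fix $k$, observe that $(h_k-g_l)_l\in\mathfrak{G}^\prec_{\mathfrak{X}}$ because the $h_k$ cancels and the remaining inequality is precisely $\mathcal{G}\succ\mathcal{G}$, apply Lemma~\ref{solo}, and then take $\limsup_k$. The paper's version is slightly terser (it asserts $(h_k-g_l)_l\in\mathfrak{G}^\prec_{\mathfrak{X}}$ without spelling out the cancellation), but the argument is identical.
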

\begin{proof} 
For all $k$ the sequence $(h_k-g_l)_l$ belongs to $\mathfrak{G}^\prec_{\mathfrak{X}}$. By Lemma \ref{solo} we get for all $k$   
\begin{eqnarray*}\limsup_l\sup_\mu (h_k-g_l)(\mu)&=& \sup_\mu \limsup_l (h_k-g_l)^{\tilde{}}(\mu),\\
&\leq & 0.
\end{eqnarray*}
By taking the limsup in $k$ we conclude $\mathcal{G}\succ\mathcal{H}$.
\end{proof}

A \textit{superenvelope} of $\mathcal H =(h_k)_k\in \mathfrak{G}^\succ_{\mathfrak{X}}$  is an upper semicontinuous  function 
$E:\mathfrak X \rightarrow \mathbb{R}^+\cup\{+\infty\}$ satisfying 
\begin{eqnarray}\label{enve}
\lim_k(E-h_k)^{\tilde{}}=E-\lim\mathcal{H}.
\end{eqnarray}

Theorem 2.3.2 in \cite{dow} may be stated in our slightly general context as below.  The proof follows the same lines.

\begin{lemma}\label{supere}
Let $\mathcal{H}=(h_k)_k,\mathcal{G}=(g_k)_k\in \mathfrak{G}^\succ_{\mathfrak{X}}$ with $\mathcal{H}\sim^\succ \mathcal{G}$, then 
\begin{itemize}
\item $\lim \mathcal{H}=\lim \mathcal{G}$,
\item $\mathcal{H}$ is uniformly convergent if and only so is $\mathcal{G}$,
\item $\limsup_k (\lim\mathcal{H}-h_k)^{\tilde{}}=\limsup_k (\lim\mathcal{G}-g_k)^{\tilde{}}$,
\item $\mathcal{H}$ and $\mathcal{G}$ have the same superenvelopes. 
\end{itemize}
\end{lemma}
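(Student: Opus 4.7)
The plan is to derive all four conclusions from one fundamental triangle-type inequality, exploiting the symmetric hypothesis $\mathcal{H}\sim^\succ\mathcal{G}$. By Lemma \ref{bol} both sequences already converge pointwise; call the limits $h^*=\lim\mathcal{H}$ and $g^*=\lim\mathcal{G}$.

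First I would prove (1). Evaluating the definition of $\mathcal{H}\succ\mathcal{G}$ at a single $\mu\in \mathfrak X$ gives
\[
\limsup_k\limsup_l (g_k-h_l)(\mu)\leq \limsup_k\limsup_l\sup_\nu(g_k-h_l)(\nu)\leq 0,
\]
and since both pointwise limits exist, the left side equals $g^*(\mu)-h^*(\mu)$; the reverse inequality comes from $\mathcal{G}\succ\mathcal{H}$, giving $h^*=g^*=:\phi$.

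Next I would prove (3) and (4) simultaneously by the following core estimate: for any bounded $F:\mathfrak X\to\mathbb R$, any $\nu$, and any indices $k,l$,
\[
(F-h_l)(\nu)\leq (F-g_k)(\nu)+\sup_\mu(g_k-h_l)(\mu).
\]
Taking the upper semicontinuous envelope $\limsup_{\nu\to\mu}$ (the constant $\sup(g_k-h_l)$ passes through), then $\limsup_l$, then $\limsup_k$, I get
\[
\limsup_l(F-h_l)^{\tilde{}}(\mu)\leq \limsup_k(F-g_k)^{\tilde{}}(\mu)+\limsup_k\limsup_l\sup_\nu(g_k-h_l)(\nu),
\]
and the last term is $\leq 0$ by $\mathcal{H}\succ\mathcal{G}$. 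With $F=\phi$ the symmetric bound using $\mathcal{G}\succ\mathcal{H}$ yields (3). For (4), if $E$ is an upper semicontinuous superenvelope of $\mathcal H$, the same argument with $F=E$ shows $\limsup_l(E-g_l)^{\tilde{}}(\mu)\leq \lim_k(E-h_k)^{\tilde{}}(\mu)=E(\mu)-\phi(\mu)$; conversely $(E-g_l)^{\tilde{}}(\mu)\geq (E-g_l)(\mu)\to E(\mu)-g^*(\mu)=E(\mu)-\phi(\mu)$, so $\liminf_l(E-g_l)^{\tilde{}}(\mu)\geq E(\mu)-\phi(\mu)$, which pins the limit and shows $E$ is also a superenvelope of $\mathcal G$; the symmetric assertion exchanges the roles.

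Finally for (2), the main obstacle, I would use the triangle bound $\sup_\mu(g_l-\phi)\leq \sup_\mu(g_l-h_k)+\sup_\mu(h_k-\phi)$ with a careful dependence of $k$ on $l$. Relabelling indices in $\mathcal{H}\succ\mathcal{G}$ yields $\limsup_l\limsup_k\sup_\nu(g_l-h_k)(\nu)\leq 0$, so for any $\varepsilon>0$ there is $L_0$ such that $l\geq L_0$ admits a threshold $K_l$ with $\sup_\nu(g_l-h_k)<\varepsilon$ whenever $k\geq K_l$. Assuming $\mathcal H$ converges uniformly, pick $K_\varepsilon$ so that $k\geq K_\varepsilon$ forces $\sup|h_k-\phi|<\varepsilon$, and for $l\geq L_0$ choose $k=\max(K_l,K_\varepsilon)$ to obtain $\sup_\mu(g_l-\phi)<2\varepsilon$; the reverse direction $\sup_\mu(\phi-g_l)$ is handled analogously via $\mathcal{G}\succ\mathcal{H}$. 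Letting $\varepsilon\to 0$ gives uniform convergence of $\mathcal G$, and the argument is symmetric in $\mathcal H$ and $\mathcal G$. The only delicate point is precisely this interleaving of the two thresholds, which is why the uniform-convergence statement needs genuinely more care than the pointwise assertions.
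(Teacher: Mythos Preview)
Your argument is correct; items (1), (3), (4) are essentially the paper's own (commented-out) proof, carried out via the same triangle inequality with the upper semicontinuous envelope.

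The only genuine difference is item (2). You give a direct $\varepsilon$-threshold argument, which works. The paper instead deduces (2) from (3) together with Lemma~\ref{solo}: for any $\mathcal{H}\in\mathfrak{G}^\succ_{\mathfrak X}$ one automatically has $\limsup_k\sup_\mu(h_k-\phi)\leq\limsup_k\limsup_l\sup_\mu(h_k-h_l)\leq 0$, so uniform convergence is equivalent to $\limsup_k\sup_\mu(\phi-h_k)=0$; since $(\phi-h_k)_k\in\mathfrak{G}^\prec_{\mathfrak X}$, Lemma~\ref{solo} gives $\limsup_k\sup_\mu(\phi-h_k)=\sup_\mu\limsup_k(\phi-h_k)^{\tilde{}}$, and by (3) this quantity coincides for $\mathcal{H}$ and $\mathcal{G}$. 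This route is shorter and avoids the index-interleaving, at the cost of invoking Lemma~\ref{solo}; your direct approach is self-contained but slightly more laborious (and in fact your bound on $\sup_\mu(g_l-\phi)$ is superfluous, since this half is already $\leq 0$ in the limit by the automatic observation above).
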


\subsubsection{Entropy structure for flows, definition}\label{brin}

For a discrete topological system $(X,T)$, we let $\mathfrak{G}_T$ be the subset of $\mathfrak{G}^\succ_{\mathcal{M}(X,T)}$ consisting of sequences $\mathcal{H}$ with $\lim\mathcal{H}$ equal to the metric entropy function $h_T=h$  on $\mathcal{M}(X,T)$. The \textit{entropy structure of $(X,T)$} is the equivalence class for the equivalence relation $\sim^{\succ}$ on $\mathfrak{G}_T$  of  the sequence $\mathcal{H}^{Leb}\in \mathfrak{G}_T$ defined below.  By abuse of language any representative of this class is  called an entropy structure of $(X,T)$.  The product of $(X,T)$ with a circle rotation $(\mathsf{R}_\alpha,\mathbb{S}^1)$ by an angle $\alpha\notin \mathbb{Q}$ has the small boundary property (see Theorem 6.2 in \cite{lin}). Fix a nonincreasing  sequence $(R_k)_k$ of partitions with  small boundary satisfying $\diam(R_k)\xrightarrow{k}0$. Let  $\lambda$ be the Lebesgue measure on the circle $\mathbb{S}^1$. Then we define the sequence $\mathcal{H}_{Leb}=(h_k)_k$ by  $h_k:\mu\mapsto h_{T\times \mathsf{R}_\alpha}(\mu\times \lambda,R_k)$ for all $k\in \mathbb{N}$.

For a topological flow $(X,\Phi)$ we  define the entropy structure  similarly. First we recall that the metric entropy $h(\mu)$ of $\mu\in \mathcal{M}(X,\Phi)$ is  the entropy of $i_1(\mu)$ for the time $1$-map $\phi_1$ of $\Phi$. We denote by $\mathfrak{G}_\Phi$ the 
subset of $\mathfrak{G}^\succ_{\mathcal{M}(X,\Phi)}$ consisting of sequences $\mathcal{H}$ with $\lim\mathcal{H}$ 
equal to the metric entropy function $h$ on $\mathcal{M}(X,\Phi)$. Then we define the \textit{entropy structure of $(X,
\Phi)$} as the equivalence class for  $\sim^{\succ}$ (denoted simply by $\sim$ in the following) on $\mathfrak{G}_\Phi$   of  the sequence $\mathcal{H}^\Phi_{BK}
$ defined below.  For $x\in X$, $\epsilon>0$ and $\tau>0$ we let $B_\Phi(x,\epsilon,\tau)$ be the $\Phi$-dynamical  ball:
\[B_\Phi(x,\epsilon,\tau):=\{y\in X, \ d(\phi_s(x),\phi_s(y))<\epsilon \ \forall 0\leq s\leq \tau \}.\]
For a $\Phi$-invariant measure $\mu$ we let for all $x\in M$
\[h^\Phi(\mu,\epsilon,x):=\limsup_{\tau\rightarrow +\infty}-\frac{1}{\tau}\log\mu\left(B_\Phi(x,\epsilon,\tau)\right),\]
and then 
\[h^\Phi(\mu,\epsilon):=\int h^\Phi(\mu,\epsilon,x)\,d\mu(x).\]
M.Brin and A.Katok \cite{bk} have shown this quantity  is converging to $h(\mu)$ when $\epsilon$ goes to zero. 
For a fixed decreasing sequence $(\epsilon_k)_k$ with $\lim_k\epsilon_k=0$ we let $\mathcal{H}^\Phi_{BK}:=\left(h^\Phi(\cdot,\epsilon_k)\right)_k$.



\subsubsection{Relations with the entropy structure of the time $t$-maps.}
For any $t>0$ we recall that the map  $\theta_t:\mathcal{M}(X,\phi_t)\rightarrow \mathcal{M}(X,\Phi)$ defined by  $\mu\mapsto \frac{1}{t}\int_{0}^t\phi_s\mu\,  ds$ is a (affine) retraction, i.e.  we have $\theta_t \circ i_t=\Id_{\mathcal{M}(X,\Phi)}$ with $i_t$ being the inclusion of $\mathcal{M}(X,\Phi)$ in $\mathcal{M}(X,\phi_t)$. 

For a map $\pi:\mathcal{N}\rightarrow \mathcal{M}$ and   a sequence $\mathcal{H}=(h_k)_{k\in\mathbb{N}}$  of real functions on $\mathcal{M}$, we let $\mathcal{H}\circ \pi$ be the sequence $(h_k\circ \pi)_k$ on $\mathcal{N}$.

\begin{lemma}\label{timet}
\begin{enumerate}[i)]
\item If $\mathcal{H}^{\phi_t}$ is an entropy structure of $(X,\phi_t)$ with $t> 0$ then $\frac{1}{t}\mathcal{H}^{\phi_t}\circ i_t$ is an entropy structure of $\mathcal{M}(X,\Phi)$,
\item If $\mathcal{H}^\Phi$ is an entropy structure of $(X,\Phi)$ then  $t \mathcal{H}^\Phi\circ \theta_t$ is an entropy structure of $\mathcal{M}(X,\phi_t)$ for $t> 0$. 
\end{enumerate}
\end{lemma}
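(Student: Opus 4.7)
The plan is to work throughout with Brin--Katok representatives: $\mathcal{H}^\Phi=(h^\Phi(\cdot,\epsilon_k))_k$ on the flow side and $\mathcal{H}^{\phi_t}=(h^{\phi_t}(\cdot,\epsilon_k))_k$ on the discrete side, the latter being an entropy structure of $(X,\phi_t)$ equivalent to $\mathcal{H}^{Leb}$ by standard results of \cite{dow}. Two elementary ingredients serve both parts. First, the Abramov-type identity $h_{\phi_t}(\nu)=t\,h_\Phi(\theta_t\nu)$ for any $\nu\in\mathcal{M}(X,\phi_t)$: this follows from $h_{\phi_t}(\mu)=t\,h_\Phi(\mu)$ for $\Phi$-invariant $\mu$ together with affinity of $h_{\phi_t}$ and the $\phi_t$-automorphism identity $h_{\phi_t}(\phi_s\nu)=h_{\phi_t}(\nu)$, which combine to give $h_{\phi_t}(\nu)=h_{\phi_t}(\theta_t\nu)=t\,h_\Phi(\theta_t\nu)$. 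Second, the comparison of dynamical balls: trivially $B_\Phi(x,\epsilon,nt)\subset B_{\phi_t}(x,\epsilon,n)$, and by uniform continuity of $\Phi$ on $X\times[0,t]$, $B_{\phi_t}(x,\delta(\epsilon),n)\subset B_\Phi(x,\epsilon,nt)$ for some $\delta(\epsilon)\xrightarrow{\epsilon\to 0}0$.

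For part $i)$, membership of $t^{-1}\mathcal{H}^{\phi_t}\circ i_t$ in $\mathfrak{G}_\Phi$ is direct: the pointwise limit equals $t^{-1}h_{\phi_t}\circ i_t=h_\Phi$ by Abramov, and $\mathfrak{G}^\succ$-membership transfers through the continuous injection $i_t$ since the sup over $\mathcal{M}(X,\Phi)$ is bounded above by the sup over $\mathcal{M}(X,\phi_t)$. Applying $-\frac{1}{nt}\log$ to the ball containments and passing to $\limsup_n$ produces, for any Borel measure $\mu$, the pointwise sandwich
\[t^{-1}h^{\phi_t}(\mu,\epsilon,x)\ \le\ h^\Phi(\mu,\epsilon,x)\ \le\ t^{-1}h^{\phi_t}(\mu,\delta(\epsilon),x).\]
Integrating against $\mu\in\mathcal{M}(X,\Phi)$ and using both the non-increasing monotonicity of $\epsilon\mapsto h^{\phi_t}(\mu,\epsilon)$ and $\delta(\epsilon_k)\to 0$, one verifies that $\limsup_k\limsup_l\sup_\mu$ of the relevant differences vanishes in each direction, establishing $t^{-1}\mathcal{H}^{\phi_t}\circ i_t\sim^\succ\mathcal{H}^\Phi$.

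For part $ii)$, membership of $t\mathcal{H}^\Phi\circ\theta_t$ in $\mathfrak{G}_{\phi_t}$ is analogous, using $t\,h_\Phi\circ\theta_t=h_{\phi_t}$ and the continuity and surjectivity of $\theta_t$. The main obstacle is the equivalence $t\mathcal{H}^\Phi\circ\theta_t\sim^\succ\mathcal{H}^{\phi_t}$, because $h^\Phi(\theta_t\nu,\epsilon,\cdot)$ is integrated against $\theta_t\nu$ whereas $h^{\phi_t}(\nu,\epsilon,\cdot)$ is integrated against $\nu$. I plan a two-step comparison. Applying the ball sandwich to the (generally non-$\Phi$-invariant) Borel measure $\nu$ treated as input to the flow Brin--Katok formula, and integrating against $\nu$, yields
\[h^{\phi_t}(\nu,\epsilon)\ \le\ t\int h^\Phi(\nu,\epsilon,x)\,d\nu(x)\ \le\ h^{\phi_t}(\nu,\delta(\epsilon)).\]
It then remains to identify $t\int h^\Phi(\nu,\epsilon,x)\,d\nu(x)$ with $t\,h^\Phi(\theta_t\nu,\epsilon)$ up to uniform equivalence in $\epsilon$. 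This last identification is the hard part: convexity of $-\log$ applied to $\theta_t\nu=t^{-1}\int_0^t\phi_s\nu\,ds$, together with reverse Fatou (the integrand is uniformly bounded), yields $h^\Phi(\theta_t\nu,\epsilon,x)\le t^{-1}\int_0^t h^\Phi(\phi_s\nu,\epsilon,x)\,ds$; combined with the near-equivariance $h^\Phi(\phi_s\nu,\epsilon,x)\le h^\Phi(\nu,\epsilon,\phi_{-s}x)\le h^\Phi(\phi_s\nu,\delta(\epsilon,s),x)$ derived from the containments $B_\Phi(\phi_{-s}x,\delta,\tau+s)\subset\phi_{-s}B_\Phi(x,\epsilon,\tau)\subset B_\Phi(\phi_{-s}x,\epsilon',\tau-s)$ (valid for $s\in[0,t]$ by uniform continuity of $\Phi$) and the $\Phi$-invariance of $\theta_t\nu$ (which absorbs the change of variable $y=\phi_{-s}x$ under integration against $\theta_t\nu$ at no cost), one obtains the two inequalities that close the uniform equivalence.
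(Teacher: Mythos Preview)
Your part i) is correct and follows the paper's approach essentially verbatim: both use the ball-containment sandwich $B_{\phi_t}(x,\tilde\epsilon,[\tau/t])\subset B_\Phi(x,\epsilon,\tau)\subset B_{\phi_t}(x,\epsilon,[\tau/t])$ to interlace the discrete and flow Brin--Katok quantities on $\mathcal{M}(X,\Phi)$.

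Part ii), however, has a genuine gap at exactly the step you flag as hardest. Your Jensen inequality gives only one direction, $h^\Phi(\theta_t\nu,\epsilon,x)\le t^{-1}\int_0^t h^\Phi(\phi_s\nu,\epsilon,x)\,ds$, and even after applying near-equivariance and the $\Phi$-invariance of $\theta_t\nu$, integration against $\theta_t\nu$ yields
\[
h^\Phi(\theta_t\nu,\epsilon)\ \le\ \int h^\Phi(\nu,\epsilon',y)\,d\theta_t\nu(y),
\]
not $\int h^\Phi(\nu,\epsilon',y)\,d\nu(y)$. Converting the integrator from $\theta_t\nu$ back to $\nu$ would require comparing $h^\Phi(\nu,\epsilon',\phi_s z)$ with $h^\Phi(\nu,\epsilon'',z)$; but your ball containments only give $h^\Phi(\nu,\epsilon',\phi_s z)\simeq h^\Phi(\phi_{-s}\nu,\epsilon'',z)$, and since $\nu$ is merely $\phi_t$-invariant, $\phi_{-s}\nu\neq\nu$ in general. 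The reverse direction (a lower bound on $h^\Phi(\theta_t\nu,\epsilon)$) is not addressed at all, as convexity of $-\log$ goes the wrong way. A secondary issue: your reverse-Fatou step assumes $-\frac{1}{\tau}\log\phi_s\nu(B_\Phi(x,\epsilon,\tau))$ is uniformly bounded in $s$, which fails whenever some $\phi_s\nu$ gives zero mass to the flow ball.

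The paper bypasses this entirely. Having established i), it takes $\mathcal{H}^\Phi=\frac{1}{t}\mathcal{H}_{BK}^{\phi_t}\circ i_t$ as the representative, so that $t\mathcal{H}^\Phi\circ\theta_t=\mathcal{H}_{BK}^{\phi_t}\circ(i_t\circ\theta_t)$. The key move is that the discrete Brin--Katok functions $\mu\mapsto h_{BK}^{\phi_t}(\mu,\epsilon)$ are \emph{harmonic} on $\mathcal{M}(X,\phi_t)$, hence affine, yielding the \emph{exact identity}
\[
h_{BK}^{\phi_t}(\theta_t\nu,\epsilon)=\frac{1}{t}\int_0^t h_{BK}^{\phi_t}(\phi_s\nu,\epsilon)\,ds
\]
rather than a one-sided Jensen bound. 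A single near-equivariance estimate $h_{BK}^{\phi_t}(\phi_s\mu,g(\epsilon))\ge h_{BK}^{\phi_t}(\mu,\epsilon)$ (and its symmetric reverse) then sandwiches this average between $h_{BK}^{\phi_t}(\nu,\epsilon')$ and $h_{BK}^{\phi_t}(\nu,\epsilon'')$, closing the equivalence without ever introducing the non-standard quantity $\int h^\Phi(\nu,\epsilon,x)\,d\nu(x)$.
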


\begin{proof}\begin{enumerate}[i)]
\item
For a fixed $t>0$ we may define the Brin-Katok entropy of $\phi_t$ by considering the $\phi_t$-dynamical ball
\[B_{\phi_t}(x,\epsilon_k,n):=\{y\in X, \ d(\phi_{lt}(x),\phi_{lt}(y))<\epsilon_k \ \forall 0\leq l<n\}.\] More precisely 
we let for all $x\in X$, for  all $\mu\in \mathcal{M}(X,\phi_t)$ and for all $k\in \mathbb{N}$
\[h^{\phi_t}(\mu, \epsilon_k,x):=\limsup_{n\rightarrow +\infty}-\frac{1}{n}\log\mu\left(B_{\phi_t}(x,\epsilon_k,n)\right),\]
and then
\[h^{\phi_t}(\mu, \epsilon_k):=\int h^{\phi_t}(\mu, \epsilon_k,x)\, d\mu(x). \]
 This defines an entropy structure $\mathcal{H}_{BK}^{\phi_t}=\left(h^{\phi_t}(\cdot, \epsilon_k)\right)_k$ of the discrete system $(X,\phi_t)$ (see  Appendix \ref{BKK}).  For all $\epsilon>0$ there exists $\tilde\epsilon>0$ such that  for all $\tau>0$ we have 
\begin{eqnarray*}\forall x\in X, \  B_{\phi_{t}} (x,\tilde \epsilon,[\tau/t])&\subset & B_\Phi(x,\epsilon,\tau)\subset B_{\phi_{t}}(x,\epsilon,[\tau/t]).\end{eqnarray*}
Let $\mu$ be a $\Phi$-invariant measure and let $\epsilon>0$. From the above inclusions  we get  :
\begin{eqnarray*}
\forall x\in X, \ \frac{1}{t}h^{\phi_t}(\mu, \epsilon,x)\leq h^{\Phi}(\mu,\epsilon,x) \leq \frac{1}{t}h^{\phi_t}(\mu, \tilde{\epsilon},x), 
\end{eqnarray*} 
and then by integrating with respect to $\mu$ 

\begin{eqnarray*}
\frac{1}{t}h^{\phi_t}(\mu, \epsilon)\leq h^{\Phi}(\mu,\epsilon) \leq \frac{1}{t}h^{\phi_t}(\mu, \tilde{\epsilon}).
\end{eqnarray*}

In particular  $\frac{1}{t}\mathcal{H}_{BK}^{\phi_t}\circ i_t$ and $\mathcal{H}_{BK}^{\Phi}$ are equivalent. \\

\item According to the first item  we can assume the entropy structure $\mathcal{H}^\Phi$ is $\frac{1}{t}\mathcal{H}_{BK}^{\phi_t}\circ i_t$.  As the functions in $\mathcal{H}_{BK}^{\phi_t}$ are harmonic\footnote{A 
real function $f$ defined on the Choquet simplex of probability invariant measures is said \textit{harmonic} when for any invariant  probability measure $\mu$  we have  $f(\mu)=\int f(\nu_x)d\mu(x)$ with $\mu=\int \nu_x\, d\mu(x)$ being  the ergodic decomposition of $\mu$. In particular, harmonic functions are affine.}, the sequence $t\mathcal{H}^{\Phi}\circ \theta_t=\mathcal{H}^{\phi_t}_{BK}\circ \theta_t$ is just the  sequence of  functions  $\mu\mapsto 
\frac{1}{t}\int_0^t h_{BK}^{\phi_t}\left(\phi_{s}\mu,\epsilon_k\right)ds$, $k\in \mathbb{N}$. 

 For all $\epsilon>0$ there exists $g(\epsilon)\leq \epsilon$ such that $d(x,y)<g(\epsilon)\Rightarrow d(\phi_s(x), \phi_s(y))<\epsilon$ for any $s$ with $|s|\leq t$. In particular we have for all $x\in X$, $n\in \mathbb{N}$ and $\epsilon>0$ :
$$\phi_{-s}B_{\phi_t}(x,g(\epsilon),n)\subset B_{\phi_t}(\phi_{-s}(x),\epsilon,n)$$

and for all $\mu\in \mathcal{M}(X,\phi_t)$,
\begin{eqnarray*}
  h^{\phi_t}(\phi_s\mu,g(\epsilon),x)&=&\limsup_{n\rightarrow +\infty}-\frac{1}{n}\log\mu\left(\phi_{-s}B_{\phi_t}(x,g(\epsilon),n)\right),\\
 &\geq & \limsup_{n\rightarrow +\infty}-\frac{1}{n}\log\mu\left(B_{\phi_t}(\phi_{-s}x,\epsilon,n)\right),\\
 &\geq & h^{\phi_t}(\mu,\epsilon,\phi_{-s}x).
 \end{eqnarray*}
 By integrating this last inequality with respect to $\phi_s\mu$ we get for all $\mu\in \mathcal{M}(X,\phi_t)$ and for all $|s|\leq t
$ :
 $$h_{BK}^{\phi_t}\left(\phi_{s}\mu,g(\epsilon)\right) \geq h_{BK}^{\phi_t}\left(\mu,\epsilon\right).$$ 
Consequently  the sequence $t\mathcal{H}^{\Phi}\circ \theta_t=\left(\frac{1}{t}\int_0^t h_{BK}^{\phi_t}\left(\phi_{s}\cdot,\epsilon_k\right)ds\right)_k$ is equivalent to the Brin-Katok entropy structure  $\mathcal{H}_{BK}^{\phi_t}=\left(h^{\phi_t}(\cdot, \epsilon_k)\right)_k$.  \end{enumerate}
\end{proof}




For discrete dynamical systems entropy structures are preserved by principal extensions (see Theorem 5.0.3 (2) in \cite{dow}).
A principal extension  between two topological flows induces  a principal extension between their time $t$-maps. Indeed the entropy function being harmonic we have $h_\Phi(\theta_t(\mu))=h_{\phi_t}(\mu)$ for  any $\mu\in \mathcal{M}(X, \phi_t)$ for a topological flow $(X,\Phi)$. Then for a principal extension $\pi:(Y,\Psi)\rightarrow (X,\Phi)$
 we get for any $\mu \in \mathcal{M}(Y,\psi_t)$ : 
 \begin{align*}
 h_{\phi_t}(\mu)&=h_\Phi(\theta_t(\mu)),\\
 &=h_\Psi(\pi \theta_t(\mu)),\\
 &=h_\Psi(\theta_t(\pi\mu))=h_{\psi_t}(\pi\mu).
 \end{align*}  As a consequence of Lemma \ref{timet} we obtain then the result analogous to Theorem 5.0.3 (2)  \cite{dow} for topological flows : 
\begin{cor}\label{mami} Entropy structures of flows are preserved by principal extensions, i.e. if $\pi:(Y,\Psi)\rightarrow (X,\Phi)$ is a principal extension then $\mathcal{H}\circ \pi$ is an entropy structure of $(Y,\Psi)$ if and only if $\mathcal{H}$ is an entropy structure of $(X,\Phi)$.\end{cor}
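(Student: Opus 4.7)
The plan is to deduce Corollary \ref{mami} from the analogous discrete statement (Theorem 5.0.3 (2) in \cite{dow}) via the translation between entropy structures of a flow and of its time-$1$ map supplied by Lemma \ref{timet}. Write $\theta_1^X:\mathcal{M}(X,\phi_1)\to\mathcal{M}(X,\Phi)$ and $\theta_1^Y:\mathcal{M}(Y,\psi_1)\to\mathcal{M}(Y,\Psi)$ for the retractions introduced earlier, and similarly $i_1^X$, $i_1^Y$ for the inclusions.

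First I would specialize to $t=1$ and invoke the paragraph immediately preceding the statement: a principal extension $\pi:(Y,\Psi)\to(X,\Phi)$ of flows induces a principal extension $\pi:(Y,\psi_1)\to(X,\phi_1)$ of the discrete time-$1$ maps. Theorem 5.0.3 (2) in \cite{dow} then supplies the discrete version of the result: a sequence $\mathcal{G}$ of functions on $\mathcal{M}(X,\phi_1)$ is an entropy structure of $(X,\phi_1)$ if and only if $\mathcal{G}\circ\pi$ is an entropy structure of $(Y,\psi_1)$.

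Next I would upgrade Lemma \ref{timet} to a genuine equivalence. Combining items (i) and (ii) of that lemma with $t=1$ and using $\theta_1^X\circ i_1^X=\Id_{\mathcal{M}(X,\Phi)}$, one obtains that $\mathcal{H}$ is an entropy structure of $(X,\Phi)$ if and only if $\mathcal{H}\circ\theta_1^X$ is an entropy structure of $(X,\phi_1)$, and the analogous equivalence holds for $(Y,\Psi)$ with $\theta_1^Y$. At this point the only remaining ingredient is the commutation relation $\pi\circ\theta_1^Y=\theta_1^X\circ\pi$, which follows at once from $\pi\circ\psi_s=\phi_s\circ\pi$ together with the definition of $\theta_1$ as a time-averaging operator.

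Chaining the three equivalences then yields the desired iff:
\begin{align*}
\mathcal{H}\text{ is an ES of }(X,\Phi) &\iff \mathcal{H}\circ\theta_1^X\text{ is an ES of }(X,\phi_1) \\
&\iff \mathcal{H}\circ\theta_1^X\circ\pi=\mathcal{H}\circ\pi\circ\theta_1^Y\text{ is an ES of }(Y,\psi_1) \\
&\iff \mathcal{H}\circ\pi\text{ is an ES of }(Y,\Psi).
\end{align*}
I do not anticipate a serious obstacle: the whole content has been repackaged into Lemma \ref{timet} and into the preceding observation that a principal extension of flows restricts to a principal extension of time-$1$ maps. The only point requiring some care is converting the two one-way implications of Lemma \ref{timet} into a single equivalence, which the identity $\theta_1\circ i_1=\Id$ handles cleanly.
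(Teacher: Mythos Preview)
Your proof is correct and follows exactly the route the paper sketches in the paragraph preceding the corollary: reduce to the discrete result (Theorem 5.0.3~(2) in \cite{dow}) via Lemma~\ref{timet}, using that a principal flow extension restricts to a principal extension of the time-$1$ maps. The paper simply writes ``As a consequence of Lemma~\ref{timet}'' without spelling out the chain of equivalences; you have made explicit the two ingredients it leaves implicit, namely the commutation $\pi\circ\theta_1^Y=\theta_1^X\circ\pi$ and the upgrade of the one-way implications of Lemma~\ref{timet} to a biconditional via $\theta_1\circ i_1=\Id$.
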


For a discrete topological system the entropy with respect to a nonincreasing sequence of partitions with small boundary defines an entropy structure. In our context we have :
\begin{cor}\label{magic}Let $(X,\Phi)$ be a topological flow.
If $\mathcal{P}=(P_k)_k$ is  a sequence of partitions of $X$ with small boundary such that $\mathcal{H}_{\mathcal{P}}:=(h(\cdot, P_k))_k$ belongs to $\mathfrak{G}_\Phi$, then $\mathcal{H}_{\mathcal{P}}$ defines an entropy structure of $(X,\Phi)$. 
\end{cor}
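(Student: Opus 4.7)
The plan is to show $\mathcal{H}_{\mathcal{P}}\sim^\succ\mathcal{H}^\Phi_{BK}$, which, since the entropy structure is by definition the $\sim^\succ$-class of $\mathcal{H}^\Phi_{BK}$, will establish the result. My main technical tool will be Lemma \ref{jol}, applied in both directions.

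The preliminary step is to verify that each $h(\cdot,P_k)$ is affine and upper semicontinuous on $\mathcal{M}(X,\Phi)$. Affineness of partition entropy on the simplex of invariant measures is classical. Upper semicontinuity follows from the small boundary hypothesis: since $\mu(\partial A)=0$ for every $A\in P_k$ and every $\mu\in\mathcal{M}(X,\Phi)$, the invariance of $\partial P_k$ under $\Phi$ yields $\mu\bigl(\partial(\bigvee_{j=0}^{n-1}\phi_j P_k)\bigr)=0$ for each $n$, so by the Portmanteau theorem the map $\mu\mapsto \tfrac{1}{n}H_\mu\bigl(\bigvee_{j=0}^{n-1}\phi_j P_k\bigr)$ is continuous on $\mathcal{M}(X,\Phi)$, and $h(\cdot,P_k)$ is the infimum over $n$ of these continuous functions.

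I would then establish both uniform dominations. For $\mathcal{H}^\Phi_{BK}\succ\mathcal{H}_{\mathcal{P}}$, since $\mathcal{H}^\Phi_{BK}\in\mathfrak{G}^\succ$, Lemma \ref{jol} reduces the task to showing $\limsup_l (h(\cdot,P_k)-h^\Phi(\cdot,\epsilon_l))^{\tilde{}}(\mu)\leq 0$ for every $k$ and $\mu$. Using the usc-ness of $h(\cdot,P_k)$, the usc envelope on the left is bounded above by $h(\mu,P_k)-\liminf_{\nu\to\mu}h^\Phi(\nu,\epsilon_l)$; the pointwise monotone convergence $h^\Phi(\cdot,\epsilon_l)\uparrow h\geq h(\cdot,P_k)$, combined with the affineness of the Brin--Katok entropies on the Choquet simplex $\mathcal{M}(X,\Phi)$, yields the required bound after taking $\limsup_l$. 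Symmetrically, for $\mathcal{H}_{\mathcal{P}}\succ\mathcal{H}^\Phi_{BK}$, Lemma \ref{jol} (applied this time with $\mathcal{H}_{\mathcal{P}}\in\mathfrak{G}^\succ$) reduces the task to $\limsup_l (h^\Phi(\cdot,\epsilon_k)-h(\cdot,P_l))^{\tilde{}}(\mu)\leq 0$, and the usc-ness of $h(\cdot,P_l)$ together with the pointwise convergence $h(\cdot,P_l)\to h\geq h^\Phi(\cdot,\epsilon_k)$ yields the conclusion analogously.

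The hard part will be the careful management of the upper and lower semicontinuous envelopes of the affine but not-continuous Brin--Katok functions $h^\Phi(\cdot,\epsilon)$. The self-uniform-domination hypothesis $\mathcal{H}_{\mathcal{P}}\in\mathfrak{G}^\succ$ is indispensable here, as it upgrades the pointwise convergence $h(\cdot,P_l)\to h$ to a form of uniform control from above that, combined with the upper semicontinuity of $h(\cdot,P_k)$, is precisely what allows the pointwise envelope comparisons required by Lemma \ref{jol} to close.
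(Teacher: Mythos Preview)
Your strategy of comparing $\mathcal{H}_{\mathcal{P}}$ directly to $\mathcal{H}^\Phi_{BK}$ via Lemma \ref{jol} runs into a genuine obstacle: you need semicontinuity properties of the Brin--Katok functions $h^\Phi(\cdot,\epsilon)$ that are simply not available. In the direction $\mathcal{H}^\Phi_{BK}\succ\mathcal{H}_{\mathcal{P}}$ you correctly bound the envelope by $h(\mu,P_k)-\liminf_{\nu\to\mu}h^\Phi(\nu,\epsilon_l)$, but then you need the lower semicontinuous envelope of $h^\Phi(\cdot,\epsilon_l)$ to climb up to $h$; neither monotone pointwise convergence nor affineness on the simplex gives this. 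In the direction $\mathcal{H}_{\mathcal{P}}\succ\mathcal{H}^\Phi_{BK}$ the situation is worse: upper semicontinuity of $h(\cdot,P_l)$ means $-h(\cdot,P_l)$ is \emph{lower} semicontinuous, so it does not help bound $(h^\Phi(\cdot,\epsilon_k)-h(\cdot,P_l))^{\tilde{}}$ from above, and you have no usc control on $h^\Phi(\cdot,\epsilon_k)$ itself. Bounding $h^\Phi(\cdot,\epsilon_k)\leq h$ only reduces the task to $\limsup_l(h-h(\cdot,P_l))^{\tilde{}}(\mu)=0$, which fails in general for sequences in $\mathfrak{G}^\succ$ with limit $h$ (take $\mathfrak{X}=\{0\}\cup\{1/n\}$, $h(0)=0$, $h(1/n)=1$, $h_l(1/n)=\mathbbm{1}_{n\leq l}$).

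The paper sidesteps this by never touching the Brin--Katok functions directly in the comparison. It works instead with the Lebesgue structure $\mathcal{H}_{Leb}$ for $\phi_1$, whose terms are partition entropies with respect to small-boundary partitions $R_l$, hence usc. The crucial inequalities
\[
h(\mu,P_k)-h(\mu\times\lambda,R_l)\leq h\bigl(\mu\times\lambda,\,(P_k\times\mathbb{S}^1)\vee R_l\,\big|\,R_l\bigr),
\qquad
h(\mu\times\lambda,R_l)-h(\mu,P_k)\leq h\bigl(\mu\times\lambda,\,(P_k\times\mathbb{S}^1)\vee R_l\,\big|\,P_k\times\mathbb{S}^1\bigr)
\]
bound the differences by \emph{conditional} entropies, which are upper semicontinuous (both partitions have small boundary) and converge pointwise to zero. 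This is exactly the input Lemma \ref{jol} requires. Then Lemma \ref{timet}(i) transfers the conclusion from $\mathcal{H}_{Leb}\circ i_1$ to $\mathcal{H}^\Phi_{BK}$. The missing idea in your attempt is this conditional-entropy trick, which produces usc bounds on both sides without any appeal to semicontinuity of Brin--Katok local entropies.
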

\begin{proof}Let $(R_k)_k$ be a nonincreasing sequence of partitions of $X$ defining the entropy structure $\mathcal{H}_{Leb}$  for  $T=\phi_1$ (see  Subsection \ref{brin}). Then we have  for any $\mu\in \mathcal{M}(X,\phi_1)$   
\begin{align*}
 h(\mu,P_k)-h(\mu\times \lambda,R_l)& \leq h\left(\mu\times \lambda, (P_k\times \mathbb{S}^1)\vee R_l | R_l\right),\\
h(\mu\times \lambda,R_l)- h(\mu,P_k)&\leq  h\left(\mu\times \lambda, (P_k\times \mathbb{S}^1)\vee R_l | P_k\times \mathbb{S}^1)\right).
\end{align*}
For $k$ (resp. $l$) fixed the first (resp. second)  right member defines an  upper semicontinuous function on $\mathcal{M}(X,\Phi)$ going pointwisely to zero when $l$ (resp. $k$)  goes to infinity. The sequence $\mathcal{H}_{\mathcal P}$  and the restriction of  $\mathcal{H}_{Leb}$ to $\mathcal{M}(X,\Phi)$ being both in $\mathfrak{G}_\Phi$ they are equivalent by Lemma \ref{jol}. By Lemma \ref{timet}  this restriction  is an entropy structure of the flow. Therefore $\mathcal{H}_{\mathcal P}$ is also an entropy structure of the flow.  
\end{proof}

\subsubsection{Entropy of suspension flows}
In this paragraph we consider a suspension flow $(X_r, \Phi_r=(\phi_t)_t)$ over a topological system $(X,T)$ with a continuous positive roof function $r$.   To simplify the notations we will denote by $\nu_\mu$ the $\Phi_r$-invariant measure $\Theta(\mu)$ associated to the $T$-invariant measure $\mu$ and $\mu_\nu$ the  $T$-invariant measure $\Theta^{-1}(\nu)$ associated to the $\Phi_r$-invariant measure $\nu$, where $\Theta$ is the homeomorphism defined in Subsection \ref{sssup}.  The  entropy of $\mu$ and $\nu_\mu$ are related  by the following formula due to L.M.Abramov \cite{ab}:
$$h(\nu_\mu)=\frac{h(\mu)}{\int r \ d\mu}.$$
Abramov formula holds for any measurable suspension flow over a measurable system. It follows from the formula  for  the entropy of an induced system. We recall below the corresponding formula for the entropy with respect to a given partition. 

\begin{lemma}\label{ac}
Let $(Y,f,\mathcal{B},\nu)$ be a measure preserving system and let $A\subset Y$ with $\nu(A)>0$. Then for any finite Borel partition $P$ of $A$ we have 
\[\nu(A)h(\nu_A,f_A,P\vee R_A)=h(\nu,f,\overline{P}),\]
where $\overline{P}$ is the partition of $Y$ given by $\overline{P}:=\{ Y\setminus A, \ B \ :  \ B\in P\}$  and $R_A:=\{\tau_A=k,\ k\in \mathbb{N}\setminus \{0\}\}$ is the partition of $A$ with respect to the first return time $\tau_A$ in $A$.
\end{lemma}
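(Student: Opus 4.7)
The plan is to realize $(Y,f,\nu)$ as a Kakutani tower over $(A, f_A, \nu_A)$ and match the atoms of $\overline{P}_f^{[0,n-1]}$ with those of $(P\vee R_A)_{f_A}^{[0,m-1]}$ for suitable $m\approx n\nu(A)$. First I would reduce to the case where $\bigcup_{k\geq 0} f^{-k}A$ has full $\nu$-measure: for the complementary (null w.r.t.\ $\nu_A$) part, both sides are unaffected, and this reduction lets one invoke Kac's lemma $\int_A\tau_A\, d\nu_A=1/\nu(A)$ (this follows from Poincar\'e recurrence and $\nu$-invariance; a full ergodic decomposition is not strictly necessary but simplifies the bookkeeping).

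Next I would set up the standard column decomposition $Y=\bigsqcup_{k\geq 1}\bigsqcup_{j=0}^{k-1} f^j(C_k)$ with $C_k:=\{x\in A : \tau_A(x)=k\}$, so that $\nu$ on $Y$ is identified with $\nu_A\otimes(\text{counting})$ normalized by $1/\nu(A)$. The key observation is then purely combinatorial: for $x\in A$, the atom containing $x$ of $\overline{P}_f^{[0,n-1]}\wedge A$ is determined exactly by the data
\[
\bigl(P(f_A^i x),\ \tau_A(f_A^i x)\bigr)_{0\leq i<N_n(x)}
\quad\text{together with}\quad \tau_A(f_A^{N_n(x)}x)>n-\tau_A^{(N_n(x))}(x),
\]
where $N_n(x):=\#\{0\leq k<n:f^kx\in A\}$ and $\tau_A^{(m)}$ is the $m$-th return time. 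In other words this atom coincides with the $(P\vee R_A)_{f_A}^{[0,N_n(x)-1]}$-atom of $x$, further refined by a single constraint on $\tau_A(f_A^{N_n(x)}x)$. For points in the higher levels $f^j(C_k)$, $1\leq j<k$, the analogous identification holds after a shift by $j$; moreover the partition of $Y$ into columns and levels is $\overline{P}$-measurable up to the level partition which contributes only $O(1)$ to entropy since $\int\tau_A\, d\nu_A<\infty$.

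Third, I would sum these column contributions. Using sub- and super-additivity of Shannon entropy and the tower-measure formula $\nu=\nu_A\otimes\#/\int\tau_A\, d\nu_A$, one obtains
\[
H_\nu\bigl(\overline{P}_f^{[0,n-1]}\bigr)
= \nu(A)\,H_{\nu_A}\!\Bigl((P\vee R_A)_{f_A}^{[0,m(n)-1]}\Bigr) + O(1),
\]
where $m(n)$ is the typical value of $N_n$. By Birkhoff applied to $\mathbf 1_A$, $N_n/n\to \nu(A)$, and by Kac $m(n)/n\to\nu(A)$ in the relevant $L^1$ sense, so after dividing by $n$ and sending $n\to\infty$ the left hand side tends to $h(\nu,f,\overline{P})$ while the right hand side tends to $\nu(A)h(\nu_A,f_A,P\vee R_A)$, yielding the desired identity.

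The main obstacle is controlling the boundary terms cleanly: the overshoot between the last return in $[0,n)$ and $n$ itself, and the contribution of the "level" coordinate within a column, which is not $\overline{P}$-measurable. The standard remedy is to truncate at return time $\leq K$ (the tail contributes at most $\sum_{k>K} k\,\nu(C_k)$ to the entropy rate, which vanishes as $K\to\infty$ because $\tau_A\in L^1(\nu_A)$ by Kac), and to absorb the level coordinate into an $O(1)$ error using $H(\text{level}\mid\text{column})\leq H(R_A)<\infty$. After these reductions the matching of atoms described above becomes an exact equality, and the limit argument runs without further complication.
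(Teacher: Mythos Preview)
Your overall strategy---Kakutani tower, match atoms of $\overline{P}_f^{[0,n-1]}$ with those of $(P\vee R_A)_{f_A}$, Kac/Birkhoff to control the number of returns---is a legitimate route to Abramov's formula, but it differs from the paper's argument and your sketch has one claim that is too strong. The identity
\[
H_\nu\bigl(\overline{P}_f^{[0,n-1]}\bigr)=\nu(A)\,H_{\nu_A}\!\Bigl((P\vee R_A)_{f_A}^{[0,m(n)-1]}\Bigr)+O(1)
\]
with a \emph{fixed} $m(n)$ is not justified: the number $N_n(x)$ of visits to $A$ in $[0,n)$ varies with $x$, so there is no single $m(n)$ for which the atoms match exactly, and the discrepancy between $N_n(x)$ and $n\nu(A)$ is only $o(n)$ (Birkhoff), not $O(1)$. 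Your truncation argument and the bound $H(R_A)<\infty$ (which does follow from $\tau_A\in L^1$, e.g.\ via the maximum-entropy property of the geometric law) will yield an $o(n)$ error, which is enough after dividing by $n$, but the write-up should say $o(n)$ and explain why the fluctuation of $N_n$ is absorbed.

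The paper avoids this bookkeeping entirely by reducing to the ergodic case and using the Shannon--McMillan--Breiman theorem pointwise rather than the global Shannon entropy. The key observation there is the \emph{exact} atom identity
\[
(P\vee R_A)_{f_A}^{[0,n-1]}(x)=\overline{P}_f^{[0,n'_x-1]}(x),\qquad n'_x:=\sum_{k=0}^{n-1}\tau_A(f_A^kx),
\]
so that for a point $x$ typical for SMB (for both $\nu_A$ and $\nu$) and for Birkhoff (giving $n'_x/n\to 1/\nu(A)$), one reads off
\[
h(\nu_A,f_A,P\vee R_A)=\lim_n\tfrac{n'_x}{n}\cdot\Bigl(-\tfrac{1}{n'_x}\log\nu\bigl(\overline{P}_f^{[0,n'_x-1]}(x)\bigr)\Bigr)=\tfrac{1}{\nu(A)}h(\nu,f,\overline{P}).
\]
This sidesteps the truncation, the level-coordinate correction, and any estimate on the fluctuations of $N_n$; the price is invoking SMB and the ergodic decomposition. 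Your approach is more elementary in that it only uses sub/super-additivity of $H$, but it requires the careful $o(n)$ accounting you gesture at in the last paragraph.
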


\begin{proof} As both terms in the above equality is preserved by the ergodic decomposition\footnote{If the ergodic decomposition of $\nu$ is given by  $\nu=\int_Y\nu^x\, d\nu(x)$, then $\nu_A=\int_A\nu^x(A)\nu^x_A\, d\nu_A(x)$ is the ergodic decomposition of $\nu_A$.}, one can assume without loss of generality the ergodicity of $\nu$. 
The induced measure $\nu_A$ on $A$ is then also ergodic.  From the Birkhof ergodic theorem, we get
\begin{eqnarray}\label{ein}  
\forall \nu-\text{a.e. }x,\  \frac{1}{n}\sum_{k=0}^{n-1}\tau_A(f_A^k(x))& \xrightarrow{n} &\int \tau_A\, d\nu_A=\frac{1}{\nu(A)}.
\end{eqnarray}

 Then by Shanon-McMillan-Breiman  formula we have:
\begin{eqnarray}\label{zwei}  
\forall \nu_A-\text{a.e. }x,\ h(\nu_A,f_A,P\vee R_A)&=&\lim_n-\frac{1}{n}\log \nu_A\left((P\vee R_A)^n_{f_A}(x)\right),
\end{eqnarray}

\begin{eqnarray}\label{drei}  
 \forall \nu-\text{a.e. }x, \ h(\nu,f,\overline P)&=\lim_{n'}-\frac{1}{n'}\log \nu\left(\overline{P}^{n'}_f(x)\right),
\end{eqnarray}

where   $(P\vee R_A)^n_{f_A}(x)$ and  $\overline{P}^{n'}_{f}(x)$ denote respectively the atom of the iterated  partition $\bigvee_{k=0}^{n-1}f_A^{-k}(P\vee R_A)$ and  $\bigvee_{k=0}^{n'-1}f^{-k}\overline{P}$ containing $x$. But we have $(P\vee R_A)_{f_A}^n(x)=\overline{P}_f^{n'_x}(x)$  with $n'_x:=\sum_{k=0}^{n-1}\tau_A(f_A^k(x))$.  By taking a  point $x$ satisfying the three above properties (\ref{ein}), (\ref{zwei}), (\ref{drei}), we get :
\begin{align*}
h(\nu_A,f_A,P\vee R_A)&=\lim_n-\frac{1}{n}\log \nu_A\left((P\vee R_A)^n_{f_A}(x)\right),\nonumber\\
& =\lim_n-\frac{n'_x}{n}\frac{1}{n'_x}\log \nu_A\left(\overline{P}^{n'}_{f}(x)\right), \nonumber\\
h(\nu_A,f_A,P\vee R_A)&=\frac{h(\nu,f,\overline{P})}{\nu(A)}.
\end{align*}
\end{proof}

We return now to our suspension flow $(X_r, \Phi_r)$. Let $\underline{r}:=\inf_{x\in X}r(x)>0$. We may deduce  from  Lemma \ref{ac} the following inequalities for the entropy of suspension flows.

\begin{lemma}\label{ab}
Let $P$ be a Borel partition of $X$, then 
 for all  $\delta\in ]0,\underline{r}[$ and for all $ \mu\in \mathcal{M}(X,T)$  \begin{eqnarray}\label{dcd} \frac{h(\nu_\mu,\phi_{\delta},\overline{P_\delta})}{\delta}\geq \frac{h(\mu,T,P)}{\int r \ d\mu},
 \end{eqnarray}
 where $\overline{P_{\delta}}$ is the partition of $X_r$ given by $\overline{P_{\delta}}:=\{ X_r\setminus (X\times [0,\delta[), \ B\times [0,\delta[ \ :  \ B\in P\}$.
When we moreover assume   $|r(x)-r(y)|<\delta$ for all $x,y$ in the same atom of   $P$, then 
\[ \frac{h(\nu_\mu,\phi_{\delta},\overline{P_\delta})}{\delta}\leq \frac{h(\mu,T,P)+\log 3}{\int r \ d\mu},\]
 
\end{lemma}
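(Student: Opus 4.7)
The plan is to apply the induced entropy formula of Lemma~\ref{ac} to the discrete system $(X_r,\phi_\delta,\nu_\mu)$ on the subset $A:=X\times[0,\delta[$, and then to identify the induced system on $A$ as a measure-theoretic extension of $(X,T,\mu)$. Writing $P_A:=\{B\times[0,\delta[\,:\,B\in P\}$, one has $\overline{P_A}=\overline{P_\delta}$ and $\nu_\mu(A)=\delta/\int r\,d\mu$, so Lemma~\ref{ac} immediately yields
\[
\frac{h(\nu_\mu,\phi_\delta,\overline{P_\delta})}{\delta}=\frac{h(\nu_{\mu,A},(\phi_\delta)_A,P_A\vee R_A)}{\int r\,d\mu},
\]
where $R_A$ is the partition of $A$ given by the first return time of $\phi_\delta$ to $A$. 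Both bounds of the lemma therefore reduce to comparing $h(\nu_{\mu,A},(\phi_\delta)_A,P_A\vee R_A)$ with $h(\mu,T,P)$.

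For the lower bound \eqref{dcd} I would introduce the projection $\pi:A\to X$, $(x,t)\mapsto x$. The hypothesis $\delta<\underline{r}$ forces the $\phi_\delta$-orbit of any $(x,t)\in A$ to cross the roof exactly once before returning to $A$, landing at $(Tx,s)$ for some $s\in[0,\delta[$; hence $\pi\circ(\phi_\delta)_A=T\circ\pi$. Combined with $\pi_\ast\nu_{\mu,A}=\mu$, this makes $\pi$ a measure-theoretic factor map, and since $P_A=\pi^{-1}P$ it yields $h(\nu_{\mu,A},(\phi_\delta)_A,P_A)=h(\mu,T,P)$. Monotonicity of the entropy in the partition then gives the lower bound.

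For the upper bound under the oscillation hypothesis on $r$, I would bound the conditional entropy $H(R_A\mid P_A)$. The return time is the integer $\tau_A(x,t)=\lceil(r(x)-t)/\delta\rceil$; on any atom $B\times[0,\delta[$ of $P_A$, the real number $(r(x)-t)/\delta$ ranges in an interval of length $(\sup_Br-\inf_Br)/\delta+1<2$, so its ceiling takes at most three integer values. Consequently each atom of $P_A$ meets at most three atoms of $R_A$, whence $H(R_A\mid P_A)\leq\log 3$. The standard inequality $h(f,P\vee Q)\leq h(f,P)+H(Q\mid P)$ then concludes.

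The one delicate step is the counting of values of $\tau_A$ on a $P_A$-atom: it uses both the oscillation hypothesis on $r$ and the condition $\delta<\underline{r}$ (which rules out multiple wrappings around the roof between successive returns to $A$), and the sharp constant $3$ comes from the elementary fact that an interval of length strictly less than $2$ in $\mathbb{R}$ meets at most three half-open intervals of the form $(n-1,n]$.
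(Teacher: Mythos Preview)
Your proof is correct and follows essentially the same route as the paper: apply Lemma~\ref{ac} on $A=X\times[0,\delta[$, identify the induced system on $A$ as an extension of $(X,T,\mu)$ via the projection $\pi(x,t)=x$ (the paper verifies $h((\nu_\mu)_A,(\phi_\delta)_A,P_A)=h(\mu,T,P)$ by a direct computation of the iterated partition, which is equivalent to your factor-map argument), and then bound the defect by counting that each atom of $P_A$ meets at most three atoms of $R_A$. The only cosmetic difference is that the paper phrases the upper bound via the conditional dynamical entropy $h((\nu_\mu)_A,(\phi_\delta)_A,P_\delta\vee R_\delta\mid P_\delta)\le\log 3$, whereas you use the (slightly stronger) static bound $H(R_A\mid P_A)\le\log 3$; both are valid.
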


\begin{proof} Let  $0<\delta<\underline{r}$. Let $A_\delta$ be the subset of $X_r$ given by $A_\delta= X \times [0,\delta[$. For the partition $P$ of $X$ we first denote by  $P_{\delta}=\{B\times [0,\delta[, \ B\in P\}$ the partition induced on $A_\delta$.  We also let $R_\delta=R_{A_\delta}$ be the partition with respect to the first return time in $A_\delta$.  By applying Lemma \ref{ac}  to $\nu_\mu$, $\phi_{\delta}$, $X_r$  and $P_{\delta}$
we get \begin{eqnarray}\label{t}
h\left((\nu_\mu)_{A_\delta},(\phi_{\delta})_{A_\delta},P_{\delta}\vee R_{\delta}\right)&=&\frac{h\left(\nu_\mu,\phi_{\delta},\overline{P_{\delta}}\right)}{\nu_\mu(A_\delta)},\nonumber\\ 
&=&\frac{\int r\, d\mu}{\delta}h\left(\nu_\mu,\phi_{\delta},\overline{P_{\delta}}\right).
\end{eqnarray}
But  the partition $\bigvee_{k=0}^{n-1}(\phi_\delta)_{A_\delta}^{-k}P_{\delta}$ of $A_\delta$ is just the partition 
$\bigvee_{k=0}^{n-1}T^{-k}P\times [0,\delta[$ and therefore we get, with $H_\mu(Q)=\sum_{C\in Q}-\mu(C)\log \mu(C)$ :
\begin{align*}
h\left((\nu_\mu)_{A_\delta},(\phi_{\delta})_{A_\delta},P_\delta\right)&=\lim_n\frac{1}{n}H_{(\nu_\mu)_{A_\delta}}\left(\bigvee_{k=0}^{n-1}T^{-k}P\times [0,\delta[\right),\\
&= \lim_n\frac{1}{n}\sum_{C\in \bigvee_{k=0}^{n-1}T^{-k}P\times [0,\delta[ }-(\nu_\mu)_{A_\delta}(C)\log (\nu_\mu)_{A_\delta}(C).
\end{align*}
For any $B\in \bigvee_{k=0}^{n-1}T^{-k}P$ and $C=B\times [0,\delta[$ we have
$ (\nu_\mu)_{A_\delta}(C)=\frac{\nu_\mu(C)}{\nu_\mu(A_\delta)}=\mu(B)$.
Therefore we obtain finally :
\begin{align*}
h\left((\nu_\mu)_{A_\delta},(\phi_{\delta})_{A_\delta},P_\delta\right)&=h(\mu,T,P),
\end{align*}
which implies the first inequality. 

Then by using again Equality (\ref{t}) we get 
\begin{align*}
\left|\frac{1}{\delta}h\left(\nu_\mu,\phi_{\delta},\overline{P_\delta}\right)-\frac{h(\mu,T,P)}{\int rd\mu}\right|&=
\frac{1}{\int rd\mu}\left| h\left((\nu_\mu)_{A_\delta},(\phi_{\delta})_{A_\delta},P_\delta\vee R_{\delta}\right)-h\left((\nu_\mu)_{A_\delta},(\phi_{\delta})_{A_\delta},P_\delta\right) \right|,\\
&=\frac{1}{\int rd\mu}h\left((\nu_\mu)_{A_\delta},(\phi_{\delta})_{A_\delta},P_\delta\vee R_{\delta}| P_{\delta}\right).
\end{align*}
Under the additional assumption of  small oscillation of $r$, any element of $P_\delta$ has a non empty intersection with at most $3$ elements of $R_\delta$ so that the conditional entropy 
$h\left((\nu_\mu)_{A_\delta},(\phi_{\delta})_{A_\delta},P_\delta\vee R_{\delta}| P_{\delta}\right)$ is bounded from above by $\log 3$. 
\end{proof}

\subsubsection{Relations with the entropy structure of the base dynamics for a suspension flow.}
 Let $(X_r, \Phi_r)$ be a suspension flow over a zero-dimensional discrete system $(X,T)$. We relate the entropy structure of the flow   $(X_r, \Phi_r)$  with the entropy structure of  $(X,T)$. To any  sequence $\mathcal{H}=(h_k)_k\in \mathfrak{G}_T$ we associate the  sequence $\mathcal{H}_r\in \mathfrak{G}_\Phi$  defined by $\mathcal{H}_r=\left(\nu\mapsto\frac{h_k(\mu_\nu)}{\int r \, d\mu_\nu}\right)_k$. 
 
\begin{lemma} \label{bidule}
 The map $\mathcal{H}\mapsto \mathcal{H}_r$ is well-defined and  compatible with the equivalence relation $\sim$, i.e. $[\mathcal{H}\sim \mathcal{G}]\Leftrightarrow [\mathcal{H}_r\sim \mathcal{G}_r]$.  
\end{lemma}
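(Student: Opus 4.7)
\textbf{Plan for Lemma \ref{bidule}.} My plan is to reduce the lemma to a single equivalence about the defining inequality of $\succ$. Because $r$ is positive and continuous on the compact space $X$, there exist $0<\underline{r}\leq \overline{r}<+\infty$ with $\underline{r}\leq \int r\,d\mu\leq \overline{r}$ for every $\mu\in\mathcal{M}(X,T)$, and since $\nu\mapsto\mu_\nu$ is a homeomorphism $\mathcal{M}(X_r,\Phi_r)\to\mathcal{M}(X,T)$ (Subsection~\ref{sssup}) the denominator $\nu\mapsto \int r\,d\mu_\nu$ is continuous and bounded away from $0$. In particular $\mathcal{H}_r$ is a bounded sequence on $\mathcal{M}(X_r,\Phi_r)$ whenever $\mathcal{H}$ is bounded on $\mathcal{M}(X,T)$.

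The central claim I would establish is: for any two bounded sequences $\mathcal{H}=(h_k)_k$, $\mathcal{G}=(g_k)_k$ on $\mathcal{M}(X,T)$,
\[
\limsup_k\limsup_l \sup_{\mu}(h_k-g_l)(\mu)\leq 0 \ \Longleftrightarrow\ \limsup_k\limsup_l \sup_{\nu}\frac{(h_k-g_l)(\mu_\nu)}{\int r\,d\mu_\nu}\leq 0.
\]
The ``$\Rightarrow$'' direction follows from the pointwise bound $(h_k-g_l)(\mu_\nu)/\int r\,d\mu_\nu \leq (h_k-g_l)^+(\mu_\nu)/\underline{r}$ and the identity $\sup_\nu (h_k-g_l)^+(\mu_\nu)=\bigl(\sup_\mu(h_k-g_l)(\mu)\bigr)^+$, combined with continuity of $x\mapsto x^+$ under $\limsup$. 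The ``$\Leftarrow$'' direction uses that whenever $\sup_\mu(h_k-g_l)(\mu)>0$, near-maximizers $\mu^*$ satisfy $(h_k-g_l)(\mu^*)/\int r\,d\mu^*\geq (h_k-g_l)(\mu^*)/\overline{r}$, hence $\bigl(\sup_\mu(h_k-g_l)(\mu)\bigr)^+\leq \overline{r}\cdot\bigl(\sup_\nu (h_k-g_l)(\mu_\nu)/\int r\,d\mu_\nu\bigr)^+$; again taking $\limsup$ concludes.

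Granting this equivalence, everything follows mechanically. Taking $\mathcal{G}=\mathcal{H}$ shows $\mathcal{H}_r\in\mathfrak{G}^\succ_{\mathcal{M}(X_r,\Phi_r)}$, and Lemma~\ref{bol} then provides a pointwise limit. At $\nu$ this limit equals $\lim_k h_k(\mu_\nu)/\int r\,d\mu_\nu = h_T(\mu_\nu)/\int r\,d\mu_\nu = h_{\Phi_r}(\nu)$ by Abramov's formula, so $\mathcal{H}_r\in\mathfrak{G}_\Phi$ and the map $\mathcal{H}\mapsto\mathcal{H}_r$ is well-defined. Applied to general $\mathcal{H},\mathcal{G}$ the equivalence yields $\mathcal{H}\succ\mathcal{G}\iff \mathcal{H}_r\succ\mathcal{G}_r$, and running this simultaneously with the roles of $\mathcal{H}$ and $\mathcal{G}$ swapped gives $\mathcal{H}\sim\mathcal{G}\iff \mathcal{H}_r\sim\mathcal{G}_r$. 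I do not anticipate a serious obstacle: the only step deserving care is the bookkeeping around the positive part $(\cdot)^+$ in the ``$\Leftarrow$'' direction when $\sup_\mu(h_k-g_l)(\mu)$ is non-positive, where both sides of the desired inequality are trivially $\leq 0$.
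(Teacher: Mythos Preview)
Your proposal is correct and follows essentially the approach the paper hints at: the paper's own ``proof'' is a single sentence pointing to the continuity of $\mu\mapsto 1/\int r\,d\mu$ and of $\Theta,\Theta^{-1}$, and your argument is precisely a clean fleshing-out of that hint, exploiting that $\Theta$ is a bijection (so suprema over $\nu$ and over $\mu$ coincide) and that the continuous positive function $\int r\,d\mu$ on the compact simplex is trapped between $\underline r$ and $\overline r$. Your use of the positive part $(\cdot)^+$ to handle the sign issue is the natural device here, and your check that $\mathcal{H}_r\in\mathfrak G_{\Phi_r}$ via Abramov's formula is exactly what ``well-defined'' requires.
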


The proof follows  from the continuity of the  map given by $\mathcal{M}(X,T)\ni \mu\mapsto \frac{1}{\int r \, d\mu}$ and the continuity of $\Theta$ and $\Theta^{-1}$. The details are left to the reader.

\begin{lemma}\label{poly}Assume $(X,T )$ is an aperiodic zero-dimensional system.
There  exist a nonincreasing sequence $(Q_k)_k$ of clopen partitions of  $X$  with $\diam(Q_k)\xrightarrow{k\rightarrow +\infty}0$ and 
and a  sequence of  partitions $(P_k)_k$ of $X_r$ with small boundary (for the flow $\Phi_r$) such that 
\begin{eqnarray*}
\sup_{\mu\in \mathcal{M}(X,T)}\left|h(\nu_\mu,P_k)-\frac{h(\mu,Q_k)}{\int r\,d\mu}\right|&\xrightarrow{k\rightarrow +\infty}&0.
\end{eqnarray*}
\end{lemma}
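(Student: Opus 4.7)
The strategy is to take $P_k$ as the tower partition of $X_r$ induced by a clopen sequence on the base, and to control the excess flow-entropy coming from the variation of $r$ within each $Q_k$-atom.

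Let $(Q_k)_k$ be a nonincreasing sequence of clopen partitions of $X$ with $\diam(Q_k)\to 0$ (which exists since $X$ is zero-dimensional), and let $\omega_k$ denote the oscillation of $r$ on atoms of $Q_k$; by uniform continuity of $r$, $\omega_k\to 0$. Define the tower partition $P_k:=\{T_B : B\in Q_k\}$ of $X_r$, where $T_B:=\{(x,s)\in X_r : x\in B,\ 0\le s<r(x)\}$. The topological boundary of each $T_B$ lies in $\partial B\times[0,\sup r)$, which is empty since $B$ is clopen, together with the graph $\{(x,r(x)) : x\in B\}$; under the suspension identification the latter equals $T(B)\times\{0\}\subset X\times\{0\}$, a subset of the Borel global cross-section of $\Phi_r$ and hence null for every $\Phi_r$-invariant probability measure. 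Thus $P_k$ has small flow boundary.

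Writing the $P_k$-name of $(x_0,s_0)$ along the $\phi_1$-orbit as $\bigl(Q_k(T^{m(n)}x_0)\bigr)_{n\ge 0}$, where $m(n)$ counts the roof crossings by time $n$, the $Q_k$-itinerary of the base is deterministically recoverable from the $P_k$-name, and $m(N)\asymp N/\int r\,d\mu$ over $N$ time steps; this yields the lower bound $h(\nu_\mu,\phi_1,P_k)\ge h(\mu,T,Q_k)/\int r\,d\mu$. For the matching upper bound, observe that given the $Q_k$-itinerary and the past crossing times $(t_j)_{j<m}$, the real value $\tau_m:=\sum_{j<m}r(T^jx_0)-s_0$ is confined to an interval of width $O(\omega_k)$: each observation $t_j=\lceil\tau_j\rceil$ combined with the $\omega_k$-range of $r$ on the $Q_k$-cell of $T^{j-1}x_0$ inductively sharpens the feasible region. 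Hence $t_m$ takes at most two consecutive integer values, and the event that $\tau_m$ straddles an integer has $\nu_\mu$-probability $O(\omega_k)$ by the product structure $\nu_\mu=(\mu\times\lambda)|_{X_r}/\int r\,d\mu$. This bounds the per-crossing conditional entropy by $O(\omega_k\log 2)$; multiplying by the crossing rate $1/\int r\,d\mu\le 1/\underline{r}$ gives
\[
h(\nu_\mu,\phi_1,P_k)\le \frac{h(\mu,T,Q_k)}{\int r\,d\mu}+\frac{C\omega_k}{\underline{r}},
\]
with $C$ independent of $\mu$.

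The main obstacle is formalizing the inductive sharpening of the $\tau_m$-uncertainty to $O(\omega_k)$: a rigorous treatment requires a Shannon--McMillan-type bound for the dynamical conditional entropy of the crossing-time process given the $Q_k$-itinerary, together with a uniform-in-$\mu$ estimate of the $\nu_\mu$-measure of the ``integer-ambiguous'' band in the suspension. Uniformity in $\mu$ comes from the fact that this measure depends only on the Lebesgue fiber distribution of $\nu_\mu$ and the oscillation $\omega_k$ of $r$, both independent of the base marginal $\mu$.
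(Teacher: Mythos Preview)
Your approach is quite different from the paper's and the decisive upper-bound step contains a real gap, which you yourself flag.

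The small-boundary claim for the tower partition $P_k=\{\mathsf T_B:B\in Q_k\}$ is fine. The difficulty is entirely in the entropy estimate. For the upper bound you assert that past observations of the crossing times $t_j=\lceil\tau_j\rceil$ ``inductively sharpen'' the uncertainty in $\tau_m$ down to $O(\omega_k)$, but this does not follow: a single observation $t_j$ only confines $\tau_j$ to an interval of length~$1$. Starting from uncertainty of order $\overline r$ in $s_0$, one observation brings the uncertainty to at most $1$; the next $r$-value (known only to within $\omega_k$) raises it to $1+\omega_k$; the next observation cuts it back to at most $1$; and so on. The steady-state conditional uncertainty is therefore $O(1)$, not $O(\omega_k)$, so your per-crossing conditional entropy bound $O(\omega_k\log 2)$ is unsupported. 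What one might hope to prove is that the \emph{entropy rate} of the crossing-time process given the $Q_k$-itinerary is $O(\omega_k)$ --- for constant $r$ this process is Sturmian-like with zero rate --- but that is a different and harder statement than the step-by-step bound you sketch, and you give no argument for it. Your lower bound also needs more than one line: the $Q_k$-itinerary is \emph{not} in general recoverable from the $P_k$-name, since consecutive returns to the same $Q_k$-cell are indistinguishable from a single long sojourn in one tower.

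The paper avoids all of this. Using aperiodicity (which you never invoke), it takes nested clopen Rohlin-tower bases $U_k\subset X$ with $\underline{\tau_{U_k}}\to\infty$, rewrites $(X_r,\Phi_r)$ as a suspension over $(U_k,T_{U_k})$ with roof $r_k=\sum_{0\le l<\tau_{U_k}}r\circ T^l$, and refines $U_k$ by a clopen $R_k$ on which $r_k$ oscillates by less than a \emph{fixed} $\delta<\underline r$. Lemma~\ref{ab} then gives an error of at most $\log 3$ per return to $U_k$; after renormalization this is $\mu(U_k)\log 3/\int r\,d\mu\le(\log 3)/(\underline r\,\underline{\tau_{U_k}})\to 0$ uniformly in $\mu$. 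The partition $P_k$ on $X_r$ is then a finite $\phi_{1/p}$-join of $\overline{(R_k)_\delta}$. In short: the paper accepts a fixed error per return but drives the return rate to zero, whereas you try to drive the per-return error to zero directly --- a route that, as written, is not completed.
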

\begin{proof}

 We consider a  sequence $(U_k)_{k\in \mathbb{N}}$ of nested topological Rohlin towers (see Lemma 8.5.4  \cite{dowb}):
\begin{itemize}
\item $U_k$ is a clopen set for every $k$,
\item $U_{k+1}\subset U_k$ for every $k$,
\item $X=\bigcup_{n\in \mathbb{N}}T^nU_k$,
\item$\underline{\tau_{U_k}}:=\min_{x\in U_k}\tau_{U_k}(x)\xrightarrow{k\rightarrow +\infty}+\infty$. 
\end{itemize}

Let $k\in \mathbb{N}$. By Kac's formula we have $\mu(U_k)\leq \frac{1}{\underline{\tau_{U_k}}}$. The flow $(X_r,\phi_r)$  may be represented as a suspension flow over $U_k$ with roof function $r_k:=\sum_{0\leq l<\tau_{U_k}}r\circ T^l$. Note that $\int r_k\,d\mu_{U_k} =\frac{\int r\,d\mu}{\mu(U_k)}$. A clopen partition $R_k$ of $U_k$ finer than $R_{U_k}:= \left\{\{\tau_{U_k}=l \}\ | \ l\in \mathbb{N}\setminus \{0\}\right\}$  induces a clopen partition $Q_k$ of $X$ by letting $Q_k=\{T^m\left(\{\tau_{U_k}=l\}\cap A\right) \ | \  A\in R_k, \ l\in \mathbb{N}\setminus \{0\}, \ 0\leq m<l\}$. Observe that $h(\mu,T, \overline{R_k})=h(\mu,T,Q_k)$ where $\overline{R_k}$ denotes the partition of $X$ given by $\overline{R_k}:=\{X\setminus U_k, B \ | \  B\in R_k\}$ (indeed for any positive  integer $n$ the partition $Q_k^{n+M}$ is finer than $\overline{R_k}^{n+M}$, which is itself finer than $Q_{k}^n$, with $M$ being a fixed    integer larger than $\max_{x\in U_k}\tau_{U_k}(x)$).

  We may choose such a sequence $(R_k)_k$  that the induced partitions
$(Q_k)_k$ satisfy  $\diam(Q_k)<1/k$ and  $Q_{k+1}$  finer than $Q_k$ for all $k$. Moreover we may assume the diameter of $R_k$ so small that $|r_k(x)-r_k(y)|<\delta$ for any points $x$ and $y$ in the same atom of $R_k$. Since the partition $R_k$ is finer than $R_{U_k}$ we get according to Lemma \ref{ac} :
\[\mu(U_k)h(\mu_{U_k}, T_{U_k},R_k)=h(\mu,T,\overline{R_k})=h(\mu, T,Q_k).\]
Fix  $p\in \mathbb{N}^*$ with   $\delta:=1/p<\underline{r}$. By applying Lemma \ref{ab} for $\delta$ to the  suspension flow over $U_k$ we get :
\begin{eqnarray*}
 \frac{h(\mu_{U_k}, T_{U_k}, R_k)}{\int r_k\,d\mu_{U_k}}&\leq \frac{1}{\delta}h(\nu_{\mu_{U_k}},\phi_\delta, \overline{(R_k)_\delta})&\leq \frac{h(\mu_{U_k}, T_{U_k}, R_k)+\log 3 }{\int r_k\,d\mu_{U_k}},\\
\frac{h(\mu, T, Q_k)}{\int r\,d\mu}&\leq \frac{1}{\delta}h(\nu_\mu, \phi_{\delta},\tilde{\overline{(R_k)_\delta}})& \leq \frac{h(\mu, T,Q_k)+\mu(U_k)\log 3}{\int r\,d\mu},
\end{eqnarray*} 
where $ \tilde{\overline{(R_k)_\delta}}$ is the partition of $X_r$ obtained from the partition $\overline{(R_k)_{\delta}}$ of $X_{r_k}$ through the natural topological conjugacy between the two 
suspension flows. We  let $P_k$ be the partition given by  $P_k:=\bigvee_{l=0}^{p-1}\phi_{l/p}^{-1}
\left(\tilde{\overline{(R_k)_\delta}}\right)$ so that we have (recall $\delta=1/p$) :
\[\frac{h(\mu, T,Q_k)}{\int r\,d\mu}\leq h(\nu_\mu, \phi_{1},P_k) \leq \frac{h(\mu,T, Q_k)+\mu(U_k)\log 3}{\int r\,d\mu}.\] The partitition $R_k$ of $U_k$ being clopen, the sets $B\times [0,\delta[$ for $B\in R_k$ have a small  boundary  for $\Phi_r$. Consequently  $\overline{(R_k)_{\delta}}$, and then $P_k$,  is a partition of $(X_r, \Phi_r)$ with small boundary.

\end{proof}

The sequence $(P_k)_k$ built in the above lemma is a priori not nonincreasing. That is why  we have generalized 
the theory of entropy structures in Subsection \label{pourr}.

\begin{cor}\label{ssssu}
With the above notations the following assertions are equivalent:
\begin{enumerate}
\item  $\mathcal{H}$ is an entropy structure of $(X,T)$, 
\item  $\mathcal{H}_r$ is an entropy structure of  $(X_r,\Phi_r)$.
\end{enumerate}
\end{cor}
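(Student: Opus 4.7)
My plan is to leverage Lemma \ref{bidule}, which tells me the assignment $\mathcal{H}\mapsto\mathcal{H}_r$ is compatible with $\sim^\succ$. By transitivity it therefore suffices to exhibit a single entropy structure $\mathcal{H}$ of $(X,T)$ whose image $\mathcal{H}_r$ is an entropy structure of $(X_r,\Phi_r)$; the equivalence in both directions will then propagate to every representative via Lemma \ref{bidule}.

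The distinguished pair will come straight from Lemma \ref{poly}: apply it to the (aperiodic, zero-dimensional) base to produce a refining sequence $(Q_k)_k$ of clopen partitions of $X$, with $\diam(Q_k)\to 0$, together with partitions $(P_k)_k$ of $X_r$ having small flow boundary for $\Phi_r$, satisfying
\[ \varepsilon_k:=\sup_{\mu\in\mathcal{M}(X,T)}\left|h(\nu_\mu,P_k)-\frac{h(\mu,Q_k)}{\int r\,d\mu}\right|\xrightarrow{k\to\infty}0. \]
Because $(Q_k)_k$ is a refining sequence of clopen partitions of a zero-dimensional system with vanishing diameter, the discrete analogue of Corollary \ref{magic} (a standard result in Downarowicz's theory applied to $(X,T)$) guarantees that $\mathcal{H}_Q:=(h(\cdot,Q_k))_k$ is an entropy structure of $(X,T)$. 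In particular $\mathcal{H}_Q\in\mathfrak{G}_T$, so by Lemma \ref{bidule} the image $(\mathcal{H}_Q)_r$ lies in $\mathfrak{G}_\Phi$.

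Now set $\mathcal{G}:=(h(\cdot,P_k))_k$. The uniform estimate $\varepsilon_k\to 0$ together with the third item of Lemma \ref{bol} yields $(\mathcal{H}_Q)_r\sim^\succ\mathcal{G}$, hence $\mathcal{G}\in\mathfrak{G}^\succ_{\mathcal{M}(X_r,\Phi_r)}$ with the same pointwise limit, which by Lemma \ref{supere} and $\mathcal{H}_Q\in\mathfrak{G}_T$ equals $h$; thus $\mathcal{G}\in\mathfrak{G}_\Phi$. Since the atoms of $P_k$ have small flow boundary, Corollary \ref{magic} applies directly to $\mathcal{G}$ and certifies it as an entropy structure of $(X_r,\Phi_r)$. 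Transporting back through $(\mathcal{H}_Q)_r\sim\mathcal{G}$ and Lemma \ref{supere} shows $(\mathcal{H}_Q)_r$ is itself an entropy structure of the flow, and Lemma \ref{bidule} finishes both implications at once.

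The step I expect to require the most care is the bookkeeping inside the $\mathfrak{G}^\succ$/$\mathfrak{G}^\prec$ formalism, namely confirming that uniform closeness coming from Lemma \ref{poly} genuinely translates into equivalence under $\sim^\succ$ (rather than just pointwise convergence to $h$). A secondary subtlety is the aperiodicity hypothesis tacit in Lemma \ref{poly}: should periodic orbits be present in $(X,T)$, one splits them off as a zero-entropy invariant subset whose entropy structure is trivial and applies the argument above to the aperiodic complement, then patches the two contributions together.
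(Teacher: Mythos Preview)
Your treatment of the aperiodic case is essentially the paper's own argument: use Lemma \ref{poly} to produce $(Q_k)_k$ and $(P_k)_k$, invoke the third item of Lemma \ref{bol} to get $(\mathcal{H}_Q)_r\sim^\succ\mathcal{H}_\mathcal{P}$, apply Corollary \ref{magic} to certify $\mathcal{H}_\mathcal{P}$ as an entropy structure of the flow, and then let Lemma \ref{bidule} propagate the equivalence in both directions. This part is fine.

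The genuine gap is your reduction to the aperiodic case. ``Splitting off the periodic points as a zero-entropy invariant subset and working on the aperiodic complement'' does not work here. The periodic set of a zero-dimensional system can be dense (think of any subshift of finite type), so the aperiodic complement is typically not closed and hence not a compact topological system; the whole $\mathfrak{G}^\succ$ formalism, Corollary \ref{magic}, and Lemma \ref{poly} all require compactness. Moreover, entropy structures encode the \emph{uniform} convergence behaviour of $(h_k)_k$ on the entire simplex $\mathcal{M}(X,T)$; one cannot recover this by studying two non-closed invariant pieces separately and then ``patching'', since the delicate content lies precisely at the accumulation of periodic measures onto aperiodic ones. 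There is no clean decomposition of the simplex compatible with the $\sim^\succ$ relation.

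The paper handles the general case differently: it passes to an \emph{aperiodic principal zero-dimensional extension} $\pi:(Y,S)\to(X,T)$, applies the aperiodic case upstairs, and then transports the conclusion back using the fact that entropy structures are preserved under principal extensions (Corollary \ref{mami} for flows, and its discrete analogue). Concretely, if $\mathcal{H}$ is an entropy structure of $(X,T)$ then $\mathcal{H}\circ\pi$ is one of $(Y,S)$; the aperiodic case gives that $(\mathcal{H}\circ\pi)_{r\circ\pi}=\mathcal{H}_r\circ\pi'$ is an entropy structure of the suspension $(Y_{r\circ\pi},\Phi_{r\circ\pi})$, and since $\pi'$ is again principal this forces $\mathcal{H}_r$ to be an entropy structure of $(X_r,\Phi_r)$. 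You should replace your last paragraph with this argument.
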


\begin{proof}
We first prove $(1)\Rightarrow(2)$ for  an aperiodic zero-dimensional system $(X,T)$.  Let $\mathcal{P}:=(P_k)_k$ and $\mathcal{Q}=(Q_k)_k$ be as in Lemma \ref{poly}. The sequence $\mathcal{H}_\mathcal{Q}$ is  an entropy structure of $(X,T)$ (see \cite{dowb}). Then if $\mathcal{H}$ is an entropy structure of $(X,T)$, we have $\mathcal{H}_\mathcal{Q}\sim \mathcal{H}$ and therefore $(\mathcal{H}_\mathcal{Q})_r\sim \mathcal{H}_r$ by Lemma \ref{bidule}.  By the last item of Lemma  \ref{bol}, the sequence $\mathcal{H}_\mathcal{P}$ belongs to $\mathfrak{G}_{\Phi_r}$ and  $(\mathcal{H}_\mathcal{Q})_r\sim \mathcal{H}_\mathcal{P}$. But  the sequence $\mathcal{H}_\mathcal{P}$ defines also an entropy structure of  $(X,\Phi)$ according to  Corollary \ref{magic}.  Thus  $\mathcal{H}_r$ is  an entropy structure of $(X,\Phi)$.

We deal now with the general case. Consider an aperiodic principal zero-dimensional extension $\pi:(Y,S)\rightarrow(X,T)$. Let $\mathcal{H}=(h_k)_k$ be an entropy structure of $(X,T)$.  As entropy structures are preserved by principal extensions, the sequence $\mathcal{H}\circ \pi$ is an entropy structure of $(Y,S)$.
Let $(Y_{r'}, \Phi_{r'})$ be the suspension flow of $(Y,S)$ under the roof function $r'=r\circ \pi$. The map $\pi':(Y_{r'},\Phi_{r'})\rightarrow (X_r,\Phi_r)$, $(y,t)\mapsto (\pi(y),t)$, defines a principal extension.
From the aperiodic  case  the sequence
$(\mathcal{H}\circ \pi)_{r'}=\mathcal{H}_r\circ \pi'$ defines an entropy structure of $(Y_{r'}, \Phi_{r'})$.
But if $\mathcal{F}=(f_k)_k$ is an entropy structure of $(X_r,\Phi_r)$ then 
$\mathcal{F}\circ \pi'$ is also an entropy structure of $(Y_{r'}, \Phi_{r'})$ by Corollary \ref{mami}. Thus $\mathcal{F}\circ \pi'$ is  equivalent to $\mathcal{H}_r\circ \pi'$.  Therefore  $\mathcal{H}_r\sim \mathcal{F}$ is an entropy structure of  $(X_r,\Phi_r)$.

The other implication    $(2)\Rightarrow(1)$ follows easily from $(1)\Rightarrow(2)$.  Indeed let $\mathcal{G}$ be   an entropy structure  of $(X,T)$. Then  $\mathcal{G}_r$ is an entropy structure of $(X_r, \Phi_r)$. 
Let $\mathcal{H}$ be a sequence  in $\mathfrak{G}_T$  such that  $\mathcal{H}_r$ is an entropy structure of  $(X_r,\Phi_r)$. We have   $\mathcal{H}_r\sim \mathcal{G}_r$, therefore
 $\mathcal{H}\sim \mathcal{G}$ and $\mathcal{H}$ is also an entropy structure of $(X,T)$.
\end{proof}

\subsubsection{Superenvelope of flows}

For a discrete dynamical system (or a topological flow),   a  \textit{superenvelope of the entropy structure} (or simply a \textit{superenvelope}) is a superenvelope of a given entropy structure (seen as a representative sequence in the equivalence class). This definition does not depend on the choice of the representative sequence by Lemma \ref{supere}.


The statement below follows easily  from  Corollary \ref{mami} and the definition of superenvelopes  (see Lemma 8.4.8 in \cite{dowb} for the analogous result for discrete systems) : 
\begin{lemma}\label{mamip} Superenvelopes of flows are preserved by principal extensions, i.e. if $\pi:(Y,\Psi)\rightarrow (X,\Phi)$ is a principal extension then $E\circ \pi:\mathcal{M}(Y,\Psi)\rightarrow \mathbb{R}^+\cup \{+\infty\}$ is a superenvelope of $(Y,\Psi)$ if and only if $E:\mathcal{M}(X,\Phi)\rightarrow \mathbb{R}^+\cup \{+\infty\}$ is a superenvelope  of $(X,\Phi)$.\end{lemma}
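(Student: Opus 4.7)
The plan is to combine Corollary \ref{mami} (principal extensions lift entropy structures) with Lemma \ref{supere} (superenvelopes depend only on the equivalence class $\sim$). Fix an entropy structure $\mathcal{H} = (h_k)_k$ of $(X,\Phi)$; by Corollary \ref{mami}, the lifted sequence $\mathcal{H}\circ\pi := (h_k\circ\pi)_k$, where $\pi$ also denotes the induced continuous affine surjection $\mathcal{M}(Y,\Psi)\to\mathcal{M}(X,\Phi)$, is an entropy structure of $(Y,\Psi)$; since $\pi$ is principal, $h\circ\pi = h$. By Lemma \ref{supere} it then suffices to show that $E$ is upper semicontinuous with $\lim_k (E - h_k)^{\tilde{}} = E - h$ if and only if $E\circ\pi$ is upper semicontinuous with $\lim_k (E\circ\pi - h_k\circ\pi)^{\tilde{}} = (E - h)\circ\pi$.

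For upper semicontinuity, the forward direction is the composition of a usc function with a continuous one. For the converse, $\pi$ being a continuous surjection between compact Hausdorff spaces is closed, so $\{E\geq\alpha\} = \pi\{E\circ\pi\geq\alpha\}$ is closed whenever $E\circ\pi$ is upper semicontinuous. For the forward direction of the defining equation, I would use the elementary inequality $(f\circ\pi)^{\tilde{}}\leq f^{\tilde{}}\circ\pi$ — immediate from continuity of $\pi$ — applied to $f = E - h_k$; combined with the trivial bound $(E\circ\pi - h_k\circ\pi)^{\tilde{}} \geq E\circ\pi - h_k\circ\pi$ and the pointwise convergence $h_k\to h$ coming from Lemma \ref{bol}, this sandwiches $\lim_k(E\circ\pi - h_k\circ\pi)^{\tilde{}}$ between $(E-h)\circ\pi$ and itself.

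The hard part is the reverse direction of the defining equation, since the inequality $(f\circ\pi)^{\tilde{}}\leq f^{\tilde{}}\circ\pi$ is in general strict. To handle it, I would invoke Lemma \ref{supere} once more to replace $\mathcal{H}$ by a monotone representative, for instance the Brin--Katok structure $\mathcal{H}_{BK}^\Phi$ of Subsection \ref{brin}, which is nondecreasing in $k$ as $\epsilon_k\searrow 0$. Then $((E - h_k)^{\tilde{}})_k$ is a nonincreasing sequence of upper semicontinuous functions, its pointwise limit $E - h$ is again upper semicontinuous as a decreasing limit of such functions, and Dini's theorem yields uniform convergence on $\mathcal{M}(Y,\Psi)$. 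Combined with the sup-identity
\[
(E - h_k)^{\tilde{}}(x) = \sup_{y\in\pi^{-1}(x)}(E\circ\pi - h_k\circ\pi)^{\tilde{}}(y),
\]
obtained by lifting a sequence $x_n\to x$ realizing the usc envelope through surjectivity of $\pi$ and extracting a convergent subsequence by compactness of $\mathcal{M}(Y,\Psi)$, uniform convergence on $(Y,\Psi)$ transfers to convergence on $(X,\Phi)$. The main obstacle is precisely this reverse direction: one must compensate for the general failure of $(f\circ\pi)^{\tilde{}} = f^{\tilde{}}\circ\pi$, and the device that makes this possible is the monotonicity of a well-chosen entropy structure combined with compactness of the fibers of $\pi$.
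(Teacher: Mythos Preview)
Your overall strategy---use Corollary \ref{mami} to transport the entropy structure, then verify the superenvelope condition \eqref{enve} directly---is exactly the approach the paper indicates (it defers to Lemma 8.4.8 of \cite{dowb} for the discrete analogue). The forward direction and the sup-identity
\[
(E - h_k)^{\tilde{}}(x) = \sup_{y\in\pi^{-1}(x)}\bigl((E-h_k)\circ\pi\bigr)^{\tilde{}}(y)
\]
are both correct, and replacing $\mathcal H$ by the nondecreasing Brin--Katok structure is a legitimate move via Lemma \ref{supere}.

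There is, however, a genuine gap in the reverse direction. You invoke Dini's theorem to get uniform convergence of $\bigl((E-h_k)\circ\pi\bigr)^{\tilde{}}$ on all of $\mathcal M(Y,\Psi)$, but Dini requires the limit function to be continuous, and $E\circ\pi-h$ is typically not: the metric entropy $h$ fails to be upper semicontinuous unless the flow is asymptotically $h$-expansive, so $-h$ is not lower semicontinuous and the limit has no reason to be continuous. Global uniform convergence is simply false in general.

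The repair is to run the argument fiberwise rather than globally. On each compact fiber $\pi^{-1}(x)$ the limit \emph{is} the constant $(E-h)(x)$, and your nonincreasing sequence of usc functions $g_k(y)=\bigl((E-h_k)\circ\pi\bigr)^{\tilde{}}(y)$ decreases to this constant. Pick $y_k\in\pi^{-1}(x)$ achieving $\sup_y g_k(y)$ (usc on a compact set), extract a convergent subsequence $y_k\to y_*$, and use monotonicity plus upper semicontinuity of each $g_l$ to get $\lim_k\sup_y g_k(y)\le g_l(y_*)$ for every $l$, hence $\le (E-h)(x)$. Combined with your sup-identity this gives $\lim_k(E-h_k)^{\tilde{}}(x)=(E-h)(x)$, which is what you need. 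In short: the same compactness-and-monotonicity mechanism works, but on $\pi^{-1}(x)$, not on $\mathcal M(Y,\Psi)$.
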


We now relate the superenvelopes of the flow with those of its time-$t$ map for $t>0$.
\begin{lemma}\label{bz}
For any $t>0$ the map $E\mapsto tE\circ \theta_t$ defines an injective map from the set of (affine) superenvelopes for the flow to the corresponding set for the time $t$-map $\phi_t$. 
\end{lemma}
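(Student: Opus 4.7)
The plan is to show that if $E$ is an affine superenvelope of $(X,\Phi)$ then $E':=tE\circ\theta_t$ is an affine superenvelope of $(X,\phi_t)$, and to deduce injectivity from the section $i_t$ of $\theta_t$. First I check basic properties: the map $\theta_t:\mathcal{M}(X,\phi_t)\to\mathcal{M}(X,\Phi)$ is continuous and affine (already noted in the text), so $E'$ inherits upper semicontinuity and affinity from $E$. For injectivity, the identity $\theta_t\circ i_t=\mathrm{Id}_{\mathcal{M}(X,\Phi)}$ forces $E=(E')\circ i_t/t$, so $E\mapsto E'$ is injective.

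The heart of the argument is to verify the superenvelope condition \eqref{enve} for $E'$. By Lemma \ref{timet} (ii), if $\mathcal{H}^{\Phi}=(h_k^\Phi)_k$ is an entropy structure of $(X,\Phi)$, then $t\mathcal{H}^{\Phi}\circ\theta_t=(th_k^\Phi\circ\theta_t)_k$ is an entropy structure of $(X,\phi_t)$; by Lemma \ref{supere} it suffices to verify the superenvelope property for $E'$ against this particular representative. Setting $f_k:=E-h_k^{\Phi}$ on $\mathcal{M}(X,\Phi)$, one has $E'-th_k^{\Phi}\circ\theta_t = t f_k\circ\theta_t$, and what must be shown is
\begin{equation*}
\limsup_k (tf_k\circ\theta_t)^{\tilde{}} \;=\; tE\circ\theta_t \;-\; h_{\phi_t}\qquad\text{on }\mathcal{M}(X,\phi_t).
\end{equation*}
The inequality ``$\geq$'' is automatic, since $g^{\tilde{}}\geq g$ pointwise and $f_k\to E-h_\Phi$ pointwise (both sequences $\mathcal{H}^\Phi$ and $\mathcal{H}^{\phi_t}$ being pointwise convergent by Lemma \ref{bol}), combined with the harmonicity identity $h_\Phi\circ\theta_t=h_{\phi_t}$ recorded just before Corollary \ref{mami}.

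For the ``$\leq$'' direction I invoke the elementary fact that for any bounded function $f$ on $\mathcal{M}(X,\Phi)$, the continuity of $\theta_t$ gives $(f\circ\theta_t)^{\tilde{}}\leq f^{\tilde{}}\circ\theta_t$: if $\mu_n\to\mu$ in $\mathcal{M}(X,\phi_t)$ then $\theta_t(\mu_n)\to\theta_t(\mu)$, so $\limsup_n f(\theta_t(\mu_n))\leq f^{\tilde{}}(\theta_t(\mu))$. Applying this to each $f_k$, multiplying by $t>0$, and taking $\limsup_k$, one obtains
\begin{equation*}
\limsup_k (tf_k\circ\theta_t)^{\tilde{}} \;\leq\; t\bigl(\limsup_k f_k^{\tilde{}}\bigr)\circ\theta_t \;\leq\; t(E-h_\Phi)\circ\theta_t,
\end{equation*}
where the last inequality is precisely the superenvelope condition for $E$ pre-composed with $\theta_t$. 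Using once more $h_\Phi\circ\theta_t=h_{\phi_t}$, this is $tE\circ\theta_t-h_{\phi_t}$, completing the verification.

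I do not anticipate a serious obstacle: the only genuinely analytic ingredient is the pointwise inequality $(f\circ\theta_t)^{\tilde{}}\leq f^{\tilde{}}\circ\theta_t$, which is immediate from continuity of $\theta_t$, and the rest is bookkeeping using the existence of the continuous affine section $i_t$ and the harmonicity of the entropy function along $\theta_t$.
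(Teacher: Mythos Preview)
Your proof is correct and follows essentially the same route as the paper: use Lemma~\ref{timet}(ii) to transport the entropy structure, then exploit continuity of $\theta_t$ to control the upper semicontinuous envelope, and read off injectivity from the retraction identity $\theta_t\circ i_t=\mathrm{Id}$. You are in fact slightly more careful than the paper, which asserts the equality $(f\circ\theta_t)^{\tilde{}}=f^{\tilde{}}\circ\theta_t$ outright, whereas you correctly observe that continuity of $\theta_t$ only gives the inequality $(f\circ\theta_t)^{\tilde{}}\le f^{\tilde{}}\circ\theta_t$ and supply the reverse bound via $g^{\tilde{}}\ge g$.

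One minor slip: the identity you cite as ``$h_\Phi\circ\theta_t=h_{\phi_t}$'' should read $t\,h_\Phi\circ\theta_t=h_{\phi_t}$ (the line before Corollary~\ref{mami} has the same missing factor). Your computations are unaffected, since what you actually use is that the limit of the transported structure $t\mathcal{H}^\Phi\circ\theta_t$ equals $h_{\phi_t}$, which is exactly $t\,h_\Phi\circ\theta_t=h_{\phi_t}$.
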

\begin{proof}
The injectivity follows from the retraction property of  $\theta_t$ (the map $E\mapsto \frac{1}{t}E\circ i_t$ defines a right inverse). It remains to check the image of a superenvelope is a superenvelope. Let $\mathcal{H}^\Phi=(h_k^\Phi)_k$ be an entropy structure of the flow. By Lemma  \ref{timet} the sequence  $t\mathcal{H}^\Phi\circ \theta _t$ is an entropy structure of $\phi_t$. Then $t\left(E\circ\theta_t-h_k^\Phi\circ \theta_t\right)=t\left(E-h_k^\Phi\right)\circ \theta_t$ and by continuity of $\theta_t$ we get 
$$\lim_k \left(t\left(E-h_k^\Phi\right)\circ \theta_t\right)^{\tilde{}}=\lim_k t\left(E-h_k^\Phi\right)^{\tilde{}}\circ \theta_t=t\left(E-h^\Phi\right)\circ \theta_t.$$ 
 \end{proof}
 
 We consider now superenvelopes of suspension flows.
\begin{lemma}\label{sus}
Let $(X_r,\Phi_r)$ be a zero-dimensional  flow given by a suspension flow over a zero-dimensional system $(X,T)$ with a positive continuous roof function $r:X\rightarrow \mathbb{R}^{+}$. The map
\[\Gamma:E\mapsto E_r:=\frac{E(\mu_\nu)}{\int r\  d\mu_\nu}\]
is a bijection between the (affine) superenvelopes of $(X,T)$ and  the (affine) superenvelopes of $(X_r,\Phi)$.
\end{lemma}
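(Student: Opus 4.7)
The starting point is Corollary \ref{ssssu}: whenever $\mathcal{H} = (h_k)_k$ is an entropy structure of $(X,T)$, the sequence $\mathcal{H}_r = \left(\nu \mapsto h_k(\mu_\nu)/\int r\, d\mu_\nu\right)_k$ is an entropy structure of $(X_r,\Phi_r)$, and its pointwise limit coincides with the metric entropy $h$ on $\mathcal{M}(X_r,\Phi_r)$ by Abramov's formula. So to check the superenvelope property on $(X_r,\Phi_r)$, it suffices to work with the sequence $\mathcal{H}_r$.

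The key technical step is a commutation identity: for every bounded real function $g$ on $\mathcal{M}(X,T)$, extending the notation $E \mapsto E_r$ to $g \mapsto g_r$ in the obvious way, I claim
\[(g_r)^{\tilde{}} = (g^{\tilde{}})_r.\]
This rests on two elementary facts. First, $\Theta^{-1} : \nu \mapsto \mu_\nu$ is a homeomorphism of compact metric spaces, hence pullback by $\Theta^{-1}$ commutes with the upper semicontinuous envelope. Second, $\nu \mapsto 1/\int r\, d\mu_\nu$ is a strictly positive continuous function (bounded above and below away from $0$, using $\underline r > 0$), and multiplication of a bounded function by a continuous positive function commutes with $\tilde{}$. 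Composing the two yields the claim.

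Now let $E$ be a superenvelope of $(X,T)$. Upper semicontinuity and nonnegativity of $E_r$ follow from the same two observations applied to $E$ itself. Applying the commutation identity to $g = E - h_k$ and using that $(\cdot)_r$ commutes with pointwise limits (since it is a pullback by $\Theta^{-1}$ followed by multiplication by a fixed function) gives
\[\lim_k (E_r - (h_k)_r)^{\tilde{}} \;=\; \lim_k \bigl((E - h_k)^{\tilde{}}\bigr)_r \;=\; \bigl(\lim_k (E - h_k)^{\tilde{}}\bigr)_r \;=\; (E - h)_r \;=\; E_r - h,\]
where the third equality uses the superenvelope property of $E$ and the final one uses Abramov (so that $h_r = h$ on $\mathcal{M}(X_r,\Phi_r)$). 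Hence $\Gamma(E) = E_r$ is a superenvelope of $\mathcal{H}_r$, thus of $(X_r,\Phi_r)$.

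For bijectivity I propose the inverse $\Gamma^{-1}(F)(\mu) := F(\nu_\mu) \int r\, d\mu$; one verifies at once that $\Gamma \circ \Gamma^{-1} = \Gamma^{-1} \circ \Gamma = \Id$, and a fully symmetric argument (noting $\int r\, d\mu \geq \underline r > 0$ is continuous in $\mu$) shows $\Gamma^{-1}$ carries superenvelopes of $(X_r,\Phi_r)$ to superenvelopes of $(X,T)$. The preservation of affinity requires a short separate computation. A convex combination $\mu = t\mu_1 + (1-t)\mu_2$ is sent by $\Theta$ to $\nu_\mu = s\nu_{\mu_1} + (1-s)\nu_{\mu_2}$ with $s = t\int r\, d\mu_1 / \int r\, d\mu$; plugging this into the definition of $E_r$ and using affinity of $E$ yields $E_r(\nu_\mu) = sE_r(\nu_{\mu_1}) + (1-s)E_r(\nu_{\mu_2})$, and the converse is analogous. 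The only delicate point of the argument is the commutation lemma $(g_r)^{\tilde{}} = (g^{\tilde{}})_r$; everything else is either a direct consequence of it or a routine algebraic manipulation involving the reweighted convex combinations induced by the nonlinear homeomorphism $\Theta$.
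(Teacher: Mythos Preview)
Your proposal is correct and follows essentially the same line as the paper's proof: both reduce the superenvelope condition to the commutation identity $(E_r-g_k)^{\tilde{}}=\bigl((E-h_k)^{\tilde{}}\bigr)_r$, justified by continuity of $\nu\mapsto 1/\int r\,d\mu_\nu$ together with $\Theta$ being a homeomorphism, and both exhibit the same explicit inverse $\Gamma^{-1}(F)(\mu)=F(\nu_\mu)\int r\,d\mu$. The only stylistic difference is in the affinity argument: the paper linearizes by passing to the cones $\mathcal{N}(X,T)$ and $\mathcal{N}(X_r,\Phi_r)$ where $\mu\mapsto\mu\times\lambda$ is genuinely affine, while you compute the reweighted barycentric coefficient $s=t\int r\,d\mu_1/\int r\,d\mu$ directly on the simplices; these are two packagings of the same computation.
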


\begin{proof} Let $\mathcal{H}=(h_k)_k$ be an entropy structure of $(X,T)$ and let $\mathcal{H}_r=(g_k)_k$.  By
 continuity of  $\mathcal{M}(X,\Phi)\ni\nu\mapsto \frac{1}{\int r \, d\mu_\nu}$ we have for all $\nu\in \mathcal{M}(X,\Phi)$: 
\begin{align*}
(E_r-g_k)^{\tilde{}}(\nu)&=\frac{(E-h_k)^{\tilde{}}(\mu_\nu)}{\int r\ d\mu_\nu},\\
\lim_k (E_r-g_k)^{\tilde{}}(\nu)&=  \frac{\lim_k(E-h_k)^{\tilde{}}(\mu_\nu)}{\int r\ d\mu_\nu}.
\end{align*}
Thus $E$ is a superenvelope of $(X,T)$ if and only if $E_r$ is a superenvelope of $(X_r,\Phi_r)$. Note finally that the map $\Gamma$ is invertible with  $\Gamma^{-1}(E_r):\mu\mapsto \int r \ d\mu \times E_r(\nu_\mu)$ for any superenvelope $E_r$ of $(X_r,\Phi_r)$.

 Assume now $E$ is affine. Let us denote  by $E$ its affine extension  on the set $\mathcal{N}(X,T)$ of $T$-invariant positive finite measures (not necessarily probability ones). Similarly we denote by $\mathcal{N}(X_r,\Phi_r)$ the set of $\Phi_r$-invariant positive finite measures. 
The map $\nu\mapsto \frac{\mu_\nu}{\int r\, d\mu_\nu}$ being an affine bijection from $\mathcal{N}(X_r,\Phi_r)$ into $\mathcal{N}(X,T)$ (the inverse is given by $\mu\mapsto \mu \times \lambda$), the map $E_r$ defines an affine function on $\mathcal{N}(X_r,\Phi_r)$ and thus on the simplex $\mathcal{M}(X_r,\Phi_r)$ by restriction. Similarly $E$ is affine when $E_r$ is affine. 

\end{proof}

\subsection{Periodic structure}

For a topological flow $(X,\Phi)$ we let $Per(\Phi)$ be the set of $\Phi$-periodic orbits.  We denote  by $t(\gamma)$ the minimal period of $\gamma\in Per(\Phi)$.   We define the \emph{global periodic growth $p(\Phi)$} of $(X,\Phi)$ as follows : $$p(\Phi)=\sup_{t>0}\frac{1}{t}\log \sharp \{\gamma\in Per(\Phi), \ t(\gamma)\leq t\}.$$
To estimate the local exponential growth of periodic orbits, we introduce  \textit{the periodic structure} as the equivalence class for $\sim^{\prec}$ on $\mathcal{M}(X,\Phi)$ of  the following nonincreasing sequence $\mathcal{P}=(p_k)_k$ of nonnegative functions  on $\mathcal{M}(X,\Phi)$ (again we  call periodic structure any representative in this class). Recall we have fixed a nonincreasing sequence $(\epsilon_k)_k$ with $\lim_k\epsilon_k=0$. We let $D=D^X$ be a convex distance on the set $\mathcal{M}(X)$ of Borel probability measures on $X$ inducing the weak-$*$ topology, e.g. with a dense countable family $(f_n)_n$ of real continuous nonzero  functions on $X$
\[\forall \mu, \nu\in \mathcal{M}(X,\Phi),\  \ D(\mu, \nu)=\sum_n\frac{\left| \int f_n\, d\mu-\int f_n\, d\nu\right|}{2^n\sup_x|f_n(x)|}.\] We let $\nu_\gamma$ be the  periodic measure associated to $\gamma\in Per(\Phi)$.  Then we let  for all $k$
$$p_k^\Phi(\nu_\gamma)=\frac{1}{t(\gamma)}\log\sharp \{\gamma'\in Per(\Phi), \ D(\nu_\gamma, \nu_\gamma')<\epsilon_k\text{ and } t(\gamma')\leq t(\gamma)\}.$$
The functions $p^\Phi_k$ are then extended harmonically on the simplex $\mathcal{M}(X,\Phi)$ by letting $p^\Phi_k(\nu)=0$ for any aperiodic measure $\nu$. We get in this way a nonincreasing sequence $\mathcal{P}=(p_k^\Phi)_k$ of nonnegative functions on $\mathcal{M}(X,\Phi)$.  \emph{The tail periodic function $u^{\Phi}_1$} is then defined as 
$$u_1^{\Phi}= \lim_k\widetilde{p_k^{\Phi}}.$$
When the global periodic growth   $p(\Phi)$ is finite,  the sequence $(p_k^\Phi)_k$ is converging pointwisely to zero and $$u_1^\Phi\leq p(\Phi)<+\infty.$$ The equivalence class of $\mathcal{P}$  (and thus $u_1^\Phi$ by Lemma \ref{supere}) depend  neither on the choice of the sequence $(\epsilon_k)_k$ nor on the distance $D$.

Similarly we define $u_1^T$ for a discrete system $(X,T)$ as $u_1^{T}= \lim_k\widetilde{p_k^{T}}$ with $p_k^T$ harmonic, vanishing on aperiodic measures and   $p_k^T(\mu_x)=\frac{1}{n}\log\sharp \left\{\mu_{x'}, \ D(\mu_x',\mu_x)<\epsilon_k\text{ and }T^{n}x'=x'\right\}$ for any periodic point $x\in X$ with minimal period 
$n$ (where  $\mu_x$ denotes here the periodic measure associated to $x$). A similar quantity  $\mathfrak u_1^T$  was first defined in \cite{bdo} by letting 
$\mathfrak u_1^{T}= \lim_k\widetilde{\mathfrak p_k^{T}}$
 with   $\mathfrak p_k^T$ harmonic satisfying $\mathfrak p_k^T(\mu_x)=\frac{1}{n}\log\sharp \left\{x', \ x'\in B(x,\epsilon_k,n) \text{ and }T^{n}x'=x', \, T^kx'\neq x'  \text{ for }k<n\right\}$ for any periodic point $x\in X$ with minimal period $n$. Obviously we have 
 $\mathfrak u_1^{T}\leq  u_1^{T}$. 
  For a subshift $(X,T)$ we clearly have $\mathfrak u_1^{T}=0$. In this case $u_1^{T}=0$ also holds true (see Lemma \ref{fdf} in Appendix C).  For a topological discrete system $(X,T)$ we also let $p(T)$ be \textit{the global periodic growth} $p(T)=\sup_{n>0}\frac{1}{n}\log\sharp\{x\in X, \ T^{n}x=x \}$.

\begin{lemma}\label{last}Let $\pi:(Y,\Psi)\rightarrow (X,\Phi)$ be an   isomorphic extension then 
$$u_1^{\Phi}\circ \pi=u_1^\Psi.$$
\end{lemma}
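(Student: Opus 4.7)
The plan is to show that an isomorphic extension induces a homeomorphism $\pi_*: \mathcal{M}(Y,\Psi) \to \mathcal{M}(X,\Phi)$ restricting to a period-preserving bijection between periodic measures, and then to compare $p_k^\Psi$ with $p_k^\Phi \circ \pi_*$ via uniform continuity before taking upper semicontinuous envelopes and passing to the limit.

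First, the pushforward $\pi_*$ is a continuous bijection between the compact Hausdorff simplices of invariant measures, hence a homeomorphism; in particular it maps extreme points to extreme points, and so preserves ergodicity and intertwines ergodic decompositions. For a periodic measure $\nu_\gamma$ with $t(\gamma) = t$, the system $(X,\Phi,\nu_\gamma)$ is measure-theoretically the rotation on $\mathbb{R}/t\mathbb{Z}$, so its preimage $\mu := \pi_*^{-1}(\nu_\gamma)$ must be metrically isomorphic to the same rotation. This forces $\mu$-a.e.\ point of $Y$ to be $t$-periodic, and by ergodicity $\mu$ is supported on a single $\Psi$-orbit $\tilde\gamma$ with $t(\tilde\gamma) = t$; equivariance of $\pi$ then yields $\pi(\tilde\gamma) = \gamma$, giving a period-preserving bijection $Per(\Psi) \leftrightarrow Per(\Phi)$.

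Now fix a modulus of uniform continuity $\omega$ for $\pi_*$ on the compact space $\mathcal{M}(Y,\Psi)$, and let $k'(k)$ be the largest index with $\epsilon_{k'(k)} \geq \omega(\epsilon_k)$, so that $k'(k) \to \infty$ as $k \to \infty$. Every $\tilde\gamma' \in Per(\Psi)$ with $D(\nu_{\tilde\gamma}, \nu_{\tilde\gamma'}) < \epsilon_k$ and $t(\tilde\gamma') \leq t(\tilde\gamma)$ maps via $\pi$ to an orbit $\gamma' = \pi(\tilde\gamma')$ of the same period satisfying $D(\nu_\gamma, \nu_{\gamma'}) < \omega(\epsilon_k) \leq \epsilon_{k'(k)}$, so
\[
p_k^\Psi(\nu_{\tilde\gamma}) \leq p_{k'(k)}^\Phi(\pi_* \nu_{\tilde\gamma})
\]
on periodic ergodic measures; the inequality is trivial on aperiodic ergodic measures, where both sides vanish (metric isomorphism preserves aperiodicity). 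Since $\pi_*$ maps the ergodic decomposition of $\nu$ to that of $\pi_*\nu$, integrating transfers the inequality to all of $\mathcal{M}(Y,\Psi)$, giving $p_k^\Psi \leq p_{k'(k)}^\Phi \circ \pi_*$.

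Because $\pi_*$ is a homeomorphism, composition with $\pi_*$ commutes with the upper semicontinuous envelope, so $\widetilde{p_k^\Psi} \leq \widetilde{p_{k'(k)}^\Phi} \circ \pi_*$; letting $k \to \infty$ and using that the nonincreasing sequence $(\widetilde{p_k^\Phi})_k$ has the same limit along the subsequence $k'(k)$, we obtain $u_1^\Psi \leq u_1^\Phi \circ \pi$. The reverse inequality is proved symmetrically using a modulus of uniform continuity for $\pi_*^{-1}$. The main subtlety is the period-preserving bijection between periodic measures, which hinges on identifying the measure-theoretic type of a periodic orbit as the rotation on $\mathbb{R}/t\mathbb{Z}$; everything else is routine bookkeeping with the two moduli of uniform continuity.
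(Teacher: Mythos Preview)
Your proof is correct and follows the same line as the paper's, which is essentially the one-sentence observation that the induced map $\pi_*:\mathcal{M}(Y,\Psi)\to\mathcal{M}(X,\Phi)$ is a homeomorphism taking periodic measures to periodic measures of the same period. You have simply supplied the details the paper omits: the measure-theoretic argument for the period-preserving bijection on periodic orbits, and the uniform-continuity bookkeeping (choosing $k'(k)$, passing through the harmonic extension, commuting the u.s.c.\ envelope with the homeomorphism $\pi_*$) needed to compare the two periodic structures and conclude $u_1^\Psi = u_1^\Phi\circ\pi_*$.
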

\begin{proof}
The proof follows directly from the fact, that the induced map $\pi:\mathcal{M}(Y, \Psi)\rightarrow \mathcal{M}(X, \Phi)$ is a homeomorphism preserving the periodic measures and their periods. 
\end{proof}

For a topological flow $(X,\Phi)$ one checks easily that the time $t$-map, $t\neq 0$, satisfies $u_1^{\phi_t}(\mu)=0$ for $\mu\notin \mathcal{M}(X,\Phi)$ and  $\frac{u_1^{\phi_t}}{t}(\mu)\leq u_1^{\Phi}(\mu)$ for $\mu\in \mathcal{M}(X,\Phi)$. However this last inequality may be strict. We investigate now the behaviour of $u_1$ under suspensions.

\begin{lemma}\label{sd}
Let $(X_r,\Phi_r)$ be a zero-dimensional flow given by a suspension flow over a zero-dimensional system $(X,T)$ with a positive continuous  roof function $r$. Then we have  for all $\nu \in \mathcal{M}(X_r,\Phi_r)$:
$$u_1^{\Phi_r}(\nu)=\frac{u_1^T(\mu_\nu)}{\int  r \,d\mu_\nu}.$$
In particular $u_1^{\Phi_r}=0$ when $(X,T)$ is a subshift.
\end{lemma}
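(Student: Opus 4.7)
The plan rests on the natural bijection between periodic orbits of $\Phi_r$ and of $T$: a $\Phi_r$-periodic orbit $\gamma$ intersects the section $X\times\{0\}$ in a $T$-periodic orbit $\gamma''$ of minimal period $n$, and one has $\nu_\gamma=\Theta(\mu_{\gamma''})$ and flow-period $t(\gamma)=n\int r\,d\mu_{\gamma''}=:nR$. Since $\Theta$ is a homeomorphism between the compact simplices $\mathcal M(X,T)$ and $\mathcal M(X_r,\Phi_r)$ and $\mu\mapsto\int r\,d\mu$ is continuous and bounded below by $\underline r>0$, there exists a common modulus $\omega$ with $\omega(\epsilon)\to 0$ transporting the weak-$*$ closeness $D^{X_r}(\nu_{\tilde\gamma},\nu_\gamma)<\epsilon$ and the estimate $|R-R''|<\epsilon$ in both directions, uniformly in the periodic orbit $\gamma$.

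The next step is to introduce the sequence $\mathcal R=(r_k)_k$ on $\mathcal M(X_r,\Phi_r)$ defined by $r_k(\nu)=p_k^T(\mu_\nu)/\int r\,d\mu_\nu$, which is nonincreasing, harmonic, vanishes on aperiodic measures, and therefore lies in $\mathfrak G^\prec$. I would prove $\mathcal P^{\Phi_r}\sim^\prec\mathcal R$ by reducing, via harmonicity and the vanishing on aperiodic measures, to a uniform $\sim^\prec$-comparison at periodic measures $\nu_\gamma$. Writing $p_l^{\Phi_r}(\nu_\gamma)=\tfrac{1}{nR}\log N_l$ and $p_k^T(\mu_{\gamma''})/R=\tfrac{1}{nR}\log M_k$, the bijection identifies the $\Phi_r$-orbits counted in $N_l$ with the $T$-orbits $\tilde\gamma''$ satisfying $D^X(\mu_{\tilde\gamma''},\mu_{\gamma''})<\omega(\epsilon_l)$ and $\tilde n''R''\leq nR$, while those counted in $M_k$ satisfy $D^X<\epsilon_k$ and $\tilde n''\mid n$. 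The flow-period condition combined with $|R''-R|<\omega(\epsilon_l)$ and $R\geq\underline r$ translates to $\tilde n''\leq n(1+\sigma(l))$ with $\sigma(l)\to 0$ uniformly in $\gamma''$.

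The main obstacle is reconciling the loose flow inequality $\tilde n''\leq n(1+\sigma(l))$ with the base divisibility $\tilde n''\mid n$. This asymmetry produces ``surplus'' orbits on each side, which must be shown to contribute only $o(nR)$ to the log-count. Assuming $p(T)<+\infty$ (otherwise the identity is trivial), I would accomplish this by localization: as $l\to\infty$, the weak-$*$ ball $\{D^X<\omega(\epsilon_l)\}$ shrinks, so the surplus count is dominated by the usc envelope $\widetilde{p_{k_0}^T}$ at any fixed scale $\epsilon_{k_0}$, producing an $o(nR)$ correction; a symmetric argument, this time with the flow-period window enlarged by a factor $1+\sigma(l)$, yields the reverse $\prec$-inequality. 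Granted $\mathcal P^{\Phi_r}\sim^\prec\mathcal R$, the $\sim^\prec$-analogue of Lemma \ref{supere} (obtained by sign reversal) gives $\lim_k\widetilde{p_k^{\Phi_r}}=\lim_k\widetilde{r_k}$, and the continuity and positivity of $\nu\mapsto\int r\,d\mu_\nu$ let one commute the usc envelope with division to get $\widetilde{r_k}(\nu)=\widetilde{p_k^T}(\mu_\nu)/\int r\,d\mu_\nu$. Passing to the limit produces the claimed identity $u_1^{\Phi_r}(\nu)=u_1^T(\mu_\nu)/\int r\,d\mu_\nu$. The ``in particular'' assertion for subshifts follows from Lemma \ref{fdf} in Appendix C, which gives $u_1^T=0$.
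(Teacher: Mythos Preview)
Your skeleton --- bijection via $\Theta$, reduction by harmonicity to periodic measures, comparison of log-counts, passage to usc envelopes via continuity of $\nu\mapsto\int r\,d\mu_\nu$ --- matches the paper's. The substantive difference is in how the period mismatch is absorbed. The paper does not attempt any surplus-orbit estimate. It imports from \cite{bdo} (Lemma~5.4 there) the upper semicontinuous majorants
\[
\overline{p_l^T}(\mu)=(1+\epsilon_l)\sup\{p_l^T(\mu'):D^X(\mu,\mu')\le\epsilon_l\},
\]
which still satisfy $u_1^T=\lim_l\overline{p_l^T}$, and then proves the single inequality $p_k^{\Phi_r}(\nu_\gamma)\int r\,d\mu_\gamma\le\overline{p_l^T}(\mu_\gamma)$ for $k$ large depending on $l$ (the other direction being symmetric). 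The device is a \emph{change of basepoint}: among the $\gamma'$ counted by $p_k^{\Phi_r}(\nu_\gamma)$, pick $\overline{\gamma'}$ whose base period $n_{\overline{\gamma'}}$ is maximal. Every other $\gamma'$ then has base period at most $n_{\overline{\gamma'}}$ and base measure $\epsilon_l$-close to $\mu_{\overline{\gamma'}}$, so the $\Phi_r$-count is controlled by the $T$-count taken at $\mu_{\overline{\gamma'}}$ with time $n_{\overline{\gamma'}}$. The integral ratio $\int r\,d\mu_\gamma/\int r\,d\mu_{\overline{\gamma'}}$ is absorbed by the factor $(1+\epsilon_l)$, and the shifted basepoint by the $\sup$ in $\overline{p_l^T}$; no separate surplus bound is required.

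Your surplus step, by contrast, has a real gap. You write that the surplus count is ``dominated by the usc envelope $\widetilde{p_{k_0}^T}$ at any fixed scale $\epsilon_{k_0}$, producing an $o(nR)$ correction''. But $\widetilde{p_{k_0}^T}$ is a growth \emph{rate}, bounded below by $u_1^T$, which is in general positive; multiplying by $n$ gives an $O(nR)$ contribution, not $o(nR)$. As written, the argument only goes through when $u_1^T\equiv 0$, which is precisely what you are trying to compute. Separately, the remark that the identity is ``trivial'' when $p(T)=+\infty$ is not correct: $u_1^T$ may be finite at some or all measures even when the global periodic growth is infinite, so that case cannot be dismissed. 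The cleanest repair of your route is exactly the paper's: trade $p_l^T$ for $\overline{p_l^T}$ and shift the basepoint to $\overline{\gamma'}$, rather than trying to bound the surplus head-on.
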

\begin{proof}
Let us show $u_1^{\Phi_r}(\nu_\mu)\leq \frac{u_1^T(\mu)}{\int  r \,d\mu}$, 
the other inequality being proved similarly by reversing the roles of $T$ and $\Phi_r$. For all $\mu\in \mathcal{M}(X,T)$ we 
let  $\overline{p_k^T}(\mu)= (1+\epsilon_k)\sup  \{
p_k^T(\mu'),\ D(\mu,\mu')\leq \epsilon_k\}$. From  Lemma 5.4 in \cite{bdo} it follows that 
$u_1^T=\lim_k\overline{p^T_k}$. As the functions $\mu\mapsto \overline{p_k^T}
(\mu)$  are upper semicontinuous  it is enough to show that for any fixed $l$
there is $k$  with 
\begin{eqnarray}\label{dddkhkr}
\forall \nu\in \mathcal{M}(X_r,\Phi_r), \ \ \ p_k^{\Phi_r}(\nu)\int  r 
\,d\mu_\nu&\leq \overline{p_l^T}(\mu_\nu).
\end{eqnarray} The distance $D^{X}$ being convex and the function $p_l^T$ being affine,  the function $\overline{p_k^T}$
is concave and therefore superharmonic (as a  concave upper semicontinuous function).  We may extend  $p_k^{\Phi_r}$ on $\mathcal{N}(X_r,\Phi_r)$  so that
$\mu\mapsto p_k^{\Phi_r}(\nu_\mu)\int  r 
\,d\mu=p_k^{\Phi_r}(\mu\times \lambda)$ is harmonic. Therefore to prove the inequality (\ref{dddkhkr})
 we can assume $\mu_\nu$ (therefore $\nu$) to be periodic without loss of generality. Let $\gamma$ be the associated  periodic orbit of the flow. We let $k$ be so large that for any $\mu, \mu'\in \mathcal{M}(X,T)$ with $D^{X_r}(\nu_\mu,\nu_{\mu'})<\epsilon_k$ we have $D^X(\mu,\mu')<\epsilon_l/2$ and $\frac{\int r\, d\mu}{\int r\, d\mu'}<1+\epsilon_l$. 

To simplify the notations we let $\mu_\gamma$ be the $T$-periodic measure $\mu_{\nu_\gamma}$ for a $\Phi_r$-periodic orbit $\gamma$. We pick up  a periodic orbit $\overline{\gamma'}$ in the set of periodic orbits $
\gamma'$ with $t(\gamma')\leq t(\gamma)$ and $D^{X_r}(\nu_{\gamma'},
\nu_{\gamma})<\epsilon_k$ such that the period $n_{\mu_{
\overline{\gamma'}}}$ of $\mu_{\overline{\gamma'}}$ maximizes the 
period of the $\mu_{\gamma'}$'s.
 The homeomorphism $\Theta : \mathcal{M}(X, T) \rightarrow  \mathcal{M}(X_r, 
 \Phi_r)$ maps the $T$-periodic measures $\mu_x$  of period $n$ to 
 the $\Phi_r$-periodic measures $\mu_\gamma$  of period $n\int r \,d
 \mu_x$.  Therefore we get \begin{align*}
 p_k^{\Phi}(\nu_\gamma)t(\gamma)&\leq 
 p_l^T(\mu_{\overline{\gamma'}})n_{\overline{\gamma'}},\\
 &\leq 
 p_l^T(\mu_{\overline{\gamma'}})\frac{t(\overline{\gamma'})}{\int r\, d
 \mu_{\overline{\gamma'}}},\\
 & \leq p_l^T(\mu_{\overline{\gamma'}})
 \frac{t(\gamma)}{\int r\, d\mu_{\overline{\gamma'}}},\\
 \text{ and thus }p_k^{\Phi}(\nu_\gamma)\int r\, d\mu_{\gamma}& \leq  
(1+\epsilon_l) p_l^T(\mu_{\overline{\gamma'}})\leq \overline{p_l^T}(\mu_\gamma).
 \end{align*}

\end{proof}

\begin{rem}
One can show $p(\Phi)\leq h_{top}(\Phi)+\sup_{\mu}u^\Phi_1(\mu)$. We did not provide a proof as this inequality will not be used in the following. We refer to Section 6 in \cite{bdo} for the analogous inequality in the discrete case. 
\end{rem}
\subsection{Expansiveness and asymptotical expansiveness}
Following R.Bowen and P.Walters a topological flow $(X,\Phi)$ is said \textit{expansive} when  $\forall \epsilon>0$ $\exists \delta>0$ such that if $d(\phi_t(x), \phi_{s(t)}(y))<\delta$ for all $t\in \mathbb{R}$ and for a continuous map $s: \mathbb{R} \rightarrow  \mathbb{R}$ with $s(0)=0$, then $y=\phi_t(x)$ with $|t|<\epsilon$. Recall  that a discrete dynamical system is expansive whenever there is $\epsilon>0$ with $\bigcap_{n\in \mathbb{Z}}T^{-k}B(T^kx,\epsilon)=\{x\}$ for all $x\in X$. The expansiveness property is invariant under topological conjugacy. The global periodic growth and the topological entropy of an expansive discrete system  (resp. expansive flow)   is finite (see Theorem 5 in \cite{BW}). 

R.Ma\~n\'e  has proved that expansive dynamical systems only act on finite dimensional  metric spaces \cite{man}. This result was extended to flows by H.R.Keynes and M.Sears \cite{key}. By Proposition \ref{SMB}  any $C^2$ smooth expansive flow satisfies the smooth small flow boundary property. In this case Lemma \ref{prec} gives a positive answer to the aforementioned open question of R.Bowen and P.Walters.  

Furthermore expansiveness is preserved by suspension :
\begin{theo}\label{gras}(Theorem 6 in  \cite{BW})
Let $(X_r,\Phi_r)$ be a zero-dimensional flow given by a suspension flow over a zero-dimensional system $(X,T)$ with a positive continuous  roof function $r$. The flow $(X_r,\Phi_r)$ is expansive if and only if $(X,T)$ is expansive. 
\end{theo}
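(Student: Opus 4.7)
The plan is to use the zero-section $X \times \{0\} \subset X_r$ as a Poincar\'e cross-section whose first return map is $T$. Throughout, write $\underline r := \min r > 0$, $\overline r := \max r$, and let $\omega$ be a modulus of continuity for $r$. I denote by $\phi^r_t$ the time-$t$ map of $\Phi_r$. In one direction I transfer expansiveness of $\Phi_r$ to $T$ via an explicit affine time change between orbits starting on the zero-section; in the other I argue contrapositively, extracting a genuine $T$-orbit relation from a pair of $\Phi_r$-orbits that stay close under a reparametrization.

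\textbf{Direction $\Rightarrow$.} Assume $(X_r, \Phi_r)$ is expansive. Fix $\epsilon_0 \in (0, \underline r/2)$ and let $\delta_0 > 0$ be the associated expansiveness constant. Choose $\delta > 0$ so small that $\delta + \overline r \, \omega(\delta)/\underline r < \delta_0$. Suppose $x, y \in X$ satisfy $d_X(T^k x, T^k y) < \delta$ for every $k \in \mathbb{Z}$. For $n \geq 0$ set $\tau_n(x) = \sum_{k=0}^{n-1} r(T^k x)$, extend to $n < 0$ in the obvious way, and define $\tau_n(y)$ analogously. The piecewise affine $s : \mathbb{R} \to \mathbb{R}$ determined by $s(\tau_n(x)) = \tau_n(y)$ is continuous, strictly increasing, with $s(0) = 0$. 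For $t = \tau_n(x) + u$, $0 \leq u < r(T^n x)$, one computes $\phi^r_t(x, 0) = (T^n x, u)$ and $\phi^r_{s(t)}(y, 0) = (T^n y,\, u \cdot r(T^n y)/r(T^n x))$, so the Bowen--Walters metric on $X_r$ bounds the distance between these two points by $d_X(T^n x, T^n y) + \overline r \, \omega(\delta)/\underline r < \delta_0$. Expansiveness of $\Phi_r$ then yields $(y, 0) = \phi^r_\sigma(x, 0)$ with $|\sigma| < \epsilon_0 < \underline r$, forcing $\sigma = 0$ and $y = x$.

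\textbf{Direction $\Leftarrow$.} Assume $(X, T)$ is expansive with constant $\delta_T$. Fix $\epsilon_0 \in (0, \underline r/3)$ and argue by contraposition: if $\Phi_r$ is not expansive, for each $\delta > 0$ there exist $p, q \in X_r$ with $p \neq \phi^r_\sigma q$ whenever $|\sigma| < \epsilon_0$, together with a continuous $s : \mathbb{R} \to \mathbb{R}$ satisfying $s(0) = 0$ and $d_{X_r}(\phi^r_t p, \phi^r_{s(t)} q) < \delta$ for every $t$. Since $d_{X_r}(p, q) < \delta \ll \underline r$, small separate time shifts applied to $p$ and $q$ (and a corresponding modification of $s$) allow us to assume both $p = (x, 0)$ and $q = (y, 0)$ lie on the zero-section. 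Let $(t_n)_n$ and $(t_n')_n$ be the return-time sequences of $\phi^r_\cdot p$ and $\phi^r_\cdot q$ to $X \times \{0\}$. The closeness condition together with $\delta \ll \underline r$ and continuity of $s$ forces $s(t_n) = t'_{n + k} + O(\delta)$ for a single integer shift $k$ independent of $n$, and hence $d_X(T^n x, T^{n + k} y) = O(\delta)$ for every $n \in \mathbb{Z}$. For $\delta$ small enough, expansiveness of $T$ gives $T^k y = x$, so $p = \phi^r_\sigma q$ for some $|\sigma| < \epsilon_0$, the desired contradiction.

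\textbf{Main obstacle.} The delicate point is in the reverse direction: proving that the page of the zero-section visited by $\phi^r_{s(t_n)} q$ is locked to that visited by $\phi^r_{t_n} p$ by a single integer shift $k$. This combines the continuity of $s$ with the uniform lower bound $\underline r$ on the roof function so that the reparametrized orbit cannot skip a page; once this is established the remainder is bookkeeping.
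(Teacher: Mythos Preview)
The paper does not prove this statement: it is quoted as Theorem 6 of Bowen--Walters \cite{BW} and used as a black box. So there is no proof in the paper to compare against, and your argument must be assessed on its own.

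Your overall strategy matches the standard one and the forward direction is clean. In the reverse direction the outline is right and you correctly isolate the page--locking step as the crux, but two points deserve more care. First, the reduction ``small separate time shifts \ldots allow us to assume both $p=(x,0)$ and $q=(y,0)$'' is not quite available: after translating $p$ to the zero--section and renormalizing $s$ so that $s(0)=0$, the new $q$ is only $\delta$--close to the zero--section, not on it; a further shift of $q$ to $(y,0)$ destroys $s(0)=0$. The fix is simply to work with $p=(x,0)$, $s(0)=0$, and $q$ merely $\delta$--near the zero--section; your page--locking argument then gives $k=0$ directly (since at $t=0$ both orbits sit on the same page up to $O(\delta)$), after which $d_X(T^n x, T^n y')<\delta_T$ for the base point $y'$ of $q$ and expansiveness of $T$ yields $x=y'$, hence $q\in\phi^r_{(-\delta,\delta)}p$. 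Second, your final line ``so $p=\phi^r_\sigma q$ for some $|\sigma|<\epsilon_0$'' tacitly uses $k=0$; if $k\neq 0$ one only gets $|\sigma|\geq \underline r$, so it is worth stating explicitly that the setup forces $k=0$. With these clarifications the argument goes through.
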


M.Misiurewicz  introduced in \cite{mis} the asymptotic $h$-expansiveness property for topological systems. A topological system $(X,T)$ is \textit{asymptotically $h$-expansive} when $$\lim_{\epsilon\rightarrow 0}\sup_{x\in X} h_{top}\left(B_T(x,\epsilon,\infty)\right)=0.$$ Asymptotical $h$-expansiveness is invariant under topological conjugacy \cite{mis}, even under principal extensions \cite{led}. The metric entropy of an asymptotical $h$-expansive system is upper semicontinuous. In particular such a system always admits a measure of maximal entropy.  T.Downarowicz characterizes asymptotical $h$-expansiveness in terms of entropy structure as follows :
\begin{theo}\label{vieu}(Theorem 9.0.2 in \cite{dow}) A topological system is asymptotical $h$-expansiveness if and only if any (some) entropy structure of $(X,T)$ is converging uniformly to the entropy function $h$.
\end{theo}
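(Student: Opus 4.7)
The plan is to fix a convenient representative of the entropy structure and then translate uniform convergence into a variational statement about the local topological tail entropy
\[
h^*(T,\epsilon):=\sup_{x\in X}\htop\bigl(B_T(x,\epsilon,\infty)\bigr),
\]
whose vanishing as $\epsilon\to 0$ defines asymptotic $h$-expansiveness. By Lemma~\ref{supere} any two entropy structures are simultaneously uniformly convergent or not, so one can work with a single well-chosen sequence. As a representative I would take the discrete Brin--Katok sequence $\mathcal{H}^T_{BK}=(h^T(\cdot,\epsilon_k))_k$ with $h^T(\mu,\epsilon)=\int h^T(\mu,\epsilon,x)\,d\mu(x)$; the comparison arguments sketched for flows in Subsection~\ref{brin} (and Appendix~\ref{BKK} of the paper) adapt verbatim to show this sequence lies in $\mathfrak{G}^\succ_{\mathcal{M}(X,T)}$ and converges pointwise to $h$, hence is an entropy structure.

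For the implication "asymptotically $h$-expansive $\Rightarrow$ uniform convergence", the key input is the Ledrappier-style inequality
\[
h(\mu)-h^T(\mu,\epsilon)\;\le\;h^*(T,\epsilon)
\]
valid for every $\mu\in\mathcal{M}(X,T)$ and every $\epsilon>0$. I would obtain this by conditioning the Shannon--McMillan--Breiman theorem on the tail $\sigma$-algebra of a generating refinement of a partition $P$ with $\diam(P)<\epsilon$: the resulting conditional entropy is controlled on $\mu$-typical fibers by the topological entropy of the $(\infty,\epsilon)$-Bowen ball through that fiber, whose supremum is exactly $h^*(T,\epsilon)$. If $h^*(T,\epsilon_k)\xrightarrow{k}0$, this forces the left-hand side to tend to zero uniformly in $\mu$, which is precisely uniform convergence of $\mathcal{H}^T_{BK}$ to $h$.

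The converse is the substantive obstacle: assuming $h^*(T)>0$ one has to manufacture invariant measures that witness non-uniform convergence. I would proceed constructively. For each small $\epsilon$ pick $x_\epsilon\in X$ with $\htop(B_T(x_\epsilon,\epsilon,\infty))$ close to $h^*(T,\epsilon)$, take $(n,2\epsilon)$-separated subsets $E_n\subset B_T(x_\epsilon,\epsilon,n)$ of maximal cardinality, and let $\mu_\epsilon$ be a weak-$*$ accumulation point of the empirical measures
\[
\frac{1}{|E_n|}\sum_{y\in E_n}\frac{1}{n}\sum_{i=0}^{n-1}\delta_{T^i y}.
\]
The Misiurewicz style of the variational principle then yields $h(\mu_\epsilon)\ge \htop(B_T(x_\epsilon,\epsilon,\infty))-o_\epsilon(1)$, while the $(n,\epsilon)$-confinement forces $h^T(\mu_\epsilon,2\epsilon)=o_\epsilon(1)$. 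This produces a sequence $(\mu_\epsilon)$ along which $h-h^T(\cdot,2\epsilon)$ stays bounded below by $h^*(T)-o_\epsilon(1)>0$, contradicting uniform convergence of $\mathcal{H}^T_{BK}$.

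The main obstacle lies in this constructive direction, specifically in reconciling the two scales $\epsilon$ and $2\epsilon$ throughout the weak-$*$ limit: one must simultaneously use the $(n,2\epsilon)$-separation of $E_n$ to extract a lower bound on $h(\mu_\epsilon)$ and the $(n,\epsilon)$-confinement of $E_n$ inside the Bowen ball to control the Brin--Katok entropy of $\mu_\epsilon$ at the larger scale $2\epsilon$ from above. A careful double-scale analysis, ensuring that the empirical measures remain essentially supported near $B_T(x_\epsilon,\epsilon,\infty)$ while concentrating enough mass on the separated set to realize the entropy, is the delicate point to verify.
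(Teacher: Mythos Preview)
The paper does not prove this theorem: it is quoted verbatim as Theorem~9.0.2 of \cite{dow} and used as a black box. There is therefore no ``paper's own proof'' to compare against; what you have written is an independent sketch of the result from the literature.

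On the merits of your sketch: the forward direction is fine and essentially standard---the inequality $h(\mu)-h^T(\mu,\epsilon)\le h^*(T,\epsilon)$ (Bowen/Misiurewicz) immediately gives uniform convergence of any entropy structure once $h^*(T,\epsilon)\to 0$. Your converse, however, is not merely delicate but contains a real gap. The measure $\mu_\epsilon$ you build is a weak-$*$ limit of \emph{orbit-averaged} empirical measures, so its typical points are spread along the orbit segment $\{T^i x_\epsilon:0\le i<n\}$ rather than concentrated near $x_\epsilon$ itself; there is no reason the Brin--Katok local entropy $h^T(\mu_\epsilon,2\epsilon)$ should be small, since $\mu_\epsilon$-typical dynamical balls need not sit inside any single $B_T(x,\epsilon,\infty)$. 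Likewise, the Misiurewicz argument gives you an invariant measure whose entropy is at least $\htop(B_T(x_\epsilon,\epsilon,\infty))$, but that lower bound is on the \emph{full} entropy $h(\mu_\epsilon)$, not on a gap $h(\mu_\epsilon)-h^T(\mu_\epsilon,2\epsilon)$; you would need the separated set to remain $2\epsilon$-separated \emph{after} passing to the limit measure and relative to its own generic points, which your construction does not provide.

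Downarowicz's proof in \cite{dow} avoids this measure-construction entirely: it identifies the topological tail entropy $h^*(T)$ with $\sup_\mu \lim_k(h-h_k)^{\tilde{}}(\mu)$ via a direct comparison of conditional topological and measure-theoretic entropies at each scale, so that $h^*(T)=0$ is literally equivalent to uniform convergence. If you want a self-contained argument, that route is both shorter and free of the scale-matching issue you flagged.
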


For a topological flow $(X,\Phi)$ it is easily seen that $(X,\phi_1)$ is asymptotically $h$-expansive if and only if  so does $(X,\phi_t)$ for any $t\neq 0$. In this case the flow will be said \textit{asymptotically $h$-expansive}.\footnote{For topological flows, R.F.Thomas has defined and studied another notion of $h$-expansiveness in \cite{Rom}.}

A topological system $(X,T)$ (resp. flow $(X,\Phi)$) is said to be \textit{asymptotically expansive} when it is asymptotically $h$-expansive and $u_1^T=0$ (resp. $u_1^\Phi=0$). From the definitions one easily checked that $u_1^T=0$ (resp. $u_1^\Phi=0$) if and only if the periodic structure $\mathcal{P}=(p_k)_k$ of $(X,T)$ (resp. $(X,\Phi)$) is converging uniformly to zero. As for discrete systems,  asymptotical $h$-expansiveness may be also characterized by the uniform convergence of entropy  for flows with the small flow boundary property. 

\begin{lemma}\label{neww}
Let $(X,\Phi)$ be a topological flow with the small flow boundary property.  The following properties are equivalent:
\begin{enumerate}[i)]
\item $(X,\Phi)$ is asymptotically $h$-expansive (resp. asymptotically expansive),
\item any (some) entropy structure $\mathcal{H}=(h_k)_k$ is converging uniformly  to $h$ (resp. and any (some)  periodic structure is converging uniformly to zero). 
\end{enumerate}
\end{lemma}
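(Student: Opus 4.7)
The plan is to decompose the two-part equivalence and reduce each half to a result already in the excerpt, crucially using the interplay between $\mathcal{M}(X,\Phi)$ and $\mathcal{M}(X,\phi_1)$ encoded by the maps $i_1$ and $\theta_1$. Despite the hypothesis in the statement, the small flow boundary property does not appear to enter essentially in either half of the argument.

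For the asymptotic $h$-expansive equivalence, the strategy is to invoke the known discrete characterization of Theorem \ref{vieu} applied to $(X,\phi_1)$ and transport it to the flow via Lemma \ref{timet}(ii). By definition $(X,\Phi)$ is asymptotically $h$-expansive if and only if $(X,\phi_1)$ is, and by Theorem \ref{vieu} the latter amounts to uniform convergence of any/some entropy structure of $(X,\phi_1)$. By Lemma \ref{timet}(ii) with $t=1$, the composition $\mathcal{H}^\Phi\circ\theta_1$ is an entropy structure of $(X,\phi_1)$ whenever $\mathcal{H}^\Phi=(h_k^\Phi)_k$ is one for $(X,\Phi)$, with common limit $h\circ\theta_1=h_{\phi_1}$. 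By Lemma \ref{supere}, uniform convergence is invariant under $\sim$. It therefore suffices to establish
\[
\sup_{\nu\in\mathcal{M}(X,\Phi)}|h_k^\Phi(\nu)-h(\nu)|\xrightarrow{k}0 \iff \sup_{\mu\in\mathcal{M}(X,\phi_1)}|h_k^\Phi(\theta_1(\mu))-h(\theta_1(\mu))|\xrightarrow{k}0.
\]
The $\Rightarrow$ direction is immediate because $\theta_1$ takes values in $\mathcal{M}(X,\Phi)$; the $\Leftarrow$ direction follows by restricting the right-hand supremum to $\mu\in i_1(\mathcal{M}(X,\Phi))$ and using the retraction identity $\theta_1\circ i_1=\Id_{\mathcal{M}(X,\Phi)}$ recalled in the excerpt.

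For the parenthetical strengthening one must show that $u_1^\Phi\equiv 0$ on $\mathcal{M}(X,\Phi)$ if and only if the periodic structure $(p_k^\Phi)_k$ converges uniformly to zero. The sequence $(p_k^\Phi)_k$ is nonincreasing and nonnegative, hence lies in $\mathfrak{G}^\prec_{\mathcal{M}(X,\Phi)}$ by Lemma \ref{bol}; applying Lemma \ref{solo} yields
\[
\lim_k\sup_{\mu\in\mathcal{M}(X,\Phi)}p_k^\Phi(\mu)=\sup_{\mu\in\mathcal{M}(X,\Phi)}\limsup_k\widetilde{p_k^\Phi}(\mu)=\sup_{\mu\in\mathcal{M}(X,\Phi)}u_1^\Phi(\mu),
\]
and the equivalence follows by nonnegativity. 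The only real subtlety I anticipate is the transfer of uniform convergence between the simplices $\mathcal{M}(X,\Phi)$ and $\mathcal{M}(X,\phi_1)$ in the first half, where the two directions call respectively on $\theta_1$ and $i_1$; this is resolved in one line by the retraction identity, so I foresee no serious obstacle.
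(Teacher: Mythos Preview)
Your proof is correct and follows essentially the same approach as the paper: reducing the $h$-expansive equivalence to Theorem \ref{vieu} for $(X,\phi_1)$ via Lemma \ref{timet}, and treating the periodic part separately via the identity $u_1^\Phi=\lim_k\widetilde{p_k^\Phi}$. You give more detail than the paper does---in particular, your use of Lemma \ref{solo} makes explicit what the paper dismisses as ``easily checked from the definitions,'' and your remark that the small flow boundary hypothesis is not actually invoked is accurate.
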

\begin{proof}It is enough to deal with the asymptotical $h$-expansiveness because we already observed that $u_1^\Phi$ is equal to zero if and only if any (some) periodic structure is converging uniformly to zero. The proof then follows from the following equivalences : 
\begin{center}$(X,\Phi)$  is asymptotically $h$-expansive, \\
$\Longleftrightarrow$,\\
$(X,\phi_1)$  is asymptotically $h$-expansive,\\
$\Longleftrightarrow$\\
the entropy structure of $(X,\phi_1)$ is converging uniformly to $h$,\\
$\stackrel{\text{Lemma \ref{timet}}}{\Longleftrightarrow}$\\
the entropy structure of $(X,\Phi)$ is converging uniformly to $h$.
\end{center}

\end{proof}

We show now that asymptotical expansiveness  is also preserved by suspension. 

\begin{lemma}\label{susp}
Let $(X_r,\Phi_r)$ be a suspension flow over a topological system $(X,T)$ under a positive continuous roof function $r$. Then the following properties are equivalent :
\begin{enumerate}[i)]
\item $(X_r,\Phi_r)$ is asymptotically $h$-expansive (resp. asymptotically expansive),
\item $(X,T)$ is asymptotically $h$-expansive (resp. asymptotically expansive).
\end{enumerate}
\end{lemma}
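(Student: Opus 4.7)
The plan is to translate each of the two constituent properties through the homeomorphism $\Theta:\mathcal{M}(X,T)\to\mathcal{M}(X_r,\Phi_r)$ of Subsection~\ref{sssup}, using the two transfer results already available: Corollary~\ref{ssssu} for the entropy structure and Lemma~\ref{sd} for the tail periodic function. I would decouple the statement into (a) asymptotic $h$-expansiveness and (b) vanishing of $u_1$, and handle them separately, relying in each case on the crucial fact that the continuous positive roof function $r$ on the compact space $X$ satisfies $0<\underline{r}\leq\int r\,d\mu\leq\sup r<\infty$ uniformly in $\mu\in\mathcal{M}(X,T)$.

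For (a), I would first recall that, by Theorem~\ref{vieu}, asymptotic $h$-expansiveness of the discrete system $(X,T)$ is equivalent to uniform convergence of any entropy structure of $(X,T)$ to $h_T$. The same characterization holds for the flow $(X_r,\Phi_r)$ without invoking the small flow boundary property: by definition, $(X_r,\Phi_r)$ is asymptotically $h$-expansive iff its time-$1$ map is, which by Theorem~\ref{vieu} is equivalent to uniform convergence of the entropy structure of $(X,\phi_1^{(r)})$, which in turn by Lemma~\ref{timet} is equivalent to uniform convergence of the entropy structure of $(X_r,\Phi_r)$ (this is exactly the chain used in the proof of Lemma~\ref{neww}). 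It then suffices to show that an entropy structure of $(X,T)$ converges uniformly to $h_T$ if and only if its image under $\Theta$ converges uniformly to $h_{\Phi_r}$.

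To that end I would take an entropy structure $\mathcal{H}=(h_k)_k$ of $(X,T)$. By Corollary~\ref{ssssu} the sequence $\mathcal{H}_r=\bigl(\nu\mapsto h_k(\mu_\nu)/\int r\,d\mu_\nu\bigr)_k$ is an entropy structure of $(X_r,\Phi_r)$, and its limit function is $\nu\mapsto h_T(\mu_\nu)/\int r\,d\mu_\nu$, which equals $h_{\Phi_r}(\nu)$ by Abramov's formula. The sandwich
\[
\frac{1}{\sup r}\,\sup_{\mu}|h_k-h_T|(\mu)\;\leq\;\sup_{\nu}\left|\frac{h_k(\mu_\nu)-h_T(\mu_\nu)}{\int r\,d\mu_\nu}\right|\;\leq\;\frac{1}{\underline{r}}\,\sup_{\mu}|h_k-h_T|(\mu)
\]
then gives the equivalence of the two uniform convergences, completing (a).

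For (b), Lemma~\ref{sd} asserts the pointwise identity $u_1^{\Phi_r}(\nu)=u_1^T(\mu_\nu)/\int r\,d\mu_\nu$, and the same sandwich estimate yields $\sup_\nu u_1^{\Phi_r}(\nu)=0$ iff $\sup_\mu u_1^T(\mu)=0$. Combining (a) and (b) proves both the plain and the parenthetical versions of the equivalence. The only mild subtlety is to notice that the characterization of asymptotic $h$-expansiveness via uniform convergence of the entropy structure can be applied to the flow without invoking the small flow boundary hypothesis of Lemma~\ref{neww}, since the argument via the time-$1$ map and Lemma~\ref{timet} does not use it; beyond this, the proof is a direct transport along $\Theta$ using the two already-established transfer lemmas.
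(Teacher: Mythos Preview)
Your proof is correct and follows essentially the same route as the paper's: both argue via the chain Theorem~\ref{vieu} $\leftrightarrow$ Corollary~\ref{ssssu} (and Lemma~\ref{sd}) $\leftrightarrow$ Lemma~\ref{neww}, with the only difference being that you spell out the sandwich estimate and explicitly note that the small flow boundary hypothesis in Lemma~\ref{neww} is not actually used in its proof (an observation the paper tacitly relies on when citing Lemma~\ref{neww} here).
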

\begin{proof}The proof follows from the  equivalences  :
\begin{center}
$(X,T)$ is asymptotically $h$-expansive (resp. asymptotically expansive),\\
$\stackrel{\text{Theorem \ref{vieu}}}{\Longleftrightarrow}$\\
any (some) entropy structure (resp. and any (some)  periodic structure) of $(X,T)$ is converging uniformly,\\
$\stackrel{\text{Corollary \ref{ssssu} (resp. and Lemma \ref{sd})}}{\Longleftrightarrow}$\\
any (some) entropy structure (resp. and any (some) periodic structure) of $(X_r,\Phi_r)$ is converging uniformly,\\
$\stackrel{\text{Lemma \ref{neww}}}{\Longleftrightarrow}$\\
 $(X_r,\Phi_r)$ is asymptotically $h$-expansive (resp.  asymptotically expansive).
\end{center}
\end{proof}


\subsection{ Relating symbolic extensions and uniform generators with expansiveness properties}

\subsubsection{The case of expansive systems}

Krieger's embedding theorem characterizes systems with a clopen uniform generator, or equivalently by Proposition \ref{super} systems topologically conjugate to a subshift :

\begin{theo}\label{balon}(Krieger's topological embedding theorem \cite{kr})
A discrete topological system $(X,T)$ is  topologically conjugate to a subshift if and only if the following properties hold:
\begin{itemize}
\item $X$ is zero-dimensional, 
\item $(X,T)$ is expansive. 
\end{itemize}
\end{theo}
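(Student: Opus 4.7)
The plan is to prove the two implications using Proposition \ref{super}, which reduces ``topologically conjugate to a subshift'' to ``admits a clopen uniform generator.'' The forward direction is essentially a matter of invariance. A subshift over a finite alphabet is zero-dimensional (the cylinder sets form a basis of clopen sets) and expansive (for the standard product metric, if two sequences differ at coordinate $k$, then $\sigma^k$ brings them to distance at least the uniform gap between alphabet letters). Both properties are preserved by topological conjugacy, so any system conjugate to a subshift inherits them.

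For the backward direction, assume $X$ is zero-dimensional and the system is expansive with expansive constant $\delta > 0$. Since $X$ is compact zero-dimensional, it admits a finite clopen partition $P$ with $\diam(P) < \delta$. I plan to show that $P$ is a clopen uniform generator, after which Proposition \ref{super} concludes the proof. Concretely, I need to verify that $\diam\left(\bigvee_{k=-n}^n T^{-k}P\right) \xrightarrow{n\to\infty} 0$.

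The argument is by contradiction and compactness. Suppose the diameter does not tend to zero; then there exist $\epsilon_0 > 0$ and pairs $x_n, y_n$ lying in a common atom of $\bigvee_{k=-n}^n T^{-k}P$ with $d(x_n, y_n) \geq \epsilon_0$. By compactness, extract subsequential limits $x_n \to x$ and $y_n \to y$ with $d(x, y) \geq \epsilon_0$. For each fixed $k \in \mathbb{Z}$, whenever $n \geq |k|$ the points $T^k x_n$ and $T^k y_n$ lie in a common atom of $P$; since the atoms of $P$ are clopen, this property passes to the limit, giving $P(T^k x) = P(T^k y)$ and hence $d(T^k x, T^k y) \leq \diam(P) < \delta$ for every $k \in \mathbb{Z}$. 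Expansiveness then forces $x = y$, contradicting $d(x, y) \geq \epsilon_0$.

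There is no real obstacle here beyond the standard compactness-and-clopen-atoms argument; the whole proof is short once Proposition \ref{super} is in place. The only technical point worth highlighting is the use of clopenness of $P$ to ensure that membership in a particular atom survives under limits, which is precisely why zero-dimensionality of $X$ is indispensable in the construction.
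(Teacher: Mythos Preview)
Your proof is correct. Note, however, that the paper does not supply its own proof of Theorem~\ref{balon}; it is stated as a classical result with a citation to Krieger~\cite{kr} (and the clopen-generator characterization is attributed to Keynes--Robertson~\cite{ky}). Your argument via Proposition~\ref{super} and the standard compactness-plus-expansiveness reasoning is exactly the expected route, and there is nothing to compare against in the paper itself.
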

We recall that the expansiveness property  implies the finiteness of the topological entropy and of the global periodic growth, which are both invariant under topological conjugacy. We state and  show now the analogous result for topological flows.

\begin{theo}A topological flow $(X,\Phi)$ is  topologically conjugate to a suspension flow over a subshift if and only if the following properties hold:
\begin{itemize}
\item $X$ is one-dimensional, 
\item $(X,\Phi)$ is expansive. 
\end{itemize}
\end{theo}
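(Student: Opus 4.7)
The plan is to treat the two directions separately, relying on Krieger's topological embedding theorem (Theorem~\ref{balon}), the preservation of expansiveness under suspension (Theorem~\ref{gras}), and the existence of a Poincar\'e cross-section in the one-dimensional case (discussed in \S\ref{sssup}).

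\textbf{Necessity.} Suppose $(X,\Phi)$ is topologically conjugate to a suspension flow $(Y_r,\Phi_r)$ over a subshift $(Y,\sigma)$ with positive continuous roof function $r$. A subshift over a finite alphabet is expansive and zero-dimensional, hence $Y_r$ is locally homeomorphic to the product of an interval and a closed subset of a Cantor set, which is one-dimensional; this transfers to $X$ via the conjugacy. Expansiveness of $(Y_r,\Phi_r)$ follows from Theorem~\ref{gras} applied to the expansive base $(Y,\sigma)$, and expansiveness is obviously invariant under topological conjugacy, so $(X,\Phi)$ is expansive.

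\textbf{Sufficiency.} Assume now that $X$ is one-dimensional and $(X,\Phi)$ is expansive. As recalled in \S\ref{sssup}, when the ambient space is one-dimensional every closed cross-section is zero-dimensional; consequently any point lies in a closed cross-section with empty flow boundary, and the Bowen--Walters construction yields a complete family $\mathcal{S}$ of such cross-sections. Their union $\mathfrak{S}=\bigcup_{S\in\mathcal{S}}S$ is then a Poincar\'e cross-section by Lemma~\ref{poinc}, so $(X,\Phi)$ is topologically conjugate to a suspension flow $(\mathfrak{S}_{t_\mathfrak{S}},\Phi_{t_\mathfrak{S}})$ over the zero-dimensional discrete system $(\mathfrak{S},T_\mathfrak{S})$ with positive continuous roof $t_\mathfrak{S}$.

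\textbf{Transferring expansiveness to the base.} By Theorem~\ref{gras} the expansiveness of the suspension flow forces $(\mathfrak{S},T_\mathfrak{S})$ to be expansive. Since $\mathfrak{S}$ is zero-dimensional, Krieger's embedding theorem (Theorem~\ref{balon}) provides a topological conjugacy $\kappa\colon(\mathfrak{S},T_\mathfrak{S})\to(Y,\sigma)$ for some subshift $(Y,\sigma)$. Setting $r:=t_\mathfrak{S}\circ\kappa^{-1}$, which is positive and continuous on $Y$, the map $(x,s)\mapsto(\kappa(x),s)$ induces a topological conjugacy between the suspension flows $(\mathfrak{S}_{t_\mathfrak{S}},\Phi_{t_\mathfrak{S}})$ and $(Y_r,\sigma_r)$, and composition with the Poincar\'e conjugacy gives the desired conjugacy of $(X,\Phi)$ with a suspension flow over the subshift $(Y,\sigma)$.

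\textbf{Main obstacle.} The delicate point is the one-dimensionality argument in the necessity direction, which essentially amounts to a dimension-theoretic computation for suspensions over zero-dimensional spaces; in the sufficiency direction the main subtlety is guaranteeing a Poincar\'e cross-section from one-dimensionality alone, which is however already handled in \S\ref{sssup}. Once these are in place, the rest is a direct combination of Krieger's theorem with Theorem~\ref{gras}.
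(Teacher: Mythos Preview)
Your proof is correct and follows essentially the same route as the paper: both directions rely on the Poincar\'e cross-section available in the one-dimensional setting (\S\ref{sssup}), the preservation of expansiveness under suspension (Theorem~\ref{gras}), and Krieger's embedding theorem (Theorem~\ref{balon}). The paper simply dismisses the necessity direction as ``clear'' while you spell it out, and your sufficiency argument is the paper's argument with the intermediate conjugacy steps made explicit.
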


\begin{proof}
The necessary conditions are clear.  Conversely we assume  that $X$ is one-dimensional and $(X, \Phi)$ is expansive. As already mentioned such a flow is conjugate to a suspension flow $(Z_r, \Phi_r)$ over a zero-dimensional system $(Z,R)$ under a positive continuous roof function $r$. By Theorem \ref{gras} this zero-dimensional discrete  system is expansive.  The system $(Z,R)$ is therefore topologically conjugate to a subshift according to Theorem \ref{balon}. 
\end{proof}

\subsubsection{Symbolic extensions of a suspension flow}
We are in position to express the existence of symbolic extensions and uniform generators for a topological flow  in terms of superenvelopes. 

 For a topological extension $\pi:(Y,\Psi)\rightarrow (X,\Phi)$   and a function $g:\mathcal{M}(Y,\Psi)\rightarrow \mathbb{R}$ we let 
\begin{eqnarray*}g^{\pi}:\mathcal{M}(X,\Phi)&\rightarrow &\mathbb{R}^+,\\
  \mu &\mapsto & \sup_{\nu, \ \pi\nu=\mu}g(\nu).
\end{eqnarray*}
This notation was introduced earlier for a topological extension between discrete topological systems (see e.g. \cite{BD}).

\begin{lemma}\label{mieux}
Let $(X_r,\Phi_r)$ be a zero-dimensional flow given by a suspension flow over a zero-dimensional system $(X,T)$ with a positive continuous  roof function $r$. 

For  any symbolic extension (resp. with an embedding) $\pi:(Y,S)\rightarrow (X,T)$ of $(X,T)$ the suspension flow $(Y_{r'}, \Phi_{r'})$ over $(Y,S)$ under $r':=r\circ \pi$ defines a symbolic extension $\pi'$ (resp. with an embedding)  of $(X_r,\Phi_r)$  with $\pi'(y,t)=(\pi(x),t)$ for all $(y,t) \in Y_{r'}$ satisfying 
$$\forall \nu\in \mathcal{M}(X_r,\Phi_r),\ h^{\pi'}(\nu)=\frac{h^{\pi }(\mu_\nu)}{\int r\, d\mu_\nu}. $$ 

Moreover for any symbolic extension $\tau:(Z,\Psi)\rightarrow(X_r,\Phi_r)$ (resp. with an embedding)  there is a symbolic extension $(Y,S)$ of $(X,T)$ (resp. with an embedding) such that the suspension flow $(Y_{r'},\Phi_{r'})$,  as defined above, is topologically conjugate to $(Z,\Psi)$.
\end{lemma}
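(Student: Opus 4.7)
The plan is to transfer the correspondence between suspensions and their bases across the extensions.

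For the forward direction, given $\pi\colon(Y,S)\to(X,T)$, I would set $r':=r\circ\pi$ and define $\pi'(y,t):=(\pi(y),t)$. The equality $r'(y)=r(\pi(y))$ makes this well-defined on the quotient $Y_{r'}$, as the seam identification $(y,r'(y))\sim(Sy,0)$ maps consistently to $(\pi(y),r(\pi(y)))\sim(T\pi(y),0)$; continuity and intertwining with the suspension flows are immediate. In the embedding case, $\psi'(x,t):=(\psi(x),t)$ defines a Borel embedding of $X_r$ into $Y_{r'}$ with $\pi'\circ\psi'=\Id_{X_r}$, using $\psi\circ T=S\circ\psi$. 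For the entropy formula, a direct computation on product sets shows $\pi'\nu_{\mu'}=\nu_{\pi\mu'}$, so the $\pi'$-preimages of $\nu$ are exactly the measures $\nu_{\mu'}$ for $\mu'\in\mathcal{M}(Y,S)$ with $\pi\mu'=\mu_\nu$; combining Abramov's formula $h(\nu_{\mu'})=h(\mu')/\int r'\,d\mu'$ with $\int r'\,d\mu'=\int r\,d\mu_\nu$ and taking the supremum over such $\mu'$ yields the stated identity.

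For the backward direction, let $\tau\colon(Z,\Psi)\to(X_r,\Phi_r)$ be a symbolic extension with $(Z,\Psi)=(W_s,\Psi_s)$ the suspension of a subshift $(W,S')$, and take $Y:=\tau^{-1}(X\times\{0\})$. I would first check $Y$ is a global closed Poincar\'e cross-section: closedness is immediate; any return of a $\Psi$-orbit to $Y$ in time $t\in(0,\inf r)$ would project under $\tau$ to a return of the corresponding $\Phi_r$-orbit to $X\times\{0\}$ in the same time, contradicting $r\geq\inf r$, so $Y$ is a cross-section of positive time; globality comes from surjectivity of $\tau$; and equivariance of $\tau$ gives $\tau^{-1}(\phi_{[-\eta,\eta]}(X\times\{0\}))=Y_\eta$, so the inclusion $X\times\{0\}\subset\Int((X\times\{0\})_\eta)$ combined with continuity of $\tau$ yields $Y\subset\Int(Y_\eta)$, hence empty flow boundary. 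By Lemma \ref{poinc}, $(Z,\Psi)$ is then topologically conjugate to the suspension of the discrete system $(Y,T_Y)$ under the continuous positive roof $r\circ\tau|_Y$.

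The crux is to show $(Y,T_Y)$ is topologically conjugate to a subshift, which by Krieger's Theorem \ref{balon} reduces to zero-dimensionality of $Y$ and expansiveness of $T_Y$. Zero-dimensionality follows from the local product structure of $Z=W_s$: any point of $Z$ has a neighborhood of the form (clopen subset of the zero-dimensional $W$) times (open interval), and the cross-section $Y$ projects homeomorphically onto the first factor. Expansiveness of $T_Y$ is deduced from that of $\Psi$ (Theorem \ref{gras}, since $(W,S')$ is a subshift): $T_Y$-orbits staying close in $Y$ translate, through a reparametrization bounded by the oscillation of $r\circ\tau|_Y$, to $\Psi$-orbits staying close in $Z$, which the flow-expansive constant forces onto the same orbit segment; transversality of $Y$ then rules out nonzero time shifts, forcing coincidence.

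Finally, for the embedding in the backward direction, a Borel embedding $\psi\colon X_r\hookrightarrow Z$ with $\tau\circ\psi=\Id$ restricts to $\psi(\cdot,0)\colon X\to Y$ (as $\tau\psi(x,0)=(x,0)$), which is Borel and equivariant via $\psi(Tx,0)=\psi_{r(x)}\psi(x,0)=T_Y\psi(x,0)$; composing with the Krieger conjugacy $(Y,T_Y)\simeq(\tilde Y,\tilde S)$ gives the required Borel embedding into the subshift. The main technical obstacles will be the careful handling of seam points $(w,s(w))\sim(S'w,0)$ in the zero-dimensionality argument and the precise reparametrization estimate needed to transfer expansiveness from $\Psi$ to $T_Y$.
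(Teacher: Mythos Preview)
Your proposal is correct and follows essentially the same route as the paper: pull back the Poincar\'e cross-section $X\times\{0\}$ along $\tau$, check that the preimage is again a Poincar\'e cross-section (via the same openness-of-preimage argument), and then invoke Krieger's Theorem~\ref{balon} once you know the return system is zero-dimensional and expansive. The one place where you work harder than necessary is the expansiveness of $T_Y$: having already established that $(Z,\Psi)$ is conjugate to the suspension over $(Y,T_Y)$, a second application of Theorem~\ref{gras} immediately gives expansiveness of $T_Y$ from expansiveness of $\Psi$, so your hand-rolled reparametrization argument is subsumed by that citation.
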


\begin{proof}
As the first part of the statement is easily checked, we  focus on the last part. Let $\tau:(Z,\Psi)\rightarrow (X, \Phi)$ be a topological  extension between two flows. If $S$ is a Poincar\'e cross-section of $(X,\Phi)$  then $S'=\tau^{-1}S$ is also a Poincar\'e cross-section of $(Z, \Psi=(\psi_t)_t)$ by Lemma \ref{poinc}. Indeed $S'$ is firstly a closed global cross-section (with a continuous return time  $t_{S'}=t_S\circ \tau$). Then, for $\zeta>0$,  the set  $\phi_{]-\zeta,\zeta[}S$ is open because $S$ has an empty flow boundary. Therefore $\psi_{]-\zeta,\zeta[}(S')= \tau^{-1}(\phi_{]-\zeta,\zeta[}S)$ is open by continuity of $\tau$ and the cross-section  $S'$ has an empty flow boundary.  Consequently $(Z,\Psi)$ is topologically conjugate to the suspension flow over $(S',T_{S'})$ under the continuous positive roof function $t_{S'}$. 
 
 In our context when $\tau:(Z,\Psi)\rightarrow (X_r,\Phi_r)$ is a symbolic extension (resp. with an embedding $\psi$) we let $S=X\times \{0\}$. According to  Theorem \ref{gras} the system $(S',T_{S'})$ is expansive (with $S'=\tau^{-1}S$). Then by Theorem \ref{balon} this system  is (topologically conjugate to) a subshift,  which defines a topological extension of $(X,T)$ (resp. with an embedding) via the projection map $\pi'=\pi|_{S'}$ (resp. and the embedding $\psi|_{S}$).
\end{proof}

\subsubsection{Characterization of Symbolic Extensions}

The main result in the entropy theory of symbolic extensions, known as the Symbolic Extension Entropy Theorem, may be stated as follows :

\begin{theo}\label{sexy}(Theorem 5.5 in \cite{BD}) Let $(X,T)$ be a topological system. The systems admits a symbolic extension (resp. principal) if and only if there exists a finite superenvelope $E$ (resp. $(X,T)$ is asymptotically $h$-expansive).

More precisely a function $E$ on $\mathcal{M}(X,T)$ equals 
$h^\pi$ for some symbolic extension $\pi$ if and only if $E$ is an affine superenvelope of the entropy structure of $(X,T)$.\end{theo}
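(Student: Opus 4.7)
The plan is to dissect the theorem into three sub-claims: (i) the necessity of an affine superenvelope, (ii) its sufficiency, and (iii) the principal case, which will drop out as a corollary once (i) and (ii) are handled.

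For (i), suppose $\pi:(Y,S)\to(X,T)$ is a symbolic extension. I would check that $h^{\pi}$ is affine by invoking the harmonicity of $h$ on $\mathcal{M}(Y,S)$ together with a measurable selection on the compact fibres of $\pi_{*}$; then upper semicontinuity of $h^{\pi}$ follows because $h$ itself is upper semicontinuous on subshifts (they are asymptotically $h$-expansive) and the sup over compact fibres preserves upper semicontinuity. Finally, the superenvelope identity $\lim_{k}(h^{\pi}-h_{k})^{\tilde{}}=h^{\pi}-h$ would be established by comparing $h(\nu,\pi^{-1}P_{k})$ with $h_{k}(\mu)$ for a refining sequence of partitions $(P_{k})$ on $X$ and a lift $\nu$ of $\mu$, exploiting the fact that on the subshift side $h(\nu,\pi^{-1}P_{k})$ converges uniformly to $h(\nu)$ by asymptotic $h$-expansiveness, and then taking a maximizing lift.

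Direction (ii) is the heart of the theorem and the main obstacle. I would first reduce to a zero-dimensional $(X,T)$ via a principal zero-dimensional extension (using the discrete analogue of Lemma \ref{mamip} to transport affine superenvelopes along principal extensions). With $(X,T)$ zero-dimensional, fix a nested sequence of clopen partitions $(P_{k})$ with $\diam(P_{k})\to 0$ and entropy structure $h_{k}=h(\cdot,P_{k})$. I would inductively build a tower of subshift extensions $\cdots\to Y_{k+1}\to Y_{k}\to\cdots\to Y_{1}\to X$, where the step $Y_{k+1}\to Y_{k}$ is realised by a Markov-type block code whose fibre entropy is tuned to $(E-h_{k+1})^{\tilde{}}-(E-h_{k})^{\tilde{}}$; affineness of $E$ ensures consistency across the ergodic decomposition. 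The inverse limit $\pi:Y=\varprojlim Y_{k}\to X$ would then satisfy $h^{\pi}\leq E$ by telescoping the built-in upper bounds and $h^{\pi}\geq E$ by lifting ergodic measures through the tower while matching $E$ in the limit via Lemma \ref{solo}.

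For (iii), a principal symbolic extension exists exactly when $h$ itself is an affine superenvelope; by Lemma \ref{solo} applied to the sequence $(h-h_{k})_{k}\in\mathfrak{G}^{\prec}_{\mathcal{M}(X,T)}$ this is equivalent to uniform convergence of the entropy structure to $h$, which by Theorem \ref{vieu} is precisely asymptotic $h$-expansiveness. The chief technical obstruction throughout is the inductive block-coding construction in (ii), where one must simultaneously refine enough to generate the target system in the inverse limit and keep the added fibre entropy at each scale exactly on budget with the gap dictated by $E$; handling both constraints at once is what makes this a deep theorem rather than a routine exercise, and is the reason the construction in \cite{BD} is delicate.
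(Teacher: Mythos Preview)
The paper does not give a proof of this statement: Theorem~\ref{sexy} is quoted verbatim from Boyle--Downarowicz \cite{BD} (their Theorem~5.5) and is used as a black box to establish the flow version, Theorem~\ref{SEXX}. So there is no proof in the paper to compare your proposal against.

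That said, your sketch is a faithful outline of the Boyle--Downarowicz argument. The reduction to zero-dimensional systems via a principal extension, the inductive tower of subshift extensions with fibre entropy calibrated to the increments of $E-h_k$, and the identification of the principal case with uniform convergence of the entropy structure are exactly the architecture of \cite{BD}. One point to be careful about in your step (ii): the extensions $Y_{k+1}\to Y_k$ are not quite ``Markov-type block codes'' in the usual sense; the construction in \cite{BD} goes through an auxiliary abstract simplex-theoretic realization (the ``extension entropy function'' machinery) before producing the actual subshift, and the inverse limit must itself be shown to be expansive (hence conjugate to a subshift), which requires that the alphabets stabilise. Your sketch compresses this into a single inductive step, which is fine as a plan but hides the place where most of the work lies. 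Your treatment of (i) and (iii) is accurate.
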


 We show now the  corresponding statement for topological flows :

\begin{theo}\label{SEXX}Let $(X,\Phi)$ be a topological flow with the small flow boundary property. The flow admits a symbolic extension (resp. principal) if and only if there exists a finite superenvelope $E$ (resp. $(X,\Phi)$ is asymptotically $h$-expansive).

More precisely a function $E$ on $\mathcal{M}(X,\Phi)$ equals 
$h^\pi$ for some symbolic extension $\pi$ if and only if $E$ is an affine superenvelope of the entropy structure of $(X,\Phi)$.
\end{theo}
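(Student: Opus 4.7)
The plan is to reduce Theorem \ref{SEXX} to the discrete Symbolic Extension Entropy Theorem (Theorem \ref{sexy}) via the zero-dimensional suspension model furnished by the small flow boundary hypothesis, and to handle the ``principal'' refinement separately using the characterization of asymptotic $h$-expansiveness.

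First I would invoke Proposition \ref{pro} to obtain a strongly isomorphic (in particular principal) extension $\varrho:(Z_r,\Psi_r)\to(X,\Phi)$, where $(Z_r,\Psi_r)$ is the suspension of a zero-dimensional discrete system $(Z,T)$ under a positive continuous roof $r$. Principality together with Lemma \ref{mamip} identifies the (affine) superenvelopes of $(X,\Phi)$ and of $(Z_r,\Psi_r)$ by the pullback $E\mapsto E\circ\varrho$. Since $\varrho$ is strongly isomorphic, the induced map $\varrho_*:\mathcal M(Z_r,\Psi_r)\to\mathcal M(X,\Phi)$ is an affine homeomorphism preserving entropy. Composition with $\varrho$ then sends symbolic extensions of $(Z_r,\Psi_r)$ to symbolic extensions of $(X,\Phi)$, with the corresponding relation $h^{\varrho\circ\pi}\circ\varrho_*=h^\pi$; conversely, starting from a symbolic extension of $(X,\Phi)$, one lifts to a symbolic extension of $(Z_r,\Psi_r)$ of matching fiber entropy by a fiber product construction, after first massaging the symbolic extension into a common suspension model via Lemma \ref{mieux}. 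This step identifies the refined statement for $(X,\Phi)$ with the refined statement for $(Z_r,\Psi_r)$.

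Next I would reduce from the suspension $(Z_r,\Psi_r)$ to its base $(Z,T)$. Lemma \ref{mieux} provides a bijection between the symbolic extensions of $(Z_r,\Psi_r)$ and the symbolic extensions $\pi$ of $(Z,T)$, via suspension under the lifted roof, obeying the Abramov-type formula $h^{\pi'}(\nu)=h^\pi(\mu_\nu)/\!\int r\,d\mu_\nu$. Lemma \ref{sus} furnishes the parallel bijection $\Gamma:E\mapsto E_r$ between (affine) superenvelopes of $(Z,T)$ and of $(Z_r,\Psi_r)$ with the same formula. These bijections are intertwined, so the refined flow statement for $(Z_r,\Psi_r)$ is equivalent to the refined discrete statement for $(Z,T)$, which is exactly Theorem \ref{sexy}. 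Composing with the first reduction yields the refined form of Theorem \ref{SEXX}. The ``coarse'' form then follows: existence of a symbolic extension is equivalent to existence of a finite affine superenvelope, and the principal case corresponds to $E=h$, which---under the small flow boundary hypothesis---is equivalent by Lemma \ref{neww} to $(X,\Phi)$ being asymptotically $h$-expansive.

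The main obstacle, as I see it, lies in the first step: verifying that a symbolic extension of $(X,\Phi)$ can indeed be canonically lifted to a symbolic extension of $(Z_r,\Psi_r)$ of matching fiber entropy. The naive fiber product $W=Y'\times_X Z_r$ sits inside a product of two suspension flows and is not obviously a suspension over a subshift; the argument requires first passing to a common zero-dimensional suspension model for $Y'$ via Lemma \ref{mieux} and then exploiting zero-dimensionality on both sides to extract a subshift structure on the base. A secondary technical point is checking that the supremum-over-preimages defining $h^\pi$ interacts correctly with the measure-theoretic bijection $\varrho_*$ and preserves affinity, but this is routine given strong isomorphism and harmonicity of the entropy function.
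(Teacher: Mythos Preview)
Your overall strategy---reduce to a zero-dimensional suspension via Proposition \ref{pro}, then to the base via Lemmas \ref{sus} and \ref{mieux}, and finally invoke Theorem \ref{sexy}---is exactly the paper's approach, and your handling of the ``superenvelope $\Rightarrow$ symbolic extension'' direction and of the principal case (via Lemma \ref{neww}) is correct.

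The one genuine gap you have already flagged yourself: lifting a symbolic extension $\pi':(Y',\Psi')\to(X,\Phi)$ back to a symbolic extension of $(Z_r,\Psi_r)$ with $h^{\pi''}=h^{\pi'}\circ\varrho$. Your fiber-product sketch is not a proof, and the difficulties you identify are real: $Y'\times_X Z_r$ carries a flow but has no evident subshift base, and ``massaging via Lemma \ref{mieux}'' does not apply directly since $(X,\Phi)$ need not have a Poincar\'e cross-section. The paper does \emph{not} attempt a fiber-product construction. Instead it invokes Theorem 7.5 of \cite{BD}: for any principal extension $\varrho$ and any symbolic extension $\pi'$ of the base system, there exists a symbolic extension $\pi''$ of the extending system with $h^{\pi''}=h^{\pi'}\circ\varrho$. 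The paper notes that the proof of this lifting theorem carries over verbatim to topological flows. Once you have $\pi''$, the zero-dimensional suspension case gives that $h^{\pi''}$ is an affine superenvelope of $(Z_r,\Psi_r)$, and Lemma \ref{mamip} pushes this down to show $h^{\pi'}$ is an affine superenvelope of $(X,\Phi)$. Replace your fiber-product paragraph by this citation and the argument closes.
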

\begin{proof}
We first consider the case of a suspension flow $(X_r,\Phi_r)$ over a zero-dimensional system $(X,T)$ under a positive continuous roof function $r$.
  By Lemma \ref{sus} and  Lemma \ref{mieux} the  map 
\[f\mapsto \left[\nu\mapsto \frac{f(\mu_\nu)}{\int r\, d\mu_\nu}\right] \]
defines a bijection between affine superenvelopes   on one hand and  the entropy functions  in symbolic extensions $h^\pi$ on the other hand, for $(X,T)$ and $(X_r,\Phi_r)$. But according to Theorem \ref{sexy} the affine superenvelopes  are exactly  the functions $h^\pi$ for the discrete system $(X,T)$. Therefore the same holds for the suspension flow $(X_r,\Phi_r)$.

We deal now with the general case. By Proposition \ref{pro} any topological flow $(X, \Phi)$ with the small flow boundary property admits a principal  extension $\pi$ by a flow $(X_r,\Phi_r)$ of the previous form. Then if $E$ is an affine superenvelope of $(X, \Phi)$, it follows from Lemma \ref{mamip} that $E\circ \pi$ is also a superenvelope of $(X_r,\Phi_r)$. According to  the previous case there exists a symbolic  extension $\pi':(Y,\Psi)\rightarrow (X_r,\Phi_r)$ with $h^{\pi'}=E\circ \pi$. Then $\pi'\circ \pi$ is a symbolic extension of $(X, \Phi)$ with $h^{\pi'\circ \pi}=E$. Conversely,  from Theorem 7.5 in \cite{BD} (which applies to topological flows with the same proof), for  any symbolic extension $\pi'$ of $(X, \Phi)$ there exists a symbolic extension $\pi''$ of $(X_r,\Phi_r)$ with the same entropy function, i.e.
$h^{\pi''}=h^{\pi'}\circ \pi$. Since $h^{\pi''}$ is an affine superenvelope of $(X_r,\Phi_r)$, the entropy function $h^{\pi'}$ is an affine  superenvelope of $(X,\Phi)$.
\end{proof}

Together with Lemma \ref{bz} we get :
\begin{lemma}
Let $(X,\Phi)$  be a topological flow with the small flow boundary property. The flow admits a  symbolic extension (resp.  principal) if and only if so does its times $t$-map for some (any) $t\neq 0$.
\end{lemma}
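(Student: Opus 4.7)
The plan is to invoke the flow symbolic extension theorem (Theorem~\ref{SEXX}) and its classical discrete counterpart (Theorem~\ref{sexy}) to reduce both statements to the existence of finite affine superenvelopes (and, in the principal case, to asymptotic $h$-expansiveness), and then to transport superenvelopes between the flow and its time-$t$ map using Lemma~\ref{bz} in one direction and a direct construction in the other.

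For the forward direction, if $(X,\Phi)$ admits a symbolic extension, Theorem~\ref{SEXX} produces a finite affine superenvelope $E$ of the flow entropy structure. Lemma~\ref{bz} then turns $E$ into the finite affine superenvelope $tE\circ\theta_t$ of the entropy structure of $\phi_t$, and Theorem~\ref{sexy} yields a symbolic extension of $(X,\phi_t)$.

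For the backward direction, suppose $(X,\phi_t)$ admits a symbolic extension for some $t\neq 0$; by symmetry we may assume $t>0$. Theorem~\ref{sexy} delivers a finite affine superenvelope $F$ of the entropy structure of $\phi_t$. I would set $E:=\tfrac{1}{t}F\circ i_t$ on $\mathcal{M}(X,\Phi)$. Since $i_t$ is an affine continuous closed embedding, $E$ inherits from $F$ the properties of being finite, affine, and upper semicontinuous. By Lemma~\ref{timet}~(i), the flow entropy structure may be chosen as $\mathcal{H}^\Phi=\bigl(\tfrac{1}{t}h_k^{\phi_t}\circ i_t\bigr)_k$, so that $h_\Phi=\tfrac{1}{t}h_{\phi_t}\circ i_t$ on $\mathcal{M}(X,\Phi)$. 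The elementary fact that the usc envelope of a function restricted to a closed subset is at most the restriction of the usc envelope of the original function, applied to $g=\tfrac{1}{t}(F-h_k^{\phi_t})$ whose restriction to $\mathcal{M}(X,\Phi)$ equals $E-h_k^\Phi$, gives
\[
\limsup_k (E-h_k^\Phi)^{\tilde{}}(\mu)\leq \lim_k \tfrac{1}{t}(F-h_k^{\phi_t})^{\tilde{}}(i_t\mu)=E(\mu)-h_\Phi(\mu),
\]
the last equality since $F$ is a superenvelope for $\phi_t$. Conversely $\liminf_k(E-h_k^\Phi)^{\tilde{}}(\mu)\geq \liminf_k(E-h_k^\Phi)(\mu)=E(\mu)-h_\Phi(\mu)$ by the pointwise convergence $h_k^\Phi\to h_\Phi$. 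Hence $\lim_k(E-h_k^\Phi)^{\tilde{}}=E-h_\Phi$, so $E$ is a finite affine superenvelope of the flow entropy structure, and Theorem~\ref{SEXX} produces a symbolic extension of $(X,\Phi)$.

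For the principal case, both statements are equivalent by Theorems~\ref{SEXX} and~\ref{sexy} to asymptotic $h$-expansiveness, and this property is flow-invariant by the very definition adopted just before Lemma~\ref{neww}. The main technical point is the verification that $E=\tfrac{1}{t}F\circ i_t$ is a superenvelope of the flow; once the usc-envelope comparison associated with the closed inclusion $\mathcal{M}(X,\Phi)\hookrightarrow\mathcal{M}(X,\phi_t)$ is identified, the transfer of the superenvelope property from $F$ to $E$ becomes automatic.
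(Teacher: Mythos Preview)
Your proof is correct and follows the same route as the paper, which simply writes ``Together with Lemma~\ref{bz} we get'' and leaves the details implicit. In fact you have supplied the missing half: Lemma~\ref{bz} only constructs the map $E\mapsto tE\circ\theta_t$ from flow superenvelopes to $\phi_t$-superenvelopes (proving the forward implication), and merely remarks that $E\mapsto \tfrac{1}{t}E\circ i_t$ is a right inverse without checking that it sends $\phi_t$-superenvelopes back to flow superenvelopes. Your explicit verification of this --- using that $\mathcal{M}(X,\Phi)$ is closed in $\mathcal{M}(X,\phi_t)$, that the usc envelope of a restriction is dominated by the restriction of the usc envelope, and Lemma~\ref{timet}(i) to identify the flow entropy structure with $\tfrac{1}{t}\mathcal{H}^{\phi_t}\circ i_t$ --- is exactly what is needed to close the loop.
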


 The fact, that $\phi_t$ admits a symbolic extensions does not depend on $t\neq 0$, was first proved by T.Downarowicz and M.Boyle in Theorem 3.4 of \cite{bdd}. For rational $t$ is was done by just considering a standard power rule for the entropy whereas  for an irrational $t$ they build explicitly a symbolic extension of $\phi_t$ from a symbolic extension of $\phi_1$ by using the coding of irrational rotations via Sturmian sequences.


\subsubsection{Characterization of Uniform generators}
In \cite{bdo} T.Downarowicz and the author  also characterize the entropy function in a symbolic extension with an embedding. The case of strongly isomorphic symbolic extension follows from \cite{bumo} as detailed in the Appendix \ref{dav}, whereas the characterization of systems topologically conjugate to a subshift first appeared in \cite{kr}.
\begin{theo}(Theorem 55 in \cite{bdo}, Main Theorem in \cite{bumo}, Krieger's topological embedding Theorem \cite{kr})\label{durud}
 Let $(X,T)$ be a topological system with the small boundary property. The system admits  a  uniform generator (resp.  essential, resp. clopen) if and only if there exists a finite superenvelope $E$ and $p(T)<+\infty$ (resp. $(X,T)$ is asymptotically expansive, resp. $T$ is expansive and $X$ is zero-dimensional).

More precisely a function $E$ on $\mathcal{M}(X,T)$ equals 
$h^\pi$ for some symbolic extension $\pi$ with an embedding  if and only if $E$ is an affine superenvelope of the entropy structure of $(X,T)$ with $E\geq h^\pi+\mathfrak u_1^T$.\end{theo}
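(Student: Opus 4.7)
The plan is to reduce this entirely to results already in the literature via Proposition \ref{super}, which says that $(X,T)$ admits a uniform generator (resp. essential, resp. clopen) if and only if it admits a symbolic extension with a Borel embedding (resp. strongly isomorphic, resp. is topologically conjugate to a subshift). Thus the three-way statement translates directly into a statement about symbolic extensions of prescribed quality.

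For the three separate equivalences, I would argue as follows. The \emph{clopen} case is immediate from Krieger's embedding theorem (Theorem \ref{balon}): a clopen uniform generator gives a topological conjugacy with a subshift, which forces expansiveness and zero-dimensionality of $X$, and conversely Theorem \ref{balon} yields the conjugacy. The \emph{essential} case follows from the Boyle--Downarowicz construction: asymptotic expansiveness is exactly the uniform convergence of the entropy structure $\mathcal{H}$ to $h$ together with $u_1^T=0$, and in that case a principal symbolic extension (Theorem \ref{sexy} in the asymptotically $h$-expansive regime) can be upgraded to a strongly isomorphic one through the small boundary property of $(X,T)$; the converse uses Lemma \ref{gege}, which turns a strongly isomorphic symbolic extension into an essential uniform generator, plus Lemma \ref{susp}-style transfer of asymptotic expansiveness across isomorphic symbolic extensions. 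The \emph{general} case is then the main content of Theorem 55 of \cite{bdo}: one direction embeds periodic $T$-orbits into periodic orbits of the subshift of the same period, forcing $p(T)\leq p(\sigma)<\infty$ and giving $E=h^\pi$ as an affine superenvelope by Theorem \ref{sexy}; the other direction builds, starting from the affine superenvelope $E$, a symbolic extension realizing $E$ and then refines it to carry a Borel embedding by encoding periodic points, the $\mathfrak u_1^T$ correction being precisely what is required to fit the periodic structure of $T$ inside that of the subshift.

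The quantitative ``more precisely'' part amounts to identifying the set of entropy functions $h^\pi$ for symbolic extensions \emph{with a Borel embedding}. In one direction $h^\pi$ is an affine superenvelope by Theorem \ref{sexy}, and the lower bound $E\geq h+\mathfrak u_1^T$ (which is how one should read the stated inequality) follows from the Main Theorem of \cite{bumo}: the extra $\mathfrak u_1^T$ term measures the entropy cost of distinguishing nearby periodic orbits, which an embedding must pay. Conversely, given any affine superenvelope $E\geq h+\mathfrak u_1^T$, one invokes the construction of \cite{bumo} (detailed in Appendix \ref{dav}) that augments the Downarowicz--Boyle extension with a Krieger-type periodic marker subshift absorbing the $\mathfrak u_1^T$ term.

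The main obstacle in carrying out the forward part of the general case is the simultaneous control of the entropy function and the faithful coding of periodic orbits: a symbolic extension with prescribed $h^\pi$ from Theorem \ref{sexy} need not support an injective labelling of all periodic points of $T$ by $\sigma$-periodic sequences of the right periods. The role of the hypotheses $p(T)<\infty$ and the small boundary property is precisely to permit such a coding, by providing partitions of arbitrarily small diameter whose atoms each see at most one orbit of each given period up to the resolution of the partition; this is where the technology of \cite{bumo} is indispensable and is the step I would expect to require the most care.
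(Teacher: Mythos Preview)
Your overall plan matches the paper's treatment: Theorem \ref{durud} is not proved in the paper but is assembled from the cited references, with Appendix \ref{dirs} supplying a direct proof of the necessary inequality $h^\pi\geq h+u_1^T$ (hence $\geq h+\mathfrak u_1^T$) and Appendix \ref{dav} detailing the essential case from \cite{bumo}. You also correctly note that the displayed inequality should read $E\geq h+\mathfrak u_1^T$.

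That said, several of your attributions are off and would mislead a reader. First, Lemma \ref{susp} concerns suspension flows and is irrelevant to discrete systems; the preservation of asymptotic expansiveness you need for the converse of the essential case comes instead from the invariance of entropy structures under principal extensions (Theorem \ref{vieu} plus \cite{led}) together with Lemma \ref{last} and Lemma \ref{fdf} for the $u_1$ part. Second, the sufficient direction of the ``more precisely'' statement --- constructing a symbolic extension with an embedding realizing a given affine superenvelope $E\geq h+\mathfrak u_1^T$ --- is the content of Theorem 55 in \cite{bdo}, not of \cite{bumo}; Appendix \ref{dav} and \cite{bumo} only handle the special case $E=h$ (asymptotic expansiveness), where the construction is of a different nature (a pointwise limit of continuous codings). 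Third, your description of the essential case as ``upgrading a principal symbolic extension through the small boundary property'' does not reflect the actual mechanism in \cite{bumo}, which builds the embedding directly rather than modifying a pre-existing extension. These are bookkeeping errors rather than mathematical gaps, but since the theorem is a compilation of external results, getting the provenance right is essentially the whole content here.
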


By following the proof of Theorem \ref{SEXX} with making use of Lemma \ref{sd} we get :

\begin{theo}\label{Gener}
Let $(X,\Phi)$ be a topological flow with the small flow boundary property. 
The flow admits a  uniform generator (resp. essential) if and only if there exists a finite superenvelope $E$ and $p(\Phi) <+\infty$ (resp. the flow is asymptotically expansive).

More precisely a function $E$ on $\mathcal{M}(X,\Phi)$ equals $h^\pi$ for some symbolic extension $\pi$ with an embedding if and only if $E$ is a superenvelope of the entropy structure of $(X,\Phi)$ and $E\geq h+u_1^\Phi$.
\end{theo}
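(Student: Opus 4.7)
The plan is to mirror the proof of Theorem \ref{SEXX}, substituting Theorem \ref{durud} for Theorem \ref{sexy} and tracking how the periodic tail function transforms under suspension. The argument proceeds in two stages: first for a suspension flow over a zero-dimensional base, then for a general flow with the small flow boundary property via the reduction provided by Proposition \ref{pro}.

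For the suspension case, let $(X_r,\Phi_r)$ be a suspension flow over a zero-dimensional system $(X,T)$ with positive continuous roof $r$. By Lemma \ref{mieux}, symbolic extensions with embedding of $(X_r,\Phi_r)$ are precisely the suspension flows $\pi'$ built from symbolic extensions with embedding $\pi:(Y,\sigma)\to(X,T)$ using roof $r\circ\pi$, with entropy function transforming as $h^{\pi'}(\nu)=h^\pi(\mu_\nu)/\int r\,d\mu_\nu$. By Lemma \ref{sus}, affine superenvelopes correspond under the same rule $E\mapsto E_r$, and by Lemma \ref{sd}, the tail periodic function transforms analogously: $u_1^{\Phi_r}(\nu)=u_1^T(\mu_\nu)/\int r\,d\mu_\nu$. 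Theorem \ref{durud} characterizes entropy functions of symbolic extensions with embedding of $(X,T)$ as affine superenvelopes dominating $h+\mathfrak u_1^T$. To reconcile this with the tail function $u_1^{\Phi_r}$ appearing in the target statement, I would first pass to an aperiodic principal zero-dimensional extension of $(X,T)$ (as in the proof of Corollary \ref{ssssu}), on which both $u_1^T$ and $\mathfrak u_1^T$ vanish. Then Theorem \ref{durud} together with the bijections above produces the characterization on $\mathcal{M}(X_r,\Phi_r)$: $E_r=h^{\pi'}$ for some symbolic extension with embedding iff $E_r$ is an affine superenvelope satisfying $E_r\geq h+u_1^{\Phi_r}$.

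For the general case, Proposition \ref{pro} provides a strongly isomorphic zero-dimensional extension $\pi_0:(X_r,\Phi_r)\to(X,\Phi)$, which is in particular principal. By Lemma \ref{mamip} affine superenvelopes transfer as $E\leftrightarrow E\circ\pi_0$, and by Lemma \ref{last}, since an isomorphic extension preserves the periodic data, we have $u_1^{\Phi_r}=u_1^\Phi\circ\pi_0$. The condition $E\geq h+u_1^\Phi$ on $\mathcal{M}(X,\Phi)$ therefore pulls back to the analogous inequality on $\mathcal{M}(X_r,\Phi_r)$; combining this with the suspension case and composing with $\pi_0$ produces the desired symbolic extension with embedding of $(X,\Phi)$. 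For the converse direction, any symbolic extension with embedding of $(X,\Phi)$ lifts through $\pi_0$ via Theorem 7.5 of \cite{BD}, which applies verbatim to flows as already noted at the end of the proof of Theorem \ref{SEXX}. The asymptotic expansiveness characterization of the essential case follows by specializing to the identically zero tail function together with Lemma \ref{neww} and Lemma \ref{susp}.

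The main obstacle will be the careful bridging of the discrepancy between the quantity $\mathfrak u_1$ appearing in the discrete result and $u_1^\Phi$ appearing in the flow result, which forces the preliminary reduction to an aperiodic zero-dimensional base; and the verification that Borel embeddings (as opposed to merely topological extensions) survive the two reductions, since an embedding of $(X,\Phi)$ into a symbolic extension must lift compatibly through both the principal extension $\pi_0$ and the suspension construction of Lemma \ref{mieux}.
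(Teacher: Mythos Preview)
Your two-stage architecture (suspension case first, then general case via Proposition \ref{pro}) and your inventory of lemmas (Lemma \ref{mieux}, Lemma \ref{sus}, Lemma \ref{sd}, Lemma \ref{mamip}, Lemma \ref{last}) are exactly what the paper has in mind; the proof there is literally ``follow the proof of Theorem \ref{SEXX} with Lemma \ref{sd}''. But your proposed bridge between $\mathfrak u_1^T$ (in Theorem \ref{durud}) and $u_1^{\Phi_r}$ (in the target) is a genuine gap.

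Passing to an aperiodic principal zero-dimensional extension of $(X,T)$ destroys exactly the information you need. On the aperiodic extension both $u_1$ and $\mathfrak u_1$ vanish, so Theorem \ref{durud} collapses to ``$E=h^\pi$ for some symbolic extension with embedding iff $E$ is an affine superenvelope'', with no periodic constraint at all. Pulling this back to the original suspension flow you would conclude that \emph{every} affine superenvelope arises as $h^{\pi'}$ for some symbolic extension with embedding of $(X_r,\Phi_r)$, which is false when $u_1^{\Phi_r}\neq 0$. The underlying reason is that an aperiodic principal extension is not isomorphic (periodic orbits downstairs acquire large fibers), so neither Lemma \ref{last} nor the lifting of Borel embeddings applies across it; you correctly invoked Lemma \ref{last} for the strongly isomorphic extension $\pi_0$ in stage two, but the same reasoning shows why the aperiodic reduction in stage one cannot work.

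The correct bridge is Lemma \ref{fdf}: for a zero-dimensional system $(X,T)$ with a symbolic extension $\pi$ with embedding one has $h^\pi\geq h+u_1^T$ directly. Combined with the trivial inequality $u_1^T\geq \mathfrak u_1^T$ and Theorem \ref{durud}, this upgrades the discrete characterization to read ``$E=h^\pi$ iff $E$ is an affine superenvelope with $E\geq h+u_1^T$''. Now Lemma \ref{sd} transports $u_1^T$ to $u_1^{\Phi_r}$ under the same rescaling as Lemma \ref{sus} transports superenvelopes and Lemma \ref{mieux} transports entropy functions, and the suspension case goes through cleanly. Stage two then proceeds exactly as you outlined, with Lemma \ref{last} (valid because $\pi_0$ is strongly isomorphic) handling the periodic side.
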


We are now in position to prove  Theorem \ref{intr} stated in  the Introduction.
\begin{proof}[Proof of Theorem \ref{intr}]
Fix $t>0$. By Lemma \ref{bz} the system $(X,\Phi_t)$ admits a (finite affine) superenvelope  (resp. the entropy function $h$ is a super envelope) if and only if so does the flow $(X,\Phi)$. Then, by Theorem \ref{sexy} and Theorem \ref{SEXX}, the time $t$-map admits a symbolic extension if and only if so does the flow. The corresponding statement for uniform generators follows from Theorem \ref{durud} and Theorem \ref{Gener}. The invariance of these properties under orbit equivalence is proved below in Theorem \ref{equi}.

\end{proof}

In general there is no relation between  uniform generators for the flow and  uniform generators for the time $t$-maps.
Indeed consider the standard suspension  (i.e. with roof function $r=1$) of the identity of a compact metrizable space $X$. Then the flow $\Phi=(\phi_t)_t$  admits a uniform generator if and only if the base $X$ of the suspension is a finite set, whereas  $\phi_t$  admits a uniform generator if and only if $t$ is irrational. Indeed when $X$ is infinite the flow $\Phi$ (resp. the times $t$-map $\phi_t$ with $t=\frac{p}{q}\in \mathbb{Q}$) has  infinitely many  periodic orbits with period $1$ (resp. $q$) and thus can not be embedded in a symbolic flow (resp. subshift). When $t$ is irrational, then $\phi_t$ has the small boundary property by the aforementioned  result of E.Lindenstrauss (Theorem 6.2 in \cite{lin}). Also $\phi_t$ is clearly aperiodic and asymtotically $h$-expansive. By Theorem 31 in \cite{bdo} it admits an (strongly isomorphic) uniform generator.

\subsection{Invariance by orbit equivalence}
R.Bowen and P.Walters have proved that   expansiveness is invariant under orbit equivalence for topological flows. Here we show :
\begin{theo}\label{equi}
The asymptotic ($h$-)expansiveness, the existence of symbolic extensions and the existence of uniforms generators are also dynamical properties invariant by orbit equivalence for topological  flows with the small flow boundary property. 
\end{theo}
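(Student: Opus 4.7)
The plan is to reduce the problem, via the principal zero-dimensional extension of Proposition \ref{pro}, to two suspension flows over topologically conjugate zero-dimensional bases, and then to transfer each of the three properties through both reductions. The small flow boundary property is already known to be invariant under orbit equivalence (Lemma \ref{floeq}), so we may assume both flows $(X,\Phi)$ and $(X',\Phi')$, related by a homeomorphism $\Lambda:X\to X'$ sending oriented orbits to oriented orbits, satisfy it. Apply Proposition \ref{pro} to $(X,\Phi)$ with a sequence of complete families $\mathcal{S}=(\mathcal{S}_k)_k$ of closed cross-sections with small flow boundary, and let $\Lambda(\mathcal{S})=(\Lambda(\mathcal{S}_k))_k$ be the corresponding sequence for $(X',\Phi')$. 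By the remark following Proposition \ref{pro}, the bases $(Y^{\mathcal{S}},T)$ and $(Y^{\Lambda(\mathcal{S})},T')$ of the resulting zero-dimensional suspension flows are topologically conjugate; call this common zero-dimensional system $(Z,R)$ and denote the corresponding roof functions by $r,r'$, so that $(Z_r,\Phi_r)$ (resp.\ $(Z_{r'},\Phi'_{r'})$) is a strongly isomorphic, hence principal and isomorphic, extension of $(X,\Phi)$ (resp.\ $(X',\Phi')$).

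Next, transfer each property through these principal extensions. For asymptotic $h$-expansiveness, use Corollary \ref{mami} together with the characterization of asymptotic $h$-expansiveness by uniform convergence of the entropy structure (Lemma \ref{neww}); combining this with the fact that $u_1$ is preserved by isomorphic extensions (Lemma \ref{last}) yields invariance of asymptotic expansiveness as well. For the existence of a symbolic extension, apply the characterization by affine superenvelopes (Theorem \ref{SEXX}) and their preservation by principal extensions (Lemma \ref{mamip}). For the existence of a uniform generator, use Theorem \ref{Gener}: a uniform generator exists iff there is an affine superenvelope $E$ with $E\geq h+u_1^\Phi$ and $p(\Phi)<+\infty$; both inequalities are preserved under the isomorphic extension $\pi:(Z_r,\Phi_r)\to(X,\Phi)$ since $h\circ\pi=h_{\Phi_r}$, $u_1^\Phi\circ\pi=u_1^{\Phi_r}$ by Lemma \ref{last}, and $\pi$ induces a period-preserving bijection of invariant measures, so $p(\Phi_r)=p(\Phi)$. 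The same argument applies to $(X',\Phi')$ versus $(Z_{r'},\Phi'_{r'})$.

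Finally, transfer each property between the suspension flow and its zero-dimensional base. Asymptotic ($h$-)expansiveness transfers between $(Z_r,\Phi_r)$ and $(Z,R)$ by Lemma \ref{susp}. Symbolic extensions transfer by Lemma \ref{mieux}: a symbolic extension of $(Z,R)$ yields one of $(Z_r,\Phi_r)$ by suspending with the pulled-back roof function, and conversely any symbolic extension of $(Z_r,\Phi_r)$ contains a Poincar\'e cross-section giving a symbolic extension of $(Z,R)$. Similarly Lemma \ref{mieux} handles the case with an embedding, so uniform generators also transfer. Performing the same steps for $(Z_{r'},\Phi'_{r'})$ and $(Z,R)$, and noting that $(Z,R)$ is identical in both reductions, we conclude that $(X,\Phi)$ and $(X',\Phi')$ share all three properties.

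The only genuine subtlety lies in the uniform generator case, because the periodic information $u_1^\Phi$ and $p(\Phi)$ are \emph{not} invariants of the underlying time-free data: under a change of time scale, periods are rescaled. The point that makes the argument go through is that each property is expressed as a dichotomy (finite/infinite, or zero/positive) that is insensitive to the positive continuous rescaling coming from the roof function, as recorded in Lemma \ref{sd} and in the bound $p(\Phi_r)\leq p(R)/\inf r$; combined with Lemma \ref{last} this allows the orbit equivalence to be absorbed entirely into the choice of roof function over a common base.
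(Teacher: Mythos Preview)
Your proof is correct and follows essentially the same strategy as the paper: pass to the zero-dimensional strongly isomorphic extensions of Proposition \ref{pro}, note (via the remark after that proposition) that the two bases are topologically conjugate, and then transfer each property between flow and base using Lemmas \ref{susp} and \ref{mieux}. Your write-up is actually more explicit than the paper's own proof about the two transfer steps (flow $\leftrightarrow$ zero-dimensional extension via Lemmas \ref{mamip}, \ref{last}, \ref{neww}, Theorems \ref{SEXX}, \ref{Gener}; and zero-dimensional flow $\leftrightarrow$ base via Lemmas \ref{susp}, \ref{mieux}); your final paragraph correctly isolates the only nontrivial point, namely that the periodic data $u_1$ and $p$ only survive the change of roof function as dichotomies (zero/nonzero, finite/infinite), which is exactly what Lemma \ref{sd} and the elementary bound $p(R)\cdot(\sup r)^{-1}\leq p(\Phi_r)\leq p(R)\cdot(\inf r)^{-1}$ provide. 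One minor quibble: when you invoke Theorem \ref{Gener} for uniform generators, the existence criterion is simply ``finite superenvelope and $p(\Phi)<+\infty$''; the inequality $E\geq h+u_1^\Phi$ belongs to the sharper statement identifying which $E$ arise as $h^\pi$, and is not needed for the dichotomy.
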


\begin{proof} Let us consider two orbit equivalent topological flows via a homeomorphism $\Lambda$.  As already mentioned the orbit equivalence induces a topological conjugacy of the base systems $Y^{\mathcal{S}}$ and $Y^{\Lambda(\mathcal{S})}$ of the zero-dimensional strongly isomorphic extensions built in Proposition \ref{pro}   (but the roof functions may differ). For discrete systems,  the existence of  (principal) symbolic extensions (with an embedding), is invariant under topological conjugacy. Therefore,  both zero-dimensional flows admit such symbolic extensions  or not by Lemma \ref{susp}. 
\end{proof}

\begin{rem}As the topological entropy, the infimum of the entropy of symbolic extensions  and the minimal cardinality of  uniform generators may be modified by a change of the  time scale. In particular these quantities are not invariant under orbit  equivalence. 
\end{rem}




\section{Representation of symbolic flows}
After Ambrose's representation theorem, D.Rudolph  showed that a suspension flow over an ergodic transformation is always isomorphic to another one where the new roof function takes only two values (in general one can not hope the roof to be constant as  such suspension flows are not mixing). 

In the same spirit we wonder what is the ``simplest model" for the  roof function of a \textit{symbolic flow}, i.e. a  suspension flow $(Y_r,\Phi_r)$ over a subshift $(Y,\sigma)$ with a positive continuous roof function $r$. In this section  we will only consider   aperiodic flows. In this case there is a very nice  topological version of  Rokhlin towers for the subshift $(Y,S)$ :

\begin{lemma}(Lemma 7.5.4 in \cite{dowb}) Let $(X,T)$ be an aperiodic zero-dimensional system. For any integer $n>0$ there exists a clopen set $U_n$ such that :
\begin{itemize}
\item $\bigcup_{k=0}^{n+1} T^k U_n=X$,
\item $U_n,TU_n,..., T^{n-1}U_n$ are pairwise disjoint.
\end{itemize}
\end{lemma}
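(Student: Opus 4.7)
The statement is a topological Rokhlin lemma for aperiodic zero-dimensional systems. My strategy is to build $U_n$ as a clopen complete forward section whose first-return function takes values in $\{n, n+1\}$: this automatically yields both conclusions, since return time $\geq n$ forces $U_n, TU_n, \dots, T^{n-1}U_n$ to be pairwise disjoint, while return time $\leq n+1$ forces $\bigcup_{k=0}^{n}T^k U_n = X$, hence a fortiori $\bigcup_{k=0}^{n+1}T^k U_n = X$.

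First I would exploit aperiodicity and zero-dimensionality to separate orbits locally. Fix a large integer $N$ to be chosen later (it will suffice to take $N \geq n(n-1)$). By aperiodicity, the $N+1$ points $x, Tx, \dots, T^{N}x$ are distinct for every $x \in X$; the Hausdorff property combined with zero-dimensionality allows us to enclose them in pairwise disjoint clopen neighborhoods $O_0,\dots,O_N$, and the pullback $V_x = O_0 \cap T^{-1}O_1 \cap \cdots \cap T^{-N}O_N$ is then a clopen neighborhood of $x$ with $V_x, TV_x, \dots, T^{N}V_x$ pairwise disjoint. By compactness, I extract a finite subcover $V_1,\dots,V_m$ and refine it to a clopen partition via $W_i = V_i \setminus \bigcup_{j<i}V_j$, each $W_i \subseteq V_i$ inheriting the disjoint-iterate property.

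Next I would construct a clopen complete forward section $B \subseteq X$ with $r_B \geq N$. A natural approach is a greedy iterative selection from the $W_i$'s: at each stage prune the current candidate by excising forward iterates that are too close (within $N-1$ steps) to pieces already chosen, ensuring simultaneously that $B$ still meets every orbit and that its return time stays $\geq N$. The return function $r_B$ is continuous on the clopen set $B$ and therefore bounded above by some integer $M$ by compactness, so $r_B$ takes only finitely many integer values in $[N, M]$. Partitioning $B$ by the value of $r_B$ and using that for $\gcd(n, n+1) = 1$ the Frobenius representation $k = an + b(n+1)$ with $a,b \geq 0$ is valid for every $k \geq n(n-1)$, I subdivide each clopen column of height $k$ into sub-columns of heights in $\{n, n+1\}$. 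Define $U_n$ as the clopen disjoint union of the bases of all these sub-columns; both conclusions of the lemma are then immediate.

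The main obstacle is the construction of the clopen complete forward section $B$ with a prescribed uniform lower bound on its return time: in a minimal Cantor system any nonempty clopen set is a complete section by density of orbits, but in a general (possibly non-minimal) aperiodic zero-dimensional system the completeness condition is delicate, and the argument must combine the disjoint-iterate property produced in the separation step with an iterative refinement that simultaneously preserves completeness while enlarging the minimum return time.
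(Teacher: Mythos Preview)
The paper does not prove this lemma; it is simply quoted from Downarowicz's book. Your two-step strategy---first build a clopen complete section $B$ with return time bounded below by a large $N$ and above by some $M$, then subdivide each column of height $k\in[N,M]$ into sub-columns of heights $n$ and $n+1$ via the representation $k=an+b(n+1)$ (available once $N\ge n(n-1)$)---is exactly the standard Krieger--Boyle argument, and it even yields return time in $\{n,n+1\}$, slightly stronger than what the stated covering condition $\bigcup_{k=0}^{n+1}T^kU_n=X$ requires.

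The one point that needs correction is your description of the greedy step. Excising only \emph{forward} iterates is not enough: if at stage $i$ you set $U_i=U_{i-1}\cup\bigl(W_i\setminus\bigcup_{k=1}^{N-1}T^{k}U_{i-1}\bigr)$, a newly added $x\in W_i$ may still satisfy $T^{k}x\in U_{i-1}$ for some $1\le k\le N-1$, producing a short return. The correct inductive step removes iterates in both directions,
\[
U_i \;=\; U_{i-1}\;\cup\;\Bigl(W_i\setminus\!\!\bigcup_{0<|k|<N}\!\!T^{k}U_{i-1}\Bigr),
\]
after which one verifies by induction that $U_i,TU_i,\dots,T^{N-1}U_i$ are pairwise disjoint and that $W_1\cup\cdots\cup W_i\subseteq\bigcup_{|k|<N}T^{k}U_i$. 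At $i=m$ this gives $X=\bigcup_{|k|<N}T^{k}B$, whence $B$ is a complete section with return time in $[N,2N-1]$, and your Frobenius subdivision finishes the job. With this fix the argument is complete.
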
 

Such a clopen set $U_n$ will be called a \textit{ $n$-marker} of $(X,T)$. We first show the roof function may be chosen almost constant.

\begin{lemma}\label{bog}Any aperiodic symbolic flow $(Y_r,\Phi_r)$ is topologically conjugate to a symbolic flow over a subshift of $\{0,1\}^{\mathbb{Z}}$ under a roof function arbitrarily close to $\frac{\log 2}{h_{top}(\Phi_r)}$. 
\end{lemma}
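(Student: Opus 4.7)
The plan is to realize $(Y_r,\Phi_r)$ as a suspension over a clopen Poincar\'e cross-section $\tilde S$ whose first-return time $\tau$ is uniformly close to $c := \log 2 / h_{top}(\Phi_r)$, and whose first-return map is topologically conjugate to a subshift of $\{0,1\}^{\mathbb{Z}}$. The strategy splits into two stages: construct $\tilde S$ with nearly constant return time using the Rokhlin marker lemma just stated, and then apply Krieger's topological embedding theorem (Theorem~\ref{balon}) to encode $(\tilde S, T_{\tilde S})$ symbolically on two letters.

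\medskip

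To construct $\tilde S$, I would fix $\varepsilon > 0$ and choose a target height $\tau_0$ with $c-\varepsilon < \tau_0 < c$. Applying the Rokhlin marker lemma to the aperiodic subshift $(Y,\sigma)$ gives, for large $n$, a clopen $n$-marker $U_n \subset Y$. Above each point $y \in U_n$ the column in $Y_r$ has continuous height $r_n(y) = \sum_{k=0}^{n-1} r(\sigma^k y)$. By refining $U_n$ into a clopen subpartition on which $r_n$ oscillates by at most a small fraction of $\tau_0$, one can choose a locally-constant integer number $m(y)$ of marks and locally-constant subdivision heights so that successive marks in each column are separated by amounts in $(c-\varepsilon,\tau_0]$. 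The union of these marks is a clopen subset $\tilde S \subset Y_r$ transverse to the flow (hence zero-dimensional) and a global Poincar\'e cross-section (empty flow boundary), so by Lemma~\ref{poinc} the flow $(Y_r,\Phi_r)$ is topologically conjugate to the suspension over $(\tilde S, T_{\tilde S})$ with continuous roof $\tau$ satisfying $\|\tau - c\|_\infty < \varepsilon$.

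\medskip

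It remains to identify $(\tilde S, T_{\tilde S})$ with a subshift of $\{0,1\}^{\mathbb{Z}}$. I would verify the hypotheses of Krieger's topological embedding theorem: \emph{expansiveness}, which follows because $(Y,\sigma)$ is expansive, so $(Y_r,\Phi_r)$ is expansive by Theorem~\ref{gras}, and the first-return map of an expansive flow on a Poincar\'e cross-section is expansive; \emph{zero-dimensionality}, which is built into the construction; \emph{entropy strictly less than $\log 2$}, from Abramov's formula and the variational principle $h_{top}(T_{\tilde S}) \leq h_{top}(\Phi_r)\cdot\max\tau \leq h_{top}(\Phi_r)\cdot\tau_0 < h_{top}(\Phi_r)\cdot c = \log 2$; and \emph{periodic-point counts}. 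For the last, each $T_{\tilde S}$-periodic orbit of exact period $n$ comes from a $\Phi_r$-periodic orbit of period in $(n(c-\varepsilon), n\tau_0]$; Lemma~\ref{sd} gives $u_1^{\Phi_r} = 0$ since the base is a subshift, and the remark after Lemma~\ref{sd} then bounds $p(\Phi_r) \leq h_{top}(\Phi_r)$, so the number of such orbits is at most $e^{h_{top}(\Phi_r) n \tau_0} < 2^n$ for large $n$.

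\medskip

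The main obstacle is enforcing the Krieger periodic-point inequality $q_n(T_{\tilde S}) \leq q_n(\{0,1\}^{\mathbb{Z}})$ for \emph{all} $n$, not merely for $n$ large: the asymptotic bound $e^{h n \tau_0} < 2^n$ does not automatically control the finitely many short periodic orbits of $\Phi_r$. Since expansiveness forces these to be isolated and finitely many below any given period, I would handle the small-$n$ cases by locally perturbing $\tilde S$ inside a flow-box around each short periodic orbit, redistributing a bounded number of subdivision heights so that the exact-period counts of $T_{\tilde S}$ satisfy Krieger's inequality while preserving the uniform estimate $\|\tau - c\|_\infty < \varepsilon$. With that adjustment, Krieger's theorem produces a topological conjugacy $(\tilde S, T_{\tilde S}) \cong (Z,\sigma)$ onto a subshift $Z \subset \{0,1\}^{\mathbb{Z}}$, and transporting $\tau$ through this conjugacy delivers the desired representation of $(Y_r,\Phi_r)$ as a suspension flow over $Z$ with continuous roof arbitrarily close to $c$.
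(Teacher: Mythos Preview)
Your overall strategy coincides with the paper's: build, via the marker lemma, a clopen Poincar\'e cross-section $\tilde S$ with nearly constant first-return time close to $c=\log 2/h_{top}(\Phi_r)$, bound $h_{top}(T_{\tilde S})<\log 2$ by Abramov, and then invoke Krieger's embedding theorem to realize $(\tilde S,T_{\tilde S})$ inside $\{0,1\}^{\mathbb Z}$. The paper's subdivision is slightly more explicit (it uses two step lengths $[Na]/N$ and $([Na]+1)/N$ with $a=c+\epsilon$), but your sketch amounts to the same construction.

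The one real issue is your ``main obstacle''. You manufacture a difficulty that does not exist: the hypothesis is that the symbolic flow $(Y_r,\Phi_r)$ is \emph{aperiodic}. A $T_{\tilde S}$-periodic point would give a closed $\Phi_r$-orbit, hence a $\sigma$-periodic point in the base subshift; there are none. Thus $(\tilde S,T_{\tilde S})$ is an aperiodic expansive zero-dimensional system with entropy $<\log 2$, and Krieger's periodic-orbit inequality is vacuously satisfied for every $n$. Your appeal to $u_1^{\Phi_r}=0$, the bound $p(\Phi_r)\le h_{top}(\Phi_r)$, and the proposed local perturbation near short periodic orbits are all unnecessary (and the perturbation step, as written, is not made rigorous). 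Once you drop that paragraph and simply note aperiodicity, your argument is complete and matches the paper's.
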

Equivalently  $(Y_r,\Phi_r)$ admits a Poincar\'e cross-section $S$ with return time $t_S$ arbitrarily close to $\frac{\log 2}{h_{top}(\Phi_r)}$ and with $h_{top}(T_S)\leq \log 2$.

\begin{proof}
Fix $1/2>\epsilon>0$. Let $a=\frac{\log 2 }{ h_{top}(\Phi_r)}+\epsilon$ and let $N$ be an integer larger then  $3/\epsilon$. We take $N'>N$ so large that any integer larger than $N'$ belongs to $[Na]\mathbb{N}+([Na]+1)\mathbb{N}$. 
Let $U_n$ be a $n$-marker of $(Y,S)$ with $Nn\underline{r}>N'$. The set $V_n:=U_n\times \{0\}\subset Y_r$ defines a Poincar\'e cross-section with return time $t_{V_n}$ larger than $n\underline{r}$.  In particular for any $u\in V_n$ there are positive integers $k,l$ such that $|Nt_{V_n}(u)-k[Na]-l([Na]+1)|<1/2$ and therefore $|t_{V_n}(u)-k[Na]/N-l([Na]+1)/N|<\epsilon/6$. There is a partition $P$ of $U_n$ in clopen sets such that for any two points $x$ and $y$  in the same atom of the induced partition of $V_n$  we have  $|t_{V_n}(x)-t_{V_n}(y)|<\epsilon/6$. In particular we may choose the above integers $k$ and $l$ independently of $u\in A$ for $A\in P$, i.e. there are nonnegative integers $k_A$ and $l_A$ such that $|t_{V_n}(u)-k_A[Na]/N-l_A([Na]+1)/N|<\epsilon/3$ for any $u\in A$. Finally we let $S$  be the union of $\phi_t A$ over $A\in P$ and $t\in\{k'[Na]/N, \ 0\leq k'< k_A\}\cup\{k_A[Na]/N+l'([Na]+1)/N,\ 0\leq l'< l_A\}$. The set $S$ is a Poincar\'e cross-section with return time $|t_S-a|< 1/N +\epsilon/3<2\epsilon/3$ and therefore $\frac{\log 2 }{ h_{top}(\Phi_r)}+\epsilon/3<t_S<\frac{\log 2 }{ h_{top}(\Phi_r)}+2\epsilon$. By Abramov entropy formula the topological entropy of the  first return map $T_S$ in $S$ is less than $\log 2$. As it is an (aperiodic) subshift it may be  topologically embedded in the full shift with two symbols by Krieger's topological embedding theorem \cite{kr}.
 \end{proof}

For a discrete topological system $(X,T)$  the orbit capacity $\ocap(E)$  of a subset $E$ of $X$ is defined as follows:
$$\ocap^T(E)=\lim_{n\rightarrow +\infty}\frac{1}{n} \sup_{x\in X}\sharp\{0\leq k\leq n, \ T^kx\in E\}.$$
Similarly for a topological flow $(X,\Phi)$ we let 
$$\ocap^\Phi(E)=\lim_{\tau\rightarrow +\infty}\frac{1}{\tau}\sup_{x\in X}\lambda\left(\{t\in [0,\tau], \ \phi_t(x)\in E\}\right).$$
Note that the limits are well defined by Fekete and Hille subadditive Lemma.
When $E$ is a closed subset of $X$ we have  $\ocap(E)=\sup_{\mu}\mu(E)$ where the supremum holds over all invariant probability measures $\mu$.  By Lemma \ref{sss} a closed cross-section $S$ of time $\eta$ has a small flow boundary if and only if $\ocap^\Phi(\partial^\Phi S_\eta)=0$.\\

We may refine Lemma \ref{bog} under the following form (similar to the representation in Rudolph's theorem). For a subshift $Y$ over a finite  alphabet $\mathcal{A}$ and for $a_{-k},\cdots, a_0\in \mathcal{A}$, we let 
$[a_{-k}\cdots a_0]$  be the cylinder set 
$$[a_{-k}\cdots a_0]:=\{(y_n)_n\in Y, \ y_{-l}=a_{-l}\text{ for }l=0,\cdots, k\}.$$
Moreover for $a\in \mathcal{A}$ and $l\in \mathbb{N}\setminus \{0\}$, we let $a^l$ be the subword given by $a^l:=\underbrace{a\cdots a}_{l\text{ times}}$.

\begin{lemma}\label{dex}
Let $(Y_r,\Phi_r)$ be an aperiodic symbolic flow over a subshift $(Y,\sigma)$. Then  for any rationally independent positive real numbers $p$ and  $q$ with $h_{top}(\Phi_r)<\frac{2\log 2}{p+q}$, for any $\epsilon>0$ and for any $\delta\in ]0,\min(p,q)[$ the flow is topologically conjugate to  a symbolic flow over a subshift $(Z,T)$ \footnote{the shift map on $Z$ is denoted here by $T$ to avoid any confusion with $(Y,\sigma)$.} of $\{0,1,2\}^{\mathbb{Z}}$ over a roof function $r'$ satisfying for $z=(z_n)_n$:
\begin{itemize}
 \item $\{r'=p\}=[0]$ and $\ocap^T( [0] )\leq\frac{1}{2}$,
 \item $\{r'=p\}=[1]$ and $\ocap^T( [1] ) \leq \frac{1}{2}+\epsilon$,
 \item $\{0<r'<\delta\}=[2]$ and $\ocap^T([2] )<\epsilon$. 
\end{itemize}
\end{lemma}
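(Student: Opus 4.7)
The construction adapts Rudolph's two-values-roof theorem to the topological setting. I would build a Poincar\'e cross-section $S$ of $(Y_r,\Phi_r)$ whose first return takes only the values $p$, $q$, and some short value $s(u)\in(0,\delta)$, and whose induced dynamics is conjugate, via the $3$-labelling of these return times, to a subshift of $\{0,1,2\}^{\mathbb Z}$. First, using Lemma 7.5.4 of \cite{dowb}, I pick a clopen $n$-marker $U_n\subset Y$ with $n$ large; set $V_n=U_n\times\{0\}$, a Poincar\'e cross-section with continuous return time $t_{V_n}\ge n\underline r$. Refine $V_n$ into a clopen partition $\mathcal P$ such that (i) $t_{V_n}$ has oscillation $<\delta/2$ on each atom and (ii) $\mathcal P$ is a topological generator of $(V_n,T_{V_n})$.

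Next, rational independence of $p$ and $q$, via Weyl equidistribution of $\{jq/(p+q)\}_{j\in\mathbb N}$ modulo $1$, yields the following arithmetic fact: for any $\alpha>0$ and for all $T$ large enough, there exist nonnegative integers $k\le l\le(1+\alpha)k$ with $kp+lq\in(T-\delta,\,T-\delta/2)$. I apply this with $\alpha=2\epsilon$ to each atom $A$, taking $T=T_A:=\min_A t_{V_n}$, obtaining $(k_A,l_A)$; the remainder $s_A(u):=t_{V_n}(u)-k_A p-l_A q$ then lies in $(0,\delta)$ and depends continuously on $u\in A$. I also pick $n$ large enough that $k_A\ge 1/\epsilon$ for every atom $A$.

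The key topological refinement is to assign within each cycle of $A$ a binary pattern $p_A\in\{0,1\}^{k_A+l_A}$ with $k_A$ zeros and $l_A$ ones, chosen so that the map $A\mapsto(k_A,l_A,p_A)$ is injective on $\mathcal P$. This is possible precisely because of the hypothesis $h_{top}(\Phi_r)<2\log 2/(p+q)$: comparing the pattern count $\binom{k_A+l_A}{k_A}\sim 2^{2T_A/(p+q)}$ with the atom count $|\mathcal P|$, which Abramov's formula bounds essentially by $e^{n\bar r\, h_{top}(\Phi_r)+o(n)}$, one sees that patterns outnumber atoms once $n$ is large. The cross-section $S$ is then built by inserting in each column $A$, for each $u\in A$, intermediate points at time $0$ followed by successive steps of length $p$ or $q$ as dictated by $p_A$, and a final step of length $s_A(u)$. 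Labelling these inserted atoms by $0$, $1$, and $2$ respectively, and setting $r'=p$, $q$, or $s_A(u)$ accordingly, yields a continuous roof function.

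Finally, I verify that $S$ is closed, global, with empty flow boundary (hence a Poincar\'e cross-section by Lemma \ref{poinc}), and that the natural $\{0,1,2\}$-coding $(S,T_S)\to\{0,1,2\}^{\mathbb Z}$ is a topological conjugacy onto a subshift $Z$: given a label sequence, the positions of $2$'s split it into successive patterns $p_{A_n}$, which determine $(A_n)_n$ by the injective assignment, which in turn determine the $V_n$-orbit by $\mathcal P$ being a generator, thence the underlying point of $S$. The orbit-capacity bounds follow from a straightforward computation: any $T$-invariant probability measure on $Z$ corresponds via Abramov to a measure $\mu_V$ on $V_n$ making $\ocap([i])$ the average per-cycle number of letters $i$ divided by the average cycle length. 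Since $k_A\le l_A\le(1+2\epsilon)k_A$ and $k_A\ge 1/\epsilon$, one obtains $\ocap([0])<1/2$, $\ocap([1])\le 1/2+\epsilon$ and $\ocap([2])<\epsilon$. The main obstacle is the injective pattern assignment in the third step, where the sharp entropy hypothesis $h_{top}(\Phi_r)<2\log 2/(p+q)$ is essential.
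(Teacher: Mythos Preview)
Your approach follows the same Rudolph-style scheme as the paper, but the counting step that justifies the injective pattern assignment has a genuine gap. You bound the \emph{total} number of atoms by $|\mathcal P|\le e^{n\bar r\,h_{top}(\Phi_r)+o(n)}$ and compare it with the \emph{per-atom} pattern count $\binom{k_A+l_A}{k_A}\sim 2^{2T_A/(p+q)}$. For patterns to outnumber atoms you would need $2T_A\log 2/(p+q)\ge n\bar r\,h_{top}(\Phi_r)$; but $T_A$ can be as small as $n\underline r$, so this forces $h_{top}(\Phi_r)\le\frac{2\log 2}{p+q}\cdot\frac{\underline r}{\bar r}$, which is strictly stronger than the hypothesis whenever $r$ is not constant. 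Concretely, atoms $A$ whose $V_n$-return time is short (orbit segments spending most of their $Y$-time where $r$ is near $\underline r$) have too few available patterns, and your global bound on $|\mathcal P|$ does not detect this.

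The paper circumvents this by first invoking Lemma~\ref{bog} to replace $(Y_r,\Phi_r)$ by a topologically conjugate symbolic flow whose roof is uniformly close to the constant $\log 2/h_{top}(\Phi_r)$ and whose base satisfies $h_{top}(\sigma)<\log 2$. After this reduction every $T_A$ is approximately $n\log 2/h_{top}(\Phi_r)$, so all $k_A$ are comparable and the hypothesis $h_{top}(\Phi_r)<2\log 2/(p+q)$ yields $k_A>n/2$; then $\binom{2k_A}{k_A}$ exceeds the number $e^{n(h_{top}(\sigma)+\epsilon)}$ of $n$-words and the injection exists. Your argument becomes correct once you insert this preliminary reduction. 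Alternatively you could repair it without Lemma~\ref{bog} by counting per return-time window: bound, for each fixed $(k,l)$, the number of atoms $A$ with $(k_A,l_A)=(k,l)$ by $e^{(kp+lq)h_{top}(\Phi_r)+o(kp+lq)}$ via separated-orbit counting for the flow, and compare class by class rather than globally.
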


Equivalently $(Y_r,\Phi_r)$ admits a Poincar\'e cross-section $S$ together a clopen uniform generator $\{\mathfrak P,\mathfrak Q,\mathfrak R\}$ of $(S,T_S)$ with  $t_S=p$ on $\mathfrak P$, $t_S=q$ on $\mathfrak Q$ and $t_S<\delta$ on $\mathfrak R$ and with $\ocap^{T_S}(\mathfrak P)\leq\frac{1}{2}$, $\ocap^{T_S}(\mathfrak Q) \leq\frac{1}{2}+\epsilon$ and  $\ocap^{T_S}(\mathfrak R)<\epsilon$.  The condition on the orbit capacity implies in particular that $\frac{1}{2}-2\epsilon\leq \mu([i])\leq \frac{1}{2} +\epsilon$ for any $\mu\in \mathcal{M}(Z,T)$ and for $i=0,1$.

\begin{proof}
Fix $\delta>0$, $\epsilon>0$ and rationally independent positive real numbers $p$ and  $q$ with $h_{top}(\Phi_r)<\frac{2\log 2}{p+q}$. 
 By Lemma \ref{bog} we may assume
$h_{top}(\sigma)<\log 2$ and $r\simeq \frac{\log 2}{h_{top}(\Phi_r)}$.

For any $x\in \mathbb{R}$ we let  $D(x)=\min\{x-(kp+lq)\geq 0\ :  \ k,l\in\mathbb{N} \text{ with }\frac{1}{1+\epsilon}\leq k/l\leq 1 \}$.  
As $p$ and $q$ are rationally independent we have $\lim_{x\rightarrow +\infty}D(x)=0$. Fix $\epsilon\in ]0,\frac{\log 2-h_{top}(\sigma) }{2}]$ small and take $N$ with $D(x)<\delta/2$ for $x>N$. We argue then  as in the proof of Lemma \ref{bog}. Let $U_n$ be a $n$-
marker of $(Y,\sigma)$ with $n>\max( N/\underline{r},1/\epsilon )$. The set $V_n=U_n\times \{0\} \subset Y_r$ defines a Poincar\'e cross-section with return time $t_{V_n}$ larger than $n\underline{r}$. In particular for any $u\in V_n$ there are positive integers $k,l
$ such that $|t_{V_n}(u)-kp-lq|<\delta/2$. There is a partition $P$ of $U_n$ in clopen sets such that for any two points $x$ and $y$  in 
the same atom of the induced partition of $V_n$ we have  $|t_{V_n}(x)-t_{V_n}(y)|<\delta/2$. In particular we may choose the above 
integers $k$ and $l$ independently of $u\in A$ for $A\in P$, i.e. there are nonnegative integers $k_A$ and $l_A$ such that $0<t_{V_n}
(u)-k_Ap-l_Aq<\delta$ for any $u\in A$. We may assume $P$ is finer than $Q^n$ with $Q$ being the zero coordinate of $Y$. Moreover 
we choose $\epsilon>0$ small and $n$ large enough so that the cardinality of the set of $n$-words in $Y$ is  less than $e^{n(h_{top}
(\sigma)+\epsilon)}\leq \binom{k_A}{2k_A}$ for any $A\in P$. Indeed $\binom{m}{2m}\sim\frac{2^{2m}}{\sqrt{\pi m} }$ and we have for 
small $\epsilon>0$
\begin{align*}
k_A(p+q)& \simeq k_Ap+l_Aq,\\
& \simeq nr,\\ 
&\simeq n\frac{\log 2}{h_{top}(\Phi_r)},\\
& > n(p+q)/2.
\end{align*}

Following D.Rudolph we may now  encode the system $(Y,\sigma)$ by ordering the subdivision of $[0,k_Ap+l_Aq]$ into $k_A$ intervals of length $p$ and $l_A$ intervals of length $q$.   For any $k_A$ we fix a bijection between the set of $n$-words of $Y$ and the $2k_A$-uple of $0$ and $1$ with exactly $k_A$ terms equal to $0$ and $1$. 
To any $A\in P$ we let $w^A=(w_{k''}^A)_{k''}$ be the $2k_A$-uple associated to  the element of $Q^n$ containing $A$. For any $k'\leq 2k_A$ we let\footnote{For $i=0,1$ we let  $\delta_i(x)=1$ if $x=i$ and $0$ if not.} $t_{k'}(A)=\sum_{ k''\leq k'}\left(p\delta_1(w^A_{k''})+q\delta_0(w^A_{k''})\right)$ and for any $2k_A<k'\leq k_A+l_A$ we let $t_{k'}(A)=t_{2k}(A)+(k'-2k_A)q$. Finally we let $S$  be the union of $\phi_t A$ over $A\in P$ and $t\in\{t_{k'}(A), \ k'\leq k_A+ l_A\}$. 
The set $S$ is a Poincar\'e cross-section with return time $t_S\in \{p,q\}\cup ]0,\delta[$. By construction the associated clopen partition of $S$ consisting of  $\mathfrak P=\{t_S=p\}$, $\mathfrak Q=\{t_S=q\}$ and 
$\mathfrak R=\{t_S\in ]0,\delta[\}$ is generating. Moreover $\ocap(T_S\in \mathfrak R)\leq \min_A\frac{1}{2k_A}<\frac{1}{n}<\epsilon$. As $\frac{1}{1+\epsilon}\leq k_A/l_A\leq 1$ we also have $\ocap(T_S\in \mathfrak P)\leq 1/2$ and $\ocap(T_S\in \mathfrak Q)\leq \frac{1}{2}+\epsilon$. 
\end{proof}

\begin{rem}
We would like to remove the remaining set $\mathfrak{R}$ by using a multiscale approach for a sequence of nested $n$-markers as D.Rudolph did for the ergodic case. One can follow this procedure. In this way one  gets a Borel section $S$ with return time $t_S$ in $\{p,q\}$, such that the partition $\{t_S=p\}$, $\{t_S=q\}$ of $S$ is a generator for the induced Borel system on $S$. Unfortunately the obtained generator is not uniform. Indeed to approach the base of the $k^{th}$ tower with an error term of size $\epsilon_k$ one needs to reencode a piece of orbit of length $l_k$ with $l_k\rightarrow +\infty$ when $\epsilon_k\rightarrow 0$, so that  the limit map does not admit a priori a continuous inverse.  
\end{rem}

\begin{ques}
Does an aperiodic symbolic flow admit a symbolic extension with an embedding given by a suspension flow over a subshift of $\{0,1\}^{\mathbb{Z}}$ with a roof function constant on the two atoms of the zero-coordinate partition? Note that  if $(X,\Phi)$ has periodic orbits one can not always ensure the roof function is two-valued. Indeed any period should then  belong to $\mathbb{N}p+\mathbb{N}q$ where $p$ and $q$ are the values of the roof function. 
\end{ques}

\begin{lemma}\label{dep}
Let $(Y_r,\Phi_r)$ be an aperiodic symbolic flow over a subshift $(Y,\sigma)$. Then  for any rationally independent positive real numbers $p<q$ with $h_{top}(\Phi_r)<\frac{2\log 2}{p+q}$, for any integer  $M\geq 2$ and for any $\delta>0$ the flow is topologically conjugate to  a symbolic flow over a subshift $(Z,T)$ of $\{0,1\}^{\mathbb{Z}}$  over a roof function $r'$ satisfying for some positive integer $K$ :
\begin{itemize}
 \item   $\{r'=p\}=[1]$, 
 \item $\{r'\in [q,  q+\delta]\}=[0]$,
 \item $\{r'>q\}=T([0^{M+K}10^K1]),$
\end{itemize}

Moreover we have    $$Z=\bigcup_{0\leq k<+\infty}  T^{k}\{r'>q\}.$$
\end{lemma}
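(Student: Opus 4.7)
The plan is to refine the three-symbol suspension from Lemma \ref{dex} by prescribing the binary encoding of $n$-words inside each marker block, then absorb the short-return (label $2$) intervals into preceding long-return intervals, thereby obtaining the required two-symbol representation. I would first apply Lemma \ref{dex} with $(p,q)$, an auxiliary roof bound $\delta_0 < \delta$, and a very small $\epsilon_0 > 0$, yielding a conjugacy of $(Y_r,\Phi_r)$ with a suspension over a subshift $(Z_0,T_0) \subset \{0,1,2\}^{\mathbb{Z}}$ with roof $r_0$ equal to $p$ on $[0]$, $q$ on $[1]$, and in $(0,\delta_0)$ on $[2]$. Recall that this construction partitions each orbit into marker blocks via an $n$-marker $U_n \subset Y$; each block consists of $k_A$ intervals of length $p$, $l_A$ intervals of length $q$, and a single leftover (label $2$) of length $< \delta_0$ at the end, with $k_A$ and $l_A$ of order $n$.

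Next, fixing a positive integer $K$ (to be chosen later), I would refine the binary encoding of the $n$-words so that: (a) the last $M+2K+2$ symbols of every marker block form the fixed pattern $1^{M+K}\, 0\, 1^K\, 2$ in Lemma \ref{dex} labels; (b) the first symbol of every marker block is $0$ (a $p$-interval); (c) the binary encoding of each $n$-word (in the remaining positions) does not contain the subword $1^{M+K}\, 0\, 1^K\, 0$. Since the number of binary words of length $2k_A$ containing a given length-$(M+2K+2)$ subword is at most $O(k_A\cdot 2^{2k_A-(M+2K+2)})$, for $K$ fixed and $k_A$ large, the forbidden patterns form a negligible fraction of $\binom{2k_A}{k_A}$, so the counting condition of Lemma \ref{dex} is still satisfied.

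Then I would form the new cross-section $S \subset Y_r$ by removing every label-$2$ point from the old cross-section; the first-return map $T_S$ is continuous since label-$2$ points are isolated, and after swapping $0\leftrightarrow 1$ in the labels and absorbing each removed $2$ into the preceding label-$1$ (new label-$0$) point, I obtain a subshift $(Z,T) \subset \{0,1\}^{\mathbb{Z}}$ with continuous roof $r'$ equal to $p$ on $[1]$ and in $\{q\}\cup (q,q+\delta_0)$ on $[0]$. A direct computation shows that the $M+2K+2$ new symbols at positions $-M-2K,\ldots,1$ around any marker-block endpoint read $0^{M+K}\, 1\, 0^K\, 1$, so block endpoints lie in $T([0^{M+K}\, 1\, 0^K\, 1])$ with $r' > q$; by (c) no other position matches, giving $\{r' > q\} = T([0^{M+K}\, 1\, 0^K\, 1])$. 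The orbit identity $Z = \bigcup_{k \geq 0} T^k\{r'>q\}$ follows since every $T_0$-orbit meets $U_n$ infinitely often, hence every $T$-orbit passes through the set of marker-block endpoints.

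The main obstacle is the joint realization of conditions (a)--(c): one must fix $M+2K+2$ terminal symbols and the initial symbol of each block while still encoding every $n$-word of $Y$ by a binary pattern with the correct numbers of $0$s and $1$s and avoiding the forbidden subword $1^{M+K}\, 0\, 1^K\, 0$. This reduces to the elementary counting estimate above, which succeeds once $K$ is chosen large enough and $n$ is taken much larger.
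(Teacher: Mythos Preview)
Your proposal is correct and follows essentially the same strategy as the paper's proof: both go back into the construction of Lemma \ref{dex}, fix the first symbol of each marker block, append a prescribed suffix, forbid a suitable subword inside the coding region, and then absorb the short leftover interval (the old label~$2$) into the preceding $q$-interval so that only two symbols remain.

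The one substantive difference is the choice of forbidden subword. The paper simply forbids $K$ consecutive $0$'s (in the final $\{0,1\}$-labels) inside the coding word $w^A$; since after this the only run of $K$ or more zeros in a block lies in the appended suffix $0^{L}10^{K}$ with $L\ge M+K$, the marker pattern $0^{M+K}10^{K}1$ is automatically pinned to the block boundary and no further case analysis is needed. Your condition~(c) instead forbids only the full marker pattern $1^{M+K}01^{K}0$ (in the Lemma~\ref{dex} labels) inside the encoding. This also works, but you then owe a short argument that the pattern cannot occur straddling the encoding/suffix junction or a block boundary in a misaligned way; this follows because your fixed suffix begins with the unique run of $M+K$ consecutive $1$'s (dex labels) and the next block starts with $0$, but it should be stated explicitly. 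With that caveat the two arguments coincide.
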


\begin{proof}
We only have to slightly modify the construction  in Lemma \ref{dex} as follows (we keep the notations of that proof). When encoding the $2k_A$-uple $w^A$ associated to $A\in P$ we may always start and  finish with the letter  $1$, avoid a sequence of $K$ consecutive $0$'s for a large enough integer $K$ and take  only $k_A-1$ (not $k_A$) terms equal to $1$.  Moreover,  we can also assume   $l_A\geq k_A+M+K+2$. All these requirements may be established by taking $n$ large enough. Then we extend $w^A$ to a  $(k_A+l_A-1)$-uple $v^A=(v^A_{k''})_{k''}$ by adding to $w^A$ a suffix of the form $0^{L}10^K$ with $L\geq M+K$. 

  Then we consider the Poincar\'e cross-section  $S$ defined by the union $\phi_t A$ over $A\in P$ and $t\in\{ t_{k'}(A), \ k'< k_A+ l_A\}$
  with $t_{k'}(A)=\sum_{ k''\leq k'}\left(p\delta_1(v^A_{k''})+q\delta_0(v^A_{k''})\right)$. The return time in $S$ is now either equal to $p$ 
  or in  $[q,q+\delta]$. Moreover it is larger than $q$ if and only if   the first return in $S$ lies in $U_n\times \{0\}$.  Finally  the partition $\{ t_S =p \}$, $\{ t_S\geq q\}$ of $S$ defines again a clopen generator of $(S,T_S)$.
  \end{proof}


For the time $t$-map  of a topological flow  we define the following weaker notion of uniform generators.
 \begin{defi} Let $(X,\Phi=(\phi_t)_t)$ be a topological flow. For $\alpha>0$ and $t\neq 0$ a partition $P$ is said to be \emph{an $\alpha$-uniform generator of $\phi_t$} when 
$\sup_{y\in P_{\phi_t}^{[-n,n]}(x)}d\left(y, \phi_{[-\alpha,\alpha]}(x)\right)$ goes to zero uniformly in $x\in X$.\end{defi}
 The above definition does not depend on the choice of the metric, but only on the topology of $X$ (the same holds for uniform generators). In particular $\alpha$-uniform generators are preserved by topological conjugacy. Clearly any uniform generator of $\phi_t$ is an  $\alpha$-uniform generator of $\phi_t$ for all $\alpha$. 

\begin{lemma}\label{Rudf}
Let $(Y_r,\Phi_r)$ be an aperiodic symbolic flow over a subshift $(Y,\sigma)$. Then  for any $t\in ]0,\frac{\log 2 }{ h_{top}(\Phi_r)}[$ and for any $\alpha>0$, the time $t$-map $\phi_t$   
admits an $\alpha$-uniform generator given by the towers associated to a clopen $3$-partition of a Poincar\'e cross-section. 
\end{lemma}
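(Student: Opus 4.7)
The plan is to apply Lemma~\ref{dex} with tower heights just slightly above $t$, and then verify that the natural $3$-symbol cylinder partition of the resulting Poincar\'e cross-section is an $\alpha$-uniform generator of $\phi_t$. First I would pick positive reals $p, q$ with $t < p, q$, with $p, q, t$ pairwise rationally independent, and $p + q < 2 \log 2/h_{top}(\Phi_r)$ (all compatible since the hypothesis $t < \log 2/h_{top}(\Phi_r)$ gives $2t < 2 \log 2/h_{top}(\Phi_r)$, so $p, q$ may be taken arbitrarily close to $t$). Fix $\delta \in {]}0, \min(p,q){[}$ and $\epsilon > 0$ small. Lemma~\ref{dex} then provides a topological conjugacy of $(Y_r, \Phi_r)$ with a symbolic flow $(Z_{r'}, \Phi_{r'})$, where $Z \subset \{0,1,2\}^{\mathbb{Z}}$ is a subshift and $r'$ equals $p$ on $[0]$, $q$ on $[1]$, and lies in $(0,\delta)$ on $[2]$, with $\ocap^T([2]) < \epsilon$. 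Let $\mathfrak{P} := \{[0], [1], [2]\}$ be the clopen $3$-partition of the Poincar\'e cross-section $S := Z \times \{0\}$; it induces the tower partition $P := \{\mathsf{T}_{[0]}, \mathsf{T}_{[1]}, \mathsf{T}_{[2]}\}$ of $Z_{r'}$.

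To check the $\alpha$-uniform generator property, take $x = (z, s)$ and $y = (z', s')$ in $Z_{r'}$ sharing the same $P$-name under $\phi_t$ on $[-N, N]$. Because $t < p, q$, each $\phi_t$-step either stays in the current tower or advances by one, so the tower-transition count $m_n$ of $x$ covers every integer in $[0, m_N]$ with $m_N$ growing linearly in $N$. Hence the $P$-name records every letter $z_{m_n}$ in this range (the rare $[2]$-symbols, of orbit capacity less than $\epsilon$, being treated as a negligible perturbation), and agreement of the $P$-names of $x$ and $y$ on $[-N,N]$ forces the zero-coordinate sequences of $z$ and $z'$ to coincide on a window growing with $N$, so that $d_Z(z, z') \to 0$ uniformly in $x$. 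Regarding the height, the run-lengths of the $P$-name encode discrete constraints on $s$ and $s'$ of the form $\lceil (r'(T^{m_n} z) - h_n)/t \rceil = \mathrm{const}$ on the heights $h_n$ upon entering each successive tower; by the rational independence of $t$ with $p$ and $q$, the intersection of these constraints across $|n| \le N$ shrinks to zero, whence $|s - s'| \to 0$ uniformly. Setting $u := s' - s$, we have $|u| \le \alpha$ for $N$ large and
\[
d(\phi_u(x), y) \,=\, d\bigl((z, s'), (z', s')\bigr) \xrightarrow[N \to \infty]{} 0
\]
uniformly in $x$ by compactness of $Z_{r'}$ and uniform continuity of $\Phi_{r'}$. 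Hence $\sup_{y \in P_{\phi_t}^{[-N,N]}(x)} d\bigl(y, \phi_{[-\alpha,\alpha]}(x)\bigr) \to 0$ uniformly in $x$, which is the desired $\alpha$-uniform generator property.

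The main subtlety lies in the height-reconstruction step: it rests on the rational independence of $t$ with the principal roof values $p, q$ and is essentially a Weyl equidistribution argument for the ``irrational rotation'' structure within each tower. The $[2]$-cylinder, of orbit capacity less than the prescribed $\epsilon$, contributes only to a lower-order correction; the symbolic reconstruction of $z$ then goes through by the standard fact that the zero-coordinate partition of a subshift is a clopen generator.
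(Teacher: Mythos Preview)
Your approach differs from the paper's and has genuine gaps in both reconstruction steps.

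\textbf{The $z$-reconstruction.} You assert that ``each $\phi_t$-step either stays in the current tower or advances by one,'' but this fails for the $[2]$-towers: in Lemma~\ref{dex} the roof on $[2]$ takes arbitrary values in $(0,\delta)$, so many $[2]$-towers have height below $t$ and can be skipped entirely by the $\phi_t$-orbit. Hence the $P$-name need not record every letter of $z$. Calling this a ``negligible perturbation'' does not fix the problem: two distinct points of $Z$ differing at a single $2$-coordinate are not close in $Z$, and you have not used any structural fact about $Z$ that would let you recover the missing $2$'s from the $0/1$-pattern.

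\textbf{The height reconstruction.} Your claim that the run-length constraints force $|s-s'|\to 0$ uniformly is not argued. Even in the idealized case where all roofs equal $p$, the induced dynamics on entry heights is a circle rotation, and what you would need is that its two-interval coding is a \emph{uniform} generator---this is true but requires a three-distance-type argument, not ``Weyl equidistribution.'' Once $q$-towers and, crucially, the \emph{variable} $[2]$-heights enter, the map on entry heights is no longer a single rotation, and no equidistribution statement is available. You also overclaim: $|s-s'|\to 0$ would make $P$ a genuine uniform generator of $\phi_t$, whereas only an $\alpha$-uniform one is asserted (and, for the paper's partition, only an $\alpha$-uniform one is obtained).

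\textbf{Comparison with the paper.} The paper proceeds quite differently: it invokes Lemma~\ref{dep} (not Lemma~\ref{dex}), sets $p=t$ exactly, and then enlarges the cross-section by adding the slice $\phi_\alpha\mathfrak{Q}$, producing a tower $\mathsf{T}_{\mathfrak{Q}}$ of height \emph{exactly} $\alpha$. Landing in that tower pins the height to within $\alpha$ in a single step, and the long $0$-blocks preceding the marking subwords guarantee this happens between any two markers. The symbolic reconstruction of $z$ is then carried out explicitly from the marking subwords. Both issues that sink your argument---skippable short towers and an unproved uniform height estimate---are thus avoided by construction.
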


\begin{proof}
We let $p=t\in ]0,\frac{\log 2 }{ h_{top}(\Phi_r)}[$ and we take $q$ rationally independent from $p$  with  $\alpha=q-p\in ]0,p[$   so small  that we have $h_{top}(\Phi_r)<\frac{2\log 2}{p+q}$. Without loss of generality we may assume $(Y_r,\Phi_r)$ is the model $(Z_{r'},\Phi_{r'})$ 
given by Lemma \ref{dep} with respect to $p,q,\epsilon$ and $\delta=\alpha=q-p$.    Let  $M\geq 2$ be so large  that  for all $s\in [0,q[$ there exists $0\leq u<  M$ and $0\leq v\leq u+1$  with $s+up=vq+\beta$ for $0\leq \beta<\alpha$. 
We let $\mathcal T$ be the $2$-partition $\{\mathfrak P, \mathfrak Q\}$ of the Poincar\'e cross-section $Z\times \{0\}$ given by 
$\mathfrak P=[0]\times \{0\}$ and $\mathfrak Q=[1]\times \{0\}$.  We let $\overline{r}=\sup_{y\in Y}r(y)$. We consider the compact space $\tilde{Y}=Y\times [0,\overline{r}]$ endowed with the metric $d_{\tilde{Y}}$ given by $d_{\tilde{Y}}\left((x,t), (y,s)\right)=d_Y(x,y)+|t-s|$. We also let $\pi_r: \tilde{Y}\rightarrow Y_r$ be the (uniformly) continuous map which associates to any $(y,t)\in \tilde{Y}$  the point $\phi_t^r(y,0)$ in $Y_r$ (with $(y,0)\in Y_r$ and $\Phi_r=(\phi_t^r)_t$).   Fix some    metric $d_{Y_r}$ on $Y_r$, for example the Bowen-Walters metric (see \cite{BW}). Finally we let $w:\mathbb{R}^+\rightarrow \mathbb{R}^+$ with $\lim_{\epsilon\rightarrow 0}w(\epsilon)=0$ be a modulus of uniform equicontinuity of $(\phi_\beta)_{|\beta|\leq \alpha}$ and $\pi_r$, i.e.  
\[\forall \beta\in [-\alpha, \alpha] \  \forall \mathcal{Z}, \mathcal{Z}'\in Y_r, \ \ d_{Y_r}(\phi_\beta (\mathcal{Z}),\phi_\beta (\mathcal{Z}'))<w(d_{Y_r}(\mathcal{Z}, \mathcal{Z}'))\] 
and 
\[\forall \tilde{u},\tilde{v}\in  \tilde{Y}, \ \ d_{Y_r}\left(
 \pi_r (\tilde{u}),\pi_r(\tilde{v})\right) <w(d_{\tilde{Y}}\left(
 \tilde{u},\tilde{v}\right).  \]

The $3$-partition  $\mathsf T_\mathcal{R}$ in towers of $Y_r$ associated to the partition $\mathcal{R}:=\{\mathfrak{P}, \mathfrak Q, \phi_\alpha\mathfrak Q\}$ of the Poincar\'e  cross-section $S'=S \cup\phi_\alpha \mathfrak Q$ with $S=Y\times \{0\}$ is an $\alpha$-uniform generator of $\phi_t$ (recall $\alpha=q-p$).   Indeed we claim that, for any positive integer $n$, for any $\mathcal{X}=(x,s)\in Y_r$ with $0\leq s<r(x)$ and for any $\mathcal{Y}\in \mathsf T^{ [-2n,2n]}_\mathfrak{R}(\mathcal X) $, there exists $\beta$ with $|\beta|\leq \alpha$ such that $\phi_\beta(\mathcal Y)=\pi_r(y,s)$   with $y\in Q^{[-n, n]}(x)$, where  $Q$ denotes  the zero-coordinate partition of $Y$. Then we have 
\begin{eqnarray*}
 d_{Y_r}(\mathcal{Y},\phi_{[-\alpha, \alpha]}\mathcal{X}) & \leq & w(d_{Y_r}(\phi_\beta(\mathcal{Y}),\mathcal{X}),\\
 & \leq & w\circ w\left(d_{\tilde{Y}}\left( (y,s),(x,s)\right)\right),\\
 &\leq &  w\circ w\left(\diam \left(Q^{[-n, n]}(x)\right)\right)\xrightarrow{n\rightarrow +\infty}0 \text{ uniformly in $x\in Y$, thus in $\mathcal{X}\in Y_r$.}
 \end{eqnarray*}

   We show now the above claim. Let $\mathcal{N}^{\phi_t}$ be the $\mathsf T_\mathcal{R}$-name of $\mathcal{X}$ with respect to $\phi_t$, i.e. $\mathcal{N}^{\phi_t}=\left(\mathsf T_{\mathcal{R}}(\phi_{kt}\mathcal{X} )\right)_{k\in [-2n,2n]}$.  Any letter $T_{\mathfrak Q}$ is followed by $T_{\phi_\alpha \mathfrak Q}$ in $\mathcal{N}^{\phi_t}$ but both correspond to the same return in $S$.  Then a subword of $\mathcal{N}^{\phi_t}$ of the form $ T_\mathfrak{P} \mathsf T_1... \mathsf T_{K''}\mathsf T_\mathfrak{P}$, with $\mathsf T_i=\mathsf T_{\mathfrak Q}$ or $\mathsf T_i=\mathsf T_{\phi_\alpha\mathfrak Q}$ for $i=1,\cdots, K''$, and $\sharp \{i, \,\mathsf T_i =\mathsf T_{\phi_\alpha\mathfrak Q}\}\in \{K,K+1\}$ indicates the return times  in $\{r'>q\}$. These subwords are called the marking subwords.   Then any subword  $\mathsf T^L_\mathfrak{P}$ of $\mathcal{N}^{\phi_t}$  between two such consecutive marking subwords correspond  to exactly  $L$ consecutive returns of $T_S$ in $\mathfrak P$  because  the associated return times in $S'$ are equal to $t=p$. This is also the case of the subwords  $\mathsf T^L_{\phi_\alpha\mathfrak Q}$ of  $\mathcal{N}^{\phi_t}$, whose last letter is not the penultimate letter  of a marking subword. In this sole case, the return time in $S'$ from $\phi_\alpha\mathfrak Q$ may differ from $p$, but we know the subword in $Y$ associated to a marking subword  is given by $10^K1$.  Combining these facts, the $Q$-name of $x$  is obtained from $\mathcal{N}^{\phi_t}$ by first replacing the marking subwords by $10^K1$,  then by  deleting the letters $\mathsf T_{\mathfrak{Q}}$ and finally by replacing the remaining letters $T_{\mathfrak P}$ and $\mathsf T_{\phi_\alpha \mathfrak{Q}}$ respectively by $0$ and $1$. For example, if  $\mathcal{N}^{\phi_t}$ is  the following sequence (where we write  the marking subwords in blue, the zero-coordinate in green and the large block of $0$'s before the marking subword in orange)  $$\cdots\textcolor{orange}{ \mathsf T_{\phi_\alpha\mathfrak Q}\cdots\mathsf T_{\phi_\alpha\mathfrak Q}}\ \textcolor{blue}{\mathsf T_{\mathfrak{P}}\mathsf T_{\mathfrak{Q}}\mathsf T_{\phi_\alpha\mathfrak Q}\cdots  \mathsf T_{\phi_\alpha\mathfrak Q}\mathsf T_\mathfrak{P}} \ \mathsf T_{\phi_\alpha\mathfrak Q} \textcolor{green}{\mathsf T_\mathfrak{P}}\mathsf T_\mathfrak{Q}T_{\phi_\alpha\mathfrak Q} \cdots  \mathsf T_{\mathfrak{P}} \ \textcolor{orange}{ \mathsf T_{\phi_\alpha\mathfrak Q}\cdots\mathsf T_{\phi_\alpha\mathfrak Q}}\  \textcolor{blue}{\mathsf T_{\mathfrak{P}}\mathsf T_{\phi_\alpha\mathfrak{Q}}\mathsf T_{\phi_\alpha\mathfrak Q}\cdots \mathsf T_{\phi_\alpha\mathfrak Q}\mathsf T_\mathfrak{P}} \cdots$$  we obtain the  subword of $Y$ given by  $\cdots\textcolor{orange}{0^{L}}\ \textcolor{blue}{10^K1} \ 0\textcolor{green}{1}0\cdots 1 \  \textcolor{orange}{0^{L'}}\ \textcolor{blue}{10^K1}\cdots $  for some $L,L'\geq K+M\geq K+2$. As we delete at most one letter in two, it contains $Q_T^{[-n,n]}(x)$ as a subword.  Thus for  any $\mathcal{Y}\in \mathsf T^{ [-2n,2n]}_\mathfrak{R}(\mathcal X)$ there is $(y,u)\in \tilde{Y}$ with  $0\leq u<r(y)$ and $y\in Q_T^{[-n,n]}(x) $. Let $N$ be a positive integer with $Y=\bigcup_{0\leq k< N}\sigma^{k}\{r>q\}$.
 To conclude the proof of the claim we  show that  there exists  $\beta$ with $|\beta|\leq \alpha$ and $s=u+\beta$, whenever $n$ is larger than $N$.  It is enough to see that any orbit  of $\phi_t$ visits at least one time the tower $\mathsf T_{\mathfrak{Q}}=\phi_{[0,\alpha[}\mathfrak Q$ of height $\alpha$   between two return times of the flow in  $\{r>q\}$.  But this follows easily from the choice of $M$ and the presence of more than $M$ consecutive $0$'s  before any return in $\{r>q\}$ (which corresponds to the blocks in orange in the above example). 
 \end{proof}

We are now in position to prove Theorem \ref{deux} stated in the introduction. 
\begin{proof}[Proof of Theorem \ref{deux}]
Let $(X,\Phi=(\phi_t)_t)$ be an aperiodic flow with a uniform generator given by a symbolic  extension   $\pi:(Y_r,\Phi_r=(\phi^r_t)_t)\rightarrow (X,\Phi)$ with an embedding $\psi$. We recall that it means $\psi:(X,\Phi)\rightarrow (Y_r,\Phi_r)$ is a Borel equivariant injective map with $\pi\circ \psi=\Id_X$.  Note that the flow $(Y_r,\Phi_r)$ is necessarily also aperiodic. Fix $\alpha>0$. By Lemma \ref{Rudf}  for $t$ small enough the time $t$-map $\phi^r_t$ of this symbolic flow admits an $\alpha$-uniform generator $\mathsf T_\mathcal{R}$  given by the towers above the atoms of a clopen $3$-partition $\mathcal{R}$  of a Poincar\'e  cross-section $S'$.  Then $\psi^{-1}\mathsf T_\mathcal{R}$ is an $\alpha$-uniform generator of $\phi_t$  given by the towers of the partition $\psi^{-1}\mathcal R$ of the global Borel section $\psi^{-1}S'$. Indeed we have with $d=d_X$ :
\begin{align*}
\sup_{y\in (\psi^{-1}\mathsf T_{\mathcal{R}})_{\phi_t}^{[-n,n]}(x)}d\left(y, \phi_{[-\alpha,\alpha]}(x)\right)& =\sup_{y\in \psi^{-1}\left((\mathsf T_{\mathcal{R}})_{\phi^r_t}^{[-n,n]}\left(\psi(x)\right)\right)}d\left(y, \phi_{[-\alpha,\alpha]}(x)\right),\\
&\leq   \sup_{z\in \mathsf (T_{\mathcal{R}})_{\phi^r_t}^{[-n,n]}(\psi(x))}d\left(\pi(z), \pi\left(\phi^r_{[-\alpha,\alpha]}(\psi(x)\right)\right).
\end{align*}
and this last right member goes to zero uniformly in $x$ with $n$ as $\pi$ is uniformly continuous and $\mathsf T_{\mathcal{R}}$ is an $\alpha$-uniform generator of $\phi^r_t$. 
\end{proof}


\appendix
\section{Modified Brin-Katok entropy Structure}\label{BKK}
Let $(X,T)$ be a topological system and let   $\mu\in \mathcal{M}(X,T)$. In \cite{dow} T.Downarowicz defines $h^{BK}(\mu,\epsilon)$  for an ergodic measure $\mu$ as done in Subsection \ref{brin}, but then he extends the function harmonically on the whole space $\mathcal{M}(X,T)$. 
  Let $P$ be a finite measurable partition of $X$. By Shanon-MacMillan-Breiman theorem the sequence $-\frac{1}{n}\log \mu(P^n(x))$ is converging for $\mu$-almost every $x$. Moreover the limit $h(\mu,P,x)$ satisfies $h(\mu,P,x)=h(\mu,P,Tx)$ almost everywhere and$ \int h(\mu,P, x)\, d\mu(x)=h(\mu,P)$.

\begin{theo}\label{ffdf}The Brin-Katok entropy structure for a topological system $(X,T)$ as defined in the proof of Lemma \ref{timet} is an entropy structure. 
\end{theo}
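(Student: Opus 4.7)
The plan is to verify the three defining properties of an entropy structure for $\mathcal{H}^T_{BK} = (h_k)_k$ with $h_k(\mu) := \int h^T(\mu,\epsilon_k,x)\,d\mu(x)$, namely: membership in $\mathfrak{G}^\succ_{\mathcal{M}(X,T)}$, pointwise convergence $\lim_k h_k = h$, and equivalence under $\sim^\succ$ with the reference entropy structure $\mathcal{H}^{Leb}$ of Subsection \ref{brin}.

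Membership in $\mathfrak{G}^\succ$ is immediate by monotonicity: since $(\epsilon_k)_k$ decreases to $0$, for $l \geq k$ the inclusion $B_T(x,\epsilon_l,n) \subset B_T(x,\epsilon_k,n)$ gives $h^T(\mu,\epsilon_l,x) \geq h^T(\mu,\epsilon_k,x)$, so after integration $h_l \geq h_k$ on $\mathcal{M}(X,T)$. Hence $\sup_\mu(h_k - h_l) \leq 0$ for $l \geq k$, yielding $\limsup_k \limsup_l \sup_\mu(h_k - h_l) \leq 0$. For pointwise convergence to $h$, when $\mu$ is ergodic the classical Brin-Katok theorem \cite{bk} gives $h^T(\mu,\epsilon,x) \to h(\mu)$ for $\mu$-a.e. $x$, and dominated convergence (using the finite topological entropy as a uniform bound) yields $h_k(\mu) \to h(\mu)$. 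For a general $\mu$, I would write the ergodic decomposition $\mu = \int \mu_y\,d\mu(y)$ and invoke the non-ergodic version of the Brin-Katok theorem: the identity $\lim_\epsilon h^T(\mu,\epsilon,x) = h(\mu_x)$ for $\mu$-a.e. $x$ follows from the Birkhoff ergodic theorem applied to an invariant set singling out the ergodic class of $x$, showing that $\mu(B_T(x,\epsilon,n))/\mu_x(B_T(x,\epsilon,n))$ is subexponential in $n$. Integration against $\mu$ then gives $\lim_k h_k(\mu) = \int h(\mu_y)\,d\mu(y) = h(\mu)$.

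For the equivalence with $\mathcal{H}^{Leb}$, I would use Lemma \ref{jol} via a double comparison with partition entropies. Given a nonincreasing sequence of partitions $R_l$ of $X \times \mathbb{S}^1$ with small boundary and vanishing diameter, the inclusion of atoms of $R_l^n$ into Bowen balls of radius $\diam(R_l)$ combined with Shannon--McMillan--Breiman yields $h(\mu \times \lambda, R_l) \leq h^T_{BK}(\mu, \epsilon_k) + o_k(1)$ whenever $\diam(R_l) < \epsilon_k$. Conversely, using that $R_k$ has small boundary, a refinement by Bowen balls of size $\epsilon_l$ loses only a vanishing amount of entropy and gives $h^T_{BK}(\mu,\epsilon_l) \leq h(\mu \times \lambda, R_k) + o_{k,l}(1)$. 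Both inequalities combined with Lemma \ref{jol} yield mutual domination, hence the equivalence under $\sim^\succ$.

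The main obstacle is the non-ergodic case in the pointwise convergence step: directly comparing $\mu(B_T(x,\epsilon,n))$ with $\mu_x(B_T(x,\epsilon,n))$ demands genuine ergodic-theoretic work rather than a formal manipulation, especially when the ergodic decomposition is non-atomic. A cleaner alternative route would first prove the theorem for ergodic measures and then deduce the general case by exploiting the harmonicity of the metric entropy and the affine character of $\mu \mapsto \int f\,d\mu$, thereby reducing the equivalence with $\mathcal{H}^{Leb}$ to its harmonic (Downarowicz-style) variant, which is already known to be an entropy structure.
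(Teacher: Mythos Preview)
Your overall strategy matches the paper's: pass to the product $(Y,S)=(X\times\mathbb{S}^1,\,T\times\mathsf{R}_\alpha)$, compare $\mathcal{H}^T_{BK}$ with the partition-entropy sequence $\mathcal{H}^{Leb}$, and invoke Lemma~\ref{jol}. The paper also records the identity $h^{S}(\mu\times\lambda,\epsilon)=h^{T}(\mu,\epsilon)$ (from the product form of dynamical balls in the max metric), which you use implicitly but do not state.

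There is, however, a genuine gap in your comparison step, and your two displayed inequalities are transposed with their justifications. The inclusion $R_l^n(x)\subset B_{S}(x,\epsilon_k,n)$ when $\diam(R_l)<\epsilon_k$ gives the \emph{easy} direction $h^{BK}(\nu,\epsilon_k)\le h(\nu,R_l)$, not the one you wrote. The hard direction is
\[
h(\nu,P)\ \le\ h^{BK}(\nu,\delta)+\gamma,
\]
with $\delta$ depending only on $P$ and $\gamma$, \emph{uniformly in $\nu\in\mathcal{M}(Y,S)$}; this is exactly what $\mathcal{H}_{BK}\succ\mathcal{H}^{Leb}$ requires. Your sentence ``a refinement by Bowen balls of size $\epsilon_l$ loses only a vanishing amount of entropy'' does not provide this uniformity. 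For a single $\nu$ with $\nu(\partial P)=0$ the inequality is classical (Brin--Katok, Ma\~n\'e), but in those arguments $\delta$ is produced via the ergodic theorem applied to a neighbourhood of $\partial P$, and the resulting $\delta$ depends on $\nu$. The paper isolates this as Proposition~\ref{fdfe} and obtains the uniformity from a combinatorial input (Lemma~6 of \cite{burgu}): for an essential partition $P$ one finds $\delta>0$ such that every $(n,\delta)$-Bowen ball meets at most $e^{\gamma' n}$ atoms of $P^n$, each differing from $P^n(x)$ in at most $n\gamma'/\log\sharp P$ coordinates. A Borel--Cantelli argument on top of this bound then yields the uniform estimate. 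Without such a device, the $o_{k,l}(1)$ you write cannot be made independent of $\mu$, and Lemma~\ref{jol} does not apply.

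Finally, your identification of the non-ergodic Brin--Katok convergence as ``the main obstacle'' is misplaced: that is a pointwise statement for each fixed $\mu$ and is classical. The genuine difficulty is the uniform-in-$\nu$ comparison above.
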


For any finite Borel partition $P$ of $X$ we have $h(\mu,P)\geq h(\mu, \diam(P))$ for all $\mu\in \mathcal{M}(X,T)$. When $P$  is a clopen partition we let $Leb(P)$ be the Lebesgue constant of the open cover $P$. Then we have also $h(\mu,P)\leq h(\mu, Leb(P))$. Consequently if $(X,T)$ is a zero-dimensional system then  the entropy structure $\left(h(\cdot,P_k)\right)_k$ for a sequence $(P_k)_k$ of clopen partitions with $\diam(P_k)\xrightarrow{k}0$ is  uniformly equivalent to the Brin-Katok entropy structure.

We deal now independently with a general topological system $(X,T)$. Let $(Y,S)$  be the product of $(X,T)$ with an irrational circle rotation $(\mathbb{S}^1,\mathsf R)$.  As already mentioned the system 
$(Y, S)$ has the small boundary property. Let $(P_k)_k$ be a nonincreasing sequence of partitions  of $Y$ with small boundary and  $\diam(P_k)\xrightarrow{k}0$. Let $\lambda$ be the Lebesgue measure on the circle. The sequence $\left(h(\cdot \times \lambda,P_k)\right)_k$ defines an entropy structure of $(X,T)$ (by definition). By  taking the distance $d_Y$ on $Y$ defined for all $y=(x,t),\, y'=(x',t')\in Y=X\times \mathbb{S}^1 $ by
$ d_Y(y,y')=\max\left(d_X(x,x'),d_{\mathbb{S}^1}(t,t')\right)$ we have 
for all $n \in\mathbb{N}$, for all $\epsilon>0$ and  for all $y=(x,t)\in Y $  
\[B_S(y,n,\epsilon)=B_T(x,n,\epsilon)\times B(t,\epsilon).\]
In particular we get $h^{S}(\mu\times \lambda,\epsilon,y)= h^T(\mu,\epsilon,x)$
and then by integrating with respect to $\mu\times \lambda$
\[h^S(\mu\times \lambda,\epsilon)=h^T(\mu,\epsilon).\]
To conclude the proof of the theorem it is enough to show the sequences $\left(h(\cdot \times \lambda,P_k)\right)_k$ and  $\left(h(\cdot \times \lambda,\epsilon_k)\right)_k$ are  equivalent for some (any) sequence $(\epsilon_k)_k$ of positive numbers with $\lim_k\epsilon_k=0$. As mentioned above we always have $h(\nu,P)\geq h(\nu,\diam(P))$ for any finite Borel partition $P$ of $Y$ and for any $\nu\in \mathcal{M}(Y,S)$.  The theorem 
follows therefore from the following proposition:
\begin{prop}\label{fdfe}
Let $(Y,S)$ be a topological system and let $P$ be a partition of $Y$ with small boundary. Then for all $\gamma>0$ there is $\delta>0$ such that 
\[\forall \nu\in \mathcal{M}(Y,S),  \ \ h(\nu,P)\leq h(\nu,\delta)+3\gamma.\]
\end{prop}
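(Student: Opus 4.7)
The plan is to prove the inequality first for an ergodic $\nu$ and then extend to all $\nu \in \mathcal{M}(Y,S)$ via the ergodic decomposition, using that both $\nu \mapsto h(\nu,P)$ and $\nu \mapsto h(\nu,\delta)$ are harmonic. The key geometric object I would introduce is the ``bad set''
\[ B_\delta^{**} := \{y \in Y : B(y,\delta) \not\subseteq P(y)\} = \{y \in Y : d(y, Y \setminus P(y)) < \delta\}, \]
on whose complement every $\delta$-ball sits inside a single atom of $P$. First I would show $\bigcap_{\delta > 0} \overline{B_\delta^{**}} \subseteq \partial P$: for $y$ in this intersection, picking $\delta_n \downarrow 0$ and $y_n \in B_{\delta_n}^{**}$ with $y_n \to y$, finiteness of $P$ produces a subsequence with $y_n \in A$ for a single atom $A$, and $d(y_n, Y \setminus A) < \delta_n \to 0$ forces $y \in \overline{A} \cap \overline{Y \setminus A} = \partial A \subseteq \partial P$. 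The small boundary hypothesis then gives $\nu(\overline{B_\delta^{**}}) \downarrow 0$ as $\delta \downarrow 0$ by monotone convergence of closed sets; combined with the upper semicontinuity of $\nu \mapsto \nu(\overline{B_\delta^{**}})$ and a Dini-type argument on the compact space $\mathcal{M}(Y,S)$, this yields uniform convergence. Hence, given $\eta > 0$ to be chosen later in terms of $\gamma$ and $|P|$, I would pick $\delta > 0$ with $\sup_\nu \nu(\overline{B_{2\delta}^{**}}) < \eta$.

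For the entropy estimate at an ergodic $\nu$, with $\alpha > 0$ small, set $G(y,n) := \{0 \le i < n : T^i y \notin \overline{B_{2\delta}^{**}}\}$ and $N(y,n) := n - |G(y,n)|$. For $i \in G(y,n)$ and $z \in B_T(y, 2\delta, n)$, the inclusion $B(T^i y, 2\delta) \subseteq P(T^i y)$ forces $T^i z \in P(T^i y)$, whence $B_T(y, 2\delta, n) \subseteq P^{G(y,n)}(y) := \bigcap_{i \in G(y,n)} T^{-i} P(T^i y)$, a union of at most $|P|^{N(y,n)}$ atoms of $P^n$. Next I would invoke Egorov together with Brin--Katok at scale $\delta$ and Birkhoff applied to $\mathbbm{1}_{\overline{B_{2\delta}^{**}}}$ to produce $n_0$ and $E \subseteq Y$ with $\nu(E) > 1 - \alpha$ such that, for every $y \in E$ and $n \ge n_0$, both $\nu(B_T(y, \delta, n)) \ge e^{-n(h(\nu,\delta) + \alpha)}$ and $N(y,n) \le n(\eta + \alpha)$. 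Choosing a maximal $(n, 2\delta)$-separated set $\{y_j\}_{j=1}^K$ inside $E$: the half-size dynamical balls $B_T(y_j, \delta, n)$ are pairwise disjoint, each of $\nu$-measure at least $e^{-n(h(\nu,\delta) + \alpha)}$, so $K \le e^{n(h(\nu,\delta) + \alpha)}$; by maximality $\bigcup_j B_T(y_j, 2\delta, n) \supseteq E$, and each $B_T(y_j, 2\delta, n)$ meets at most $|P|^{n(\eta + \alpha)}$ atoms of $P^n$.

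Consequently, the number of atoms of $P^n$ intersecting $E$ is at most $e^{n(h(\nu,\delta) + \alpha + (\eta + \alpha)\log|P|)}$. Letting $E'$ denote their union, the standard conditional entropy estimate using the $2$-partition $\{E', Y \setminus E'\}$ gives
\[ H_\nu(P^n) \le \log\bigl|\{A \in P^n : A \cap E \neq \emptyset\}\bigr| + \alpha \cdot n \log|P| + \phi(\alpha), \]
with $\phi$ the binary entropy. Dividing by $n$ and sending $n \to \infty$ yields $h(\nu,P) \le h(\nu,\delta) + \alpha + (\eta + 2\alpha)\log|P|$, so choosing $\eta \log|P| < \gamma$ and $\alpha(1 + 2\log|P|) < 2\gamma$ closes the proof with room to spare.

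The hard part will be the first step. The small boundary hypothesis naturally controls the $\nu$-mass of closed neighborhoods of $\partial P$, but the geometric argument really needs control of $B_\delta^{**}$; in a general compact metric space these two sets may genuinely differ (e.g.\ when atoms are clopen but well separated, so $\partial P = \emptyset$ yet $B_\delta^{**}$ is large for large $\delta$). The closure trick $\bigcap_\delta \overline{B_\delta^{**}} \subseteq \partial P$ is precisely the bridge that activates the small boundary hypothesis, and combined with the Dini-type argument it furnishes the uniformity in $\nu$ that is essential throughout.
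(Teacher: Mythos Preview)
Your argument for ergodic $\nu$ is correct and is actually a pleasant alternative to the paper's proof: where the paper invokes an external combinatorial lemma (Lemma~6 of \cite{burgu}) to get a \emph{uniform} subexponential bound on the number of $P^n$-atoms meeting any dynamical ball, you instead control the ``bad set'' $B_\delta^{**}$ via the small boundary hypothesis and a Dini argument, and then obtain the atom count only on a Birkhoff-good set $E$. This is more self-contained.

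The gap is in the first sentence. You assert that $\nu\mapsto h(\nu,\delta)$ is harmonic and use this to pass from ergodic to general $\nu$. With the paper's definition $h(\nu,\delta)=\int h(\nu,\delta,x)\,d\nu(x)$ this is not justified, and the easy inequality goes the \emph{wrong} way: for a finite ergodic decomposition $\nu=\sum_i c_i\mu_i$ and $\mu_1$-a.e.\ $x$ one has $\nu(B_T(x,\delta,n))\ge c_1\,\mu_1(B_T(x,\delta,n))$, hence $h(\nu,\delta,x)\le h(\nu_x,\delta,x)$, and integrating gives
\[
h(\nu,\delta)\ \le\ \int h(\nu_y,\delta)\,d\nu(y).
\]
Thus from the ergodic case you only obtain $h(\nu,P)\le \int h(\nu_y,\delta)\,d\nu(y)+3\gamma$, which does not imply $h(\nu,P)\le h(\nu,\delta)+3\gamma$. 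Note that the appendix is precisely proving that the integral Brin--Katok definition agrees (up to uniform equivalence) with Downarowicz's harmonic extension, so assuming harmonicity here would be circular. The paper sidesteps the issue entirely by proving the \emph{pointwise} inequality $h(\nu,P,x)\le h(\nu,\delta,x)+3\gamma$ for $\nu$-a.e.\ $x$ and arbitrary $\nu$, via a Borel--Cantelli stratification in the level $k$ of $h(\nu,P,x)$; integrating then gives the statement. Your Dini bound $\sup_\nu \nu(\overline{B_{2\delta}^{**}})<\eta$ does survive to the non-ergodic setting (indeed it forces $\nu_x(\overline{B_{2\delta}^{**}})<\eta$ for every ergodic component, hence the Birkhoff limit is $<\eta$ a.e.), but your separated-set count breaks down because $h(\nu,\delta,y_j)$ is no longer constant, so the disjoint balls $B_T(y_j,\delta,n)$ have wildly varying $\nu$-measures and you cannot bound $K$.
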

\begin{proof}
We will  show that for all $\gamma>0$ there is $\delta>0$ such that  for any $\nu\in \mathcal{M}(Y,S)$ and for 
  $\nu$-almost every $x$ :
\begin{eqnarray}\label{pipo}h(\nu,P,x)\leq  h(\nu,\delta,x) +3\gamma.\end{eqnarray}
Let $\gamma>0$. 
In \cite{bk,mane} the authors only consider a single measure $\nu$  with a partition satisfying  $\nu(\partial P)=0$.
This last condition together the ergodic theorem allows to control the number of atoms of $P^n$ intersecting a $\nu$-typical dynamical ball of length $n$. To get  uniform estimates in $\nu$ for an essential partition $P$, we use the combinatorial lemma of \cite{burgu}.
We may then apply  verbatim the proof of Ma\~n\'e \cite{mane} to get the desired inequality (\ref{pipo}). As a sake of completeness we give now the details. 

Fix $\gamma'\in ]0,\gamma/2[$ so small that $\limsup_{n}\frac{1}{n}\log \dbinom{\lceil n \gamma' /\log \sharp P\rceil}{n}<\gamma/2$.
By Lemma 6 in \cite{burgu}  there exists $\delta>0$ such that 
\begin{eqnarray}\label{burguet}\limsup_n\sup_{x\in X}\frac{1}{n}\log \sharp\{A^n\in P^n, \ B(x,n,\delta)\cap A^n\neq \emptyset\}<\gamma'.\end{eqnarray} 
In fact it follows from the proof in \cite{burgu} that 
\begin{eqnarray}\label{burgueta}\limsup_n\sup_{x\in X} \sup_{A^n}\frac{1}{n}\sharp\left\{k\in [0,n-1], \ A_k\neq P(S^kx)\right\}<\frac{\gamma'}{\log \sharp P},\end{eqnarray}
where the supremum holds over $ A^n=\bigcap_{k=0}^{n-1}S^{-k}A_k\in P^n$ with $ B(x,n,\delta)\cap A^n\neq \emptyset$.

 For  $n,k\in \mathbb{N}$ we let 
 $$E_k^n:=\{x\in X, \ \nu(P^n(x))\leq e^{-nk\gamma}\}\text{ and}$$
 $$F_{k}^n:=\{x\in E_k^n, \ \exists A^n\in P^n \text{ with } \nu(A^n)\geq  e^{-n(k-2)\gamma}\text{ and } B(x,n,\delta)\cap A^n\neq \emptyset\}.$$
  Then,  for $n$ large enough and for any  fixed $A^n\in P^{n}$, there are at most $\dbinom{\lceil n\gamma'/\log \sharp P \rceil}{n}e^{\gamma' n}$ atoms   $P^n(x)$ with $B(x,n,\delta)\cap A^n\neq \emptyset$ by (\ref{burgueta}). Therefore we have for $n$ large enough :
  \begin{eqnarray*}
  \nu(F_n^k)&\leq & \sum_{\stackrel{A^n\in P^n,}{ \nu(A^n)\geq e^{-n(k-2)\gamma }}}\sum_{\stackrel{x\in E_k^n}{B(x,n,\delta)\cap A^n\neq \emptyset}}\nu(P^n(x)),\\
  &\leq & e^{n(k-2)\gamma } \times \dbinom{\lceil n\gamma' /\log \sharp P \rceil}{n}e^{n\gamma'} \times  e^{-nk\gamma}\leq e^{-\gamma n}.\end{eqnarray*}

Therefore by Borel-Cantelli Lemma,  every $x$ in a subset $E$ of full $\nu$-measure belongs to finitely many 
$E_{n}^k$, $n,k\in \mathbb{N}$.  We may also assume that $-\frac{1}{n}\log \nu(P^n(x))$ is converging (to 
$h(\nu,P,x)$) for $x\in E$.  Let $x\in E$ and $k\in \mathbb{N}$ with $k\gamma< h(\nu,P,x)\leq (k+1)\gamma$. For $n$ large enough, $x$ belongs to $E_k^n\setminus F^n_k$. Thus the dynamical ball $B(x,n,\delta)$   only intersects atoms $A^n\in P^n$ with $\nu(A^n)\leq e^{-n(k-2)\gamma}$, therefore $\nu(B(x,n,\delta))\leq e^{-n(k-3)\gamma}$ by (\ref{burguet}). We get finally 
\begin{eqnarray*}
h(\nu,\delta,x)&\geq & (k-3)\gamma, \\
&\geq &  h(\nu,P,x)-3\gamma.
\end{eqnarray*}

\end{proof}

 

 \section{Proof of the inequality $h^\pi\geq h+u^T_1$}\label{dirs}
 From Theorem 55 in \cite{bdo} we have $h^{\pi}\geq h+ \mathfrak u_{1}^T$ for a symbolic extension $\pi$ with an embedding of $(X,T)$, but in \cite{bdo} the proof involves a delicate intermediate construction, \textit{the enhanced system}. Here we give a direct proof of $h^\pi\geq h +u_1^T\geq h+ \mathfrak u_{1}^T$.
\begin{lemma}\label{fdf}Assume $(X,T)$ is a zero-dimensional system admitting  a symbolic extension $\pi:(Y,S)\rightarrow (X,T)$ with an embedding $\psi:(X,T)\rightarrow (Y,S)$, then  we  have  $$h^\pi\geq h+u_1^T.$$
In particular $u_1^T=0$ when $(X,T)$ is a subshift.
\end{lemma}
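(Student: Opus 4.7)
The plan is to prove the inequality by first understanding how the Borel embedding $\psi$ interacts with the periodic structure, then producing enough ``room'' in $\mathcal{M}(Y,S)$ to realize the extra entropy contribution $u_1^T$.

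First I would establish the basic equivariance fact: since $\pi\circ\psi=\Id_X$ and $\psi$ is Borel equivariant and injective, every $T$-periodic orbit $\gamma\subset X$ of minimal period $n$ is sent to an $S$-periodic orbit $\psi(\gamma)\subset Y$ of the same minimal period, and distinct $T$-periodic orbits produce distinct $S$-periodic orbits (a shorter period of $\psi(\gamma)$ would descend to a shorter period of $\gamma$ via $\pi$). Thus, for a fixed periodic orbit $\gamma$ with minimal period $n$ and the set $\mathcal{N}_k(\gamma)$ of $T$-periodic orbits $\gamma'$ satisfying $T^n x'=x'$ and $D(\mu_{\gamma'},\mu_\gamma)<\epsilon_k$, we obtain $N_k:=|\mathcal{N}_k(\gamma)|=\exp\bigl(n\, p_k^T(\mu_\gamma)\bigr)$ pairwise distinct $S$-periodic orbits $\{\psi(\gamma'):\gamma'\in\mathcal{N}_k(\gamma)\}$ in the subshift $Y$, all of period dividing $n$.

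Next I would use this to prove the inequality at periodic measures, where it reduces to $h^\pi(\mu_\gamma)\geq u_1^T(\mu_\gamma)$ since $h(\mu_\gamma)=0$. The idea is to encode the $N_k$ lifted periodic orbits combinatorially inside $Y$: since $(Y,S)$ is a subshift on a finite alphabet and since $\pi$ is continuous (so two periodic measures in $Y$ whose images under $\pi_*$ differ by $<\epsilon_k$ can be distinguished at an appropriate scale), I would build a horseshoe-type invariant subshift $Z_k\subset Y$ whose topological entropy is at least $\tfrac{1}{n}\log N_k - o_k(1)$, obtained by concatenating the $N_k$ admissible periodic $n$-words corresponding to the $\psi(\gamma')$'s. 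By the variational principle one then gets a measure $\nu_k\in\mathcal{M}(Y,S)$ supported near $Z_k$ with $h(\nu_k)\geq p_k^T(\mu_\gamma)-o(1)$ and with $\pi_*\nu_k$ close (in weak-$*$) to $\mu_\gamma$. Passing to a weak-$*$ limit and exploiting that the subshift $(Y,S)$ is asymptotically $h$-expansive (so that $h$ is upper semicontinuous on $\mathcal{M}(Y,S)$) produces a $\nu\in\mathcal{M}(Y,S)$ with $\pi_*\nu=\mu_\gamma$ and $h(\nu)\geq \widetilde{p_k^T}(\mu_\gamma)$; letting $k\to\infty$ gives $h^\pi(\mu_\gamma)\geq u_1^T(\mu_\gamma)$.

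Finally I would extend from periodic measures to all of $\mathcal{M}(X,T)$. Since $h^\pi$ is an affine superenvelope of the entropy structure (Theorem \ref{sexy}), and $u_1^T$ is defined via harmonic extension and u.s.c. envelopes, the inequality $h^\pi\geq h+u_1^T$ extends by approximation: on periodic measures it is Step 2, on aperiodic ergodic measures one has $u_1^T=0$ by harmonic extension, and on mixed measures one uses affineness of $h$ and harmonicity of $p_k^T$ to combine the cases before taking the u.s.c. envelope. The ``in particular'' is then immediate: for a subshift $(X,T)$ take $\pi=\Id$, giving $h=h^{\Id}\geq h+u_1^T$, hence $u_1^T=0$.

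The main obstacle I anticipate is Step 2: the naive measure $\tfrac{1}{N_k}\sum\nu_{\psi(\gamma')}$ is a convex combination of periodic measures and therefore has entropy zero, while the lifted orbits $\psi(\gamma')$ sit in distinct fibers $\pi^{-1}(\gamma')$, not in $\pi^{-1}(\gamma)$. One cannot work inside a single fiber; instead one must realize the lifts as a positive-entropy subsystem of $Y$ whose projection is only weak-$*$ close to $\mu_\gamma$, and then recover the exact identity $\pi_*\nu=\mu_\gamma$ by upper semicontinuity together with the weak-$*$ compactness of $\mathcal{M}(Y,S)$. The combinatorial construction of such a positive-entropy subsystem, exploiting the finite-alphabet structure of $Y$ to glue the $N_k$ admissible periodic $n$-words, is the technical heart of the argument.
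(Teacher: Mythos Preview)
Your proposal has two substantive gaps.

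The first is in Step~2: the subshift $Y$ carries no mixing or specification hypothesis, so concatenations of the periodic $n$-words $\psi(\gamma')$ need not lie in $Y$, and the finite alphabet alone gives no gluing mechanism. You flag this as ``the technical heart'' but provide no construction; in fact for a general subshift no such horseshoe $Z_k\subset Y$ exists. Moreover, even granting the construction, your limiting argument does not yield the u.s.c.\ envelope: from $h(\nu_k)\ge p_k^T(\mu_\gamma)-o(1)$ and $\nu_k\to\nu$ you obtain at best $h(\nu)\ge\limsup_k p_k^T(\mu_\gamma)$, and for a fixed periodic $\mu_\gamma$ this limit is zero (periods being finite), not $\widetilde{p_k^T}(\mu_\gamma)$.

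The second gap is in Step~3: the claim that $u_1^T=0$ on aperiodic ergodic measures is false. It is $p_k^T$ that vanishes there by harmonic extension, but $u_1^T=\lim_k\widetilde{p_k^T}$ involves the u.s.c.\ envelope, and $\widetilde{p_k^T}(\mu)$ can be strictly positive at an aperiodic $\mu$ whenever periodic measures with large $p_k^T$ accumulate on it. This is precisely the phenomenon $u_1^T$ captures, so the case split periodic/aperiodic cannot close the argument.

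The paper avoids both issues at once. Given any $\mu$, it first invokes a lemma producing $\mu_k\to\mu$ with $p_k(\mu_k)\to u_1^T(\mu)$, which absorbs the envelope and the periodic/aperiodic dichotomy in one step. It then builds $\nu_k=\int\gamma_k^x\,d\mu_k(x)\in\mathcal{M}(Y,S)$ where $\gamma_k^x$ is the average of the Dirac masses at the $\psi(y)$ for the relevant periodic $y$; these $\nu_k$ have zero entropy, and the key quantity is not $h(\nu_k)$ but the \emph{finite conditional static entropy} $\tfrac{1}{n_k}H_{\nu_k}(P^{n_k}\mid\pi^{-1}Q_k^{n_k})$. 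A direct count (distinct periodic points give distinct $P^n$-words in $Y$, while they occupy few $Q_k^n$-atoms in $X$) bounds this below by $p_k(\mu_k)-1/k$, and subadditivity together with weak-$*$ continuity of clopen conditional entropies bounds its $\limsup$ above by $(h^\pi-h)(\mu)$. No concatenation inside $Y$ is ever required.
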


\begin{proof}
  We let $\mathcal{Q}=(Q_k)_k$ be a nonincreasing sequence of clopen partitions with $\diam(Q_k)\xrightarrow{k}0$. The sequence of affine upper semicontinuous functions $h_k=h(\cdot, Q_k)$, $k\in \mathbb{N}$ then defines an entropy structure of $(X,T)$. Recall $D$ denotes a convex distance on $\mathcal{M}(X,T)$ inducing the weak-$*$ topology.  We let $Per_n(X,T):=\{x\in X, \ T^nx=x\}$ and $Per(X,T)=\bigcup_{n>0}Per_n(X,T)$. 
  By a standard combinatorial argument there is for any $k$  a positive number $\epsilon_k\in ]0,1/k[$   so small  that for any $x\in Per_n(X,T)$  the number of $A\in Q_k^n$, such that there exists  $y\in Per_n(X,T)\cap A$ with    $D(\mu_y,\mu_x)<\epsilon_k$, is less than $e^{n/k}$.   
  For a periodic point $x$  with minimal period $n$ we recall  $p_k(\mu_x)=\frac{1}{n}\log\sharp\{\mu_y,  \  D(\mu_y,\mu_x)<\epsilon_k \text{ and }y\in Per_n(X,T)\}$. 
The sequence $(p_k)_k$ is converging pointwisely to zero because  there are only finitely many periodic points in $(X,T)$ with a given period as in the subshift $(Y,S)$. Therefore there exists a nondecreasing sequence of  positive integers $(n_k)_k$ going to infinity  such that for all $k$ we have $p_k(\mu_x)=0$ for all periodic points $x$ with minimal period less than $n_k$. For $\mu\in \mathcal{M}(X,T)$ we let $p_k(\mu)=\int p_k(\mu_x)\, d\mu(x)$, $k\in \mathbb{N}$.  By definition  we have $u_1=u_1^T=\lim_k\tilde{p_k}$. 
By Lemma 54 in  \cite{bdo} for all $\mu\in \mathcal{M}(X,T)$ there exists a sequence of $T$-invariant probability measures $(\mu_k)_k$ converging to $\mu$ with
$p_k(\mu_k)\xrightarrow{k}u_1(\mu)$. 
For any periodic point $x$ with minimal period equal to $n$ we  let 
$\gamma^x_k$ be the  probability measure associated to $\sum_{y}\delta_{\psi(y)}$, where the sum holds  over $y\in Per_n(X,T)$ with $D(\mu_y,\mu_x)<\epsilon_k$. Finally  we let 
$\nu_k=\int \gamma^x_k\, d\mu_k(x)\in \mathcal{M}(Y,S)$. Observe that $\pi \gamma_k^x\xrightarrow{k}\mu_x$  and therefore $\pi\nu_k\xrightarrow{k}\mu$ by convexity of $D$.   Let $P$ be the zero-coordinate partition of $Y$. By superharmonicity of $\nu\mapsto H_{\nu}(R|R')$ for any given clopen partitions $R$, $R'
$ of $Y$ we have 
\begin{align*}
\frac{1}{n_k}H_{\nu_k}(P^{n_k}|\pi^{-1}Q_k^{n_k})& \geq 
\frac{1}{n_k}\int_{Per(X,T)\setminus Per_{n_k-1}(X,T)} H_{\gamma_{k}^x}(P^{n_k}|\pi^{-1}Q_k^{n_k}) d\mu_k(x).
\end{align*}
Then for any  periodic point $x$ with minimal period $n_x\geq n_k$ we have :
\begin{align*}
\frac{1}{n_k}H_{\gamma_{k}^x}(P^{n_k}|\pi^{-1}Q_k^{n_k})
& \geq  \frac{1}{n_x}H_{\gamma_{k}^x}(P^{n_x}|\pi^{-1}Q_k^{n_x}),  \text{ because $\left(\frac{1}{n}H_{\xi}(P^n|\pi^{-1}Q_k)\right)_n\searrow$  by Fact 2.2.5 in \cite{dowb}},\\
& \geq \frac{1}{n_x}\left(H_{\gamma_{k}^x}(P^{n_x})-H_{\gamma_k^x}(\pi^{-1}Q_k^{n_x})\right),\\
&\geq  \frac{1}{n_x}\log\sharp\{ y\in Per_{n_x}(X,T)
 \text{ with }D(\mu_y,\mu_x)<\epsilon_k\} \\
 & \ \ \ \  
 -\frac{1}{n_x}\log\sharp\{ A\in Q_k^{n_x}, \ \exists y\in Per_{n_x}(X,T)\cap A \text{ with }D(\mu_y,\mu_x)<\epsilon_k\},\\ 
&\geq  p_k(\mu_x)-1/k.
\end{align*}

Therefore we get 
\begin{align*}
p_k(\mu_k)=\int p_k(\mu_x)d\mu_k(x)\leq \frac{1}{n_k}H_{\nu_k}(P^{n_k}|\pi^{-1}Q_k^{n_k})+1/k.
\end{align*}

The left member goes to $u_1(\mu)$ when $k$ goes to infinity. Let us now show the limsup in $k$ of the right member is not larger  than $h^\pi(\mu)-h(\mu)$. We have for all $k''\leq k'\leq k$

\begin{align*} \frac{1}{n_k}H_{\nu_k}(P^{n_k}|\pi^{-1}Q_k^{n_k})&\leq \frac{1}{n_{k}}H_{\nu_k}(P^{n_{k}}|\pi^{-1}Q_{k''}^{n_{k}}), \text{ since $Q_k$ is  finer than $Q_{k''}$},\\ 
&\leq \frac{1}{n_{k'}}H_{\nu_k}(P^{n_{k'}}|\pi^{-1}Q_{k''}^{n_{k'}})\text{ because $\left(\frac{1}{n}H_{\xi}(P^n|\pi^{-1}Q_k^n)\right)_n\searrow$ as recalled above}.
\end{align*}
The involved partitions being clopen,  we have for any weak limit $\nu$ of $(\nu_k)_k$,  by letting $k$ go to infinity :
$$ \limsup_k\frac{1}{n_k}H_{\nu_k}(P^{n_k}|\pi^{-1}Q_k^{n_k})\leq \frac{1}{n_{k'}}H_{\nu}(P^{n_{k'}}|\pi^{-1}Q_{k''}^{n_{k'}}).$$
As it holds for all $k'$ and $\pi\nu=\mu$ we get with $h_{k''}:=h(\cdot, Q_{k''})$ :
$$ \limsup_k\frac{1}{n_k}H_{\nu_k}(P^{n_k}|\pi^{-1}Q_k^{n_k})\leq h(\nu)-h_{k''}(\pi\nu)\leq (h^{\pi}-h_{k''})(\mu).$$
We conclude the proof by letting $k''\rightarrow +\infty$.\end{proof}

\section{Uniform generators with small boundary for asymptotically expansive systems}\label{dav}

\begin{prop}\label{Gener}
Any asymptotically expansive topological system with the small  boundary property admits an  essential uniform generator. 
\end{prop}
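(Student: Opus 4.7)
The proposition is the essential branch of Theorem \ref{durud} specialized to the superenvelope $E = h$, so by Proposition \ref{super} my plan is to construct a strongly isomorphic symbolic extension of $(X,T)$ and read off the essential uniform generator as the pullback of the zero-coordinate partition, using Lemma \ref{gege}.

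First I will translate the hypothesis into a workable entropic form. Asymptotic $h$-expansiveness, via Theorem \ref{vieu}, ensures that any entropy structure converges uniformly to $h$, so that $h$ itself is a finite, affine, upper semicontinuous superenvelope of the entropy structure. Asymptotic expansiveness moreover gives $u_1^T = 0$, hence $\mathfrak u_1^T \leq u_1^T = 0$, and the admissibility condition $E \geq h + \mathfrak u_1^T$ appearing in the detailed part of Theorem \ref{durud} is satisfied by $E := h$. The small boundary property provides a nested sequence of essential partitions $(P_k)_k$ with $\diam(P_k) \to 0$, and Proposition \ref{fdfe} combined with asymptotic $h$-expansiveness will yield that $h(\cdot, P_k)$ converges uniformly to $h$.

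Next I will run the block construction underlying the proof of Theorem \ref{durud}, but with the $P_k$'s as the building blocks rather than arbitrary Borel partitions. Concretely, a point $x$ is to be encoded by a sequence of blocks of increasing length $n_k$, each block recording the $P_k$-name of $x$ over a time window of size $n_k$; the uniform convergence of $h(\cdot, P_k)$ to $h$ and of the periodic structure $(p_k)_k$ to zero together bound the size of the alphabet at each level and guarantee that the full code lives in a single finite-alphabet subshift. The resulting map $\psi$ from $X$ into this subshift will be a Borel embedding whose ambiguity is supported on $\bigcup_{k,\, n \in \mathbb{Z}} T^{-n}(\partial P_k)$, a countable union of null sets. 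The associated symbolic extension is therefore strongly isomorphic, and Lemma \ref{gege} then promotes it to an essential uniform generator of $(X,T)$.

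The hard part will be the combinatorial block coding in the second step, which must simultaneously keep the alphabet finite (via the entropy bound), respect the periodic data (via $u_1^T = 0$, combined with the sharpness from Lemma \ref{fdf}), and be measurable with respect to partitions of small boundary. These three constraints are exactly what asymptotic expansiveness plus the small boundary property supply, and I expect the proof to reduce to verifying that a standard Boyle--Downarowicz block construction can be carried out under these quantitative controls without enlarging the ambiguity locus beyond $\bigcup_{k,n} T^{-n}(\partial P_k)$.
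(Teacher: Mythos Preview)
Your overall strategy---build a strongly isomorphic symbolic extension and pull back the zero-coordinate partition via Lemma \ref{gege}---is correct, but the paper takes a cleaner route and your sketch has a gap at the crucial step.

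The paper first passes to a zero-dimensional strongly isomorphic extension (using the small boundary property), where all partitions may be taken clopen. It then invokes the Main Theorem of \cite{bumo} directly: that theorem produces the embedding $\psi$ as a pointwise limit of \emph{continuous} equivariant maps $\psi_k$, with the specific structural feature that there is a single ``free'' symbol $*$ satisfying $\psi^{-1}([a]) = \bigcup_k \psi_k^{-1}([a])$ for every $a \neq *$, and $\psi\mu([*]) = 0$ for all invariant $\mu$. Thus each $\psi^{-1}([a])$ with $a\neq *$ is an increasing union of clopen sets, hence open, and a two-line measure computation shows $\sum_a \mu(\Int(\psi^{-1}[a])) = 1$. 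No block construction needs to be redone; the essentiality is read off from the shape of the \cite{bumo} embedding.

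Your proposal instead tries to run the block coding directly with essential (non-clopen) partitions $P_k$ on the original space. This is where the gap lies. The marker/Rokhlin-tower machinery underlying these constructions (Krieger markers, the coding in \cite{bumo}, \cite{bdo}) is carried out over zero-dimensional bases with clopen towers; it is not clear how to implement it with merely essential $P_k$, and you give no indication of how the continuous approximants $\psi_k$ would be defined. More importantly, your assertion that the ambiguity locus of $\psi$ is contained in $\bigcup_{k,n} T^{-n}(\partial P_k)$ is unjustified: in the actual construction the non-injectivity of $\pi$ is governed by the asymptotic density of free positions (the symbol $*$), not by partition boundaries. If you first pass to zero-dimensional, the $\partial P_k$ are empty and your claimed ambiguity set is vacuous, which shows the argument cannot proceed as stated. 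Finally, invoking Theorem \ref{durud} here is circular: this appendix is precisely where the paper derives the essential branch of Theorem \ref{durud} from \cite{bumo}.
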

\begin{proof}
Replacing the system by a zero-dimension strongly isomorphic extension, we can assume by Proposition \ref{super} the initial system to be zero-dimensional. Let $(X,T)$ be such a zero-dimensional asymptotically expansive system. By the Main Theorem in \cite{bumo}
 there exists, for a finite alphabet $\mathcal{A}$, a sequence of continuous equivariant maps $\psi_k:(X,T)\rightarrow (\mathcal{A}^{Z}, \sigma)$ converging pointwisely to an embedding $\psi$, such that the induced maps on $\mathcal{M}(X,T)$ are converging uniformly. Moreover it is shown that $\psi^{-1}$ extends continuously to a symbolic extension $\pi:(\overline{\psi(X)}, \sigma)\rightarrow (X,T)$.  The maps $\psi_k$ encode the orbit of $x$ at some scales $\epsilon_k$ with $\epsilon_k\xrightarrow{k}0$.  Except for the so-called \textit{free positions} corresponding to some letter $*$ in $\mathcal{A}$, which represents the coordinates  we can freely use to encode the smaller scales, the other letters are fixed once for all : 
 $$\forall a\in \mathcal{A}\setminus \{*\}, \ \psi^{-1}([a])=\bigcup_{k}\psi_k^{-1}([a]).$$
Moreover the upper asymptotic density of $*$ in any $\psi_k(x)$ goes to zero uniformly in $x\in X$ when $k$ goes to infinity. Consequently we have $\psi\mu([*])=0$ for any $\mu \in \mathcal{M}(X,T)$. 

The partition $P=\{\psi^{-1}(a), \ a\in \mathcal{A}\}$ defines a uniform generator (see  Proposition \ref{super}). Let us now show $P$ is an essential partition. The maps $\psi_k$ being continuous we have for any $\mu\in \mathcal{M}(X,T)$
\begin{eqnarray*}
\sum_{a\in \mathcal{A}}\mu\left( \Int (\psi^{-1}[a])\right)&\geq &\sum_{a\in \mathcal{A}\setminus \{*\}}\mu\left( \Int (\psi^{-1}[a])\right),\\
 &\geq &\lim_k \sum_{a\in \mathcal{A}\setminus \{*\}}\mu\left( \Int (\psi_k^{-1}[a])\right),\\
  &\geq &\sum_{a\in \mathcal{A}\setminus \{*\}}\lim_k \mu\left( \psi_k^{-1}[a]\right),\\
 &\geq &\sum_{a\in \mathcal{A}\setminus \{*\}}\lim_k \psi_k\mu( [a]),\\
 &\geq &\sum_{a\in \mathcal{A}\setminus \{*\}}\psi\mu( [a])=1.
\end{eqnarray*}
Therefore $ P$ has a small boundary.

\end{proof}



\begin{thebibliography}{9}
\bibitem{ab} L.M. Abramov, {\it On the entropy of a flow}, Dok. Akad. Nauk. SSSR. Vol. 128 (1959)
873-875.
\bibitem{amb}W. Ambrose, {\it Representation of ergodic flows}, Ann. of Math. (2) 42, (1941) 723–739.
\bibitem{BOW}R. Bowen, {\it Symbolic Dynamics for Hyperbolic Flows}, American Journal of Mathematics Vol. 95, No. 2 (Summer, 1973), pp. 429-460
\bibitem{BW}R. Bowen  and P. Walters, {\it Expansive One-Parameter Flows}, Journal of Differential Equations 12, 180-193 (1972).
\bibitem{BD}M. Boyle and T. Downarowicz,
\emph{The entropy theory of symbolic
extensions}, Inventiones Math. {\bf 156} (2004), 119--161
\bibitem{bdd}M. Boyle and T. Downarowicz, {\it Symbolic extension entropy: C r examples, products and flows},  Discrete and Continuous Dynamical Systems 16(2):329-341 (2006),
\bibitem{bk}M. Brin and  A. Katok,  {\it On local entropy}, Geometric dynamics (Rio de Janeiro, 1981), 30–38, Lecture Notes in Math., 1007, Springer, Berlin, 1983.
\bibitem{burgu}D. Burguet, {\it A direct Proof of the Tail variational principle and its extension to maps}, Ergodic
theory and dynamical systems 29 (2009), 357-369.
\bibitem{bumo}D. Burguet, {\it Embedding asymptotically expansive systems},  Monatsh. Math. 184 (2017), no. 1, 21-49.
\bibitem{bdo}D. Burguet and T. Downarowicz, {\it Symbolic extensions with an embedding and structure of periodic orbits}, Journal of Dynamics and Differential Equations (to appear).
\bibitem{dow}
T. Downarowicz, {\it Entropy structure}, J. Anal. Math. 96 (2005), 57-116.
\bibitem{dowb}  T. Downarowicz, \emph{Entropy in dynamical systems}, 
New Mathematical Monographs, vol. 18, Cambridge University Press, Cambridge (2011)
\bibitem{DH}T. Downarowicz and D. Huczek, {\it Zero-Dimensional Principal Extensions }, Acta Appl. Math. 126 (2013), 117-129.  
\bibitem{DN} T. Downarowicz and  S. Newhouse, {\it  
Symbolic extensions and smooth dynamical systems}, Invent. math. 160, 453–499 (2005).
\bibitem{Ho} M. Hochman, 
\emph{Isomorphism and embedding into Markov shifts off universally null sets}
Acta Applicandae Mathematicae {\bf 126} (Hang Kim Memorial Volume), Issue 1 (2013), pp 187-201.
\bibitem{Hoc} M. Hochman, 
\emph{Every Borel automorphism without finite invariant measures admits a two-set generator}, preprint, http://arxiv.org/abs/1508
\bibitem{key}H. B. Keynes  and M. Sears, {\it Real-expansive flows and topological dimension}, Ergod. Th.  Dynam. Sys. (1981), 1, 179-195.
\bibitem{ky}H. B. Keynes and J. B. Robertson, {\it Generators for topological entropy and expansiveness}, Math.
Systems Theory 3 (1969), 51-59.
\bibitem{kri} W. Krieger, {\it On entropy and generators of measure-preserving transformations}, Trans. Amer.
Math. Soc. 149 (1970), 453-464.
\bibitem{kr}W. Krieger, {\it On the subsystems of topological Markov chains}, Ergodic Theory Dynam. Sys-
tems 2, p. 195-202, 1982.
\bibitem{kuz} J. Kulesza, {\it  Zero-dimensional covers of finite-dimensional dynamical systems}, Erg.Th. Dyn. Syst. 15 (1995), 939-950.
\bibitem{kat}A. Katok, {\it Lyapunov exponents, entropy and periodic points for diffeomorphisms}, Publ. Math. IHES (1980), 137-173.
\bibitem{KH} A. Katok and B. Hasselblatt, {\it Introduction to the modern theory of dynamical systems}, Encyclopedia of Mathematics and its Applications, vol. 54, Cambridge Univ. Press, 1995, xviii+802 pp.
\bibitem{led}F. Ledrappier,{\it  A variational principle for the topological conditional entropy}, Springer
Lec. Notes in Math. 729 (1979), Springer-Verlag, 78-88.
\bibitem{lin} E. Lindenstrauss, {\it Mean dimension, small entropy factors and an imbedding theorem}, Publ. Math. IHÉS
89, 227–262 (1999).
\bibitem{mane}R. Mane, {\it Ergodic theory and differentiable dynamics}, Springer-Verlag, Berlin, 1987.
\bibitem{man}R. Mane, {\it Expansive homeomorphisms and topological dimension}, Trans. Amer. Math. Soc. 252 (1979)
313–319.
\bibitem{mis}M. Misiurewicz,
{\emph Topological conditional entropy},
Studia Math.,  2 : 175-200, 1976.
\bibitem{ohn} T. Ohno, {\it A weak equivalence and topological entropy}, Publ. RIMS, Kyoto Univ. 16
(1980), 289-298.
\bibitem{KS}K. Slutsky, {\it Regular cross sections of Borel flows}, to appear in J. Eur. Math. Soc.
\bibitem{Rom} Romeo F. Thomas, {\it Entropy of expansive flows}, Ergod.Th. Dynam. Sys. (1987), 7, 611-625.
\bibitem{rud}D. Rudolph, {\it A Two-Valued Step Coding for Ergodie Flows},  Math. Z. 150,  201-220 (1976).
A Two-Valued Step Coding for Ergodie Flows
\bibitem{wa}Wagh, V. M., {\it A descriptive version of Ambrose's representation theorem for 
ows},  
Proc. Indian Acad. Sci. Math. Sci. 98 (1988), no. 2-3, 101-108.
\bibitem{Wi}H. Whitney, {\it  Regular families of curves}, Ann. of Math. 34 (1933), 244-270.
\end{thebibliography}
\end{document}